\newcommand{\dd}{\, {\rm d}}
\newcommand{\abs}[1]{\left\vert#1\right\vert}
\newcommand{\Abs}[1]{\left\vert#1\right\vert}
\newcommand{\norm}[1]{\left\Vert#1\right\Vert}  
\newcommand{\Norm}[1]{\left\Vert#1\right\Vert}  
\newcommand{\T}{\ensuremath{{\mathbb T}}}
\newcommand{\R}{\ensuremath{{\mathbb R}}}
\newcommand{\beq}{\begin{equation}}
\newcommand{\eeq}{\end{equation}}
\newcommand{\beqs}{\begin{equation*}}
\newcommand{\eeqs}{\end{equation*}}
\newcommand{\bal}{\begin{equation}\begin{aligned}}
\newcommand{\eal}{\end{aligned}\end{equation}}
\newcommand{\bals}{\begin{equation*}\begin{aligned}}
\newcommand{\eals}{\end{aligned}\end{equation*}}
 \numberwithin{num}{section}
\theoremstyle{thmstyleone}%
\newtheorem{theorem}{Theorem}
\newtheorem{proposition}[theorem]{Proposition}%
\newtheorem{lemma}[theorem]{Lemma}
\theoremstyle{remark}
\newtheorem{openprob}[theorem]{Open Problems}
\theoremstyle{thmstyletwo}%
\newtheorem{remark}{Remark}%
\theoremstyle{thmstylethree}%
\newtheorem{definition}{Definition}%
\begin{document}

\title[Semi-local behaviour of non-local hypoelliptic equations]{Semi-local behaviour of non-local hypoelliptic equations: divergence form}
\author{Amélie Loher}
\email{amelie.loher@all-souls.ox.ac.uk}
\date{16th September 2025}

\address{Department of Pure Mathematics and Mathematical Statistics, University of Cambridge,
Wilberforce Rd, Cambridge, CB3 0WA, UK}

\begin{abstract}
We derive the Strong Harnack inequality for a class of hypoelliptic integro-differential equations in divergence form.
Our result is semi-local, in the sense that we require the equation to hold globally in velocity. In fact, it is known that the Strong Harnack inequality fails in the kinetic case if the equation is merely satisfied in a bounded velocity domain \cite{KW-failedH}. This is in stark contrast to the case of parabolic equations, for which a local Strong Harnack inequality holds. 

In a first step, we derive a local bound on the non-local tail on upper level sets by exploiting the coercivity of the cross terms. In a second step, we perform a De Giorgi argument in $L^1$, since we control the tail term only in $L^1$. This yields a linear $L^1$ to $L^\infty$ bound.
Consequently, we prove polynomial upper bounds for \(s\in(0,1)\)
and exponential lower bounds for \(s\in(\frac12,1)\) on the
fundamental solution by adapting Aronson's method to non-local hypoelliptic equations.
\end{abstract}

\keywords{Harnack inequalities, Integro-differential equations, hypoellipticity}

\maketitle
\tableofcontents

\section{Introduction}\label{sec:intro}

\subsection{Problem formulation}
We consider kinetic integro-differential equations of the form
\beq
	\partial_t f + v \cdot \nabla_x f = \mathcal L f + h, \quad t \in \R, ~ x \in \R^d, ~ v \in \R^d,
\label{eq:1.1}
\eeq
where we assume $h = h(t, x, v)$ is a real measurable scalar field and where $\mathcal L$ is defined by 
\beq
	\mathcal L f(t, x, v) := \textrm{PV}\int_{\R^d}  \big[f(t, x, w) - f(t, x, v)\big] K(t, x, v, w)\dd w,
\label{eq:1.2}
\eeq
with a non-negative measurable kernel $K: \R \times \R^d \times \R^d \times \R^d \to [0, +\infty)$. Here, $\textrm{PV}$ denotes the Cauchy principal value, that is
\bals
	\textrm{PV}&\int_{\R^d} K(t, x, v, w) \big[f(t, x, w) - f(t, x, v)\big] \dd w \\
	&= \lim_{\varepsilon\to0}\int_{\R^d\setminus B_\varepsilon(v)} K(t, x, v, w) \big[f(t, x, w) - f(t, x, v)\big] \dd w.
\eals
Given a solution to \eqref{eq:1.1} in some bounded subset of $\R^{1+d}$ in time and space, and on $\R^d$ in velocity, we derive a linear bound on the local supremum of this solution by its local infimum in the future.
Such a relation is known as the Strong Harnack inequality for solutions of \eqref{eq:1.1}. As a consequence, we deduce polynomial upper and exponential lower bounds (for $s \geq \frac{1}{2}$) on the fundamental solution of \eqref{eq:1.1} away from the initial time.

Equation \eqref{eq:1.1} is an integro-differential equation. The non-locality measures fluctuations in velocity for a local time and space. In particular, the non-local operator takes into account values of the function outside any local velocity domain. The regularity of solutions to \eqref{eq:1.1} has first been investigated by Imbert-Silvestre \cite{IS, ISglobal}. In particular, it is known that super-solutions of \eqref{eq:1.1} satisfy the Weak Harnack inequality, \cite[Theorem 1.6]{IS} and \cite[Theorem 1.1]{AL}, that is some local average of a power of the solution in a past cylinder is bounded by the local infimum in a future cylinder. This is even true for solutions in bounded velocity domains. The proof of the Weak Harnack inequality uses a gain of integrability in form of a bound on the supremum by the $L^2$ norm of the solution (an $L^2$-$L^\infty$ bound) \cite[Lemma 6.6]{IS}, \cite[Lemma 4.1]{AL}.
Combining the Weak Harnack with the $L^2$-$L^\infty$ bound suggests that solutions of \eqref{eq:1.1} satisfy the Strong Harnack inequality. Intuitively, however, it is not clear that we expect the supremum of a solution to \eqref{eq:1.1} to be bounded by a local quantity, since the values in the whole velocity domain affect the local supremum. In fact, the Strong Harnack inequality fails for kinetic equations satisfied in bounded velocity domains \cite{KW-failedH}. 
So far only a \textit{nonlinear} relation between the supremum and the $L^2$ norm has been established for \eqref{eq:1.1} in \cite[Lemma 4.1]{AL}, which can be made linear at the cost of \textit{non-local tail terms} that precisely take into account function values of the solution outside the domain in which \eqref{eq:1.1} holds. We refer the reader to \cite[Remark 4.2]{AL}. As a consequence of this nonlinear $L^2$-$L^\infty$ bound \cite[Lemma 4.1]{AL} and the Weak Harnack inequality \cite[Theorem 1.1]{AL}, we deduced a nonlinear upper bound on the local supremum, which we called the “Not-so-Strong Harnack inequality” \cite[Theorem 1.3]{AL}. Here, however, we aim to establish a \textit{linear} $L^2$-$L^\infty$ bound \textit{without tail terms}, so that we improve the “Not-so-Strong Harnack inequality” \cite[Theorem 1.3]{AL} to a Strong Harnack inequality. We show that if \eqref{eq:1.1} is satisfied for any $v \in \R^d$, then the Strong Harnack inequality holds. Such a statement is of independent interest since it captures the local regularity of solutions to a class of hypoelliptic integro-differential equations, and moreover, it bears interesting consequences, such as quantitative bounds on the fundamental solution.

\subsection{Main results}
In this paper, we capture the local behaviour of solutions to non-local hypoelliptic equations.
We derive the Strong Harnack inequality for solutions of \eqref{eq:1.1}, and consequentially, we deduce polynomial upper bounds and exponential lower bounds on the fundamental solution of \eqref{eq:1.1}. 
The kernels of the non-local operator \eqref{eq:1.2} that we are interested in would ideally correspond to the setting that was introduced in \cite{IS59} and used in \cite{IS, ISglobal, AL}: there the ellipticity class of the kernel was prescribed through physical considerations of gas dynamics modelled by the Boltzmann equation without cutoff assumptions.
In particular, the notion of ellipticity, introduced in \cite{IS59} and reused here, generalises the commonly used pointwise bounds on the kernel of the non-local operator. The idea is to relate the non-local operator \eqref{eq:1.2} to the fractional Laplacian in an average sense, see Subsection \ref{subsec:ellipticity-class} below. In short, we assume a coercivity condition in integral form \eqref{eq:coercivity} and an integral upper bound \eqref{eq:upperbound}. However, in contrast to the setting considered by \cite{ISglobal} or \cite{AL}, we make a pointwise symmetry assumption stated in \eqref{eq:symmetry}, which is the non-local analogue of local operators in divergence form. 
Solutions to \eqref{eq:1.1} of this class satisfy the Strong Harnack inequality, \textit{provided that the equation is satisfied for any $v\in \R^d$.}
\begin{theorem}[Strong Harnack inequality]\label{thm:strong-Harnack}
Let $f \in L^2([-3, 0]\times B_1; H^s(\R^d))$ be a non-negative solution of \eqref{eq:1.1}-\eqref{eq:1.2} in $[-3, 0]\times B_1 \times \R^d \subset \R^{1+2d}$ in the sense of Definition \ref{def:solutions} (which requires the solution to decay pointwisely in velocity almost everywhere in $[-3, 0]\times B_1 \times \R^d$),
with a non-negative kernel $K$ of order $2s \in (0, 2)$ that satisfies for given $\lambda_0, \Lambda_0 > 0,$ a coercivity condition on average \eqref{eq:coercivity}, an upper bound on average \eqref{eq:upperbound}, a pointwise divergence form symmetry \eqref{eq:symmetry} and, in case that $s \geq \frac{1}{2}$, a non-divergence form symmetry \eqref{eq:symmetry-nondiv}, and with a non-negative source term $0 \leq h \in L^\infty([-3, 0] \times B_1\times \R^d)$. 
Then there is $C> 0$ depending on $s, d, \lambda_0, \Lambda_0$ such that for any $0 < r_0 <\frac{1}{6}$ the Strong Harnack inequality is satisfied:
\beq\label{eq:strong-Harnack}
	\sup_{\tilde Q^-_{\frac{r_0}{4}}} f \leq C \inf_{Q_{\frac{r_0}{4}}} f + C\norm{h}_{L^\infty([-3, 0] \times Q_1^t)},
\eeq
where $\tilde Q^-_{\frac{r_0}{4}} := [-t_3, -t_2] \times B_{\left(\frac{r_0}{4}\right)^{1+2s}} \times B_{\frac{r_0}{4}}$ with $t_2 = \frac{5}{2}r_0^{2s} - \frac{1}{2}\left(\frac{r_0}{2}\right)^{2s} \leq t_3 = t_2 +\left(\frac{r_0}{4}\right)^{2s}$, and $Q_{\frac{r_0}{4}} := [-t_{1}, 0]\times B_{\left(\frac{r_0}{4}\right)^{1+2s}} \times B_{\frac{r_0}{4}}$ with $t_{1} = \left(\frac{r_0}{4}\right)^{2s}$, see Figure \ref{fig:harnacks}. In particular, any solution $f$ of \eqref{eq:1.1} with zero source term $h = 0$ satisfies 
\beqs
	\sup_{\tilde Q^-_{\frac{r_0}{4}}} f \leq C \inf_{Q_{\frac{r_0}{4}}} f.
\eeqs
\end{theorem}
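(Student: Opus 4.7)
The plan is to derive a new \emph{linear} $L^1$-to-$L^\infty$ bound that is valid precisely because the equation holds for every $v\in\R^d$, and to combine it with the Weak Harnack inequality of \cite[Theorem 1.1]{AL}, which already provides $\|f\|_{L^p(\tilde Q^-)} \lesssim \inf_{Q} f + \|h\|_{L^\infty}$ for some $p > 0$. The nonlinear $L^2$-to-$L^\infty$ bound of \cite[Lemma 4.1]{AL} is not enough on its own; removing its tail terms requires a global-in-velocity tail control, produced from the interplay of the divergence-form symmetry \eqref{eq:symmetry} and the coercivity \eqref{eq:coercivity}.

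\textbf{Step 1: Tail control on upper level sets.} Fix a level $k\ge 0$ and a kinetic cutoff $\eta(t,x,v)$ localized in time, space and velocity. Test \eqref{eq:1.1} against $(f-k)_+\eta^2$. The non-local bilinear form, after using \eqref{eq:symmetry} to symmetrize in $(v,w)$ and splitting into near/far pieces with respect to the velocity support of $\eta$, produces both a coercive Dirichlet piece on $(f-k)_+$ and a sign-definite cross-term that controls
\beqs
\int\!\!\int\!\!\int (f-k)_+(t,x,v)\,\eta^2(t,x,v)\,\mathrm{Tail}[(f-k)_+](t,x,v)\,\dd v\,\dd x\,\dd t \leq C\Bigl(\|(f-k)_+\|_{L^2(\mathrm{supp}\,\eta)}^2 + \|h\|_{L^\infty}|\{f>k\}|\Bigr),
\eeqs
where $\mathrm{Tail}[g](v):=\int_{|v-w|\ge 1}g(w)|v-w|^{-d-2s}\,\dd w$. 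This tail bound is $L^1$ in velocity and crucially requires that $f$ solves the equation for all $v\in\R^d$.

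\textbf{Step 2: De Giorgi iteration in $L^1$.} With dyadic kinetic cylinders $Q^{(j)}$ and levels $k_j\nearrow L$, set $A_j := \|(f-k_j)_+\|_{L^1(Q^{(j)})}$. The coercive Dirichlet piece together with the kinetic averaging/transport gain provides a boost of integrability on $(f-k_j)_+$, while the non-local tail contribution at each step is handled by Step 1 directly in $L^1$ rather than $L^2$. This produces a recursion $A_{j+1} \le C^j L^{-\alpha} A_j^{1+\alpha}$ which, for $L = C_0(\|f\|_{L^1(Q_2)} + \|h\|_{L^\infty})$, forces $A_j\to 0$ and hence the linear bound $\sup_{Q_1}f \leq C\|f\|_{L^1(Q_2)} + C\|h\|_{L^\infty}$.

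\textbf{Step 3 and main obstacle.} On a chain of nested kinetic cylinders of radii $r_i\nearrow r$, interpolate $\|f\|_{L^1} \le \|f\|_{L^\infty}^{1-p}\,\|f\|_{L^p}^{p}$ and use Young's inequality with $\varepsilon$ to absorb the $L^\infty$ factor into the left-hand side; feeding the Weak Harnack bound on $\|f\|_{L^p}$ into the resulting inequality and matching the time slabs of $\tilde Q^-_{r_0/4}$ and $Q_{r_0/4}$ (separated by the Weak Harnack waiting time) yields \eqref{eq:strong-Harnack}. The hard part is Step 1: the cross terms of the non-local bilinear form must be turned into a sign-definite tail control rather than discarded as error terms. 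The pointwise symmetry \eqref{eq:symmetry} is what allows this, and the global-in-$v$ hypothesis is what allows the cross terms to pick up tail information far away in velocity; without it, the counterexample of \cite{KW-failedH} obstructs the Strong Harnack, so any passage from the known $L^2$ tail to the needed $L^1$ tail must use the global hypothesis essentially.
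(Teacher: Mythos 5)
Your overall strategy — derive a linear $L^1$-to-$L^\infty$ bound via an $L^1$ tail control on level sets, run a De Giorgi iteration in $L^1$, then combine with the Weak Harnack inequality and absorb the $L^\infty$ factor via interpolation — coincides with the paper's. Steps 2 and 3 are in line with the paper's Sections~\ref{sec:strategy-A} and~\ref{sec:strong_H}. The gap is in Step~1, which you yourself flag as the hard part, and the mechanism you propose there does not produce what you claim.

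Testing \eqref{eq:1.1} against $(f-k)_+\eta^2$ cannot yield a sign-definite control of the tail. With that test function the far-field part of the bilinear form is
\begin{equation*}
	\int\!\!\int_{|w-v|\geq R}\big(f(v)-f(w)\big)\,(f-k)_+\eta^2(v)\,K(v,w)\,\dd w\,\dd v,
\end{equation*}
and the piece $-\int\!\!\int f(w)(f-k)_+\eta^2(v)K$ carries the \emph{wrong} sign: after moving $\mathcal L f$ to the left in the energy identity, this is an error term that must be bounded, not a controlled quantity. The cross terms (one argument above level $k$, one below) are indeed signed, but they contribute positively to the bilinear form; in the energy estimate they may only be \emph{dropped} (as in \eqref{eq:step1-iii}) — dropping a non-negative term from a lower bound does not bound the tail. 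The sign you need can only come from testing against a function that is \emph{decreasing and concave} in $f$. That is the content of Proposition~\ref{prop:tail-bound}: one tests against $f_{l,\varepsilon}^{-(1-\zeta)}\psi_l$ with $\zeta\in(0,1)$ and a piecewise cutoff $\psi_l = \eta^2\chi_{\{f>l\}} + \chi_{\{f<l\}}$. Concavity of $\xi\mapsto\xi^{-(1-\zeta)}$ flips the sign on the tail contribution \eqref{eq:E-pos-tail}, so the tail moves to the left of the inequality with a positive coefficient, while the choice of $\psi_l$ (identically $1$ on the lower level set, $\eta^2\leq 1$ above) makes the lower--lower block vanish via the divergence symmetry \eqref{eq:symmetry} and renders the cross terms non-positive. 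Only \emph{after} Proposition~\ref{prop:tail-bound} is established does the paper test against $(f-l)_+\phi^2$ to get the energy estimate \eqref{eq:energy-no-tail}, using the already-proven tail bound to handle $\mathcal E_{tail}$ and, later, the source terms $h_2^{\mathrm{tail}}$ in the gain of integrability. Without the concave test function, the chain of estimates in your Step~1 does not close.

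A secondary omission: the tail bound must be inserted not just once but in two places — once in the energy estimate (Step 1-ii of the paper's Section~\ref{sec:strategy-A}) and once in the source terms of the Kolmogorov gain of integrability (Proposition~\ref{prop:goi}, bound \eqref{eq:H2-tail}) — since the cutoff $\phi$ used to reduce to the whole-space constant-coefficient equation \eqref{eq:frac-kolm} also generates tail remainders.
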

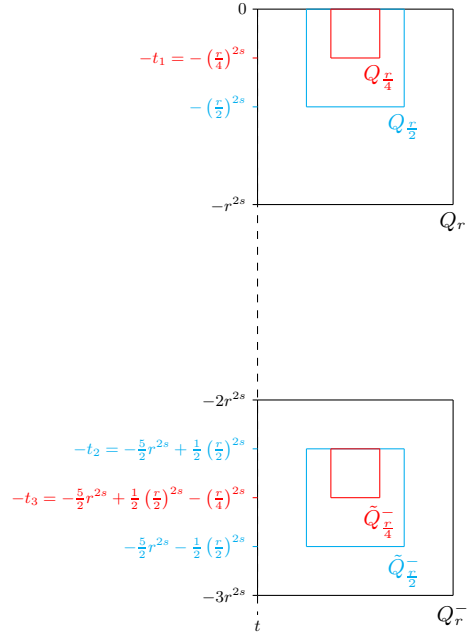
\begin{figure}
\begin{tikzpicture}[scale =1.3]
   \draw [black](-1,0) -- (1, 0);
   \draw [black](-1,-2) -- (1, -2);
   \draw [black](-1,0) -- (-1, -2);
   \draw [black](1,0) -- (1, -2)  node[anchor=north, scale=0.9] {$Q_{r}$};
    \draw[cyan] (-0.5,0) -- (0.5, 0);
   \draw [cyan](-0.5,-1) -- (0.5, -1);
   \draw [cyan](-0.5,0) -- (-0.5, -1);
   \draw [cyan](0.5,0) -- (0.5, -1)  node[anchor=north, scale=0.9] {$Q_{\frac{r}{2}}$};
    \draw[red] (-0.25,0) -- (0.25, 0);
   \draw [red](-0.25,-0.5) -- (0.25, -0.5);
   \draw [red](-0.25,0) -- (-0.25, -0.5);
   \draw [red](0.25,0) -- (0.25, -0.5)  node[anchor=north, scale=0.9] {$Q_{\frac{r}{4}}$};
    \draw[black] (-1,-4) -- (1, -4);
   \draw [black](-1,-6) -- (1, -6);
   \draw [black](-1,-4) -- (-1, -6);
   \draw [black](1,-4) -- (1, -6)  node[anchor=north, scale=0.9] {$Q^-_{r}$};
    \draw[cyan] (-0.5,-4.5) -- (0.5, -4.5);
   \draw [cyan](-0.5,-5.5) -- (0.5, -5.5);
   \draw [cyan](-0.5,-4.5) -- (-0.5, -5.5);
   \draw [cyan](0.5,-4.5) -- (0.5, -5.5)  node[anchor=north, scale=0.9] {$\tilde Q^-_{\frac{r}{2}}$};
    \draw[red] (-0.25,-4.5) -- (0.25, -4.5);
   \draw [red](-0.25,-5) -- (0.25, -5);
   \draw [red](-0.25,-4.5) -- (-0.25, -5);
   \draw [red](0.25,-4.5) -- (0.25, -5)  node[anchor=north, scale=0.9] {$\tilde Q^-_{\frac{r}{4}}$};
   \draw[dashed] (-1, 0) -- (-1, -6.2) node[anchor = north, scale = 0.7] {$t$};
   \draw (-1, 0) -- (-1.05, 0) node[anchor= east, scale=0.7] {$0$};
   \draw[red] (-1, -0.5) -- (-1.05, -0.5) node[anchor= east, scale=0.7] {$-t_1 = -\left(\frac{r}{4}\right)^{2s}$};
   \draw[cyan] (-1, -1) -- (-1.05, -1) node[anchor= east, scale=0.7] {$-\left(\frac{r}{2}\right)^{2s}$};
   \draw (-1, -2) -- (-1.05, -2) node[anchor= east, scale=0.7] {$-r^{2s}$};
   \draw (-1, -4) -- (-1.05, -4) node[anchor= east, scale=0.7] {$-2r^{2s}$};
   \draw[cyan] (-1, -4.5) -- (-1.05, -4.5) node[anchor= east, scale=0.7] {$-t_2 = -\frac{5}{2}r^{2s} + \frac{1}{2}\left(\frac{r}{2}\right)^{2s}$};
   \draw[red] (-1, -5) -- (-1.05, -5) node[anchor= east, scale=0.7] {$-t_3 = -\frac{5}{2}r^{2s} + \frac{1}{2}\left(\frac{r}{2}\right)^{2s} - \left(\frac{r}{4}\right)^{2s}$};
   \draw[cyan] (-1, -5.5) -- (-1.05, -5.5) node[anchor= east, scale=0.7] {$-\frac{5}{2}r^{2s} - \frac{1}{2}\left(\frac{r}{2}\right)^{2s}$};
   \draw (-1, -6) -- (-1.05, -6) node[anchor= east, scale=0.7] {$-3r^{2s}$};
 \end{tikzpicture}
\caption{The kinetic cylinder $Q_r$ and its past cylinder $Q_r^-$ for some $r > 0$.  The dashed vertical line represents the timeline. The blue cylinder $Q_{\frac{r}{2}}$ and its corresponding past $\tilde Q^-_{\frac{r}{2}}$ represent the domains appearing in the Weak Harnack inequality, Theorem \ref{thm:weakH}. The red cylinder $Q_{\frac{r}{4}}$ and its corresponding past $\tilde Q^-_{\frac{r}{4}}$ are the domains appearing in the Strong Harnack inequality, Theorem \ref{thm:strong-Harnack}.}\label{fig:harnacks}
\end{figure}
Figure \ref{fig:harnacks} illustrates a non-local to local bridge: we recover the same domains for the Harnack inequalities as in the local case \cite[Theorem 5]{JGCM}.

As a consequence of Theorem \ref{thm:strong-Harnack}, we derive polynomial upper and, in case that $s \in (\frac12, 1)$,  exponential lower bounds on the fundamental solution of \eqref{eq:1.1} with a non-negative kernel $K$ satisfying \eqref{eq:coercivity}-\eqref{eq:symmetry-nondiv}, \textit{provided that the fundamental solution exists}. To give sense to the next three theorems, we thus assume existence of a non-negative measurable function $J$, which is the fundamental solution of \eqref{eq:1.1} connecting a given point $(t, x, v) \in \R^{1+2d}$ with $(\tau, y, w) \in \R^{1+2d}$, in the sequel denoted by
\beqs
	J(t, x, v; \tau, y, w) = J(t- \tau, x - y - (t-\tau)w, v-w) =: J\big((\tau, y, w)^{-1} \circ (t, x, v)\big),
\eeqs
where $\circ$ denotes the Galilean translation, that is $(t_0, x_0, v_0) \circ (t, x, v) = (t_0 + t, x_0+x + tv_0, v_0 + v)$, which respects the translation invariance of \eqref{eq:1.1}.
Moreover, we assume that the fundamental solution $J$ has the following properties:
\begin{enumerate}[i.]

\item For every $t \in \R_+$ there holds the normalisation
\beq\label{eq:normalisation-J}
	\int_{\R^{2d}} J(t, x, v)  \dd x \dd v = 1.
\eeq

\item There holds $J \geq 0$, and for all $(t, x, v), (\tau, y, w) \in \R_+ \times \R^{2d}$ a form of symmetry
\beq\label{eq:symmetry-J}
J\left((\tau, y, w)^{-1} \circ (t, x, v)\right) = J\left((\tau, x, v)^{-1} \circ (t, y, w)\right).
\eeq

\item For any $0 \leq \tau < \sigma < t < T$ and any $(x, v), (y, w) \in \R^{2d}$ the Chapman-Kolmogorov identity holds
\bal\label{eq:representation-J}
	J(t, x, v; \tau, y, w) &= J\left((\tau, y, w)^{-1} \circ (t, x, v)\right)\\
	&= \int_{\R^{2d}} J\left((\sigma, \varphi, \xi)^{-1} \circ (t, x, v)\right)J\left((\tau, y, w)^{-1} \circ(\sigma, \varphi, \xi)\right) \dd \varphi \dd \xi \\
	&= \int_{\R^{2d}} J(t, x, v; \sigma, \varphi, \xi)J(\sigma, \varphi, \xi; \tau, y, w) \dd \varphi \dd \xi.
\eal
\end{enumerate}
Then, we deduce, on the one hand, polynomial upper bounds.
\begin{theorem}[Polynomial upper bounds on the fundamental solution]\label{thm:upperbounds}
Let $x, v, y_0, w_0 \in \R^d$, and $0 \leq \tau_0 < \sigma < T$. Let $J$ be the fundamental solution of \eqref{eq:1.1} in $[0, T] \times \R^{2d}$ with a vanishing source term $h = 0$, with a pointwise polynomial velocity decay, and with a non-negative kernel $K$ satisfying the coercivity assumption \eqref{eq:coercivity}, the integral upper bound \eqref{eq:upperbound}, the divergence form symmetry \eqref{eq:symmetry} and, in case that $s \geq \frac{1}{2}$, the non-divergence form symmetry \eqref{eq:symmetry-nondiv}. Assume $J$ satisfies \eqref{eq:normalisation-J}, \eqref{eq:symmetry-J} and \eqref{eq:representation-J}. Then there exists $C > 0$ depending on $s, d, \lambda_0, \Lambda_0$ such that
\bal\label{eq:J-up}
	J&(\sigma,x, v; \tau_0, y_0, w_0) \\
	&\leq C (\sigma - \tau_0)^{-\frac{2d(1+s)}{2s}}\\
	&\qquad \times \left[ 1 +\frac{\max\left\{\abs{v- w_0}^{2s},  \abs{x - y_0 - (\sigma - \tau_0) (v-w_0)}^{\frac{2s}{1+2s}}\right\}}{\sigma - \tau_0}\right]^{-\frac{s}{4s}}.
\eal
\end{theorem}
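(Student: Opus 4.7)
The plan is to adapt Aronson's classical strategy for heat-kernel upper bounds to the non-local hypoelliptic setting, with the Strong Harnack inequality from Theorem \ref{thm:strong-Harnack} replacing the parabolic Harnack inequality, and the Chapman-Kolmogorov identity \eqref{eq:representation-J} playing the role of the semigroup property.

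\emph{Step 1: On-diagonal bound.} Using the Galilean translation invariance of \eqref{eq:1.1}, I reduce to $(\tau_0, y_0, w_0) = (0, 0, 0)$ and write $\Delta := \sigma > 0$. The function $J(\cdot, \cdot, \cdot; 0, 0, 0)$ is a non-negative solution of \eqref{eq:1.1}-\eqref{eq:1.2} with $h = 0$ away from the singularity, so after rescaling on the kinetic scale $r_0 \sim \Delta^{1/(2s)}$, Theorem \ref{thm:strong-Harnack} applied at an arbitrary target point $(\sigma, x, v)$ bounds $J(\sigma, x, v; 0, 0, 0)$ by $C \inf_{Q} J$ over a forward cylinder $Q$ whose $(x, v)$-slice at each fixed time has Lebesgue volume of order $r_0^{(1+2s)d}\cdot r_0^d = r_0^{2d(1+s)}$. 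Integrating this pointwise lower bound against the mass normalization \eqref{eq:normalisation-J} yields the on-diagonal bound
\[
J(\sigma, x, v; 0, 0, 0) \leq C \Delta^{-\frac{2d(1+s)}{2s}}.
\]

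\emph{Step 2: Off-diagonal decay.} Denote the kinetic distance by $R^{2s} := \max\{|v|^{2s}, |x - \sigma v|^{\frac{2s}{1+2s}}\}$ and restrict to the regime $R^{2s} \geq \Delta$. The plan is to iterate the Chapman-Kolmogorov identity \eqref{eq:representation-J} at the midpoint $\sigma' = \sigma/2$. I split the integration in $(\varphi, \xi)$ into the region where the Galilean displacement from $(0, 0, 0)$ to $(\sigma/2, \varphi, \xi)$ carries at least $R/2$ of the kinetic distance and its complement (where the remaining displacement from $(\sigma/2, \varphi, \xi)$ to $(\sigma, x, v)$ must carry at least $R/2$). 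On each piece, one factor is close to diagonal and is controlled by Step 1, while the other is genuinely off-diagonal on a distance comparable to $R$ over a shorter time window. The symmetry \eqref{eq:symmetry-J} allows me to exchange the roles of the arguments when needed so that Step 1 is directly applicable, and the normalization \eqref{eq:normalisation-J} integrates out the free factor. Closed by a dyadic bootstrap, this produces an estimate of the form
\[
J(\sigma, x, v; 0, 0, 0) \leq C \Delta^{-\frac{2d(1+s)}{2s}}\bigl(1 + R^{2s}/\Delta\bigr)^{-\alpha},
\]
with $\alpha = 1/4$ emerging as the optimal exponent in the dyadic trade-off between the on-diagonal cost and the off-diagonal gain at each level.

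\emph{Main obstacle.} The principal difficulty is the anisotropic Galilean geometry. Under the composition $(\sigma', \varphi, \xi)^{-1} \circ (\sigma, x, v) = (\sigma-\sigma', x-\varphi-(\sigma-\sigma')\xi, v-\xi)$, the correct midpoint of the kinetic trajectory from $(0, 0, 0)$ to $(\sigma, x, v)$ is not the Euclidean midpoint but one that is shifted along the free transport in \eqref{eq:1.1}, and the kinetic quantity $R$ decomposes non-additively between the two halves. Choosing the splitting threshold in Step 2 so that at least one of the two Chapman-Kolmogorov factors is genuinely off-diagonal at every dyadic level, and verifying the appropriate quasi-triangle inequality for $R$ along this decomposition, is the algebraic heart of the argument.
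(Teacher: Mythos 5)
Your Step~1 is essentially correct: applying the Strong Harnack inequality at kinetic scale $r_0 \sim (\sigma-\tau_0)^{1/(2s)}$ and integrating the resulting pointwise lower bound against the normalization $\int J = 1$ does produce the on-diagonal bound $J \lesssim (\sigma-\tau_0)^{-2d(1+s)/(2s)}$. The paper reaches the same bound more directly through Proposition~\ref{prop:L2-Linfty-A}, but your route through Theorem~\ref{thm:strong-Harnack} is a legitimate variant.

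Your Step~2, however, has a genuine gap. The Chapman--Kolmogorov splitting at the midpoint cannot, by itself, produce any off-diagonal decay. Concretely: if you only know the uniform on-diagonal bound $J(\Delta,\cdot) \le C\Delta^{-N}$ and the normalization $\int J(\Delta,\cdot) = 1$, then on the piece where $(\varphi,\xi)$ is ``far from $(0,0,0)$'' your proposal bounds the far factor by its supremum and integrates the near factor against the normalization, yielding only $C\Delta^{-N} \cdot 1$, which is exactly the on-diagonal bound again. The dyadic iteration reproduces whatever exponent $\alpha$ you feed in; it does not bootstrap from $\alpha = 0$ to $\alpha > 0$. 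The missing ingredient --- which the paper supplies in Proposition~\ref{prop:aronson} and Lemma~\ref{lem:existence-H} --- is a \emph{weighted $L^2$ energy estimate}: one constructs an explicit exponential-type weight $H$ solving the discrete Bernstein inequality $\mathcal T H + \int_{B_\rho(v)} [H^{1/2}(v)-H^{1/2}(w)]^2 K\,dw \le 0$, tests the equation against $fH\varphi_R^2$, and obtains
\begin{equation*}
\sup_{t} \int f^2(t) H(t) \le \int f^2(\tau_0) H(\tau_0) + C\rho^{-2s}\|H\|_{L^\infty}\|f\|_{L^2}^2 .
\end{equation*}
Combining the pointwise lower bound on $H$ near the target with its smallness on the (remote) support of the initial data gives an $L^2$ tail estimate (the analogue of \eqref{eq:offdiag-aux}), and \emph{only then} can the Chapman--Kolmogorov splitting and Cauchy--Schwarz close the argument. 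You open by naming Aronson's strategy, but what you actually describe is a pure semigroup-splitting argument, which is circular without this weighted energy input. Relatedly, the exponent $1/4$ does not ``emerge from a dyadic trade-off''; in the paper it arises from halving (via Cauchy--Schwarz) the exponent $1/2$ coming out of Theorem~\ref{thm:aronson-bound}, which in turn traces back to the explicit lower bound $H \ge (\rho^{2s}/(k\delta))^{-1}$ in \eqref{eq:H-lowerbound}. Finally, the ``main obstacle'' you identify --- the Galilean non-additivity of the kinetic distance --- is real but secondary; the primary difficulty is the existence and verification of the weight $H$ compatible with both the transport $v\cdot\nabla_x$ and the non-local operator, which occupies Lemma~\ref{lem:existence-H}.
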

On the other hand, we derive an exponential lower bound.
\begin{theorem}[Exponential lower bounds on the fundamental solution]\label{thm:lowerbounds}
Let \(s\in(\frac12,1)\).
Let $x, v, y_0, w_0 \in \R^d$ and $0 < \tau_0 < \sigma < T$.
Let $J$ be the fundamental solution of \eqref{eq:1.1} in $[0, T] \times \R^{2d}$ with a vanishing source term $h = 0$, with a pointwise polynomial velocity decay and with a kernel satisfying the coercivity assumption \eqref{eq:coercivity}, the integral upper bound \eqref{eq:upperbound}, the divergence form symmetry \eqref{eq:symmetry} and the non-divergence form symmetry \eqref{eq:symmetry-nondiv}. Then there exists a constant $C$ depending on $s, d, \lambda_0, \Lambda_0$ such that
\begin{align*}
J(\sigma,x,v;\tau_0,y_0,w_0)
\geq{}&
c(\sigma-\tau_0)^{-\frac{d(1+s)}s}\\
&\times
\exp\left[
-C\left(
\frac{|v-w_0|}{(\sigma-\tau_0)^{1/(2s)}}
+
\frac{|x-y_0-(\sigma-\tau_0)w_0|}
     {(\sigma-\tau_0)^{1+1/(2s)}}
\right)^{\frac{2s}{2s-1}}
\right].
\end{align*}
\end{theorem}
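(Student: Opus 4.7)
By the Galilean covariance encoded in \eqref{eq:representation-J}, after a change of variables I may assume $(\tau_0, y_0, w_0) = (0, 0, 0)$, and writing $Q := 2d(1+s)$ the target estimate becomes
\[
J(\sigma, x, v; 0, 0, 0) \ge c\, \sigma^{-Q/(2s)}\, \exp\!\left(-C\left(\frac{|x|^{2s}}{\sigma^{1+2s}} + \frac{|v|^{2s}}{\sigma}\right)\right).
\]
I adapt Aronson's classical chaining argument to the kinetic non-local setting. The proof decomposes into a near-diagonal pointwise lower bound, its propagation to a full kinetic cylinder via Theorem \ref{thm:strong-Harnack}, and an off-diagonal extension by iterating the Chapman-Kolmogorov identity \eqref{eq:representation-J}.

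\textbf{Near-diagonal lower bound.} First, I combine the normalisation \eqref{eq:normalisation-J} with the polynomial upper bound of Theorem \ref{thm:upperbounds} to show that a positive fraction of the mass of $J(\sigma, \cdot, \cdot; 0, 0, 0)$ is localised inside a kinetic ball of natural radius $\sim \sigma^{(1+2s)/(2s)}$ in position and $\sim \sigma^{1/(2s)}$ in velocity, centred at the origin. Since this ball has volume $\sim \sigma^{Q/(2s)}$, a mean-value / pigeonhole argument yields a point $(x^*, v^*)$ in this ball with $J(\sigma, x^*, v^*; 0, 0, 0) \ge c\, \sigma^{-Q/(2s)}$. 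Next, viewing $J(\cdot, \cdot, \cdot; 0, 0, 0)$ as a non-negative solution of \eqref{eq:1.1}-\eqref{eq:1.2} in its forward variables, which by hypothesis holds globally in velocity, I apply the Strong Harnack inequality Theorem \ref{thm:strong-Harnack} after Galilean translation and kinetic rescaling to propagate the pointwise lower bound to a uniform lower bound on a forward kinetic cylinder of natural size.

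\textbf{Chaining via Chapman-Kolmogorov.} For the general off-diagonal estimate, I connect $(0, 0, 0)$ to the target $(\sigma, x, v)$ by a chain of $N$ intermediate points $(\sigma_j, x_j, v_j)_{j=0}^N$ with $\sigma_0 = 0$, $\sigma_N = \sigma$, $(x_0, v_0) = (0, 0)$, and $(x_N, v_N) = (x, v)$. Iterating \eqref{eq:representation-J},
\[
J(\sigma, x, v; 0, 0, 0) \ge \int_{A_1 \times \cdots \times A_{N-1}} \prod_{j=0}^{N-1} J\!\left((\sigma_j, \varphi_j, \xi_j)^{-1} \circ (\sigma_{j+1}, \varphi_{j+1}, \xi_{j+1})\right) d\varphi_1 d\xi_1 \cdots d\varphi_{N-1} d\xi_{N-1},
\]
where each $A_j$ is a small kinetic neighbourhood of $(x_j, v_j)$. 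By choosing the sub-interval lengths $\sigma_{j+1} - \sigma_j$ and the jumps $(x_{j+1} - x_j - (\sigma_{j+1} - \sigma_j) v_j,\, v_{j+1} - v_j)$ so that each increment fits inside a near-diagonal cylinder at the corresponding scale, each factor in the product is bounded below by $c\, (\sigma_{j+1} - \sigma_j)^{-Q/(2s)}$, and each intermediate integration supplies a compensating volume factor. The product yields a bound of the form $c^N\, \sigma^{-Q/(2s)}$, and the minimal number of steps required is $N \sim 1 + |v|^{2s}/\sigma + |x|^{2s}/\sigma^{1+2s}$, producing the claimed exponential factor with $C = \log(1/c)$.

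\textbf{Main obstacle.} The delicate step is the construction of the chain in accordance with the anisotropic, non-commutative kinetic geometry: position scales as (velocity scale) times time, and the Galilean translation couples $x$ to $v$, so one cannot freely jump in $x$ without paying in velocity or time. Choosing the partition $\{\sigma_j\}$ and intermediate velocities $v_j$ to minimise $N$ under the constraint that each increment lies inside a near-diagonal cylinder amounts to computing a discrete geodesic in the Carnot-Carathéodory-type metric naturally associated with the kinetic structure; this is where the exponent $|x|^{2s}/\sigma^{1+2s} + |v|^{2s}/\sigma$ appears. Showing that the Strong Harnack constant does not deteriorate across scales relies on the scale invariance of \eqref{eq:1.1} and on the symmetry \eqref{eq:symmetry-J} of the divergence-form fundamental solution, which in particular lets the chain be traversed in either direction so that the endpoints are handled on equal footing.
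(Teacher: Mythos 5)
Your overall architecture (near-diagonal lower bound plus chaining to produce the exponential factor) is the classical template, but both components diverge from the paper, and the near-diagonal step has a genuine gap.

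On chaining: the paper never iterates the Chapman--Kolmogorov identity. Instead, Theorem \ref{thm:pw-harnack} chains the Strong Harnack inequality (Theorem \ref{thm:strong-Harnack}) along a sequence of kinetic cylinders connecting $(\tau_0,y_0,w_0)$ to $(t,x,v)$, yielding $f(\tau_0,y_0,w_0)\leq C^N f(t,x,v)$, with $N$ chosen minimally subject to the anisotropic geometry; this pointwise Harnack inequality is what produces the exponential. Chaining the Harnack inequality directly means one only needs a single positive quantity at one end of the chain, rather than a uniform near-diagonal lower bound on the kernel at every intermediate scale, which is precisely what your Chapman--Kolmogorov iteration requires.

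The gap is in your near-diagonal lower bound. You deduce mass concentration of $J(\sigma,\cdot,\cdot;0,0,0)$ in a kinetic ball of natural radius from the normalisation \eqref{eq:normalisation-J} and the polynomial upper bound of Theorem \ref{thm:upperbounds}. But the decay exponent in Theorem \ref{thm:upperbounds} is $\frac{s}{4s}=\frac14$, which gives no tail control: writing $\rho$ for the kinetic radius, the tail integral behaves like $\int_R^\infty (\rho^{2s}/\sigma)^{-1/4}\rho^{2d(1+s)-1}\,d\rho$, which diverges whenever $2d(1+s)>s/2$, i.e.\ always for $d\geq 1$. Hence a positive fraction of the mass need not sit in any fixed kinetic ball, and the pigeonhole point $(x^*,v^*)$ with $J\gtrsim\sigma^{-Q/(2s)}$ is not obtained. (Theorem \ref{thm:upperbounds-conditional} would suffice, but it is conditional; Theorem \ref{thm:lowerbounds} is unconditional.)

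The paper avoids any near-diagonal lower bound or tail control. In the proof of Theorem \ref{thm:lowerbounds}, one sets $f(\tau_1,y_1,w_1)=M\int_{B}J(\sigma,x,v;\tau_1,y_1,w_1)\,dx\,dv$, uses the divergence-form symmetry \eqref{eq:symmetry} to recognize $f$ as a solution of the adjoint equation with terminal data $M$ on the ball $B$, extends it by the constant $M$ to the future to obtain a non-negative super-solution $\tilde f$, and applies the Weak Harnack inequality (Theorem \ref{thm:weakH}) to conclude that $f(\tau_0,y_0,w_0)=\int_B J(\sigma,\cdot,\cdot;\tau_0,y_0,w_0)$ is bounded below. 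This furnishes the positive mass $\mathcal M$ required in Theorem \ref{thm:suff-cond}, after which Theorem \ref{thm:pw-harnack} delivers the exponential lower bound. To repair your argument while keeping the Chapman--Kolmogorov chaining you would need to replace the mass-concentration step by an argument of this adjoint/Weak-Harnack type.
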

\begin{remark}\label{rmk:cc-fund-sol}
\begin{enumerate}[i.]
\item 
Note that in general, the fundamental solution has no explicit expression, not even in case of constant coefficients. The constant coefficient equation for \eqref{eq:1.1}, known as the \textit{fractional Kolmogorov equation}, is given by
\begin{equation}\label{eq:frac-kolm}
	\partial_t f + v \cdot \nabla_x f = (-\Delta_v)^s f + h,
\end{equation}
with source term $h \in L^2(\R_+ \times \R^{2d})$, and where $(-\Delta_v)^s$ denotes the fractional Laplacian, defined as
\begin{equation}\label{eq:frac-lapl}
	(-\Delta_v)^s f(v) = C(d, s)\int_{\R^d} \frac{f(v) - f(w)}{\abs{v-w}^{d+2s}} \dd w. 
\end{equation}
Equation \eqref{eq:frac-kolm} admits a fundamental solution, given through an implicit representation,
\beq\label{eq:J0}
	J_0(t, x, v) = C(d, s)t^{-\frac{2d(1+s)}{2s}} \mathcal J_0\left(\frac{x}{t^{1+\frac{1}{2s}}}, \frac{v}{t^{\frac{1}{2s}}}\right),
\eeq
where $\mathcal J_0$ reads explicitly in Fourier variables
\beqs
	\hat {\mathcal J}_0(\varphi, \xi) = \exp\left(- \int_0^1 \abs{\xi - \tau \phi}^{2s} \dd \tau\right). 
\eeqs
In particular, the fundamental solution $J_0$ satisfies \eqref{eq:normalisation-J}, \eqref{eq:symmetry-J}, \eqref{eq:representation-J}, and moreover, $0 \leq \mathcal J_0 \in C^\infty$ decays polynomially at infinity. 
\item In case of rough coefficients, the existence of a fundamental solution operator has only recently been established by Auscher-Imbert-Niebel \cite{auscher-imbert-niebel-1} for local and non-local hypoelliptic equations. In particular, their results are applicable to \eqref{eq:1.1} with kernels satisfying \eqref{eq:coercivity}-\eqref{eq:symmetry}.
\item In Theorem \ref{thm:upperbounds} and Theorem \ref{thm:lowerbounds}, the time decay is optimal, as can be seen by comparing the exponents on time to \eqref{eq:J0}. This is a consequence of the optimality of the linear $L^1$-$L^\infty$ bound of Proposition \ref{prop:L2-Linfty-A} below.
However, neither the decay in space nor in velocity is optimal. 

Concerning the exponential lower bound of Theorem \ref{thm:lowerbounds}, it is not clear to us if we should expect polynomial lower bounds to hold without assuming a pointwise lower bound on the kernel of the non-local operator \eqref{eq:1.2}.

The polynomial upper bounds in Theorem \ref{thm:upperbounds} are sub-optimal in two ways. First, a minor improvement could possibly be achieved, if, in Aronson's method (see Section \ref{sec:aronson} below), when we define the decay function $H$ in \eqref{eq:H}, we replace $\max\{\abs{v-w_0}, \abs{x-y_0-tw_0}^{\frac{1}{1+2s}}\}$ by the sum $\abs{v-w_0} + \frac{\abs{x-y_0-tw_0}}{t}$. It is conceivable that this leads to an improved decay in space to $\frac{s}{2}$ instead of $\frac{s}{2(1+2s)}$. 
Second, the most significant sub-optimality arises due to the way we treat the non-singular contribution of the fundamental solution in Aronson's method (see Proposition \ref{prop:aronson}). To determine the decay of the non-singular part, we only use the integral upper bounds on the kernel \eqref{eq:upperbound}. In the parabolic literature, the non-singular part is usually treated separately from the singular terms, using a technique referred to as Meyer's decomposition \cite{meyer}. Conditional to such a decomposition, see \eqref{eq:meyers}, we obtain the optimal decay at least in velocity, see Theorem \ref{thm:upperbounds-conditional} below. 
Finally note that in the parabolic setting with kernels that are pointwisely bounded from above, the proof of Theorem \ref{thm:upperbounds} - which does not rely on Meyer's decomposition - yields a decay that is not so far from optimal, see Appendix \ref{app:improved-decay}.
\end{enumerate}
\end{remark}
For the statement of the conditional upper bounds, we denote by $J_\rho$ the fundamental solution to the cutoff problem
\beqs
	\partial_t f + v \cdot \nabla_x f = \mathcal L_\rho f,
\eeqs
where $\mathcal L_\rho f$ is defined for any $\rho > 0$ as
\beqs
	\mathcal L_\rho f(t, x, v) := PV \int_{B_\rho(v)} (f(t, x, w) - f(t, x, v)) K(t, x, v, w) \dd w. 
\eeqs
Then we show:
\begin{theorem}[Conditional upper bounds on the fundamental solution]\label{thm:upperbounds-conditional}
Let $x, v, y_0, w_0 \in \R^d$ and $0 < \tau_0 < \sigma < T$.
Let $J$ be the fundamental solution of \eqref{eq:1.1} in $[0, T] \times \R^{2d}$ with a vanishing source term $h = 0$, and with pointwise polynomial velocity decay. 
We assume the kernel is coercive \eqref{eq:coercivity}, satisfies integral upper bounds \eqref{eq:upperbound}, and is symmetric in divergence form \eqref{eq:symmetry}, and if $s\geq \frac{1}{2}$ symmetric in non-divergence form \eqref{eq:symmetry-nondiv}.
If, moreover, there exists a constant $c > 0$, such that for every $\rho > 0$, there holds
\beq\label{eq:meyers}
	J(\sigma, x, v; \tau_0, y_0, w_0) \leq J_\rho(\sigma, x, v; \tau_0, y_0, w_0) + c(\sigma - \tau_0) \rho^{-(2d(1+s)+2s)}, 
\eeq
then there exists $C > 0$ depending on $c, s, d, \Lambda_0, \lambda_0$ such that
\bal\label{eq:J-up-conditional}
	J&(\sigma,x, v; \tau_0, y_0, w_0) \\
	&\leq C (\sigma - \tau_0)^{-\frac{2d(1+s)}{2s}}\\
	&\qquad \times \left[ 1 +\frac{\max\left\{\abs{v- w_0}^{2s},  \abs{x - y_0 - (\sigma - \tau_0) (v-w_0)}^{\frac{2s}{1+2s}}\right\}}{\sigma - \tau_0}\right]^{-\frac{2d(1+s) + 2s}{2s}}.
\eal
\end{theorem}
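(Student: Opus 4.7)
My plan is to adapt the Aronson iteration of Proposition \ref{prop:aronson} to the cutoff fundamental solution $J_\rho$, and then transfer the bound back to $J$ via \eqref{eq:meyers}, optimising over $\rho$ at the end. Write $t := \sigma - \tau_0$ and
\[
M := \max\bigl\{\abs{v-w_0}^{2s},\ \abs{x-y_0-t(v-w_0)}^{\frac{2s}{1+2s}}\bigr\},
\]
so that the target bound \eqref{eq:J-up-conditional} reads $J \leq C\,t^{-a}(1 + M/t)^{-b}$ with $a := \frac{2d(1+s)}{2s}$ and $b := \frac{2d(1+s)+2s}{2s} = a+1$. Assumption \eqref{eq:meyers} gives, for every $\rho > 0$,
\[
J(\sigma, x, v; \tau_0, y_0, w_0) \leq J_\rho(\sigma, x, v; \tau_0, y_0, w_0) + c\,t\,\rho^{-(2d(1+s)+2s)},
\]
so it suffices to bound $J_\rho$ with the sharp decay rate $b$, and then choose $\rho$ to balance the two contributions.

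The truncated kernel $K_\rho := K\,\mathbbm{1}_{B_\rho(v)}$ still satisfies the coercivity \eqref{eq:coercivity} at scales $r \lesssim \rho$, the integral upper bound \eqref{eq:upperbound} (trivially, since $K_\rho \leq K$), and the divergence-form symmetry \eqref{eq:symmetry}. Hence the Strong Harnack inequality of Theorem \ref{thm:strong-Harnack} and the $L^1$-$L^\infty$ bound of Proposition \ref{prop:L2-Linfty-A} apply to solutions of $\partial_t f + v\cdot\nabla_x f = \mathcal{L}_\rho f$, with constants independent of $\rho$. I would rerun the iteration of Proposition \ref{prop:aronson} for $J_\rho$: the step in the proof of Theorem \ref{thm:upperbounds} where the non-singular long-range contribution is controlled only through \eqref{eq:upperbound} — the precise source of sub-optimality noted in Remark \ref{rmk:cc-fund-sol} — is now void, because $\mathcal{L}_\rho$ has no long-range part at all. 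This should produce
\[
J_\rho(\sigma, x, v; \tau_0, y_0, w_0) \leq C\,t^{-\frac{2d(1+s)}{2s}}\left[1 + \frac{M}{t}\right]^{-\frac{2d(1+s)+2s}{2s}}.
\]

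Balancing the two terms in the Meyer decomposition, $t^{-a}(1+M/t)^{-b} \asymp t\,\rho^{-2sb}$ forces $\rho^{2s} \asymp t + M$, which is admissible since $b - a = 1$ exactly. For this choice both summands become comparable to $t\,(t+M)^{-b} = t^{-a}(1 + M/t)^{-b}$, which is precisely the right-hand side of \eqref{eq:J-up-conditional}.

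The main technical obstacle is verifying that Aronson's iteration can be carried out for the cutoff equation with constants uniform in $\rho$. Concretely, one must check that the ellipticity constants of $K_\rho$ at the scales relevant to the iteration do not degenerate with $\rho$, and that the Galilean-kinetic rescaling used in the bootstrap remains compatible with the cutoff $B_\rho(v)$ in velocity. Once this is secured, the balancing step above is an elementary calculus exercise.
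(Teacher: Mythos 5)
Your overall strategy coincides with the paper's: run Aronson's method on the cutoff equation (where the non-singular contribution $\mathcal I_{loc}^{ns}$ vanishes) and transfer back to $J$ via \eqref{eq:meyers}. However, the displayed intermediate bound
\[
J_\rho(\sigma, x, v; \tau_0, y_0, w_0) \leq C\,t^{-\frac{2d(1+s)}{2s}}\left[1 + \tfrac{M}{t}\right]^{-\frac{2d(1+s)+2s}{2s}}
\]
cannot hold uniformly in $\rho$. If it did, combining it with \eqref{eq:meyers} and letting $\rho\to\infty$ would yield the unconditional sharp decay for $J$, which is precisely what cannot be proved without the Meyer hypothesis. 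It also fails at the other end: as $\rho\to 0$ the truncated operator degenerates to pure transport, whose fundamental solution is a delta along characteristics, incompatible with any $L^\infty$ bound. What Aronson's iteration actually gives — with the free scale in the weight $H$ of Lemma \ref{lem:existence-H} set equal to the cutoff scale $\rho$, which is forced, since that equality is exactly what kills $\mathcal I_{loc}^{ns}$ — is the $\rho$-dependent estimate
\[
J_\rho(\sigma, x, v; \tau_0, y_0, w_0) \leq C\, t^{-\frac{2d(1+s)}{2s}}\left(\tfrac{\rho^{2s}}{kt}\right)^{\frac{1}{4}-\frac{\gamma}{12\rho}},
\]
valid for $t\leq\rho^{2s}/(4k)$, with the complementary time range handled by the on-diagonal bound of Theorem \ref{thm:nash} applied to $J$ itself. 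The exponent equals $-\frac{2d(1+s)+2s}{2s}$ only at the specific $\rho$ from \eqref{eq:rho}, $\rho = \frac{\gamma}{12}\bigl(\frac{1}{4}+\frac{2d(1+s)+2s}{2s}\bigr)^{-1}$.

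Because of this, the "balancing" in your final paragraph — equating a $\rho$-free bound on $J_\rho$ against $ct\rho^{-(2d(1+s)+2s)}$ to force $\rho^{2s}\asymp t+M$ — is presented as a free optimization over $\rho$ when it is not: the cutoff scale is already pinned down by the requirement that the Aronson weight and the truncation of $\mathcal L_\rho$ match. One then simply verifies that, at that $\rho\sim\gamma\sim M^{1/(2s)}$, both the Aronson term and the Meyer error are comparable to $t\,\gamma^{-(2d(1+s)+2s)} \asymp t^{-\frac{2d(1+s)}{2s}}(1+M/t)^{-\frac{2d(1+s)+2s}{2s}}$ in the regime $M\gtrsim t$. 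You land on the correct answer, but the logic needs repair. On the $\rho$-uniformity concern you raise at the end: the core of Aronson's machinery, Proposition \ref{prop:aronson} and Lemma \ref{lem:existence-H}, uses only \eqref{eq:upperbound} and \eqref{eq:symmetry}, which $K_\rho\leq K$ inherits with the same constants; the coercivity \eqref{eq:coercivity} enters only through the $L^1$-$L^\infty$ bound at scale $r\lesssim\rho$, where the truncation modifies the coercivity only by lower-order terms of the type already absorbed in the iteration, so the constants do not degenerate — though this deserves to be checked explicitly rather than asserted.
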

\begin{remark}
As outlined in Remark \ref{rmk:cc-fund-sol}, the major sub-optimality in Theorem \ref{thm:upperbounds} came from the non-singular contribution of the fundamental solution.
The condition in \eqref{eq:meyers} precisely says that we can consider the non-singular part separately, so that we only need to use Aronson's method on the singular term, for which we get the optimal decay at least in velocity. 

In the parabolic literature, \eqref{eq:meyers} is established (with a corresponding decay in $\rho$ of order $d+2s$ instead of $2d(1+s) +2s)$ using parabolic maximum principles, see \cite{gh08, gh14}, \textit{provided that the kernels are pointwisely bounded from above}: there exists $\Lambda > 0$ such that
\beq\label{eq:upperbounds-pw}
	\forall v, w \in \R^d \qquad K(v, w) \leq \Lambda\abs{v-w}^{-(d+2s)}.
\eeq
This assumption does not seem to be easily relaxable to integral upper bounds \eqref{eq:upperbound}. Moreover, even if it is conceivable that the methods from the parabolic setting could be carried over, it is not clear that one would get the expected decay of $2d(1+s) + 2s$ on $\rho$.
\end{remark}

\subsection{Contribution}
On the one hand, Theorem \ref{thm:strong-Harnack} is the first Strong Harnack inequality for \textit{hypoelliptic} equations without additional tail error term. We have announced the result in \cite{AL-ls}, yet the derivation of the local tail bound \cite[Proposition 3.2]{AL-ls} that is sketched there contains an issue in Step 4, as it would require the supremum to be linear. The method proposed in this article circumvents this issue, by deriving a local tail bound on upper level sets, exploiting the sign of the cross terms, so that it can be directly incorporated into the local boundedness result, if we perform a De Giorgi iteration based on a gain of integrability in $L^1$. 
Provided that the equation is satisfied for all $v \in \R^d$, the Strong Harnack inequality improves the ``Not-so-Strong Harnack inequality" previously obtained in \cite[Theorem 1.3]{AL}, which stated a non-linear relation between the supremum in the past and the infimum in the present. This non-linear relation could be made linear at the prize of a non-local tail term, as was outlined in \cite[Remark 4.2]{AL}, and thus the key to the Strong Harnack inequality \eqref{eq:strong-Harnack} lies in capturing the behaviour of the non-local tail term, which we introduce for any $0 < r < R$ and any $v_0 \in \R^d$ as
\beq\label{eq:tail}
	\forall v \in B_r(v_0): \qquad \mathbf T(f; r, R, v_0) :=  \int_{\R^d \setminus B_R(v_0)} f(w) K(v, w) \dd w.
\eeq
The tail measures the non-singular part of the non-local diffusion operator $\mathcal L$ defined in \eqref{eq:1.2}, see also \cite{PKDC, PKDC-2}. In particular, we want to derive a bound on the tail in terms of local quantities only.
This was first achieved for parabolic equations in \cite[Theorem 1.9]{kassmann-weidner}.
In contrast to the parabolic case, we prove a local tail bound \textit{on upper level sets}, so that when we derive the energy estimate and the gain of integrability on level set functions, we can directly incorporate this local tail bound to obtain purely local estimates, which can then be iterated with a De Giorgi argument. This yields a \textit{linear} relation between the $L^\infty$ and the $L^1$ norm of a \textit{solution} (rather than merely a sub-solution) to \eqref{eq:1.1} as stated in the following  proposition, which in turn implies the Strong Harnack inequality, but which is of independent interest. We emphasise that for the tail bound to hold on level set functions, we require \eqref{eq:1.1} to hold for all $v\in\R^d$.
\begin{proposition}[$L^1$-$L^\infty$ bound]\label{prop:L2-Linfty-A}
Let $R > 0$ and $z_0 = (t_0, x_0, v_0) \in \R^{1+2d}$. Let $\mathcal I :=[-R^{2s} + t_0, t_0]$ and let $\Omega_x := B_{R^{1+2s}}(x_0 + (t-t_0) v_0)$.
Let $f \in L^2(\mathcal I \times \Omega_x; H^s(\R^d))$ be a non-negative solution of \eqref{eq:1.1}-\eqref{eq:1.2} in $\mathcal I \times \Omega_x \times \R^d$ in the sense of Definition \ref{def:solutions}, 
with a kernel satisfying the coercivity condition \eqref{eq:coercivity}, the upper bound \eqref{eq:upperbound} the symmetries \eqref{eq:symmetry}, and (in case that $s \geq \frac12$) \eqref{eq:symmetry-nondiv}, with an essentially bounded source term $h \geq 0$. Then for any $\delta \in (0, 1)$ there exists a large constant $C > 1$ depending on $\delta, s, d, \lambda_0, \Lambda_0$ such that 
\beqs
	 \textrm{for a.e. $z_1 \in Q_{\frac{R}{8}}(z_0)$:} \quad  f(z_1) \leq  C R^{-(2d(1+s) + 2s)}\int_{Q_R(z_0)} f(z) \dd z + \delta R^{2s} \norm{h}_{L^\infty(Q_{R}(z_0))}. 
\eeqs
\end{proposition}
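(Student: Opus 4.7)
The plan is to run a De Giorgi iteration in $L^1$ rather than $L^2$, on the truncations $f_k := (f-k)_+$ at an increasing sequence of levels $k_n$, while absorbing the non-local tail contribution into purely local quantities using the assumption that \eqref{eq:1.1} holds for \emph{all} $v \in \R^d$. After translation and scaling it suffices to work with $z_0 = 0$ and $R=1$, so I would reduce to proving $\sup_{Q_{1/8}} f \lesssim \|f\|_{L^1(Q_1)} + \delta \|h\|_\infty$.

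The first step is the energy estimate. Testing the equation against $\chi^2 f_k$ for a suitable kinetic cutoff $\chi$ supported in a slightly larger cylinder than the one where I want the bound, I obtain the standard inequality controlling the $H^s_v$-seminorm of $\chi f_k$ on the truncated cylinder by $L^2$-type energy of $f_k$ plus cross terms and the source. In the non-local kinetic setting this produces an extra tail contribution of the form $\int \chi^2 f_k(v)\, \mathbf{T}(f_k;\cdot)\dd v$, which in the usual approach must be estimated by a non-local tail of $f$ over $\R^d \setminus B_1$ and therefore obstructs a purely local $L^1$-$L^\infty$ bound.

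The crucial new step is the \emph{local tail bound on upper level sets}, where I exploit that \eqref{eq:1.1} holds for all $v \in \R^d$ and not merely on $B_1$. Testing the equation for $f$ (not $f_k$) on the whole velocity space against a suitable test function supported where $f > k$, one gets a global integral identity. The pointwise divergence form symmetry \eqref{eq:symmetry} gives the standard symmetrisation, and the coercivity \eqref{eq:coercivity} produces a positive contribution, while the cross terms between $\{f > k\}\cap B_1$ and $\{f > k\}\cap (\R^d\setminus B_1)$ have a definite (favourable) sign that absorbs the non-local tail of $f_k$. This yields a bound of the form
\begin{equation*}
\int_{-1}^0 \int_{B_{1+2s}} \int_{B_1} \chi^2 f_k(v) \int_{\R^d \setminus B_1} f_k(w) K(v,w) \dd w \dd v \dd x \dd t \leq C\|f_k\|_{L^1(Q_1)} + C\|h\|_\infty |\{f_k>0\}|,
\end{equation*}
i.e.\ the tail term on the level set function is controlled by local $L^1$ quantities. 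This is the step I expect to be the main obstacle: one must carefully track which cross contributions have the right sign and verify that no tail of $f$ itself (only of $f_k$) remains on the right-hand side, using linearity of the equation and the fact that $f \geq 0$ globally.

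Combining the energy estimate with the local tail bound gives a purely local Caccioppoli-type estimate for $f_k$. To run a De Giorgi iteration in $L^1$ I need a gain of integrability: the $H^s$ regularity in velocity yields a Sobolev embedding into $L^{p}_v$ with $p>1$, and coupled with the transport-averaging from \cite{IS, AL} one gets a reverse Hölder inequality relating the $L^p$ norm on a smaller cylinder to the $L^1$ norm on a larger one plus the source. Choosing levels $k_n \to k_\infty$ geometrically, a standard iteration of the resulting super-linear recurrence on $A_n := \|(f-k_n)_+\|_{L^1(Q_{r_n})}$ closes provided $k_\infty$ dominates a constant multiple of $\|f\|_{L^1(Q_1)}$ plus a small multiple of $\|h\|_\infty$; the smallness of the source constant $\delta$ comes from choosing enough iterations, exactly as in \cite[Lemma 4.1]{AL}, but now with linearity because the tail has been locally absorbed.
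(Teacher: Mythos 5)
Your high-level plan coincides with the paper's: derive a local bound on the non-local tail on upper level sets exploiting globality in $v$, feed this into an energy estimate, obtain a gain of integrability in $L^1$, and iterate. However, there are two concrete gaps in the way you set up the tail bound, and a third issue with the gain of integrability.

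First, the form of the tail bound you claim is too strong. You write that the tail term (weighted by $\chi^2 f_k(v)$) is bounded by $C\|f_k\|_{L^1(Q_1)} + C\|h\|_\infty |\{f_k>0\}|$, a purely linear local estimate. What the paper actually establishes (Proposition~\ref{prop:tail-bound}) is
\[
\int_{Q_{\frac{R}{2}}}\int_{\R^d\setminus B_R}(f-l)_+(w)\,\chi_{f>l}(v)\,K(v,w)\dd w\dd v
\;\leq\; C R^{-2s}\Bigl(\sup_{Q_R}(f-l)_+^{1-\zeta}\Bigr)\int_{Q_R}(f-l)_+^\zeta \dd z,
\]
with a non-removable factor $\sup_{Q_R}(f-l)_+^{1-\zeta}$. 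This supremum is what forces the De Giorgi iteration to be closed carefully: the paper must choose the final level $L$ to dominate $\delta\sup_{Q_R}f$ (plus a source term) precisely in order to absorb the supremum factors in the recurrence. A purely $L^1$ tail bound as you claim would trivialise that choice, but I see no way to obtain it.

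Second, the mechanism you propose for the tail bound is the wrong one. You say to ``test the equation for $f$ against a suitable test function supported where $f>k$.'' In the paper, the test function $\psi_l\,f_{l,\varepsilon}^{-(1-\zeta)}$ is precisely \emph{not} supported on $\{f>l\}$: it equals the constant $\varepsilon^{-(1-\zeta)}$ on the lower level set $\{f<l\}$ and $\eta^2\,(f-l+\varepsilon)^{-(1-\zeta)}$ on the upper level set. The reason the cross terms $\chi_{f(v)>l>f(w)}$ come out with a favourable sign is exactly that the test function is \emph{larger} on the lower level set than on the upper one (because $\eta\leq 1$ and $(f-l+\varepsilon)^{-(1-\zeta)}\leq \varepsilon^{-(1-\zeta)}$), and on the upper level set the \emph{concavity} of $\xi\mapsto\xi^{-(1-\zeta)}$ gives the signed term. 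You do not invoke concavity at all, and if you literally set the test function to zero on $\{f\leq k\}$, the cross term changes sign and you cannot absorb the tail.

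Third, your gain of integrability via ``Sobolev embedding plus transport-averaging'' is at best an $L^2\to L^p$ mechanism, whereas what is actually needed (and what the paper proves in Proposition~\ref{prop:goi}) is a genuinely $L^1\to L^p$ gain, obtained by a Duhamel representation against the fundamental solution $J_0$ of the constant-coefficient fractional Kolmogorov equation and Young's inequality for the twisted convolution. The paper then estimates the sources $h_1,h_2$ in $L^1$ using Cauchy--Schwarz against the level-set measure and the energy estimate (itself fed by the tail bound). This step does not follow from the velocity-averaging lemmas of \cite{IS,AL}, which operate at the $L^2$ level, and is a separate technical ingredient you would need to supply.
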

To the best of our knowledge, Proposition \ref{prop:L2-Linfty-A} is the first occurrence of the First De Giorgi Lemma for hypoelliptic integro-differential equations, where the supremum is bounded linearly by the $L^1$ norm without any tail terms. Even in the non-local parabolic case, where the Strong Harnack inequality in \cite[Theorem 1.1]{kassmann-weidner} naturally implies this linear $L^1$-$L^\infty$ bound, such a statement has not been derived directly. In particular, a tail bound on upper level sets has not been deduced before, not even for parabolic equations.
The strength of this statement lies in the fact that it describes the local behaviour of a solution to a non-local equation in terms of local quantities only. The weakness, however, is that this result applies only to functions solving \eqref{eq:1.1} for any $v \in \R^d$. Even though this limits the interest of the statement in itself, it does not affect the consequences on the fundamental solution derived in Theorem \ref{thm:upperbounds} and Theorem \ref{thm:lowerbounds}. Moreover, 
if we consider global solutions of the non-cutoff Boltzmann equation, then these satisfy pointwise decay estimates \cite{IMSfrench}, which seem to be sufficient to deduce a boundedness condition on the antisymmetric part of the kernel, which in turn is sufficient to derive the results of this paper for functions solving the Boltzmann equation for any $v \in \R^d$. In short, concerning applications to a physically relevant kinetic model, the global in $v$ assumption is justifiable.

Furthermore, or rather as a consequence of the local nature of Proposition \ref{prop:L2-Linfty-A}, Theorem \ref{thm:upperbounds} and Theorem \ref{thm:lowerbounds} are the first occurrence of bounds on the fundamental solution of a kinetic integro-differential equation. Previous results have discussed two-sided estimates either for kinetic equations with a local diffusion operator with rough, but uniformly elliptic coefficients \cite{lanconelli-pascucci, lanconelli-pascucci-polidoro, auscher-imbert-niebel-2}, or for non-local parabolic equations, with kernels that satisfy pointwise upper bounds, divergence form symmetry \eqref{eq:symmetry} and coercivity as in \eqref{eq:coercivity}, see \cite{KW-aronson}. In this paper, we use Aronson's method, originally conceived for local parabolic equations \cite{aronson-1, aronson-2}, and establish a strategy based thereon for kinetic integro-differential equations. This yields both the exponential lower bounds from Theorem \ref{thm:lowerbounds}, and the polynomial upper bounds from Theorem \ref{thm:upperbounds}.

\subsection{Literature}

\subsubsection{On the Strong Harnack inequality}
For \textit{local} equations, the Strong Harnack inequality simply follows from the Weak Harnack inequality combined with the $L^2$-$L^\infty$ bound in the first De Giorgi Lemma \cite{DeGiorgi, moser}. We refer the reader to \cite[Theorem 4]{GIMV} and \cite[Theorem 5]{JGCM} for the proof of the Strong Harnack inequality for local kinetic equations. 

For \textit{non-local elliptic} equations, a Strong Harnack inequality was derived in \cite{coz17}: first they show a local $L^2$-$L^\infty$ bound on the solution with a non-local tail term, then they use the Weak Harnack inequality for elliptic non-local equations and a bound on the non-local tail term in terms of local quantities only, to obtain the full Harnack inequality. 

For \textit{non-local parabolic} equations, on the other hand, until recently there had only been probabilistic methods to prove a Strong Harnack inequality without any tail term \cite{bass-levin, chen-kumagai, chen-kumagai-wang-20, chen-kumagai-wang-21}. In \cite{bass-levin,  chen-kumagai}, they consider pointwisely bounded kernels, and establish the Strong Harnack inequality using the corresponding stochastic jump process and heat kernel estimates. In \cite{chen-kumagai-wang-20, chen-kumagai-wang-21} they consider nonlocal Dirichlet forms on doubling metric measure spaces, and characterise Harnack Inequalities in terms of conditions on the underlying geometry and assumptions on the kernel. The first \textit{non-stochastic} approach for parabolic equations was derived in Kassmann-Weidner \cite{kassmann-weidner}. Their idea is to first improve the local boundedness result from \cite{KW-a} by bounding the local $L^\infty$ norm of a solution by the sum of its local $L^2$ norm and a non-local tail term in $L^1$. To this end, they introduce two strategies, both relying on a ``tail corrector term", essentially the $L^1$ norm in time of the tail \eqref{eq:tail} with a negative sign, which has the property that it can be added to a solution to the equation such that the sum of these two terms remains a solution to the equation, and thus the tail corrector term has the effect of cancelling the tail terms arising through the equation. As a second step, they improve the Weak Harnack inequality from \cite{KW-b} by bounding the $L^1$ tail by the local infimum of the solution. Such a tail bound is contained in Proposition \ref{prop:tail-bound} below, if we consider a solution that is constant in $x$, so that we reduce the hypoelliptic case to a parabolic setting, and if we set $l = 0$.
Combining both steps yields the Strong Harnack inequality without any tail term. 

Finally we remark that the Strong Harnack inequality fails in the \textit{hypoelliptic} case if the equation is satisfied merely in a bounded velocity domain, \cite{KW-failedH}.

\subsubsection{On bounds on the fundamental solution}
Regarding the case of \textit{local parabolic} equations, the study of two-sided estimates on the fundamental solution originated with the papers of Nash in 1958 \cite{nash}, Aronson in 1967-68 \cite{aronson-1, aronson-2}, and Fabes and Stroock in 1986\cite{fabes-stroock}. 
More precisely, on the one hand, the first proof of Gaussian bounds for linear parabolic second-order equations has been derived by Aronson for uniformly elliptic, but rough, diffusion coefficients in divergence form. Aronson's derivation is based on energy estimates and Harnack inequalities, and thus exploits the regularity properties of the solution to the underlying equation. On the other hand, the argument of Nash, Fabes and Stroock proves the Gaussian bounds without referring to the regularity properties of the solution, and instead implies estimates on the local behaviour of the solution as a consequence of the Gaussian bounds. When Nash solved Hilbert's 19th problem on the continuity of solutions to equations with uniformly elliptic rough diffusion coefficients \cite{nash}, he stated without proof estimates on the fundamental solution. These have been revisited by Fabes and Stroock \cite{fabes-stroock}, who combine Nash's sketch of the argument together with Davies' idea of replacing the Euclidean distance by the Riemannian distance associated to the coefficients of the equation at hand \cite{davies}. With this, Fabes-Stroock obtain Gaussian upper and lower bounds, from which they further derive the Harnack inequalities. 

Variants of these ideas have been applied to \textit{local kinetic} equations, most notably in \cite{lanconelli-pascucci, lanconelli-pascucci-polidoro}, or also in the recent preprint \cite{auscher-imbert-niebel-2}. For \textit{non-local parabolic} equation, the most recent work that we are aware of is by Kassmann-Weidner \cite{KW-aronson}, who employ ideas from Aronson's method to derive a polynomial upper bound on the fundamental solution.

We end this discussion with a historical remark: in 1958 Nash \cite{nash} derived Hölder continuity of parabolic equations in divergence form with rough coefficients, and stated two-sided non-Gaussian estimates for the fundamental solution as a consequence. In 1986, Fabes and Stroock \cite{fabes-stroock} derived two-sided Gaussian estimates directly from Nash's ideas, and derived the Harnack inequalities as a consequence. Since we know how to derive the Harnack inequalities for \eqref{eq:1.1}, one might investigate the derivation of the Harnack inequalities for \eqref{eq:1.1} as a consequence of the bounds on the fundamental solution. We assemble this remark into a collection of open problems:
\begin{openprob}
\begin{enumerate}
\item Derive the Harnack inequalities of Theorem \ref{thm:weakH} and Theorem \ref{thm:strong-Harnack} for solutions to \eqref{eq:1.1} as a consequence of the polynomial upper and the exponential lower bounds of Theorem \ref{thm:upperbounds} and Theorem \ref{thm:lowerbounds}. Instead of an exponential lower bound, one might need to assume a polynomial lower bound on the fundamental solution. It is also conceivable that it requires the optimal upper bounds.
\item Is the exponential lower bound on the fundamental solution of \eqref{eq:1.1} in Theorem \ref{thm:lowerbounds} optimal for kernels with an average coercivity condition \eqref{eq:coercivity}, an average upper bound \eqref{eq:upperbound} and with the divergence form symmetry \eqref{eq:symmetry}? A polynomial lower bound might require a pointwise lower bound on the kernel.
\item Improve the polynomial upper bounds of Theorem \ref{thm:upperbounds}, or in other words, make Theorem \ref{thm:upperbounds-conditional} unconditional. Again, we think this requires a pointwise upper bound on the kernel \eqref{eq:upperbounds-pw} instead of the average upper bound assumed in \eqref{eq:upperbound}.
\item Does the linear $L^1$ to $L^\infty$ bound hold for mere \textit{sub-solutions} to \eqref{eq:1.1}? The proof method in this article heavily relies on the fact that we work with solutions. However, in the local case, a linear $L^1$ to $L^\infty$ bound can be derived even for sub-solutions. It seems to us that the sub-solution property is not sufficient in the non-local case to yield a linear $L^1$ to $L^\infty$ bound.
\end{enumerate}
\end{openprob}

\subsection{Strategy}

\subsubsection{For the Strong Harnack inequality}

The first step is to derive a tail bound in terms of local quantities: we show that the $L^1$ norm in time and space of the non-local tail \eqref{eq:tail} is bounded by a local Lebesgue norm. We work on level set functions, so that this step is compatible with the local boundedness result that we aim for in a second step. This introduces two main complications; firstly, the fact that a level set function is not a super-solution of the equation anymore, and secondly, the appearance of cross terms, that is when the test function and the non-locality are on different sides of the level $l$:
\beqs
	\chi_{f > l}(v) \chi_{f < l}(w).
\eeqs
The appearance of these cross terms is a purely non-local effect; in the local case, these terms naturally vanish.
We test the equation with a root of the level set function. Due to the concavity of the test function, this gives rise to a signed term if both the local and the non-local function are above the level. Moreover, on the upper level set, the tail has the good sign as well, since we work with super-solutions. The error terms occur on the lower level set and through the cross terms, both of which exploit the global decay in velocity of the solution. The bound for the cross terms moreover relies on the concavity of the test function, whereas the term that arises when the solution and the test function are both in the lower level set is bounded by an appropriate choice of the cutoff function. 


The goal in a second step is to derive a linear $L^1$-$L^\infty$ bound by incorporating the tail bound into the energy estimate and the gain of integrability. This yields local estimates that we can iterate à la De Giorgi. To this end, we work with solutions, which is in contrast to the local case, where a linear $L^1$-$L^\infty$ bound holds for mere sub-solutions.
It might be interesting to investigate whether it is sufficient to work with \textit{sub-solutions} in the non-local case to deduce a linear local boundedness result without tail terms.

\subsubsection{For the bounds on the fundamental solution}
As a consequence of the Strong Harnack inequality, Theorem \ref{thm:strong-Harnack}, we adapt Aronson's method to the non-local kinetic case. In fact, Proposition \ref{prop:L2-Linfty-A} implies directly an on-diagonal upper bound for the fundamental solution to \eqref{eq:1.1}. For the off-diagonal bounds, the idea is to construct a function $H$ that satisfies 
\beqs
	\partial_t H + v \cdot\nabla_x H + \int_{\R^d} \left[H^{\frac{1}{2}}(w) - H^{\frac{1}{2}}(v)\right]^2 K(v, w) \dd w \leq 0. 
\eeqs
By testing \eqref{eq:1.1} with the product of the solution and this function $H$, we derive an energy estimate on the $L^2$ norm of $f H^{\frac{1}{2}}$, which in turn permits to determine the decay of the solution from the decay of the function $H$. Together with the on-diagonal bounds that follow as a consequence of the linear $L^1$-$L^\infty$ bound of Proposition \ref{prop:L2-Linfty-A}, this implies the unconditional and conditional upper bounds on the fundamental solution of Theorem \ref{thm:upperbounds} and Theorem \ref{thm:upperbounds-conditional}.

For \(s\in(\frac12,1)\), the exponential lower bound of Theorem \ref{thm:lowerbounds} follows by applying the Strong Harnack inequality of Theorem \ref{thm:strong-Harnack} repeatedly along a path that connects two given points. This yields a pointwise Harnack inequality, which in turn implies an exponential lower bound on any solution that has a non-vanishing average over some local ball. In particular, it implies the exponential bound on the fundamental solution that connects two given points.

The assumptions on the non-local kernel are described in the next subsection.

\subsection{Ellipticity class of the non-local kernel $K$}\label{subsec:ellipticity-class}
The regularity of integro-differential equations relies on the conditions that are assumed on the kernel of the non-local operator \eqref{eq:1.2}. A widely studied non-local diffusion operator is the fractional Laplacian \eqref{eq:frac-lapl}, see \cite{hitchhiker}. Based on this, a commonly used assumption is that the kernel is pointwisely bounded from above and below by the kernel of the fractional Laplacian, and satisfies the pointwise symmetry assumptions, $K(v, w) = K(w, v)$ and $K(v, v+w) = K(v, v-w)$, \cite{coz17, kim19, kim20}. In \cite{felsinger-kassmann, KW-a, KW-b}, a more general class of non-local kernels was considered for which the Weak Harnack inequality is derived. They relax the pointwise upper and lower bounds to conditions in integral form \cite{felsinger-kassmann}, and additionally they relax the pointwise symmetry assumptions in \cite{KW-a, KW-b}. 

In the case of kinetic equations, the assumptions on the kernel are guided by the Boltzmann equation. Even though the kernel depends in general on time and space we will often omit to write out this dependency for the sake of brevity. We let $0 < \lambda_0 < \Lambda_0$ and $s \in (0, 1)$. Let $\varphi : \R^d \to \R$ and $\Omega \subseteq \R^d \times \R^d$. Then we assume that $K$ is coercive in the sense that
\beq
	\iint_{\Omega} \big(\varphi(v)-\varphi(w)\big)^2K(v, w) \dd w\dd v \geq \lambda_0 \iint_{\Omega}   \frac{\abs{\varphi(v)-\varphi(w)}^2}{\abs{v-w}^{d+2s}} \dd v\dd w.
\label{eq:coercivity}
\eeq
Moreover, we require the following upper bound for $r > 0$
\beq
	\forall v \in \R^d \quad \int_{\R^d \setminus B_r(v)}K(v,w)\dd w \leq \Lambda_0 r^{-2s}.
\label{eq:upperbound}
\eeq
Furthermore, we assume a pointwise divergence form symmetry:
\beq
	\forall v, w \in \R^d \quad K(v, w) = K(w, v).
\label{eq:symmetry}
\eeq
The local analogue is an operator in divergence form. In case that $s \geq \frac{1}{2}$ we assume a pointwise non-divergence form symmetry:
\beq
	\forall v, w \in \R^d \quad K(v, v+w) = K(v, v-w).
\label{eq:symmetry-nondiv}
\eeq
This is similarly the non-local analogue of an operator in non-divergence form. The coercivity \eqref{eq:coercivity} and the upper bound \eqref{eq:upperbound} are inspired from the series of work by Imbert-Mouhot-Silvestre on the regularity of solutions of the non-cutoff Boltzmann equation conditional to certain macroscopic bounds \cite{IS, ISschauder, ISglobal, IS59, IMS, IS?}. For in fact, provided that certain macroscopic quantities remain bounded, the non-cutoff Boltzmann equation (see \cite{ISglobal}) can be understood as an integro-differential equation of the form \eqref{eq:1.1}, with a kernel satisfying \eqref{eq:coercivity}-\eqref{eq:upperbound}, and a weaker notion of divergence form symmetry than \eqref{eq:symmetry}, which encodes that on average the non-symmetric part of the kernel is of lower order. 
We aim to address the Strong Harnack inequality for the non-cutoff Boltzmann equation in a follow-up article.


\subsection{Invariant transformations}
Finally, we introduce the kinetic cylinders that define the domain of solutions to \eqref{eq:1.1}. Let $f$ solve \eqref{eq:1.1}. Then for $r \in [0, 1]$ the scaled function $f_r(t, x, v) = f(r^{2s}t, r^{1+2s}x, rv)$ satisfies
\beqs
	\partial_t f_r + v\cdot \nabla_x f_r = \mathcal L_r f_r + h_r,
\eeqs
where $\mathcal L_r$ is the non-local operator associated to the scaled kernel
\beqs
	K_r(t, x, v, w) = r^{d+2s} K(r^{2s}t, r^{1+2s}x, r v, rw), 
\eeqs
and the source term is scaled as
\beqs
	h_r(t, x, v) = r^{2s} h(r^{2s}t, r^{1+2s}x, r v).
\eeqs
For $r \in [0, 1]$ the scaled kernel $K_r$ satisfies \eqref{eq:coercivity}-\eqref{eq:symmetry-nondiv}. Moreover, $\norm{h_r}_{L^\infty(Q_1)} \leq r^{2s}\norm{h}_{L^\infty(Q_1)} \leq \norm{h}_{L^\infty(Q_1)}$.

Furthermore, the equation is invariant under Galilean transformations, which, we recall, are given by $z \to z_0 \circ z = (t_0 + t, x_0+x + tv_0, v_0 + v)$ with $z_0 = (t_0, x_0, v_0)\in \R^{1+2d}$. If $f$ is a solution of \eqref{eq:1.1} then its Galilean transformation $f_{z_0}(z) = f(z_0 \circ z)$ solves
\beqs
	\partial_t f_{z_0} + v\cdot \nabla_x f_{z_0} = \mathcal L_{z_0} f_{z_0} + h_{z_0},
\eeqs
where $\mathcal L_{z_0}$ is the non-local operator associated to the translated kernel
\beqs
	K_{z_0}(t, x, v, w) = K(z_0 \circ z, v_0 + w), 
\eeqs
and with source
\beqs
	h_{z_0}(t, x, v) = h(z_0 \circ z).
\eeqs
Again the modified kernel $K_{z_0}$ satisfies \eqref{eq:coercivity}-\eqref{eq:symmetry-nondiv} provided that $K$ does, and $h_{z_0}$ is bounded provided that $h$ is. 

In view of these invariances we define kinetic cylinders 
\beqs
	Q_r(z_0) := \{(t, x, v) : -r^{2s} \leq t - t_0 \leq 0, \abs{v - v_0} < r, \abs{x - x_0 -(t-t_0)v_0} < r^{1+2s}\}
\eeqs
for $r > 0$ and $z_0 = (t_0, x_0, v_0) \in \R^{1+2d}$. For later reference, we also introduce the cylinder shifted to the past
\beqs
	Q^-_r(z_0) := Q_r\big(z_0 - (2r^{2s}, 2r^{2s}v_0, 0)\big),
\eeqs
so that in particular for $z_0 = 0$
\beqs
	Q^-_r := Q_r(-2r^{2s}, 0, 0) = (-3r^{2s}, -2r^{2s}] \times B_{r^{1+2s}} \times B_r.
\eeqs
Similarly the cylinder shifted to the future is denoted as
\beqs
	Q^+_r(z_0) := Q_r\big(z_0 + (2r^{2s}, 2r^{2s}v_0, 0)\big).
\eeqs

\subsection{Notation}
We write $z := (t, x, v) \in \R^{1+2d}$ for a time, space, and velocity variable. We denote by $\mathcal T$ the transport operator $\mathcal T := \partial_t + v\nabla_x$. 

In general, our domains $Q \subset \R^{1+2d}$ are cylinders in time, space and velocity. If we slice the cylinder in time, we denote it by $Q^t \subset \R^{2d}$, that is $Q^t = \left(\{t\} \times \R^d \times \R^d\right)\cap Q$.

The positive part of $a \in \R$ is denoted by $a_+ = \max\{a, 0\} \geq 0$. Correspondingly, we write for the negative part $a_- = \min\{a, 0\} \leq 0$. In particular, any $a$ can be written as $a = a_+ + a_-$. 

The bilinear form associated to the non-local operator $\mathcal Lf$ occurring in \eqref{eq:1.1} is defined for any $(t, x, v) \in \R^{1+2d}$ by
\beq\label{eq:bilinear-form}
	\mathcal E(f, g) = -\int_{\R^d} \mathcal Lf(v) g(v) \dd v = \int_{\R^d}\int_{\R^d} \big(f(v) - f(w)\big) g(v) K(v, w) \dd w \dd v.
\eeq
Note that the Dirichlet form associated to a local operator in divergence form has a positive sign.

\subsection{Outline} The paper continues by stating some preliminaries in Section \ref{sec:prelims}. We then bound the non-local tail on the upper level set in terms of a local Lebesgue norm in Section \ref{sec:tail-bound}. Section \ref{sec:strategy-A} demonstrates Proposition \ref{prop:L2-Linfty-A}. We conclude the proof of Theorem \ref{thm:strong-Harnack} in Section \ref{sec:strong_H}. Finally, the upper bound on the fundamental solution is discussed in Section \ref{sec:upperbound}, whereas the lower bound is derived in Section \ref{sec:lowerbound}.

\section{Preliminaries}\label{sec:prelims}

\subsection{The non-local kernel}
The results in this subsection are stated in a simplified manner since we are only concerned with symmetric kernels in the sense of \eqref{eq:symmetry}.
Here we summarise two observations on the non-local kernel $K$ of \eqref{eq:1.2} which are also used later in this paper. The first theorem states the boundedness in $H^s$ in velocity of the bilinear functional associated to the non-local operator $\mathcal L$ from \eqref{eq:1.2}. 
\begin{theorem}[$H^s$ estimate \protect{\cite[Theorem 4.1, Corollary 5.2]{IS},  \cite[Theorem 2.1]{AL}}]\label{thm:boundedness-Hs}
Let $K : B_{\bar R} \times \R^d \to \R$ be a non-negative kernel satisfying \eqref{eq:upperbound} and \eqref{eq:symmetry}. For any $f,g  \in \dot H^s(\R^d)$ supported in $B_{\frac{\bar R}{2}}$ there holds
\beqs
	\mathcal E(f, g) \leq \Lambda \norm{f}_{\dot H^s(\R^d)} \norm{g}_{\dot H^s(\R^d)},
\eeqs
where $\mathcal E$ is the bilinear form \eqref{eq:bilinear-form} associated to the non-local operator $\mathcal L$ of \eqref{eq:1.2}.
\end{theorem}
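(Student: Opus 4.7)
The strategy is to exploit the divergence-form symmetry \eqref{eq:symmetry} of $K$ to symmetrise the bilinear form and reduce the claim to a single quadratic energy estimate. Concretely, by swapping $v \leftrightarrow w$ in \eqref{eq:bilinear-form} and using $K(v,w)=K(w,v)$, one obtains
\begin{equation*}
2\,\mathcal E(f,g) = \int_{\R^d}\int_{\R^d}\big(f(v)-f(w)\big)\big(g(v)-g(w)\big) K(v,w) \dd w \dd v,
\end{equation*}
and Cauchy--Schwarz with respect to the positive measure $K(v,w)\dd w\dd v$ yields
\begin{equation*}
|\mathcal E(f,g)| \leq \tfrac12 \sqrt{I_K(f)}\,\sqrt{I_K(g)},\qquad I_K(h) := \int_{\R^d}\int_{\R^d}\big(h(v)-h(w)\big)^2 K(v,w) \dd w \dd v.
\end{equation*}
The support of $f,g$ in $B_{\bar R/2}\subset B_{\bar R}$ makes the symmetrisation step legitimate (one extends $K$ trivially by the symmetry outside $B_{\bar R}\times B_{\bar R}$ when $v$ or $w$ escapes). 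The whole theorem therefore reduces to establishing the single quadratic bound $I_K(h)\leq C\,\|h\|_{H^s(\R^d)}^2$ for every $h\in H^s(\R^d)$ supported in $B_{\bar R/2}$.

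To prove this energy estimate, I would split the integration domain at the scale $\bar R$ into a long-range part over $\{|v-w|>\bar R\}$ and a short-range part over $\{|v-w|\leq \bar R\}$. On the long range, the elementary bound $(h(v)-h(w))^2\leq 2h(v)^2+2h(w)^2$ combined with Fubini, the symmetry \eqref{eq:symmetry}, and the integral tail bound \eqref{eq:upperbound} immediately produces a control of the form $4\Lambda_0 \bar R^{-2s}\|h\|_{L^2(\R^d)}^2$. The short range is the heart of the matter: although \eqref{eq:upperbound} supplies no pointwise upper bound on $K$, it does force every dyadic annulus $A_k(v)=\{2^{-k-1}\bar R<|v-w|\leq 2^{-k}\bar R\}$ to carry $K$-mass at most $C\Lambda_0(2^{-k}\bar R)^{-2s}$, which has exactly the same scaling as $\int_{A_k(v)}|v-w|^{-d-2s}\dd w$. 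Balancing this averaged mass against the oscillation $(h(v)-h(w))^2$ in each annulus, summing over $k\geq 0$, and comparing with the Gagliardo-type identity
\begin{equation*}
[h]_{H^s(\R^d)}^2 = c_{d,s}\int_{\R^d}\int_{\R^d}\frac{(h(v)-h(w))^2}{|v-w|^{d+2s}}\dd w \dd v,
\end{equation*}
yields the short-range bound by a constant multiple of $[h]_{H^s(\R^d)}^2$.

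The main obstacle will be this short-range step: the tail bound \eqref{eq:upperbound} is purely integral, and one has to convert averaged mass control into an effective quadratic-form bound while preserving the $H^s$ scaling, with no help from any pointwise bound on $K$. This is precisely where the detailed computations of \cite[Theorem 4.1]{IS} and \cite[Theorem 2.1]{AL} are invoked, as they treat exactly the same kernel class. Once $I_K(h)\leq C(d,s,\Lambda_0)\|h\|_{H^s(\R^d)}^2$ is in hand, Cauchy--Schwarz from the first step closes the proof with a constant $\Lambda = C(d,s,\Lambda_0)$.
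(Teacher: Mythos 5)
The paper itself states this result without proof, simply citing \cite{IS} and \cite{AL}, so there is no in-paper argument to compare against; I can only assess your sketch on its own terms and against the standard route those references take.

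Your reduction is correct: the symmetry \eqref{eq:symmetry} gives $\mathcal E(f,g)=\tfrac12\int\int\big(f(v)-f(w)\big)\big(g(v)-g(w)\big)K(v,w)\,\dd w\dd v$, Cauchy--Schwarz in the positive measure $K\,\dd w\dd v$ collapses the problem to the single quadratic bound $I_K(h)\lesssim\|h\|_{H^s(\R^d)}^2$, the remark about extending $K$ by symmetry is legitimate given that $f,g$ are supported in $B_{\bar R/2}$, and the long-range estimate over $\{|v-w|>\bar R\}$ via $(h(v)-h(w))^2\leq2h(v)^2+2h(w)^2$, Fubini, \eqref{eq:symmetry} and \eqref{eq:upperbound} is exactly right, yielding $\lesssim\Lambda_0\bar R^{-2s}\|h\|_{L^2}^2$.

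The weak point is the short-range step, and you understate how much is being hidden in ``balancing the mass against the oscillation and comparing with the Gagliardo-type identity.'' The bound \eqref{eq:upperbound} controls only the \emph{total} $K$-mass carried by each dyadic annulus $A_k(v)$, not its distribution there, so $K$ may concentrate arbitrarily on the subset of $A_k(v)$ where $\big(h(v)-h(w)\big)^2$ is largest; there is no pointwise comparison between $K(v,w)$ and $|v-w|^{-d-2s}$ to invoke. Even in the benign convolution case $K(v,w)=k(v-w)$ the naive version of your argument fails: pairing the supremal annular modulus $\sup_{|z|\sim r_k}\|h(\cdot+z)-h\|_{L^2}^2\lesssim r_k^{2s}[h]_{H^s}^2$ with the annular mass $\lesssim\Lambda_0 r_k^{-2s}$ and summing over $k$ produces a series with $O(1)$ terms, hence a divergent estimate. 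The actual argument has to retain the full Fourier-side oscillation $\sin^2(z\cdot\xi/2)$ rather than its crude upper bound so that the dyadic sum telescopes back to $|\xi|^{2s}$, and for general (non-convolution) symmetric $K$ the mechanism is more delicate still. You are honest about deferring the details to \cite{IS,AL}, which is fine, but as a sketch the short-range paragraph suggests a pointwise-comparison shortcut that does not exist; the genuine content of the cited proofs is precisely the replacement of that false comparison by an averaged, Fourier-flavoured estimate.
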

Theorem \ref{thm:boundedness-Hs} also motivates our notion of solutions to \eqref{eq:1.1}. 
\begin{definition}[Notion of solutions]\label{def:solutions}
Assume $0 \leq K$ satisfies \eqref{eq:coercivity}-\eqref{eq:symmetry} and for $\bar R > 0$, let $(t_0, t_1) \times \Omega_x := \big(-(\frac{\bar R}{2})^{2s}, 0\big)\times B_{(\frac{\bar R}{2})^{1+2s}}$ and $\Omega_v \subseteq \R^d$.
We say that $f:(t_0, t_1)\times \Omega_x\times \R^d \to \R$ is a solution of \eqref{eq:1.1}-\eqref{eq:1.2} in $(t_0, t_1) \times \Omega_x \times \Omega_v$ for a source $h \in L^2\big((t_0, t_1) \times \Omega_x \times \Omega_v\big)$, if 
\begin{enumerate}
	\item $f \in C^0((t_0, t_1), L^2(\Omega_x\times \Omega_v))\cap L^2((t_0, t_1) \times \Omega_x, L^\infty(\R^d) +H^s\big(\R^d))$,
	\item $\mathcal T f \in L^2((t_0, t_1)\times \Omega_x \times \Omega_v)$,
	\item $f \geq 0$ almost everywhere in $(t_0, t_1) \times \Omega_x \times \Omega_v$,
	\item $f$ conserves mass,
	\item for some $q > d$ and some $C_q$ depending on $q$, we know that $f(t, x, v) \leq C_q \langle v \rangle^{-q}$ for almost every $(t, x, v) \in [t_0, t_1]\times \Omega_x\times \R^d$,
	\item for all non-negative $\varphi \in L^2((t_0, t_1) \times \Omega_x,  H^s(\Omega_v))$ such that $\forall (t,x) \in (t_0, t_1) \times \Omega_x$, $\varphi(t, x, \cdot)$ is supported in $\Omega_v$, there holds
	\beqs
		\int_{(\tau_0, \tau_1)\times \omega_x \times \Omega_v} (\mathcal T f)\varphi\dd z + \int_{(\tau_0, \tau_1)}\int_{\omega_x}\mathcal E(f, \varphi)\dd x\dd t - \int_{(\tau_0, \tau_1)\times \omega_x \times \Omega_v} h\varphi\dd z = 0,
	\eeqs
	for any $t_0 \leq \tau_0 \leq \tau_1 \leq t_1$, and any subset $\omega_x \subseteq \Omega_x$.
\end{enumerate}
\end{definition}
\begin{remark}\label{rmk:indicator-testing}
In fact, in case that $s \geq \frac{1}{2}$, we want to work with a stronger notion of solutions which allows the test function $\varphi$ to be in $L^2((t_0, t_1) \times \Omega_x,  H^{\frac{s}{2}}(\Omega_v))$. We argue that there are examples of solutions $f$ to \eqref{eq:1.1} that satisfy higher differentiability than what would normally be expected from an energy estimate, namely $f \in L^2((t_0, t_1) \times \Omega_x,  H^{\frac{3s}{2}}(\Omega_v))$. This is true for instance if we work with constant coefficients, that is $K(v, w) \sim \abs{v-w}^{-(d+2s)}$. We don't know if this is a general principle, but we line out that for the purposes of this article, we can show that test functions in $H^{\frac{s}{2}}(\Omega_v)$ are admissible. The justification is based on a comparison principle involving the fundamental solution to the equation with constant coefficients, which can be found in Appendix \ref{sec:approx}. We argue that \textit{qualitatively} the solution is in $H^{\frac{3s}{2}}(\Omega_v)$, so that we can test the equation with a function in $H^{\frac{s}{2}}(\Omega_v)$, which does not affect the final results in this paper \textit{quantitatively}. 

We emphasise this for the purpose of the tail estimate in Proposition \ref{prop:tail-bound}, where we use a test function which is merely in $H^{\frac{\bar s}{2}}(\Omega_v)$ for any $\bar s \in (0, 1)$. 
\end{remark}

The second observation is a non-local analogue of the product rule.
\begin{lemma}[Commutator estimates \protect{\cite[Lemmata 4.10, 4.11]{IS}}]\label{lem:commutator}
Let $K$ be a non-negative kernel satisfying \eqref{eq:upperbound}. Let $D$ be a closed set and $\Omega$ open such that $D \Subset \Omega \subset B_{\bar R/2} \subset \R^d$ for $0 < \bar R \leq 2$. Let $\varphi$ be a smooth function with support in $D$, let $f \in H^s(\Omega) \cap L^\infty(\R^d)$ and let $\rho = \frac{\textrm{dist}(D, \R^d\setminus \Omega)}{2}$. We write 
\beqs
	\mathcal L [\varphi f] - \varphi \mathcal L f = h_1 + h_2, 
\eeqs
where $h_1, h_2$ are given by
\bals
	&h_1(v) = \int_{\R^d\setminus B_\rho(v)} f(w) \big(\varphi(w) - \varphi(v)\big) K(v, w) \dd w, \\
	&h_2(v) = \int_{B_\rho(v)} f(w) \big(\varphi(w) - \varphi(v)\big) K(v, w) \dd w.
\eals
Then, by construction $h_2 = 0$ outside $\Omega$, and there holds
\beq\label{eq:h1}
	\norm{h_1}_{L^2(\R^d\setminus \Omega)} \leq \Lambda  \rho^{-2s} \norm{\varphi}_{L^\infty} \norm{f}_{L^2(D)}.
\eeq
Moreover, if $s \in (0, 1/2)$, there holds
\beq\label{eq:h2}
	\norm{h_2}_{L^2(\R^d)} \leq \Lambda \rho^{1-2s} \norm{\varphi}_{C^1} \norm{f}_{L^2(\Omega)}.
\eeq
Else if $s \in [1/2, 1)$ and if additionally $K$ satisfies \eqref{eq:symmetry}, then there exists $h_{22}, h_{23} \in L^2(\R^d)$ such that 
\beqs
	h_2 = h_{22} + (-\Delta_v)^{\frac{s}{2}} h_{23},
\eeqs
with
\bal\label{eq:h2-2}
	\Norm{h_{22}}_{L^2(\R^d)} &\leq \Lambda \rho^{2-2s} \norm{\varphi}_{C^2} \norm{f}_{L^2(\Omega)} +\Lambda \rho^{1-s}  \norm{\varphi}_{C^1}\norm{f}_{H^s(\Omega)}, \\
	\Norm{h_{23}}_{L^2(\R^d)} &\leq \Lambda \rho^{1-s}  \norm{\varphi}_{C^1}\norm{f}_{L^2(\Omega)}.
\eal
\end{lemma}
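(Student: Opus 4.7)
The plan is to derive the identity $\mathcal L[\varphi f](v) - \varphi(v)\mathcal L f(v) = \int_{\R^d} f(w)[\varphi(w) - \varphi(v)] K(v, w) \dd w$ by direct manipulation of the two principal-value integrals (the $-\varphi(v)f(v)$ contribution to $\mathcal L[\varphi f]$ cancels with $\varphi(v)$ times the $-f(v)$ contribution to $\mathcal L f$). This splits naturally into the far-field $h_1$ and near-field $h_2$ as defined. The fact that $h_2 \equiv 0$ outside $\Omega$ follows from the geometry: if $v \notin \Omega$, then for any $w \in B_\rho(v)$ the combination $\abs{w-v} < \rho$ and $\mathrm{dist}(D, \R^d \setminus \Omega) = 2\rho$ forces $w \notin D$, so that $\varphi(v) = \varphi(w) = 0$ and the integrand vanishes identically.

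For \eqref{eq:h1}, for $v \notin \Omega$ we have $\varphi(v) = 0$, and only $w \in D$ contributes to $h_1(v)$; moreover the constraint $\abs{w-v} > \rho$ is automatic since $w \in D$, $v \notin \Omega$ forces $\abs{v-w} \geq 2\rho$. Applying the Cauchy--Schwarz inequality in the measure $K(v,w)\dd w$ and invoking \eqref{eq:upperbound} to bound $\int_D K(v,w)\dd w \leq \Lambda \rho^{-2s}$ yields $\abs{h_1(v)}^2 \leq \Lambda \rho^{-2s} \norm{\varphi}_{L^\infty}^2 \int_D f(w)^2 K(v,w)\dd w$. Integrating in $v$, swapping order via Fubini, and then using \eqref{eq:symmetry} together with \eqref{eq:upperbound} in the other variable to estimate $\int_{\R^d \setminus \Omega} K(v,w)\dd v \leq \Lambda (2\rho)^{-2s}$ for each fixed $w \in D$, closes the estimate.

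For \eqref{eq:h2} in the subcritical range $s \in (0, 1/2)$, apply the Lipschitz bound $\abs{\varphi(w) - \varphi(v)} \leq \norm{\varphi}_{C^1} \abs{w-v}$ and split the measure $\abs{w-v} K(v,w)\dd w$ symmetrically for Cauchy--Schwarz. The crucial moment computation $\int_{B_\rho(v)} \abs{w-v} K(v,w)\dd w \leq \int_0^\rho \Lambda r^{-2s}\dd r \lesssim \rho^{1-2s}/(1-2s)$ follows from a layer-cake decomposition and \eqref{eq:upperbound}. A second application of Fubini combined with \eqref{eq:symmetry} then yields the claimed $L^2$ bound.

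The main obstacle is the supercritical case $s \in [1/2, 1)$, where the first moment above diverges. The strategy is to Taylor expand $\varphi(w) - \varphi(v) = \nabla\varphi(v)\cdot(w-v) + R(v,w)$ with $\abs{R(v,w)} \leq \tfrac{1}{2}\norm{\varphi}_{C^2}\abs{w-v}^2$. The quadratic remainder is treated exactly as in the subcritical step, using the finite moment $\int_{B_\rho(v)} \abs{w-v}^2 K(v,w)\dd w \lesssim \rho^{2-2s}$, and produces the $\rho^{2-2s} \norm{\varphi}_{C^2} \norm{f}_{L^2(\Omega)}$ contribution to $h_{22}$. For the linear term $\nabla\varphi(v)\cdot \int_{B_\rho(v)} f(w)(w-v)K(v,w)\dd w$, one substitutes $f(w) = f(v) + [f(w) - f(v)]$; the increment piece is handled by Cauchy--Schwarz, where the integral $\int \int_{B_\rho(v)} [f(w) - f(v)]^2 K(v,w)\dd w \dd v$ is bounded by the Dirichlet form $2\mathcal E(f, f) \leq 2\Lambda\norm{f}_{H^s}^2$ using Theorem \ref{thm:boundedness-Hs} together with \eqref{eq:symmetry}, producing the $\rho^{1-s} \norm{\varphi}_{C^1}\norm{f}_{H^s(\Omega)}$ contribution. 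The genuinely delicate piece is the leading term $f(v)\nabla\varphi(v)\cdot G(v)$ with $G(v) = \int_{B_\rho(v)}(w-v)K(v,w)\dd w$, since $G$ is not uniformly bounded pointwise for $s \geq 1/2$. The key is that \eqref{eq:symmetry} makes the integrand $(w-v)K(v,w)$ anti-symmetric up to a $v$-dependent correction, allowing a fractional integration by parts (duality against $(-\Delta_v)^{-s/2}$) to identify this term as $(-\Delta_v)^{s/2} h_{23}$ with $h_{23} \in L^2$ obeying the stated bound. It is precisely at this step that the divergence-form hypothesis \eqref{eq:symmetry}, not needed for $s < 1/2$, becomes indispensable.
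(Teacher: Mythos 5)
Your treatment of $h_1$ and of $h_2$ for $s\in(0,1/2)$ matches the paper's: Cauchy--Schwarz in the measure $K(v,w)\dd w$, the moment bounds coming from \eqref{eq:upperbound} via a layer-cake argument, and Fubini combined with the symmetry \eqref{eq:symmetry} to close the second integration. The genuine divergence, and the place where your argument has a gap, is the case $s\in[1/2,1)$.

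The paper makes no Taylor expansion of $\varphi$ at all. It tests $h_2$ against $g\in H^s$, uses the antisymmetry of $(\varphi(w)-\varphi(v))K(v,w)$ under $v\leftrightarrow w$ (a consequence of \eqref{eq:symmetry}) to symmetrize the double integral, rewrites
\begin{equation*}
g(v)f(w)-g(w)f(v)=f(v)\bigl(g(v)-g(w)\bigr)+g(v)\bigl(f(w)-f(v)\bigr),
\end{equation*}
and then applies Cauchy--Schwarz together with $\int_{\abs{v-w}<\rho}\abs{\varphi(v)-\varphi(w)}^2K(v,w)\dd w\lesssim\rho^{2-2s}\norm{\varphi}_{C^1}^2$ to land directly on $\Lambda\rho^{1-s}\norm{\varphi}_{C^1}\bigl(\norm{f}_{L^2}\norm{g}_{H^s}+\norm{g}_{L^2}\norm{f}_{H^s}\bigr)$, from which the existence of $h_{22}$ and $h_{23}$ with the stated bounds follows abstractly by duality. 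This is shorter and requires only $\varphi\in C^1$; it in fact gives a sharper bound on $h_{22}$ (no $\norm{\varphi}_{C^2}$ term) than the statement demands.

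Your Taylor-expansion route is the strategy appropriate for kernels with the \emph{non-divergence} symmetry $K(v,v+z)=K(v,v-z)$, under which the vector $G(v)=\int_{B_\rho(v)}(w-v)K(v,w)\dd w$ vanishes by oddness and the whole argument closes in $L^2$. Here we only have $K(v,w)=K(w,v)$, $G$ does not vanish (indeed for $s\geq 1/2$ the integral defining $G$ is not even absolutely convergent, which is precisely the obstruction), and your handling of the leading term $f(v)\nabla\varphi(v)\cdot G(v)$ is where the gap lies. Pairing it against $g$ and symmetrizing with $\Psi:=fg\nabla\varphi$ gives $\iint_{\abs{v-w}<\rho}\bigl(\Psi(v)-\Psi(w)\bigr)\cdot(w-v)K\dd w\dd v$, and the difference $\Psi(v)-\Psi(w)$ necessarily splits into a $g(v)-g(w)$ piece, an $f(v)-f(w)$ piece, and a $\nabla\varphi(v)-\nabla\varphi(w)$ piece. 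The first of these is the $(-\Delta_v)^{s/2}h_{23}$ contribution you describe; the second and third are $h_{22}$-type contributions controlled by $\rho^{1-s}\norm{\varphi}_{C^1}\norm{f}_{H^s}$ and $\rho^{2-2s}\norm{\varphi}_{C^2}\norm{f}_{L^2}$, respectively. So the leading term cannot be identified with $(-\Delta_v)^{s/2}h_{23}$ alone, as you assert; the claim that ``fractional integration by parts'' absorbs it entirely into $h_{23}$ is not correct. The overall numerology is nonetheless salvageable, because these extra contributions to $h_{22}$ fall within the stated bound \eqref{eq:h2-2}; you would need to carry them through explicitly rather than asserting the leading term lands purely in $h_{23}$. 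Finally, note that both your decomposition and the paper's rely on $h_2$ being defined only distributionally when $s\geq 1/2$ (the pointwise integral need not converge), which is exactly why the duality formulation, not a pointwise estimate, is the right framework here.
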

We reprove this lemma to make the dependence on $\rho$ in \eqref{eq:h2} and \eqref{eq:h2-2} precise. We need it for the optimal exponent on the radius in Proposition \ref{prop:L2-Linfty-A}. 
\begin{proof}
We let $E = D + B_\rho$ so that $D \Subset E \Subset \Omega$ with $\textrm{dist}(D, \R^d \setminus E) = \rho$ and $\textrm{dist}(E, \R^d \setminus \Omega) = \rho$.
\begin{figure}{
\centering
\begin{tikzpicture}[scale=2.2]
  \draw (-5,1) -- (-5.05, 1) node[anchor=east, scale =0.7] {\footnotesize{$1$}};
  \draw (-5,0) -- (-5.05, 0) node[anchor=east, scale =0.7] {\footnotesize{$0$}};
  \draw (-3, 0) -- (-3, -0.05) node[anchor=north, scale =0.7] {\small$D$};
  \draw (-1.5, 0) -- (-1.5, -0.05) node[anchor=north, scale =0.7] {\small$E$};
  \draw (0, 0) -- (0, -0.05) node[anchor=north, scale =0.7] {\small$\Omega$};
  \draw[WildStrawberry, line width = 0.7pt](-5, 1) -- (-4, 1) node[anchor=south, scale =0.8] {$\varphi$};
  \draw[WildStrawberry, line width = 0.7pt] (-4, 1) -- (-3, 0);
  \draw[WildStrawberry, line width = 0.7pt] (-3, 0) -- (0.5, 0);
  \begin{scope}[on background layer]
 \draw[<->, PineGreen, line width = 1pt] (-3, 0) -- node[anchor=north, scale =0.7] {$\rho$} (-1.5, 0) ;
 \draw[<->, PineGreen, line width = 1pt] (-1.5, 0) -- node[anchor=north, scale =0.7] {$\rho$} (0, 0) ;
    \draw[->] (-5, 0) -- (0.5, 0) node[right] {\footnotesize{$ v $}};
  \draw[->] (-5, 0) -- (-5, 1.1);
  \end{scope}
\end{tikzpicture}}
\caption{Illustration of the smooth cutoff $\varphi$ from Lemma \ref{lem:commutator} with which we commute $f$, and the supports of the cutoff and the local contribution of the commutator. Note $D = \textrm{supp }\varphi$, and $E = \textrm{supp } h_2$.}\label{fig:cutoffs-1}
\end{figure}
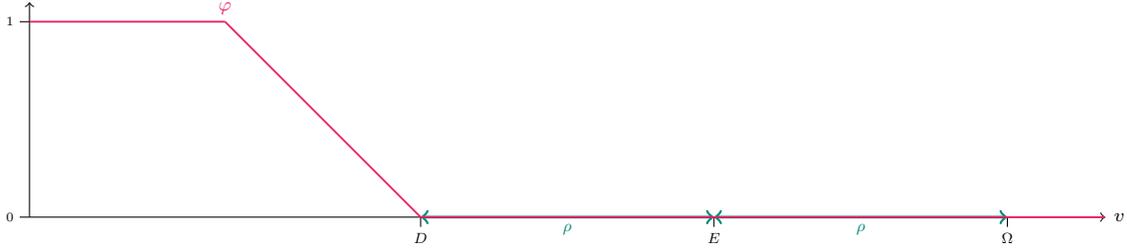

To bound $h_1$, we notice that $\varphi(v) = 0$ for $v \notin D$. Thus if $v \notin \Omega \supset D$, then
\beqs
	h_1 = \int_{\R^d\setminus B_\rho(v)} f(w) \varphi(w) K(v, w) \dd w = \int_{D} f(w) \varphi(w) K(v, w) \dd w.
\eeqs
Therefore, using Cauchy-Schwarz, \eqref{eq:upperbound} and Fubini's theorem
\bals
	\int_{\R^d \setminus  \Omega} h_1^2 \dd v &= \int_{\R^d \setminus  \Omega} \Bigg( \int_{D} f(w) \varphi(w) K(v, w) \dd w\Bigg)^2 \dd v\\
	&\leq \norm{\varphi}_{L^\infty}^2 \int_{\R^d \setminus  \Omega}\Bigg(\int_{D} f^2(w) K(v, w) \dd w\Bigg)\Bigg(\int_{D}  K(v, w) \dd w\Bigg)  \dd v\\
	&\leq \Lambda \rho^{-2s} \norm{\varphi}_{L^\infty}^2\int_{D} f^2(w) \int_{ \abs{v-w} \geq 2\rho} K(v, w) \dd v \dd w\\
	&\leq \Lambda^2 \rho^{-4s} \norm{\varphi}_{L^\infty}^2 \norm{f}_{L^2(D)}^2.
\eals
This yields \eqref{eq:h1}.

To bound $h_2$ we first consider $s \in (0, 1/2)$. We use Cauchy-Schwarz, \eqref{eq:upperbound} for $s < 1/2$ and Fubini
\bals
	\norm{h_2}_{L^2(\R^d)}^2 &= \int_E \Bigg(\int_{B_\rho(v)} f(w) \big(\varphi(w) - \varphi(v)\big) K(v, w) \dd w\Bigg)^2 \dd v\\
	&\leq   \int_E \Bigg(\int_{B_\rho(v)} f^2(w)\Abs{\varphi(w) - \varphi(v)} K(v, w) \dd w\Bigg) \\
	&\qquad \times\Bigg(\int_{B_\rho(v)}  \Abs{\varphi(w) - \varphi(v)} K(v, w) \dd w\Bigg)  \dd v\\
	&\leq  \norm{\varphi}_{C^1}^2 \int_E \Bigg(\int_{B_\rho(v)} f^2(w) \Abs{v-w}K(v, w) \dd w\Bigg) \\
	&\qquad \times \Bigg(\int_{B_\rho(v)}  \Abs{v-w} K(v, w) \dd w\Bigg)  \dd v\\
	&\leq  \Lambda \rho^{1-2s} \norm{\varphi}_{C^1}^2 \int_{\Omega} f^2(w)\int_{E \cap B_\rho(w)}  \Abs{v-w}K(v, w) \dd v \dd w\\
	&\leq \Lambda^2 \rho^{2-4s} \norm{\varphi}_{C^1}^2\norm{f}_{L^2(\Omega)}^2.
\eals
This yields \eqref{eq:h2}.

Second we consider $s \in [1/2, 1)$. Due to \eqref{eq:symmetry}, we find
\bals
	h_2 = \underbrace{\frac{1}{2} \int_{B_\rho(v)} f(w) \big(\varphi(w) - \varphi(v)\big) \big(K(v, w)+K(w, v)\big) \dd w}_{=: h_2^{\textrm{sym}}}. 
\eals
We estimate $h_2^{\textrm{sym}}$ by duality. Let $g \in H^s(\R^d)$. Then, since $\textrm{supp }h_2^{\textrm{sym}} \subseteq E$, we have
\bals
	\int_E h_2^{\textrm{sym}}(v) g(v) \dd v &= \int_E \int_{B_\rho(v)} g(v) f(w) \big(\varphi(w) - \varphi(v)\big)K(v, w) \dd w \dd v\\
	&= \frac{1}{2} \int_{\Omega} \int_{\Omega \cap \abs{v-w} < \rho} f(v) \big(g(v) - g(w)\big)  \big(\varphi(w) - \varphi(v)\big)K(v, w)\dd w \dd v\\ 
	&\quad +\frac{1}{2} \int_{\Omega} \int_{\Omega \cap \abs{v-w} < \rho} g(v) \big(f(w) - f(v)\big)  \big(\varphi(w) - \varphi(v)\big) K(v, w) \dd w \dd v.
\eals
Thus by Cauchy-Schwarz, \eqref{eq:upperbound} and Theorem \ref{thm:boundedness-Hs}
\begin{align*}
	&\int_E h_2^{\textrm{sym}}(v) g(v) \dd v \\
	&\leq \norm{f}_{L^2(\Omega)} \Bigg\{\int_{\Omega} \Bigg(\int_{\Omega \cap \abs{v-w} < \rho}  \big(g(v) - g(w)\big)  \big(\varphi(w) - \varphi(v)\big) K(v, w)\dd w\Bigg)^2 \dd v\Bigg\}^{\frac{1}{2}}\\
	&\quad + \norm{g}_{L^2(\Omega)} \Bigg\{\int_{\Omega} \Bigg(\int_{\Omega \cap \abs{v-w} < \rho}  \big(f(w) - f(v)\big)  \big(\varphi(w) - \varphi(v)\big) K(v, w) \dd w\Bigg)^2 \dd v\Bigg\}^{\frac{1}{2}}\\
	&\leq \norm{f}_{L^2(\Omega)} \Bigg\{\int_{\Omega}\Bigg(\int_{\Omega \cap \abs{v-w} < \rho}  \big(g(v) - g(w)\big)^2 K(v, w) \dd w\Bigg) \\
	&\qquad \qquad \qquad\qquad \qquad\times \Bigg(\int_{\Omega \cap \abs{v-w} < \rho} \Abs{\varphi(w) - \varphi(v)}^2K(v, w) \dd w \Bigg)\dd v\Bigg\}^{\frac{1}{2}}\\
	&\quad + \norm{g}_{L^2(\Omega)} \Bigg\{\int_{\Omega}\Bigg(\int_{\Omega \cap \abs{v-w} < \rho}  \big(f(w) - f(v)\big)^2 K(v, w) \dd w\Bigg) \\
	&\qquad \qquad \qquad\qquad \qquad\times \Bigg(\int_{\Omega \cap \abs{v-w} < \rho} \Abs{\varphi(w) - \varphi(v)}^2K(v, w) \dd w \Bigg)\dd v\Bigg\}^{\frac{1}{2}}\\
	&\leq \Lambda^{\frac{1}{2}} \rho^{1-s} \norm{f}_{L^2(\Omega)} \norm{\varphi}_{C^1} \Bigg\{\int_{\Omega}\int_{\Omega \cap \abs{v-w} < \rho}  \big(g(v) - g(w)\big)^2 K(v, w) \dd w \dd v\Bigg\}^{\frac{1}{2}}\\
	&\quad + \Lambda^{\frac{1}{2}} \rho^{1-s} \norm{g}_{L^2(\Omega)} \norm{\varphi}_{C^1} \Bigg\{\int_{\Omega}\int_{\Omega \cap \abs{v-w} < \rho}  \big(f(w) - f(v)\big)^2 K(v, w) \dd w \dd v\Bigg\}^{\frac{1}{2}}\\
	&\leq \Lambda \rho^{1-s}  \norm{\varphi}_{C^1} \Big( \norm{f}_{L^2(\Omega)}\norm{g}_{H^s(\Omega)} + \norm{g}_{L^2(\Omega)}\norm{f}_{H^s(\Omega)}\Big). 
\end{align*}
This estimate implies that there exists $h_{22}, h_{23}$ such that
\beqs
	h_2 = h_2^{\textrm{sym}} = h_{22}^{\textrm{sym}} + (-\Delta_v)^{\frac{s}{2}} h_{23}^{\textrm{sym}},
\eeqs
with
\bals
	\Norm{h_{22}}_{L^2(\R^d)} \leq  \Lambda \rho^{1-s}  \norm{\varphi}_{C^1}\norm{f}_{H^s(\Omega)}, \qquad \Norm{h_{23}}_{L^2(\R^d)}\leq  \Lambda\rho^{1-s}  \norm{\varphi}_{C^1}\norm{f}_{L^2(\Omega)}.
\eals
\end{proof}

\subsection{Weak Harnack}
We make use of the Weak Harnack inequality derived in \cite[Theorem 1.6]{IS} and \cite[Theorem 1.1]{AL}.
\begin{theorem}[Weak Harnack inequality \protect{\cite[Thm. 1.6]{IS}, \cite[Thm. 1.1]{AL}}]\label{thm:weakH}
Let $f$ be a non-negative super-solution to \eqref{eq:1.1}-\eqref{eq:1.2} in $[-3, 0] \times Q_1^t := [-3, 0] \times B_1 \times B_1$ with a non-negative kernel $K$ satisfying \eqref{eq:coercivity}-\eqref{eq:symmetry}. 
Then there is $C > 0$ and $\zeta > 0$ depending on $s, d, \lambda,\Lambda$ such that for $r_0 < \frac{1}{3}$ the Weak Harnack inequality is satisfied:
\beq
	\Bigg(\int_{\tilde Q_{\frac{r_0}{2}}^-} f^\zeta\dd z\Bigg)^{\frac{1}{\zeta}} \leq C \Big(\inf_{Q_{\frac{r_0}{2}}} f + \norm{h}_{L^\infty(Q_1)}\Big),
\label{eq:weakH}
\eeq
where $\tilde Q_{\frac{r_0}{2}}^-:= Q_{\frac{r_0}{2}}\big((-\frac{5}{2}r_0^{2s} + \frac{1}{2} (\frac{r_0}{2})^{2s}, 0, 0)\big)$, see Figure \ref{fig:harnacks}.
 \label{thm:harnack}
\end{theorem}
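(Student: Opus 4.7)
My plan is to follow the De Giorgi--Nash--Moser--Krylov--Safonov scheme adapted to non-local kinetic equations, in the spirit of Imbert--Silvestre. By the scaling and Galilean invariance recalled in the preliminaries, I may reduce to the normalised situation $z_0 = 0$ and $r_0$ fixed (say $r_0 = 1/3$). The target is actually a weak-type $L^\varepsilon$ decay estimate
\[
\Abs{\{ f > \lambda \} \cap \tilde Q^-_{r_0/2}} \;\leq\; C\,\lambda^{-\varepsilon}\bigl(\inf_{Q_{r_0/2}} f + \norm{h}_{L^\infty}\bigr)^\varepsilon,
\]
which by layer-cake integration gives \eqref{eq:weakH} for some small $\zeta \in (0,\varepsilon)$. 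The three ingredients I would assemble are: (i) a \emph{First De Giorgi lemma} giving $L^2$--$L^\infty$ boundedness for sub-solutions (with a non-local tail term, since here we do not have the global-in-$v$ assumption of Proposition \ref{prop:L2-Linfty-A}); (ii) an \emph{intermediate value lemma}; and (iii) a kinetic \emph{Krylov--Safonov / ink-spots covering} lemma.

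For (ii), the heart of the matter, I would work with a non-negative super-solution $f$ that satisfies a positive-measure condition $\abs{\{f\ge 1\} \cap Q^-} \ge \mu\abs{Q^-}$ and ask whether $f$ can simultaneously be small on a slightly later/intermediate sub-cylinder. Testing the equation against $\varphi = \psi^2 \cdot F(f)$ with $\psi$ a kinetic cutoff and $F$ a convex decreasing function of the form $F(u) = (u+\eta)^{-1}$ (or a truncation $F(u) = (1-u/\delta)_+$), the coercivity \eqref{eq:coercivity} produces an $H^s$-seminorm of a truncation of $f$, which via a Poincaré inequality (in the averaging-lemma sense, since the transport term $v\cdot\nabla_x$ is present) lower-bounds the measure of the intermediate set $\{\delta < f < 1\}$ in terms of $\mu$, $\nu$, $s$, $d$, $\lambda_0$, $\Lambda_0$. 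Cross terms where the level-set indicator and the non-local increment live on opposite sides of the truncation are handled using the symmetry \eqref{eq:symmetry} to unfold them into a non-negative quadratic form, exactly as in the local case the divergence form kills them. For (iii), I would use the kinetic ink-spots lemma developed in Imbert--Silvestre for cylinders adapted to the Galilean geometry $z \mapsto z_0\circ z$, which iteratively stacks past-cylinders to expand positivity. Combining (ii) and (iii) in an iteration over dyadic levels $\lambda = 2^k$, I get either $\inf_{Q_{r_0/2}} f \gtrsim 2^{-k}$ or $\abs{\{f>2^k\} \cap \tilde Q^-} \le C(1-\eta)^k \abs{\tilde Q^-}$, which is exactly the weak-$L^\varepsilon$ decay sought.

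The step I expect to be most delicate is the intermediate value lemma (ii): in the non-local case one has to control both the long-range tail generated when testing with a truncation (which here cannot be absorbed using any global-in-$v$ hypothesis) and the cross terms of the bilinear form \eqref{eq:bilinear-form}, which are a purely non-local phenomenon. The divergence form symmetry \eqref{eq:symmetry} is essential at exactly this point to make the cross contributions either vanish or have the correct sign, and the averaging-lemma style gain of integrability is what allows the $H^s$-energy, which is only in velocity, to propagate into a measure estimate on a cylinder that is extended in $x$ as well. The ink-spots covering (iii) is technically heavy because of the non-commutative kinetic geometry but is by now standard after \cite{IS, AL}, and the log-transformation feeding (i) into the scheme is routine modulo carefully retaining the tail contribution that shows up in the $\norm{h}_{L^\infty}$ term of \eqref{eq:weakH}.
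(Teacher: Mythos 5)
Your proposal matches the paper's approach: the paper does not prove this theorem but cites \cite{IS, AL} and summarises the argument as the De Giorgi scheme (first lemma for $L^2$--$L^\infty$, second lemma / intermediate-value, and a kinetic Krylov--Safonov covering), which is precisely the architecture you lay out. The one caveat worth flagging is that in \cite{IS, AL} the Weak Harnack is proved under a weaker, averaged cancellation condition on the kernel rather than the pointwise symmetry \eqref{eq:symmetry}; since the present statement assumes \eqref{eq:symmetry}, your reliance on it to kill the cross terms is legitimate here, but it is a restriction of the cited results rather than the full generality of \cite{IS, AL}.
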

The proof of the Weak Harnack inequality follows the De Giorgi method: in a first step one shows the local gain of regularity from $L^2$ to $L^\infty$, then in a second step from $L^\infty$ to $C^\alpha$. The Weak Harnack inequality follows in the end from the $C^\alpha$ regularity by using a covering argument. We refer the reader to \cite{IS} for a proof that is constructive in case that $s \in (0, 1/2)$ and based on compactness arguments in case that $s \geq 1/2$; or to \cite{AL} for a quantitative proof for any $s \in (0, 1)$. 

\subsection{Iteration argument}

Finally, we use a standard iteration argument, see for example \cite[Lemma 8.18]{GiaquintaMartinazzi}.
\begin{lemma}\label{lem:covering}
Let $\phi: [0, T]\to \R$ be a non-negative bounded function. Suppose that for $0 \leq r < R \leq T$ we have
\beqs
	\phi(r) \leq A(R-r)^{-\alpha} + \delta \phi(R)
\eeqs
for some $A, \alpha > 0$, $0 \leq \delta < 1$. Then there exists a constant $c$ depending on $\alpha, \delta$ such that for $0 \leq r < R \leq T$ we have
\beqs
	\phi(r) \leq c A (R-r)^{-\alpha}.
\eeqs
\end{lemma}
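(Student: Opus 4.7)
The plan is to prove the statement by iterating the hypothesis along a carefully chosen increasing sequence in $[r,R]$ and exploiting the fact that $\delta < 1$ to sum the resulting geometric series.

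First I would fix $0 \leq r < R \leq T$ and introduce a parameter $\tau \in (0,1)$ whose value will be chosen in a moment. I would define the sequence
\begin{equation*}
  r_0 := r, \qquad r_{i+1} := r_i + (1-\tau)\tau^i (R-r), \quad i \geq 0,
\end{equation*}
equivalently $r_i = r + (1-\tau^i)(R-r)$, so that $r_i$ is strictly increasing, $r_i \nearrow R$ as $i \to \infty$, and the gaps satisfy $r_{i+1} - r_i = (1-\tau)\tau^i (R-r)$. Applying the hypothesis with the pair $(r_i, r_{i+1})$ in place of $(r,R)$ yields
\begin{equation*}
  \phi(r_i) \leq A (1-\tau)^{-\alpha}\tau^{-i\alpha}(R-r)^{-\alpha} + \delta \phi(r_{i+1}).
\end{equation*}

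Next I would iterate this inequality $k$ times starting from $i=0$. Setting $q := \delta \tau^{-\alpha}$, a straightforward induction gives
\begin{equation*}
  \phi(r) \leq A (1-\tau)^{-\alpha}(R-r)^{-\alpha}\sum_{i=0}^{k-1} q^i + \delta^k \phi(r_k).
\end{equation*}
To make the geometric series converge I would choose $\tau \in (\delta^{1/\alpha}, 1)$, which is possible since $\delta < 1$; this ensures $q < 1$. Passing to the limit $k \to \infty$, the remainder $\delta^k \phi(r_k)$ vanishes because $\phi$ is bounded on $[0,T]$ and $\delta^k \to 0$. Summing the resulting series yields
\begin{equation*}
  \phi(r) \leq \frac{(1-\tau)^{-\alpha}}{1-q}\, A\, (R-r)^{-\alpha},
\end{equation*}
and the constant $c := (1-\tau)^{-\alpha}/(1-\delta \tau^{-\alpha})$ depends only on $\alpha$ and $\delta$, as required.

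There is essentially no serious obstacle here; the only point that requires a bit of care is the calibration of $\tau$ so that simultaneously $q = \delta\tau^{-\alpha}<1$ and the prefactor $(1-\tau)^{-\alpha}$ stays finite, which is exactly why the condition $\delta<1$ is used. The boundedness hypothesis on $\phi$ is used solely to kill the tail term $\delta^k\phi(r_k)$ when taking $k\to\infty$, and the fact that $\phi$ is finite at $R$ (via boundedness on the whole $[0,T]$) is what allows the inequality to be iterated all the way up to $R$.
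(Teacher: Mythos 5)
Your proof is correct and follows essentially the same approach as the paper's (which the paper attributes to a standard iteration lemma and sketches with the same geometric sequence $r_{i+1} = r_i + (1-\tau)\tau^i(R-r)$, the same choice $\delta\tau^{-\alpha} < 1$, and the same passage to the limit). No discrepancy.
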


\section{Tail bound on upper level sets}\label{sec:tail-bound}
In this section we derive the first step to the Strong Harnack inequality: we show a bound on the non-local tail term in $L^1$ by a local quantity. The idea, inspired from \cite{kassmann-weidner}, is to test the equation against $f^{-(1-\zeta)}$ for $\zeta \in (0, 1)$ and where $f > 0$ solves \eqref{eq:1.1}, with the aim of obtaining a local bound on the $L^1$ norm of the tail \eqref{eq:tail}. 
If we work with super-solutions, then the tail has the good sign just by virtue of the definition of $\mathcal L$ \eqref{eq:1.2}. Moreover, for $\zeta < 1$, the concavity of the test function gives a signed term. Finally, if we work with non-negative super-solutions, then we end up with the local tail bound in $L^1$.
We then want to incorporate this $L^1$ tail bound in the next step into the derivation of a local boundedness result. However, this boundedness result is established on level set functions in the spirit of De Giorgi. Even though $f-l$ is a solution to \eqref{eq:1.1} for any $l \in\R_+$, the positive part $(f-l)_+$ is merely a sub-solution. The tail bound in this section, however, requires a super-solution. Thus, in order to exploit the $L^1$ tail bound directly in the energy estimate \eqref{eq:aux1-improved-DG}, we test with the level set function to some inverse power, that is  $(f-l)_+^{-(1-\zeta)}$ for any $l \in \R_+$, and for the sake of obtaining a local bound, we localise with some cutoff.
The non-locality in velocity introduces cross terms, that is terms when $f(v)$ and $f(w)$ are on different sides of the level. These are dealt with by exploiting the polynomial velocity decay of the super-solution. Moreover, the term that arises when both, the function and the test function, are below level $l$, can be made sufficiently small by appropriately twisting the cutoff function on the lower level set: its derivative has to be sufficiently small. 

The appearance of cross terms is a purely non-local effect. In order to use the velocity decay of the solution in the whole space, we need to assume \eqref{eq:1.1} to be satisfied for all $v\in \R^d$.

\begin{proposition}[Non-local to local bound]\label{prop:tail-bound}
Let $R > 0$, $z_0 \in \R^{1+2d}$, and denote by $[t_1, t_2] \times \Omega_x$ the set $[t_1, t_2]  \times \Omega_x := [-R^{2s} + t_0, t_0] \times B_{R^{1+2s}}(x_0 + (t-t_0) v_0)$. For a given $l \in \R_+$, and for any non-negative super-solution $f$ of \eqref{eq:1.1} in $[t_1, t_2] \times\Omega_x \times \R^d$ with a non-negative source term $h$, such that $f(t, x, v) \leq C_q \langle v \rangle^{-q}$ for almost every $(t, x, v) \in [t_1, t_2] \times \Omega_x \times \Omega_v$ for $q > d$; 
and such that the non-negative kernel satisfies \eqref{eq:upperbound}-\eqref{eq:symmetry-nondiv}, we can find for any $\zeta \in \left(0, 1\right)$, a constant $C > 0$ depending on $\zeta, s, d, \Lambda_0, C_q$ such that, 
in case that $l = 0$, 
\bal\label{eq:L1-tail-l0}
	\int_{Q_{\frac{R}{2}}}&\int_{\R^d \setminus B_R}(f-l)_+(w) K(v, w)\chi_{f(v) > l}  \dd w \dd z\\
	&\leq C R^{n(1-\zeta)-2s}\left(\sup_{Q_R} (f-l)_+\right)^{1-\zeta} \left(\int_{Q_R} (f-l)_+ \dd z \right)^{\zeta};
\eal
and in case that $l > 0$,
%
%
\bal\label{eq:L1-tail}
	\int_{Q_{\frac{R}{2}}}\int_{\R^d \setminus B_R}& (f-l)_+(w) \chi_{f > l}(v) K(v, w) \dd w\dd v \\
	&\leq  CR_0^{n(1-\zeta)-2s} \mathcal M_l \abs{\big\{ f > l\big\}  \cap Q_{R_0}}^\zeta,
\eal
where 
\bal\label{M-l}
	&\mathcal M_l \\
	&:=  \left[1 +\left(\frac{\sup_{Q_{R_0}} (f-l)_+}{l}\right)^{\zeta}+ \left(\frac{\sup_{Q_{R_0}} (f-l)_+}{l}\right)^{1-\zeta}\right] \left(l + \sup_{Q_{R_0}}(f-l)_+\right).
\eal
and where $R_0 = 2\max\left\{R, \left(\frac{C_q}{l}\right)^{\frac{1}{q}}\right\}$ with $l > 0$. 
\end{proposition}
\begin{remark}\label{rmk:Lzeta-tail}
\begin{enumerate}[i.]
\item For $l = 0$ we could relate the local $L^\zeta$ norm on the right hand side to the local infimum using the Weak Harnack inequality of Theorem \ref{thm:weakH}, if we additionally assume \eqref{eq:coercivity}. Moreover, for $l =0$, it would suffice to suppose \eqref{eq:1.1} to hold on a bounded subset of $\R^d$.
\end{enumerate}
\end{remark}
For the proof, we collect preliminary estimates in the following lemma.
\begin{lemma}\label{lem:prelim}
Let $R > 0$ and $v_0 \in \R^d$. Let $\eta \in C_c^\infty(\R^d)$ be some smooth function with support in $B_R(v_0)$. Let $f: \R^d \to \R$ be non-negative, $l \geq 0$, $f_{l,\varepsilon} = (f -l)_+ +\varepsilon$ for some $\varepsilon > 0$ and $\zeta \in (0, 1)$. 
Then there holds:
\begin{enumerate}[i.] 
\item First, we have for any $v, w$ outside $f(v) \leq l \cap f(w) \leq l$
\bal\label{eq:3.1}
	\big[f(v) - f(w)\big] &\Big[f_{l,\varepsilon}^{-(1-\zeta)}(v)-  f_{l,\varepsilon}^{-(1-\zeta)}(w)\Big] \\
	&\leq - \frac{4(1-\zeta)}{\zeta^2}\frac{\big[f(v) - f(w)\big] }{\big[f_{l,\varepsilon}(v)- f_{l,\varepsilon}(w)\big] } \Big(f^{\frac{\zeta}{2}}_{l, \varepsilon}(v) - f^{\frac{\zeta}{2}}_{l,\varepsilon}(w)\Big)^2.
\eal
\item Second, for any $v, w$ outside $f(v) \leq l \cap f(w) \leq l$
\bal\label{eq:3.2}
	&\min\Big\{f_{l, \varepsilon}^{-(1-\zeta)}(v), f_{l, \varepsilon}^{-(1-\zeta)}(w)\Big\} \abs{f(v) - f(w)} \\
	&\qquad \qquad\leq \frac{2}{\zeta}\frac{\big[f(v) - f(w)\big] }{\big[f_{l,\varepsilon}(v)- f_{l,\varepsilon}(w)\big] } \max\left\{f_{l, \varepsilon}^{\frac{\zeta}{2}}(v), f_{l, \varepsilon}^{\frac{\zeta}{2}}(w)\right\}\abs{f_{l, \varepsilon}^{\frac{\zeta}{2}}(v)- f_{l, \varepsilon}^{\frac{\zeta}{2}}(w)}.
\eal
\item Third, 
\bal\label{eq:3.4}
	\frac{1}{2} \left\vert\left(\eta f_{l, \varepsilon}^{\frac{\zeta}{2}}\right)(v) -\left(\eta  f_{l, \varepsilon}^{\frac{\zeta}{2}}\right)(w)\right\vert^2 &- \big(\eta(v) - \eta(w)\big)^2  f_{l, \varepsilon}^\zeta(w) \\
	&\leq \min\big\{\eta^2(v), \eta^2(w)\big\} \abs{ f_{l, \varepsilon}^{\frac{\zeta}{2}}(v) -  f_{l, \varepsilon}^{\frac{\zeta}{2}}(w)}^2.
\eal
and 
\bal\label{eq:3.5}
	&\max\big\{\eta^2(v), \eta^2(w)\big\} \abs{  f_{l, \varepsilon}^{\frac{\zeta}{2}}(v) -  f_{l, \varepsilon}^{\frac{\zeta}{2}}(w) }^2 \\
	&\quad\leq 2  \abs{ \left(\eta  f_{l, \varepsilon}^{\frac{\zeta}{2}}\right)(v) - \left(\eta  f_{l, \varepsilon}^{\frac{\zeta}{2}}\right)(w) }^2 + 2\big(\eta(v) - \eta(w)\big)^2  f_{l, \varepsilon}^\zeta(v).
\eal
\end{enumerate}
\end{lemma}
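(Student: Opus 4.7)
The plan is to treat each of the three parts as an elementary pointwise inequality for the two nonnegative reals $A := f_{l,\varepsilon}(v)$ and $B := f_{l,\varepsilon}(w)$, working (by $v \leftrightarrow w$ symmetry) in the configuration $A \geq B > 0$, and then perform a small bookkeeping step to convert between $f_{l,\varepsilon}$-differences and $f$-differences where needed.

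For (i), the analytic input is Cauchy--Schwarz applied to $A^{\zeta/2} - B^{\zeta/2} = \tfrac{\zeta}{2}\int_B^A t^{\zeta/2-1}\dd t$, yielding
\[
(A^{\zeta/2} - B^{\zeta/2})^2 \leq \tfrac{\zeta^2}{4}(A-B)\int_B^A t^{\zeta-2}\dd t = \tfrac{\zeta^2}{4(1-\zeta)}(A-B)\bigl(B^{-(1-\zeta)} - A^{-(1-\zeta)}\bigr).
\]
Rearranging is exactly \eqref{eq:3.1} whenever both $f(v), f(w) > l$, since in that case the ratio $\tfrac{f(v)-f(w)}{f_{l,\varepsilon}(v)-f_{l,\varepsilon}(w)}$ equals $1$. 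In the mixed configuration, where one value is above and the other at or below $l$, truncation forces $|f_{l,\varepsilon}(v) - f_{l,\varepsilon}(w)| \leq |f(v) - f(w)|$, so the ratio is $\geq 1$ and appears on the correct side to absorb the discrepancy after the same Cauchy--Schwarz computation.

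For (ii), dividing both sides by $f(v) - f(w) > 0$ and multiplying by $A - B > 0$ reduces the claim to the elementary inequality $A^{\zeta - 1}(A - B) \leq \tfrac{2}{\zeta}A^{\zeta/2}(A^{\zeta/2} - B^{\zeta/2})$, which is just the concavity-based tangent estimate $A^{\zeta/2} - B^{\zeta/2} \geq \tfrac{\zeta}{2}A^{\zeta/2 - 1}(A - B)$ multiplied by $A^{\zeta/2}$. For (iii), setting $u := f_{l,\varepsilon}^{\zeta/2}$ and decomposing
\[
\eta(v)u(v) - \eta(w)u(w) = \eta(v)\bigl(u(v) - u(w)\bigr) + u(w)\bigl(\eta(v) - \eta(w)\bigr),
\]
Young's inequality $2ab \leq a^2 + b^2$ gives $(\eta u(v) - \eta u(w))^2 \leq 2\eta^2(v)(u(v)-u(w))^2 + 2 u^2(w)(\eta(v)-\eta(w))^2$; dividing by $2$ yields \eqref{eq:3.4}, with the minimum on the right produced by choosing, according to which of $\eta^2(v), \eta^2(w)$ is smaller, either this decomposition or its $v \leftrightarrow w$ mirror. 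The companion bound \eqref{eq:3.5} is obtained by rearranging these same decompositions and applying Young's inequality in the reverse direction.

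The main obstacle I expect is the bookkeeping in (i) in the mixed configuration: $f_{l,\varepsilon}$ is flat below level $l$ while $f$ is not, so the two ``differences'' appearing on the left and right of \eqref{eq:3.1} genuinely differ, and the correction factor $\tfrac{f(v)-f(w)}{f_{l,\varepsilon}(v)-f_{l,\varepsilon}(w)}$ must be tracked carefully. Once that case split is in place, the remaining content is essentially one-dimensional calculus.
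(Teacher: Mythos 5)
Your proposal is correct and follows essentially the same route as the paper: Cauchy--Schwarz on the integral representation of $f_{l,\varepsilon}^{\zeta/2}(v)-f_{l,\varepsilon}^{\zeta/2}(w)$ for (i), a concavity/mean-value estimate for (ii), and the product-rule splitting plus Young's inequality for (iii). One small caveat worth flagging (shared with the paper's own ``WLOG $\eta(v)\leq\eta(w)$'' step): the $v\leftrightarrow w$ mirror decomposition in (iii) produces $f_{l,\varepsilon}^\zeta(v)$ rather than $f_{l,\varepsilon}^\zeta(w)$ on the right, so it does not literally yield the $\min$/$\max$ forms of \eqref{eq:3.4}--\eqref{eq:3.5} as stated for the opposite ordering of $\eta$; this is harmless because the downstream estimates \eqref{eq:I-sign}--\eqref{eq:I-sym} only invoke the single unsplit decomposition, but it is an imprecision your write-up inherits.
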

\begin{proof}[Proof of Lemma \ref{lem:prelim}]
\begin{enumerate}[i.]
	\item We define for $\xi > 0$
		\beqs
			F(\xi) := \xi^{-(1-\zeta)},
		\eeqs
		so that 
		\beqs
			F'(\xi) = -(1-\zeta) \xi^{-(2-\zeta)} \leq 0.
		\eeqs
		Then we compute using Jensen's inequality for all $v, w$ outside $f(v) \leq l \cap f(w) \leq l$
		\bals
			\big[f(v) - f(w)\big] &\Big[f_{l,\varepsilon}^{-(1-\zeta)}(v)-   f_{l,\varepsilon}^{-(1-\zeta)}(w)\Big]\\
			 &= \big[f(v) - f(w)\big] \Big[ F\big(f_{l,\varepsilon}(v)\big)-   F\big(f_{l,\varepsilon}(w)\big)\Big]\\
			&= \big[f(v) - f(w)\big]\int_{f_{l,\varepsilon}(w)}^{f_{l,\varepsilon}(v)} F'(\xi) \dd \xi\\
			&= -(1-\zeta)\big[f(v) - f(w)\big] \int^{f_{l,\varepsilon}(v)}_{f_{l,\varepsilon}(w)} \xi^{-(2-\zeta)} \dd \xi\\
			&\leq - (1-\zeta)\frac{\big[f(v) - f(w)\big] }{\big[f_{l,\varepsilon}(v)- f_{l,\varepsilon}(w)\big] }\Bigg(\int^{f_{l,\varepsilon}(v)}_{f_{l,\varepsilon}(w)} \xi^{-\frac{(2-\zeta)}{2}} \dd \xi\Bigg)^{2}\\
			&= - \frac{4(1-\zeta)}{\zeta^2}\frac{\big[f(v) - f(w)\big] }{\big[f_{l,\varepsilon}(v)- f_{l,\varepsilon}(w)\big] } \Big(f^{\frac{\zeta}{2}}_{l, \varepsilon}(v) - f^{\frac{\zeta}{2}}_{l,\varepsilon}(w)\Big)^2.
		\eals
		This proves \eqref{eq:3.1}.
	\item To prove \eqref{eq:3.2} we denote by 
	\beqs
		\mathcal F'(\xi) := \sqrt{-F'(\xi)} = (1-\zeta)^{\frac{1}{2}} \xi^{-\frac{(2-\zeta)}{2}},
	\eeqs
	so that 
	\beqs
		\mathcal F(\xi) = \frac{2(1-\zeta)^{\frac{1}{2}}}{\zeta} \xi^{\frac{\zeta}{2}}.
	\eeqs
	Then, for all $v, w$ outside $f(v) \leq l \cap f(w) \leq l$, we note by Cauchy's mean value theorem
	\bal\label{eq:aux-3.2}
		\frac{\zeta}{2(1-\zeta)^{\frac{1}{2}}}&\frac{\abs{f(v) - f(w)}}{\Big\vert f_{l, \varepsilon}^{\frac{\zeta}{2}}(v) - f_{l, \varepsilon}^{\frac{\zeta}{2}}(w)\Big\vert} = \frac{\abs{f(v) - f(w)}}{\abs{\mathcal F\big(f_{l, \varepsilon}(v)\big) - \mathcal F\big(f_{l, \varepsilon}(w)\big)}} \\
		&\leq\frac{\abs{f(v) - f(w)}}{\abs{f_{l, \varepsilon}(v) - f_{l, \varepsilon}(w)}} \max\Bigg\{\frac{1}{\mathcal F'(f_{l, \varepsilon}(v))}, \frac{1}{\mathcal F'(f_{l,\varepsilon}(w))}\Bigg\}.
	\eal
	Upon multiplying \eqref{eq:aux-3.2} by $\min\big\{f_{l, \varepsilon}^{-(1-\zeta)}(v), f_{l, \varepsilon}^{-(1-\zeta)}(w)\big\}$ we deduce
	\bals
		\frac{\zeta}{2}&\min\Big\{f_{l, \varepsilon}^{-(1-\zeta)}(v), f_{l, \varepsilon}^{-(1-\zeta)}(w)\Big\}\frac{\abs{f(v) - f(w)}}{\Big\vert f_{l, \varepsilon}^{\frac{\zeta}{2}}(v) - f_{l, \varepsilon}^{\frac{\zeta}{2}}(w)\Big\vert}  \\
		&\leq \frac{\abs{f(v) - f(w)}}{\abs{f_{l, \varepsilon}(v) - f_{l, \varepsilon}(w)}} \max\Bigg\{f_{l, \varepsilon}^{\frac{(2-\zeta)}{2}}(v),f_{l, \varepsilon}^{\frac{(2-\zeta)}{2}}(w)\Bigg\}\min\Big\{f_{l, \varepsilon}^{-(1-\zeta)}(v), f_{l, \varepsilon}^{-(1-\zeta)}(w)\Big\}\\
		&\leq\frac{\abs{f(v) - f(w)}}{\abs{f_{l, \varepsilon}(v) - f_{l, \varepsilon}(w)}}  \max \Bigg\{f_{l, \varepsilon}^{\frac{\zeta}{2}}(v),f_{l, \varepsilon}^{\frac{\zeta}{2}}(w)\Bigg\},
	\eals
	or equivalently \eqref{eq:3.2}.
	\item To prove \eqref{eq:3.4} we assume with no loss in generality that $\eta(v) \leq \eta(w)$. Then we simply write
	\bals
		\eta(v)  f_{l, \varepsilon}^{\frac{\zeta}{2}}(v) - \eta(w)  f_{l, \varepsilon}^{\frac{\zeta}{2}}(w) = \eta(v)\Big(  f_{l, \varepsilon}^{\frac{\zeta}{2}}(v) -   f_{l, \varepsilon}^{\frac{\zeta}{2}}(w)\Big) +   f_{l, \varepsilon}^{\frac{\zeta}{2}}(w) \Big(\eta(v) - \eta(w)\Big).
	\eals
	This implies
	\bals
		\Big\vert \eta(v)  f_{l, \varepsilon}^{\frac{\zeta}{2}}(v) - \eta(w)  f_{l, \varepsilon}^{\frac{\zeta}{2}}(w) \Big\vert^2 \leq2\eta^2(v)\Big\vert  f_{l, \varepsilon}^{\frac{\zeta}{2}}(v) -   f_{l, \varepsilon}^{\frac{\zeta}{2}}(w)\Big\vert^2+ 2 f_{l, \varepsilon}^{\zeta}(w) \Big(\eta(v) - \eta(w)\Big)^2.
	\eals
	Similarly, to prove \eqref{eq:3.5} we note
	\beqs
		\eta(w)\Big( f_{l,\varepsilon}^{\frac{\zeta}{2}}(v)-   f_{l, \varepsilon}^{\frac{\zeta}{2}}(w)\Big) = \eta(v)  f_{l, \varepsilon}^{\frac{\zeta}{2}}(v) -  \eta(w)  f_{l, \varepsilon}^{\frac{\zeta}{2}}(w) + \Big(\eta(w) - \eta(v)\Big) f_{l, \varepsilon}^{\frac{\zeta}{2}}(v),
	\eeqs
	so that
	\beqs
		\eta^2(w)\Big\vert  f_{l, \varepsilon}^{\frac{\zeta}{2}}(v)-   f_{l, \varepsilon}^{\frac{\zeta}{2}}(w)\Big\vert^2 \leq 2  \Big\vert\eta(v)  f_{l, \varepsilon}^{\frac{\zeta}{2}}(v) -  \eta(w)  f_{l, \varepsilon}^{\frac{\zeta}{2}}(w)\Big\vert^2 +2 \Big(\eta(w) - \eta(v)\Big)^2 f_{l, \varepsilon}^{\zeta}(v).
	\eeqs
\end{enumerate}\qedhere
\end{proof}
\begin{proof}[Proof of Proposition \ref{prop:tail-bound}]
Our aim is to get a local bound on the tail on the upper level set. To this end we do a concavity estimate on the upper level sets by testing the equation with the level set function of the solution to some inverse power. This creates \textit{cross} terms. These are dealt with by exploiting the polynomial decay of the super-solution. 

With no loss in generality, we may assume $h = 0$. Indeed, if $h \geq 0$, then $f$ is a super-solution to \eqref{eq:1.1} with $h = 0$ as well. Moreover, due to the translation invariance, we may, with no loss in generality, consider $z_0 = (0, 0, 0)$. 


As discussed in Remark \ref{rmk:indicator-testing}, the justification of the computations below in case that $s \geq \frac{1}{2}$ relies on a comparison of the solution for \eqref{eq:1.1} to a solution of the fractional Kolmogorov equation, and quantifying the loss of regularity that the roughness of the coefficient introduces. The solution can be represented through the fundamental solution of the fractional Kolmogorov equation for an appropriate source term. We show that we can work with a solution that enjoys sufficient differentiability in order for the test function constructed below to be admissible. In the limit, where the smoothening of the fractional Laplacian is lost, this higher regularity vanishes as well. But only once the statement of Proposition \ref{prop:tail-bound} is proven for more regular solutions, we pass to the limit, which solves \eqref{eq:1.1}. Details are carried out in the Appendix \ref{sec:approx}. For $s < \frac{1}{2}$ this argument is obsolete.

For $l \in \R_+$ we consider $\psi_l \geq 0$ given by
\beq\label{eq:psi-l}
	\psi_l(t, x, v)  := \eta^2(t, x, v) \chi_{f > l}(t, x, v) + \chi_{f < l}(t, x, v)\xi(v),
\eeq
where $\eta \in C_c^\infty(\R^{2d+1})$ with $0 \leq \eta \leq 1$ such that $\eta = 1$ in $Q_{\frac{3R}{4}}$ and $\eta = 0$ outside $Q_{\frac{7R}{8}}$; moreover, $\xi$ is a cutoff in $v$ such that $\xi = 1$ in $B_r$ and $\xi = 0$ outside $B_{\bar R}$ with $r, \bar R$ \textit{to be determined.}

With no loss in generality, we assume there is at least one point $z_0$ in $Q_{R}$ where $f(z_0) > l$. Otherwise the statement \eqref{eq:L1-tail} is trivially satisfied.

\textit{Step 1: Non-local operator.}

We first consider the non-local operator $\mathcal E$ for fixed $(t, x)$.
Due to the divergence structure of $K$ \eqref{eq:symmetry}, we find
\bals
	\mathcal E\left(f, \psi f_{l, \varepsilon}^{-(1-\zeta)}\right)&= \int_{\R^d}\int_{\R^d}\big[f(v) - f(w)\big] f_{l,\varepsilon}^{-(1-\zeta)}(v) \psi_l(v) K(v, w) \dd w \dd v\\
	&= \frac{1}{4}\int_{\R^d}\int_{\R^d} \big[f(v) - f(w)\big] \Big[\big(f_{l,\varepsilon}^{-(1-\zeta)}\psi_l\big)(v)-  \big(f_{l,\varepsilon}^{-(1-\zeta)}\psi_l\big)(w)\Big]\\
	&\qquad\qquad\qquad \times \big[K(v, w)+K(w, v)\big] \dd w \dd v.
\eals
We then split $\mathcal E$ into three parts: we distinguish when $f(v), f(w) > l$, when $f(v), f(w) < l$ and when either $f(v) > l > f(w)$ or $f(w) > l >f(v)$. We write 
\bals
	&\mathcal E\left(f, \psi f_{l, \varepsilon}^{-(1-\zeta)}\right)\\
	&= \int_{\R^d}\int_{\R^d}\big[f(v) - f(w)\big] f_{l,\varepsilon}^{-(1-\zeta)}(v) \psi_l(v)K(v, w) \dd w \dd v\\
	&= \int_{\R^d}\int_{\R^d}\big[f(v) - f(w)\big]  f_{l,\varepsilon}^{-(1-\zeta)}(v)\psi_l(v) K(v, w) \chi_{f(v) > l}  \chi_{f(w) > l}  \dd w \dd v\\
	&\quad +\int_{\R^d}\int_{\R^d}\big[f(v) - f(w)\big]  f_{l,\varepsilon}^{-(1-\zeta)}(v) \psi_l(v)K(v, w) \chi_{f(v) < l}  \chi_{f(w) < l}  \dd w \dd v\\
	&\quad +\int_{\R^d}\int_{\R^d}\big[f(v) - f(w)\big]  f_{l,\varepsilon}^{-(1-\zeta)}(v) \psi_l(v) K(v, w) \big[\chi_{f(v) > l > f(w)}+  \chi_{f(w) > l > f(v)}\big]  \dd w \dd v\\
	&=: \mathcal E_{up} + \mathcal E_{low} + \mathcal E_{cross}. 
\eals
The indicator functions that we added respect the symmetry of each integrand.

\textit{Step 1-i: Non-locality on the lower level set.}

We find writing $K^{sym}(v, w):= K(v, w)+K(w, v)$
\begin{align*}
	\mathcal E_{low} &=\frac{1}{4}\int_{\R^d}\int_{\R^d} \big[f(v) - f(w)\big] \Big[\big( f_{l,\varepsilon}^{-(1-\zeta)}\psi_l\big)(v)-  \big( f_{l,\varepsilon}^{-(1-\zeta)}\psi_l\big)(w)\Big]\\
	&\qquad \qquad\qquad \times K^{sym}(v, w)\chi_{f(v) < l}  \chi_{f(w) < l} \dd w \dd v\\
	&= \frac{\varepsilon^{-(1-\zeta)}}{4}\int_{\R^d}\int_{\R^d} \big[f(v) - f(w)\big] \Big[\xi(v)-  \xi(w)\Big]K^{sym}(v, w)\chi_{f(v) < l}  \chi_{f(w) < l} \dd w \dd v \\
	&= \frac{\varepsilon^{-(1-\zeta)}}{2}\int_{\R^d}\int_{\R^d} f(v)\Big[\xi(v)-  \xi(w)\Big]K^{sym}(v, w)\chi_{f(v) < l}  \chi_{f(w) < l} \dd w \dd v\\
	&\leq \frac{\varepsilon^{-(1-\zeta)}}{2}\int_{B_{\bar R}}\int_{\R^d} f(v)\Big[\xi(v)-  \xi(w)\Big]K^{sym}(v, w)\chi_{f(v) < l}  \chi_{f(w) < l} \dd w \dd v,
\end{align*}
where the last inequality uses the definition of $\xi$. 

Due to \eqref{eq:upperbound}, we have for $\abs{v-w} > \bar R -r$ 
\bals
	\int_{B_{\bar R}}&\int_{\abs{v-w} > \bar R -r} f(v)\Big[\xi(v)-  \xi(w)\Big]\big[K(v, w)+K(w, v)\big]\chi_{f(v) < l}  \chi_{f(w) < l} \dd w \dd v\\
	&\leq \int_{B_{\bar R}}\int_{\abs{v-w} > \bar R -r} f(v)\xi(v)\big[K(v, w)+K(w, v)\big]\chi_{f(v) < l}  \chi_{f(w) < l} \dd w \dd v\\
	&\leq C (\bar R -r)^{-2s} \int_{B_{\bar R}} f(v) \dd v. 
\eals
For $\abs{v-w} < \bar R-r$ we Taylor-expand $\xi$, in case that $s < \frac{1}{2}$ up to first order, in case that $s \geq \frac{1}{2}$ up to second order. We start with $s < \frac{1}{2}$
\bals
\int_{B_{\bar R}}&\int_{\abs{v-w} < \bar R -r} f(v)\Big[\xi(v)-  \xi(w)\Big]\big[K(v, w)+K(w, v)\big]\chi_{f(v) < l}  \chi_{f(w) < l} \dd w \dd v\\ 
&\leq \norm{D \xi}_{L^\infty} \int_{B_{\bar R}} f(v) \int_{\abs{v-w} < \bar R -r} \abs{v-w} \big[K(v, w)+K(w, v)\big] \dd w \dd v\\
&\leq C (\bar R -r)^{1-2s} (\bar R -r)^{-1} \int_{B_{\bar R}} f(v) \dd v, 
\eals
using that $\norm{D \xi}_{L^\infty} \sim  (\bar R -r)^{-1}$, and \eqref{eq:upperbound}. 

For $s \geq \frac{1}{2}$, we need to do a further distinction. We abbreviate $K^{sym}(v, w) = K(v, w)+K(w, v)$, so that
\bals
	&\int_{B_{\bar R}}\int_{\abs{v-w} < \bar R -r} f(v)\Big[\xi(v)-  \xi(w)\Big]K^{sym}(v, w)\chi_{f(v) < l}  \chi_{f(w) < l} \dd w \dd v\\ 
	&=\int_{B_{\bar R}}\int_{\abs{z} < \bar R -r} f(v)\Big[\xi(v)-  \xi(v+z)\Big]K^{sym}(v, v+z)\chi_{f(v) < l}  \chi_{f(v+z) < l} \dd z \dd v\\ 
	&= \int_{B_{\bar R}}\int_{\abs{z} < \bar R -r} f(v)\Big[\xi(v)-  \xi(v+z)\Big]K^{sym}(v, v+z)\chi_{f(v) < l}  \chi_{f(v+z) < l}\chi_{f(v-z) <l}  \dd z \dd v \\
	&\quad +\int_{B_{\bar R}}\int_{\abs{z} < \bar R -r} f(v)\Big[\xi(v)-  \xi(v+z)\Big]K^{sym}(v, v+z)\chi_{f(v) < l}  \chi_{f(v+z) < l}\chi_{f(v-z) >l}  \dd z \dd v.
\eals

We Taylor expand the first term, and use the symmetries \eqref{eq:symmetry} and \eqref{eq:symmetry-nondiv}, due to which $K^{sym}(v, v+z) = K^{sym}(v, v-z)$,
\bals
	\int_{B_{\bar R}}&\int_{\abs{z} < \bar R -r}f(v)  \Big[\xi(v)-  \xi(v+z)\Big]K^{sym}(v, v+z)\chi_{f(v) < l}  \chi_{f(v+z) < l}\chi_{f(v-z) <l}  \dd z \dd v \\
	&=\int_{B_{\bar R}}f(v)D\xi(v)\chi_{f(v) < l}\int_{\abs{z} < \bar R -r}   (-z)K^{sym}(v, v+z)\chi_{f(v+z) < l}\chi_{f(v-z) <l}  \dd z \dd v \\
	&\quad + \norm{D^2 \xi}_{L^\infty}\int_{B_{\bar R}}f(v)\chi_{f(v) < l}\int_{\abs{z} < \bar R -r}   \abs{z}^2K^{sym}(v, v+z)  \chi_{f(v+z) < l}\chi_{f(v-z) <l}  \dd z \dd v \\
	&\leq C (\bar R - r)^{-2} (\bar R - r)^{2-2s}\int_{B_{\bar R}}f(v) \dd v. 
\eals
For the second term, we use the fact that on the set $\{f(v-z) > l\}$ we have $l < f(v-z) \leq C_q \langle v-z\rangle^{-q}$, so that $\abs{ v-z} \leq\left( \frac{C_q}{l}\right)^{\frac{1}{q}}$. Keeping this in mind, we find
\begin{align*}
	&\int_{B_{\bar R}}\int_{\abs{z} < \bar R -r} f(v)\Big[\xi(v)-  \xi(v+z)\Big]K^{sym}(v, v+z)\chi_{f(v) < l}  \chi_{f(v+z) < l}\chi_{f(v-z) >l}  \dd z \dd v\\
	&= \int_{B_{\bar R}}\int_{\abs{z} < \bar R -r}f(v)  \big[\xi(v) - 1\big]K^{sym}(v, v+z)\chi_{f(v) < l}  \chi_{f(v+z) < l}\chi_{f(v-z) >l}  \dd z \dd v\\
	&\quad + \int_{B_{\bar R}}\int_{\abs{z} < \bar R -r}f(v)  \big[1-\xi(v+z) \big]K^{sym}(v, v+z)\chi_{f(v) < l}  \chi_{f(v+z) < l}\chi_{f(v-z) >l}  \dd z \dd v\\
	\intertext{using that $\xi \leq 1$}
	&\leq  \int_{B_{\bar R}}\int_{\abs{z} < \bar R -r}f(v)  \big[1-\xi(v+z) \big]K^{sym}(v, v+z)\chi_{f(v) < l}  \chi_{f(v+z) < l}\chi_{f(v-z) >l}  \dd z \dd v.
\end{align*}
But $1-\xi(v+z)= 0$ for $\abs{v+z} < r$. Thus $\abs{v+z} \geq r$ and if we pick $r \geq 2\left( \frac{C_q}{l}\right)^{\frac{1}{q}}$, then $\abs{v-z} < \frac{r}{2}$, so that
\[
	\abs{z} \geq \frac{1}{2}\left(\abs{v+z} - \abs{v-z}\right) \geq \frac{1}{2}\left(r - \frac{r}{2}\right) = \frac{r}{4}. 
\]
Therefore, we find
\begin{align*}
&\int_{B_{\bar R}}\int_{\abs{z} < \bar R -r}f(v)  \big[1-\xi(v+z) \big]K^{sym}(v, v+z)\chi_{f(v) < l}  \chi_{f(v+z) < l}\chi_{f(v-z) >l}  \dd z \dd v \\
&= \int_{B_{\bar R}}\int_{\frac{r}{4} < \abs{z} < \bar R -r}f(v)  \big[1-\xi(v+z) \big]K^{sym}(v, v+z)\chi_{f(v) < l}  \chi_{f(v+z) < l}\chi_{f(v-z) >l}  \dd z \dd v \\
\intertext{using $w = v-z$, Fubini, bounding $1-\xi \leq 1$, and using $\abs{v-w} = \abs{2v - w -v} = \abs{v-(2v-w)}$}
&\leq \int_{\abs{w} <\frac{r}{2}}\int_{\abs{v} \leq \bar R, \frac{r}{4} < \abs{v-w} < \bar R-r}f(v)  K^{sym}(v, 2v-w)\chi_{f(v) < l}  \chi_{f(w) >l}  \dd v \dd w \\
&\leq Cl r^{-2s} \int_{\abs{w} <\frac{r}{2}} \chi_{f(w) >l} \dd w. 
\end{align*}

We conclude 
\bal\label{eq:E-neg}
\mathcal E_{low} \leq C \varepsilon^{-(1-\zeta)}(\bar R - r)^{-2s}\int_{B_{\bar R}} f(v) \dd v + C\varepsilon^{-(1-\zeta)} l r^{-2s}  \abs{\{f>l\}  \cap B_{\frac{r}{2}}}.
\eal

\textit{Step 1-ii: Cross non-locality.}

For the cross term, we note that $l > 0$. If $l = 0$, then $\chi_{f(v) > l > f(w)}+  \chi_{f(w) > l > f(v)} = 0$ due to the non-negativity of $f$. We find
\begin{align*}
	\mathcal E_{cross} &= \frac{1}{4}\int_{\R^d}\int_{\R^d} \big[f(v) - f(w)\big] \Big[\big( f_{l,\varepsilon}^{-(1-\zeta)}\psi_l\big)(v)-  \big( f_{l,\varepsilon}^{-(1-\zeta)}\psi_l\big)(w)\Big]\\
	&\qquad \qquad\qquad \times \big[K(v, w)+K(w, v)\big]\big[\chi_{f(v) > l > f(w)}+  \chi_{f(w) > l > f(v)}\big] \dd w \dd v\\
	&=\frac{1}{2}\int_{\R^d}\int_{\R^d} \big[f(v) - f(w)\big] \Big[\big( f_{l,\varepsilon}^{-(1-\zeta)}\psi_l\big)(v)-  \big( f_{l,\varepsilon}^{-(1-\zeta)}\psi_l\big)(w)\Big]\\
	&\qquad \qquad\qquad \times \big[K(v, w)+K(w, v)\big]\chi_{f(v) > l > f(w)}\dd w \dd v\\
	&=\frac{1}{2}\int_{\R^d}\int_{\R^d} \big[f(v) - f(w)\big] \Big[\eta^2(v) (f-l+\varepsilon)^{-(1-\zeta)}(v)-  \varepsilon^{-(1-\zeta)}\xi(w)\Big] \\
	&\qquad \qquad\qquad \times\big[K(v, w)+K(w, v)\big]\chi_{f(v) > l > f(w)}\dd w \dd v\\
	&=\frac{1}{2}\int_{\R^d}\int_{\R^d} \big[f(v) - f(w)\big] \Big[\eta^2(v) (f-l+\varepsilon)^{-(1-\zeta)}(v)-  \varepsilon^{-(1-\zeta)}\Big] \\
	&\qquad \qquad\qquad \times\big[K(v, w)+K(w, v)\big]\chi_{f(v) > l > f(w)}\dd w \dd v\\
	&\quad+ \frac{\varepsilon^{-(1-\zeta)}}{2}\int_{\R^d}\int_{\R^d} \big[f(v) - f(w)\big] \big[1-\xi(w)  \big] \\
	&\qquad \qquad\qquad \times\big[K(v, w)+K(w, v)\big]\chi_{f(v) > l > f(w)}\dd w \dd v. 
\end{align*}
Due to the concavity of the test function, the first part of the cross term has a sign. For the second part we use the fact that $f(v) \leq C_q \langle v \rangle^{-q}$ for any $q > 0$, so that on $\{f(v) > l\}$ we have $\abs{v} < \langle v \rangle < 
\left(\frac{C_q}{l}\right)^{\frac{1}{q}}$. At the same time, $(1-\xi)(w) =0$ for $\abs{w} \leq r$, so if we pick $r \geq 2\left(\frac{C_q}{l}\right)^{\frac{1}{q}}$ then $\abs{v-w} > \frac{r}{2}$. We therefore find
\bal\label{eq:E-cross}
	\mathcal E_{cross}&\leq \frac{\varepsilon^{-(1-\zeta)}}{2}\int_{\R^d}\int_{\abs{v-w} > \frac{r}{2}} \big[f(v) - f(w)\big] \big[1-\xi(w)  \big] \\
	&\qquad \qquad\qquad \times\big[K(v, w)+K(w, v)\big]\chi_{f(v) > l > f(w)}\dd w \dd v\\
	&\leq \frac{\varepsilon^{-(1-\zeta)}}{2}\int_{B_{\frac{r}{2}}}\int_{\abs{v-w} > \frac{r}{2}} \big[(f - l)_+(v)+l\big]\\
	&\qquad \qquad\qquad \times\big[K(v, w)+K(w, v)\big]\chi_{f(v) > l > f(w)}\dd w \dd v \\
	&\leq \frac{\varepsilon^{-(1-\zeta)}}{2}\left[\left(\frac{r}{2}\right)^{-2s} \int_{B_{\frac{r}{2}}} (f - l)_+(v) \dd v + l \abs{\{ f > l \} \cap B_{\frac{r}{2}}}\right].
\eal

\textit{Step 1-iii: Non-locality on the upper level set.}

It remains to bound $\mathcal E_{up}$. 
For fixed time and space, we find 
\bal\label{eq:E-pos}
	\mathcal E_{up} &= \int_{\R^d}\int_{\R^d}\big[f(v) - f(w)\big]  f_{l,\varepsilon}^{-(1-\zeta)}(v) \eta^2(v)K(v, w)\chi_{f(v) > l} \chi_{f(w)> l} \dd w \dd v\\
	&=\int_{B_R}\int_{B_R}\big[f(v) - f(w)\big]  f_{l,\varepsilon}^{-(1-\zeta)}(v)\eta^2(v) K(v, w)\chi_{f(v) > l} \chi_{f(w)> l} \dd w \dd v\\
	&\quad + \int_{B_R}\int_{\R^d \setminus B_R}\big[f(v) - f(w)\big] f_{l,\varepsilon}^{-(1-\zeta)}(v)\eta^2(v) K(v, w)\chi_{f(v) > l} \chi_{f(w)> l} \dd w \dd v\\
	&=: \mathcal E_{up}^{loc} + \mathcal E_{up}^{tail}. 
\eal
For $\mathcal E^{tail}_{up}$, we note that there is no singularity, due to the choice of $\eta$, which vanishes outside $B_{\frac{7R}{8}}$. Therefore, by \eqref{eq:upperbound}
\bal\label{eq:E-pos-tail}
	\mathcal E_{up}^{tail} &= \int_{B_R}\int_{\R^d \setminus B_R}\big[f(v) - f(w)\big] f_{l,\varepsilon}^{-(1-\zeta)}(v) \eta^2(v)K(v, w)\chi_{f(v) > l} \chi_{f(w)> l} \dd w \dd v\\
	&= \int_{B_R}\int_{\R^d \setminus B_R} (f-l)_+(v) f_{l,\varepsilon}^{-(1-\zeta)}(v)\eta^2(v) K(v, w)\chi_{f(v) > l} \chi_{f(w)> l} \dd w \dd v\\
	&\quad - \int_{B_R}\int_{\R^d \setminus B_R}(f-l)_+(w)  f_{l,\varepsilon}^{-(1-\zeta)}(v)\eta^2(v) K(v, w)\chi_{f(v) > l} \chi_{f(w)> l} \dd w \dd v\\
	&\leq \Lambda_0 \left(\frac{R}{8}\right)^{-2s} \int_{B_R} ((f-l)_++\varepsilon)^{\zeta}(v) \chi_{f > l}(v) \dd v\\
	&\quad- \int_{B_R}\int_{\R^d \setminus B_R}(f-l)_+(w) f_{l,\varepsilon}^{-(1-\zeta)}(v)\eta^2(v) K(v, w)\chi_{f(v) > l} \dd w \dd v.
\eal
In particular, the tail of $(f-l)_+$ has a negative sign.

We now abbreviate $K^{sym}(v, w) := K(v, w) +K(w, v)$. Then we find for the local contribution of $\mathcal E_{up}$
\bal\label{eq:E-pos-loc-def}
	\mathcal E_{up}^{loc} &= \int_{B_R}\int_{B_R}\big[f(v) - f(w)\big] f_{l,\varepsilon}^{-(1-\zeta)}(v) \eta^2(v)K(v, w)\chi_{f(v) > l} \chi_{f(w)> l} \dd w \dd v\\
	&= \frac{1}{4}\int_{B_R}\int_{B_R}\big[f(v) - f(w)\big] \Big[ f_{l,\varepsilon}^{-(1-\zeta)}(v)\eta^2(v) -  f_{l,\varepsilon}^{-(1-\zeta)}(w)\eta^2(w)\Big]\\
	&\qquad \qquad\qquad \times K^{sym}(v, w) \chi_{f(v) > l} \chi_{f(w)> l} \dd w \dd v\\
	&= \frac{1}{4}\int_{B_R}\int_{B_R}\big[f(v) - f(w)\big] \Big[ f_{l,\varepsilon}^{-(1-\zeta)}(v)\eta^2(v) -  f_{l,\varepsilon}^{-(1-\zeta)}(w)\eta^2(w)\Big]\\
	&\qquad \qquad\qquad \times K^{sym}(v, w)\chi_{f(v) > l} \chi_{f(w)> l}\big[\chi_{f(v) > f(w)} + \chi_{f(w) > f(v)}\big] \dd w \dd v\\
	&= \frac{1}{2}\int_{B_R}\int_{B_R}\big[f(v) - f(w)\big] \Big[ f_{l,\varepsilon}^{-(1-\zeta)}(v)\eta^2(v) -  f_{l,\varepsilon}^{-(1-\zeta)}(w)\eta^2(w)\Big]\\
	&\qquad \qquad\qquad \times K^{sym}(v, w)\chi_{f(v) > l} \chi_{f(w)> l}\chi_{f(w) > f(v)} \dd w \dd v\\
	&= \frac{1}{2}\int_{B_R}\int_{B_R}\big[f(v) - f(w)\big] \Big[ f_{l,\varepsilon}^{-(1-\zeta)}(v) -  f_{l,\varepsilon}^{-(1-\zeta)}(w)\Big]\eta^2(v)\\
	&\qquad \qquad\qquad \times K^{sym}(v, w)\chi_{f(v) > l} \chi_{f(w)> l}\chi_{f(w) > f(v)} \dd w \dd v\\
	&\quad+ \frac{1}{2}\int_{B_R}\int_{B_R}\big[f(v) - f(w)\big]  f_{l,\varepsilon}^{-(1-\zeta)}(w)\big[\eta^2(v) - \eta^2(w) \big]\\
	&\qquad \qquad\qquad \times K^{sym}(v, w) \chi_{f(v) > l} \chi_{f(w)> l}\chi_{f(w) > f(v)} \dd w \dd v\\
	&=: I_{sign} + I_{sym}.
\eal
The term $I_{sign}$ has a sign due to the concavity of the test function, whereas in $I_{sym}$ the cutoff $\eta$ absorbs some of the singularity of the kernel $K$.
To quantitatively bound $I_{sign}$ and $I_{sym}$ we use the estimates of Lemma \ref{lem:prelim}. For $I_{sign}$ we use \eqref{eq:3.1}, \eqref{eq:3.4}, and \eqref{eq:upperbound}
\bal\label{eq:I-sign}
	I_{sign} &= \frac{1}{2}\int_{B_R}\int_{B_R}\big[f(v) - f(w)\big] \Big[ f_{l,\varepsilon}^{-(1-\zeta)}(v) -  f_{l,\varepsilon}^{-(1-\zeta)}(w)\Big]\eta^2(v)\\
	&\qquad \qquad \qquad\qquad \times K^{sym}(v, w)\chi_{f(w) > f(v) > l}  \dd w \dd v\\
	 	&\leq -\frac{2(1-\zeta)}{\zeta^2}\int_{B_R}\int_{B_R}\left( f_{l,\varepsilon}^{\frac{\zeta}{2}}(v) -  f_{l,\varepsilon}^{\frac{\zeta}{2}}(w)\right)^2\eta^2(v)\\
		&\qquad \qquad \qquad \qquad\times K^{sym}(v, w) \chi_{f(w) > f(v) > l}  \dd w \dd v\\
		&\leq \frac{2(1-\zeta)}{\zeta^2}\int_{B_R}\int_{B_R} f_{l,\varepsilon}^{\zeta}(w) \big(\eta(v)-\eta(w))^2 \\
		&\qquad \qquad \qquad \qquad\times K^{sym}(v, w)\chi_{f(w) > f(v) > l}  \dd w \dd v\\
		&\quad -\frac{1-\zeta}{\zeta^2}\int_{B_R}\int_{B_R}\left[ \left(f_{l,\varepsilon}^{\frac{\zeta}{2}}\eta\right)(v) -  \left(f_{l,\varepsilon}^{\frac{\zeta}{2}}\eta\right)(w)\right]^2\\
		&\qquad \qquad \qquad \qquad\times  K^{sym}(v, w) \chi_{f(w) > f(v) > l}  \dd w \dd v\\
		&\leq C(\zeta, \Lambda_0) R^{-2s} \int_{B_R} (f-l+\varepsilon)^{\zeta}(w)  \chi_{f(w) > l} \dd w\\
		&\quad -\frac{1-\zeta}{\zeta^2}\int_{B_R}\int_{B_R}\left[ \left(f_{l,\varepsilon}^{\frac{\zeta}{2}}\eta\right)(v) -  \left(f_{l,\varepsilon}^{\frac{\zeta}{2}}\eta\right)(w)\right]^2\\
		&\qquad \qquad \qquad \qquad\times K^{sym}(v, w)\chi_{f(w) > f(v) > l}  \dd w \dd v.
\eal
This was the good term. Concerning the unsigned term $I_{sym}$, we use \eqref{eq:3.2}, Young's inequality, \eqref{eq:3.5} and \eqref{eq:upperbound} so that for any $\delta_0 \in (0, 1)$
\bal\label{eq:I-sym}
	&I_{sym} =\frac{1}{2}\int_{B_R}\int_{B_R}\big[f(v) - f(w)\big]  f_{l,\varepsilon}^{-(1-\zeta)}(w)\big[\eta^2(v) - \eta^2(w) \big] \\
	&\qquad \qquad \qquad \qquad \times K^{sym}(v, w) \chi_{f(w) > f(v)> l} \dd w \dd v\\
	&\leq \frac{1}{\zeta}\int_{B_R}\int_{B_R}f_{l,\varepsilon}^{\frac{\zeta}{2}}(w) \left[f_{l,\varepsilon}^{\frac{\zeta}{2}}(w) - f_{l,\varepsilon}^{\frac{\zeta}{2}}(v)\right]  \abs{\eta^2(w) - \eta^2(v)} \\
	&\qquad \qquad \qquad \qquad \times K^{sym}(v, w) \chi_{f(w) > f(v)> l} \dd w \dd v\\
	&\leq C(\delta_0, \zeta) \int_{B_R}\int_{B_R}f_{l,\varepsilon}^{\zeta}(w) \abs{\eta(w) - \eta(v)}^2K^{sym}(v, w) \dd w \dd v \\
	&\quad +\frac{\delta_0}{2}\int_{B_R}\int_{B_R} \max\{\eta^2(v), \eta^2(w)\} \left[f_{l,\varepsilon}^{\frac{\zeta}{2}}(w) - f_{l,\varepsilon}^{\frac{\zeta}{2}}(v)\right]^2 \\
	&\qquad \qquad \qquad\qquad \times K^{sym}(v, w)  \chi_{f(w) > f(v)> l} \dd w \dd v\\
	&\leq C(\delta_0, \zeta) \int_{B_R}\int_{B_R}f_{l,\varepsilon}^{\zeta}(w) \abs{\eta(w) - \eta(v)}^2K(v, w)\chi_{f  >l}(w) \ \dd w \dd v \\
	&\quad + \delta_0 \int_{B_R}\int_{B_R}f_{l,\varepsilon}^{\zeta}(v) \abs{\eta(w) - \eta(v)}^2K(v, w)\chi_{f  >l}(v)  \dd w \dd v \\
	&\quad +\delta_0\int_{B_R}\int_{B_R} \left[\left(\eta f_{l,\varepsilon}^{\frac{\zeta}{2}}\right)(w) - \left(f_{l,\varepsilon}^{\frac{\zeta}{2}}\right)(v)\right]^2 K^{sym}(v, w)   \chi_{f(w) > f(v)> l} \dd w \dd v\\
	&\leq C(\delta_0, \zeta, \Lambda_0) R^{-2s} \int_{B_R}(f-l + \varepsilon)^{\zeta}(v) \chi_{f  >l}(v) \dd v \\
	&\quad +\delta_0\int_{B_R}\int_{B_R} \left[\left(f_{l,\varepsilon}^{\frac{\zeta}{2}}\eta\right)(w) - \left(f_{l,\varepsilon}^{\frac{\zeta}{2}}\eta\right)(v)\right]^2 K^{sym}(v, w)  \chi_{f(w) > f(v)> l} \dd w \dd v.
\eal
In particular, the last term can be absorbed with the signed term of $I_{sign}$ in \eqref{eq:I-sign} for $\delta_0$ sufficiently small and for any $\zeta < 1$, so that we conclude from \eqref{eq:E-pos-loc-def}, \eqref{eq:I-sign}, and \eqref{eq:I-sym}
\bal\label{eq:E-pos-loc}
	&\mathcal E_{up}^{loc} = I_{sign} + I_{sym}\\
	&\leq C(\delta_0, \zeta, \Lambda_0) R^{-2s} \int_{B_R}(f-l + \varepsilon)^{\zeta}(v) \chi_{f  >l}(v) \dd v\\
	&\quad -\frac{1-\zeta}{\zeta^2}\int_{B_R}\int_{B_R}\left[ \left(f_{l,\varepsilon}^{\frac{\zeta}{2}}\eta\right)(v) -  \left(f_{l,\varepsilon}^{\frac{\zeta}{2}}\eta\right)(w)\right]^2K^{sym}(v, w)\chi_{f(w) > f(v) > l}  \dd w \dd v\\
	&\quad +\delta_0\int_{B_R}\int_{B_R} \left[\left(f_{l,\varepsilon}^{\frac{\zeta}{2}}\eta\right)(w) - \left(f_{l,\varepsilon}^{\frac{\zeta}{2}}\eta\right)(v)\right]^2 K^{sym}(v, w) \chi_{f(w) > f(v)> l} \dd w \dd v\\
	&\leq  C(\delta_0, \zeta, \Lambda_0) R^{-2s} \int_{B_R}(f-l + \varepsilon)^{\zeta}(v)\chi_{f  >l}(v) \dd v.
\eal

\textit{Step 1-iv: Bound on the non-locality.}

We conclude from \eqref{eq:E-neg}, \eqref{eq:E-cross}, \eqref{eq:E-pos}, \eqref{eq:E-pos-tail}, and \eqref{eq:E-pos-loc}
\bal\label{eq:E}
	\mathcal E&\left(f, \psi f_{l, \varepsilon}^{-(1-\zeta)}\right) = \mathcal E_{up} + \mathcal E_{low} + \mathcal E_{cross} \\
	&\leq C(\zeta, \Lambda_0) R^{-2s} \int_{B_R}(f-l + \varepsilon)^{\zeta}(v)\chi_{f > l} \dd v\\
	&\quad- \int_{B_R}\int_{\R^d \setminus B_R}(f-l)_+(w) \eta^2(v) f_{l,\varepsilon}^{-(1-\zeta)}(v) K(v, w)\chi_{f(v) > l} \dd w \dd v \\
	&\quad + C \varepsilon^{-(1-\zeta)}(\bar R - r)^{-2s}\int_{B_{\bar R}} f(v) \dd v+  \frac{\varepsilon^{-(1-\zeta)}}{2}\left(\frac{r}{2}\right)^{-2s} \int_{B_{\frac{r}{2}}} (f - l)_+(v) \dd v\\
	&\quad + Cl r^{-2s}\varepsilon^{-(1-\zeta)} \abs{\{f > l \}\cap B_{\frac{r}{2}}}. 
\eal
Note that this estimate holds true for any $(t, x)$. 

\textit{Step 2: Transport.}

We start with the transport operator. We integrate over a torus, 
\[
	\T_{R_1, R_2}(y) := \left\{ \left(\sqrt{(x_1-y_1)^2 + (x_2-y_2)^2} - R_2\right)^2 + \sum_{i=3}^d (x_i-y_i)^2 = R_1^2\right\},
\]
with minor radius $R_1 := \frac{1}{2}R^{1+2s}$, major radius $R_2 := \frac{3}{4}R^{1+2s}$, centred at $y = (\frac{3}{4}R^{1+2s}, 0, \dots, 0)$. Then $\T_{R_1, R_2}(y) \subset B_{2R^{1+2s}}$, and we get
\bals
	\int_{-R^{2s}}^0 \int_{\T_{R_1, R_2}(y)}& \int_{\R^d} v \cdot\nabla_x f(z)\psi_l(z) f_{l, \varepsilon}^{-(1-\zeta)}(z)   \dd v \dd x \dd t  \\
	&=\int_{\R}\int_{\R^d}  \int_{\R^d}  v \cdot\nabla_x (f-l)_+(z)\eta^2(z) f_{l, \varepsilon}^{-(1-\zeta)}(z)   \dd v \dd x\dd t\\
	&\quad +\varepsilon^{-(1-\zeta)}\int_{-R^{2s}}^0\int_{\T_{R_1, R_2}(y)}  \int_{\R^d}  v \cdot\nabla_x (f-l)_-(z)   \xi(v)  \dd v \dd x\dd t\\
	&=\frac{1}{\zeta}\int_{-R^{2s}}^0 \int_{\R^d}  \int_{\R^d} v \cdot\nabla_x \big((f-l)_++\varepsilon\big)^{\zeta}(z)\eta^2(z)    \dd v \dd x\dd t.
\eals
The last equality holds due to the divergence theorem applied to the integration in the spatial variable $x$. 
Now we use $\abs{v \cdot\nabla_x \eta} \sim R^{-2s}$
\bals
	\frac{1}{\zeta}&\int_{-R^{2s}}^0 \int_{\R^d}  \int_{\R^d} v \cdot\nabla_x \big((f-l)_++\varepsilon\big)^{\zeta}(z)\eta^2(z)    \dd v \dd x\dd t\\
	&=-\frac{1}{\zeta}\int_{-R^{2s}}^0 \int_{\R^d}  \int_{\R^d}  \big((f-l)_++\varepsilon\big)^{\zeta}(z)v \cdot \nabla_x \eta^2(z)   \dd v \dd x\dd t\\
	&\leq CR^{-2s} \int_{Q_R}  \big((f-l)_++\varepsilon\big)^{\zeta}(z)\dd z.
\eals

We continue with the time derivative. We denote by $t_1 :=-R^{2s}$ and by $t_2 = 0$, so that
\bals
	\int_{-R^{2s}}^0& \int_{\T_{R_1, R_2}(y)} \int_{\R^d} \partial_t f(z)\psi_l(z) f_{l, \varepsilon}^{-(1-\zeta)}(z)   \dd v \dd x \dd t \\
	&=\int_{\R} \int_{\T_{R_1, R_2}(y)} \int_{\R^d} \partial_t (f-l)_+(z)\eta^2(z) f_{l, \varepsilon}^{-(1-\zeta)}(z)   \dd v \dd x \dd t  \\
	&\quad +\varepsilon^{-(1-\zeta)}\int_{t_1}^{t_2} \int_{\T_{R_1, R_2}(y)} \int_{\R^d} \partial_t (f-l)_-(z)  \xi(v)\dd v \dd x \dd t.
\eals
We integrate both terms by part. For the first, we get:
\bals
	\int_{\R}& \int_{\T_{R_1, R_2}(y)} \int_{\R^d} \partial_t (f-l)_+(z)\eta^2(z) f_{l, \varepsilon}^{-(1-\zeta)}(z)   \dd v \dd x \dd t \\
	&= \int_{\R} \int_{\T_{R_1, R_2}(y)} \int_{\R^d} \partial_t ((f-l)_++\varepsilon)^{\zeta}(z)\eta^2(z)    \dd v \dd x \dd t \\
	&= -\int_{\R} \int_{\T_{R_1, R_2}(y)} \int_{\R^d}  ((f-l)_++\varepsilon)^{\zeta}(z)\partial_t\eta^2(z)    \dd v \dd x \dd t \\
	&\leq C R^{-2s}\int_{Q_R} ((f-l)_++\varepsilon)^{\zeta}(z)\dd z. 
\eals

For the second term, we find:
\bal\label{eq:aux-temporal}
	\varepsilon^{-(1-\zeta)}&\int_{t_1}^{t_2} \int_{\T_{R_1, R_2}(y)} \int_{\R^d} \partial_t (f-l)_-(z)  \xi(v)\dd v \dd x \dd t\\
	&=\varepsilon^{-(1-\zeta)} \int_{\T_{R_1, R_2}(y)} \int_{\R^d} \big((f-l)_-(t_2) -  (f-l)_-(t_1)\big) \xi(v)\dd v \dd x\\
	&=\varepsilon^{-(1-\zeta)} \int_{\T_{R_1, R_2}(y)} \int_{\R^d} \big((f-l)(t_2) -  (f-l)(t_1)\big) \xi(v)\dd v \dd x\\
	&\quad - \varepsilon^{-(1-\zeta)} \int_{\T_{R_1, R_2}(y)} \int_{\R^d} \big((f-l)_+(t_2) -  (f-l)_+(t_1)\big) \xi(v)\dd v \dd x.
\eal
We now want to use the mass conservation and the decay for large velocities. We find
\begin{align*}
	 \int_{\T_{R_1, R_2}(y)} &\int_{\R^d} \big((f-l)(t_2) -  (f-l)(t_1)\big) \xi(v)\dd v \dd x\\
	 &= \int_{\T_{R_1, R_2}(y)} \int_{\R^d} \big(f(t_2) -  f(t_1)\big) \xi(v)\dd v \dd x\\
	 &=\int_{\T_{R_1, R_2}(y)} \int_{\R^d} \big(f(t_2) -  f(t_1)\big) \dd v \dd x\\
	 &\quad +\int_{\T_{R_1, R_2}(y)} \int_{\R^d} \big(f(t_2) -  f(t_1)\big) (\xi-1)(v)\dd v \dd x\\
	 &\leq \int_{\T_{R_1, R_2}(y)} \int_{\R^d} f(t_1) (1-\xi)(v) \dd v\dd x,
\end{align*}
where for last inequality we used the mass conservation and the fact that $\xi \leq 1$. 
Then we recall our definition of $\xi$, which implies that $1-\xi = 0$ in $B_r$. Thus
\bals
	&\int_{\T_{R_1, R_2}(y)} \int_{\R^d} f(t_1) (1-\xi)(v) \dd v\dd x \\
	&=  \int_{\T_{R_1, R_2}(y)} \int_{B_{\bar R} \setminus B_r} f(t_1) (1-\xi)(v) \dd v\dd x + \int_{B_{R^{1+2s}}} \int_{\R^d \setminus B_{\bar R}} f(t_1) (1-\xi)(v) \dd v\dd x\\
	&\leq C_q \int_{\T_{R_1, R_2}(y)} \int_{B_{\bar R} \setminus B_r}  \langle v\rangle^{-q}   (1-\xi)(v) \dd v\dd x + C_q \int_{\T_{R_1, R_2}(y)} \int_{\R^d \setminus B_{\bar R}} \langle v\rangle^{-q}  \dd v\dd x.
\eals
Since $q > d$, 
\[
	\int_{\T_{R_1, R_2}(y)} \int_{\R^d \setminus B_{\bar R}} \langle v\rangle^{-q}  \dd v\dd x \leq C R^{d(1+2s)} {\bar R}^{-q+d}. 
\]	
Moreover, we use that on $B_{\bar R} \setminus B_r$, the cutoff $(1-\xi)(v) \sim \frac{\abs{v}-r}{\bar R -r} $ and $\langle v \rangle \geq \abs{v}-r$. Thus for $d < q < d+1$, 
\bals
	 C_q& \int_{\T_{R_1, R_2}(y)} \int_{B_{\bar R} \setminus B_r}  \langle v\rangle^{-q}   (1-\xi)(v) \dd v\dd x \\
	 &\leq C \int_{\T_{R_1, R_2}(y)} \int_{B_{\bar R} \setminus B_r}  \frac{(\abs{v}-r)^{1-q}}{\bar R -r} \dd v\dd x \\
	 &\leq C \omega_d R^{d(1+2s)}\frac{1}{\bar R -r} \int_{0}^{\bar R - r}\rho^{-q+1+d-1}  \dd \rho \\
	 & \leq C R^{d(1+2s)}\frac{(\bar R-r)^{d-q+1}}{\bar R -r}   = C \frac{R^{d(1+2s)}}{(\bar R -r)^{q-d}}. 
\eals
We combine these estimates to find:
\bals
	 \int_{\T_{R_1, R_2}(y)} &\int_{\R^d} \big((f-l)(t_2) -  (f-l)(t_1)\big) \xi(v)\dd v \dd x \leq C \frac{R^{d(1+2s)}}{(\bar R -r)^{q-d}} + C R^{d(1+2s)} { \bar R}^{-q+d}. 
\eals
This deals with the first part of \eqref{eq:aux-temporal}. We are left with bounding
\begin{align*}
 - &\varepsilon^{-(1-\zeta)} \int_{\T_{R_1, R_2}(y)} \int_{\R^d} \big((f-l)_+(t_2) -  (f-l)_+(t_1)\big) \xi(v)\dd v \dd x \\
&\leq \varepsilon^{-(1-\zeta)} \int_{\T_{R_1, R_2}(y)} \int_{\R^d}  (f-l)_+(t_1) \xi(v)\dd v \dd x \\
\intertext{we use that on $\{f(v) > l\}$, we have $\abs{v} \leq \left(\frac{C_q}{l}\right)^{\frac{1}{q}} = \frac{r}{2}$}
&\leq \varepsilon^{-(1-\zeta)} \int_{\T_{R_1, R_2}(y)} \int_{B_{\frac{r}{2}}}  (f-l)_+(t_1) \dd v \dd x.
\end{align*}
We conclude:
\bals
	\int_{-R^{2s}}^0& \int_{\T_{R_1, R_2}(y)} \int_{\R^d} \partial_t f(z)\psi_l(z) f_{l, \varepsilon}^{-(1-\zeta)}(z)   \dd v \dd x \dd t \\
	&\leq C R^{-2s}\int_{Q_R} ((f-l)_++\varepsilon)^{\zeta}(z)\dd z + C\varepsilon^{-(1-\zeta)} \frac{R^{d(1+2s)}}{(\bar R -r)^{q-d}} \\
	&\quad + \varepsilon^{-(1-\zeta)} \int_{B_{2R^{1+2s}}} \int_{B_r}  (f-l)_+(t_1) \dd v \dd x.
\eals

Overall, we find
\bal\label{eq:transport}
	\int_{-R^{2s}}^0& \int_{\T_{R_1, R_2}(y)} \int_{\R^d} \mathcal T f(z)\psi_l(z) f_{l, \varepsilon}^{-(1-\zeta)}(z)   \dd v \dd x \dd t  \\
	&\leq CR^{-2s} \int_{Q_R}  \big((f-l)_++\varepsilon\big)^{\zeta}(z)\dd z  + C\varepsilon^{-(1-\zeta)} \frac{R^{d(1+2s)}}{(\bar R -r)^{q-d}} \\
	&\quad + \varepsilon^{-(1-\zeta)} \int_{B_{2R^{1+2s}}} \int_{B_r}  (f-l)_+(t_1) \dd v \dd x.
\eal

\textit{Step 3: Relation of transport and non-locality.}

Finally we use the equation. By \eqref{eq:1.1}, \eqref{eq:transport}, and \eqref{eq:E} we see
\bals
	0 &\leq \int_{-R^{2s}}^0 \int_{\T_{R_1, R_2}(y)}  \int_{\R^d} \mathcal T f(z)\psi_l(z) f_{l, \varepsilon}^{-(1-\zeta)}(z)   \dd v \dd x \\
	&\quad+ \int_{-R^{2s}}^0\int_{\T_{R_1, R_2}(y)}  \mathcal E\left(f, \psi_l f_{l, \varepsilon}^{-(1-\zeta)}\right) \dd x \dd t\\
	&\leq C(\zeta) R^{-2s}\int_{Q_{2R}}  ((f-l)_++\varepsilon)^{\zeta}(z) \dd z\\
	&\quad-\int_{-R^{2s}}^0  \int_{\T_{R_1, R_2}(y)} \int_{B_R}\int_{\R^d \setminus B_R}(f-l)_+(w) \eta^2(z)\phi(t) f_{l,\varepsilon}^{-(1-\zeta)}(v) K(v, w)\chi_{f(v) > l}  \dd w \dd z\\
	&\quad +C \varepsilon^{-(1-\zeta)}(\bar R - r)^{-2s}\int_{Q_{2R}^v \times B_{\bar R}} f(v) \dd z+  \frac{\varepsilon^{-(1-\zeta)}}{2}\left(\frac{r}{2}\right)^{-2s} \int_{Q_{2R}^v \times B_{\frac{r}{2}}} (f - l)_+(v) \dd z\\
	 &\quad+ C\varepsilon^{-(1-\zeta)} \frac{R^{d(1+2s)}}{(\bar R -r)^{q-d}}  + \varepsilon^{-(1-\zeta)} \int_{B_{2R^{1+2s}}} \int_{B_r}  (f-l)_+(t_1) \dd v \dd x \\
	 &\quad + C l\varepsilon^{-(1-\zeta)} r^{-2s}  \abs{\{f>l\}  \cap Q_{2R}^v \times B_{\frac{r}{2}}}.
\eals
or rearranged, (using that $B_{\frac{R^{1+2s}}{2}} \subset \T_{R_1, R_2}(y)$),
\bals
	&\int_{Q_{\frac{R}{2}}}\int_{\R^d \setminus B_R}(f-l)_+(w)  f_{l,\varepsilon}^{-(1-\zeta)}(v) K(v, w)\chi_{f(v) > l}  \dd w \dd z\\
	&\leq C R^{-2s}\int_{Q_{2R}}  (f-l)_+^{\zeta}(z) \dd z + C R^{n-2s}\varepsilon^{\zeta}+ C\varepsilon^{-(1-\zeta)} \frac{R^{d(1+2s)}}{(\bar R -r)^{q-d}}\\
	&\quad + C \varepsilon^{-(1-\zeta)}(\bar R - r)^{-2s}\int_{Q_{2R}^v \times B_{\bar R}} f(v) \dd z+  \varepsilon^{-(1-\zeta)}\left(\frac{r}{2}\right)^{-2s} \int_{Q_{2R}^v \times B_{\frac{r}{2}}} (f - l)_+(v) \dd v\\
	&\quad + C l\varepsilon^{-(1-\zeta)} r^{-2s}  \abs{\{f>l\}  \cap Q_{2R}^v \times B_{\frac{r}{2}}}.
\eals
We want to pick $\bar R$ sufficiently large, so that 
\[
	\frac{1}{(\bar R-r)^{q-d}} + \frac{\norm{f}_{L^1(Q_R^v\times \R^d)}}{(\bar R-r)^{2s}} \leq \frac{2^{1-2s}}{r^{2s}} \int_{Q_{2R}^v \times B_{\frac{r}{2}}} (f - l)_+(v) \dd v.
\]
(The right hand side is strict positive, since with no loss in generality there is at least one point where $f > l$ in $Q_R \subset Q_R^v \times B_{\frac{r}{2}}$, so $\bar R < \infty$.)
Then, recalling that $2R \leq r$,
\bal\label{eq:almost-the-end}
	&\int_{Q_{\frac{R}{2}}}\int_{\R^d \setminus B_R}(f-l)_+(w)  f_{l,\varepsilon}^{-(1-\zeta)}(v) K(v, w)\chi_{f(v) > l}  \dd w \dd z\\
	&\leq C R^{-2s}\int_{Q_{2R}}  (f-l)_+^{\zeta}(z) \dd z+ C R^{n-2s}\varepsilon^{\zeta} + C \varepsilon^{-(1-\zeta)}r^{-2s} \int_{Q_{2R}^v \times B_{\frac{r}{2}}} (f - l)_+(v) \dd z\\
	&\quad + C l\varepsilon^{-(1-\zeta)} r^{-2s}  \abs{\{f>l\}  \cap Q_{2R}^v \times B_{\frac{r}{2}}}.
\eal

Now we pick $\varepsilon  >0$ sufficiently small, depending on $l$. Recall $n = 2d(1+s) + 2s$.

If $l = 0$, we pick
\[
	\varepsilon \approx r^{-n} \int_{Q_r} (f-l)_+ \dd z. 
\]

Otherwise, if $l > 0$, we pick
\beq\label{eq:eps-l}
	\varepsilon \approx l r^{-n}\abs{\big\{ f > l\big\}  \cap Q_r}. 
\eeq
	
In the first case we find
\bals
&\int_{Q_{\frac{R}{2}}}\int_{\R^d \setminus B_R}(f-l)_+(w)  f_{l,\varepsilon}^{-(1-\zeta)}(v) K(v, w)\chi_{f(v) > l}  \dd w \dd z\\
	&\leq C r^{-2s}\int_{Q_{r}}  (f-l)_+^{\zeta}(z) \dd z+ C r^{n(1-\zeta)-2s}\left(\int_{Q_r} (f-l)_+ \dd z \right)^{\zeta}.
\eals
Due to Jensen's inequality, we find
\bals
	\int_{Q_{r}}  (f-l)_+^{\zeta}(z) \dd z = r^n \left(\frac{1}{r^n} \int_{Q_{r}}  (f-l)_+^{\zeta}(z) \dd z\right)^{\frac{1}{\zeta} \zeta}\leq r^n \left(\frac{1}{r^n} \int_{Q_{r}}  (f-l)_+(z) \dd z\right)^{ \zeta},
\eals
so that
\bals
	\int_{Q_{\frac{R}{2}}}\int_{\R^d \setminus B_R}&(f-l)_+(w)  f_{l,\varepsilon}^{-(1-\zeta)}(v) K(v, w)\chi_{f(v) > l}  \dd w \dd z\\
	&\leq C r^{n(1-\zeta)-2s}\left(\int_{Q_r} (f-l)_+ \dd z \right)^{\zeta}.
\eals
This implies \eqref{eq:L1-tail-l0} for $l = 0$, if we take out the infimum of $(f-l +\varepsilon)^{-(1-\zeta)}$ over $z \in Q_{\frac{R}{2}} \cap \chi_{f > l}$ on the left hand side, divide by the infimum, 
use $(\inf g)^{-1} = \sup g^{-1}$, so that 
\bals
	&\int_{Q_{\frac{R}{2}}}\int_{\R^d \setminus B_R}(f-l)_+(w) K(v, w)\chi_{f(v) > l}  \dd w \dd z\\
	&\leq C r^{n(1-\zeta)-2s}\left(\sup_{Q_r} (f-l)_+^{1-\zeta} + r^{-n(1-\zeta)} \left( \int_{Q_r} (f-l)_+ \dd z\right)^{1-\zeta} \right)\left(\int_{Q_r} (f-l)_+ \dd z \right)^{\zeta}\\
	&\leq C r^{n(1-\zeta)-2s}\sup_{Q_r} (f-l)_+^{1-\zeta} \left(\int_{Q_r} (f-l)_+ \dd z \right)^{\zeta}+C r^{-2s}  \int_{Q_r} (f-l)_+ \dd z\\
	&\leq C r^{n(1-\zeta)-2s}\left(\sup_{Q_r} (f-l)_+\right)^{1-\zeta} \left(\int_{Q_r} (f-l)_+ \dd z \right)^{\zeta}.
\eals

In the second case, where $l > 0$, we find from \eqref{eq:almost-the-end} with the choice of \eqref{eq:eps-l}
\bals
	&\int_{Q_{\frac{R}{2}}}\int_{\R^d \setminus B_R}(f-l)_+(w)  f_{l,\varepsilon}^{-(1-\zeta)}(v) K(v, w)\chi_{f(v) > l}  \dd w \dd z\\
	&\leq   C r^{-2s}\int_{Q_{r}}  (f-l)_+^{\zeta}(z) \dd z + C l^\zeta r^{n(1-\zeta)-2s}\left(1 + \frac{\sup_{Q_r}(f-l)_+}{l}\right) \abs{\big\{ f > l\big\}  \cap Q_r}^\zeta.
\eals

As before, we take out the infimum of $(f-l +\varepsilon)^{-(1-\zeta)}$ over $z \in Q_{\frac{R}{2}} \cap \chi_{f > l}$ on the left hand side, and find
\bals
	\int_{Q_{\frac{R}{2}}}&\int_{\R^d \setminus B_R} (f-l)_+(w) \chi_{f > l}(v) K(v, w) \dd w\dd v \\
	&\leq  C r^{-2s}\left(\sup_{Q_r} (f-l)_+\right)^{1-\zeta}   \int_{Q_{r}}  (f-l)_+^{\zeta}(z) \dd z \\
	&\quad+ C \left(\sup_{Q_r} (f-l)_+\right)^{1-\zeta}l^\zeta r^{n(1-\zeta)-2s}\left(1 + \frac{\sup_{Q_r}(f-l)_+}{l}\right) \abs{\big\{ f > l\big\}  \cap Q_r}^\zeta\\
	&\quad +Cl^{1-\zeta} r^{-n(1-\zeta)-2s}\abs{\big\{ f > l\big\}\cap Q_r}^{1-\zeta} \int_{Q_{r}}  (f-l)_+^{\zeta}(z) \dd z \\
	&\quad  + C l r^{-2s}\left(1 + \frac{\sup_{Q_r}(f-l)_+}{l}\right) \abs{\big\{ f > l\big\}  \cap Q_r}.
\eals
Again, due to Jensen's inequality
\bals
	\int_{Q_{r}}  (f-l)_+^{\zeta}(z) \dd z &\leq r^n \left(\frac{1}{r^n} \int_{Q_{r}}  (f-l)_+(z) \dd z\right)^{ \zeta}\\
	&\leq r^{n(1-\zeta)} \left(\sup_{Q_r} (f-l)_+\right)^\zeta \abs{\{f > l\} \cap Q_r}^\zeta,
\eals
and 
\[
	\int_{Q_{f}}  (f-l)_+^{\zeta}(z) \dd z\leq \sup_{Q_r} (f-l)_+^{\zeta} \abs{\big\{ f > l\big\}  \cap Q_r} \leq\left( \sup_{Q_r} (f-l)_+\right)^{\zeta} \abs{\big\{ f > l\big\}  \cap Q_r},
\]
so that
\bals
	\int_{Q_{\frac{R}{2}}}&\int_{\R^d \setminus B_R} (f-l)_+(w) \chi_{f > l}(v) K(v, w) \dd w\dd v \\
	&\leq  C \left(\frac{\sup_{Q_r} (f-l)_+}{l}\right)^{1-\zeta} r^{n(1-\zeta)-2s}\left(l + \sup_{Q_r}(f-l)_+\right) \abs{\big\{ f > l\big\}  \cap Q_r}^\zeta\\
	&\quad  + C  r^{-2s}\left(l + \sup_{Q_r}(f-l)_+ + l\left(\frac{\sup_{Q_r}(f-l)_+}{l}\right)^\zeta\right) \abs{\big\{ f > l\big\}  \cap Q_r}\\
	&\leq  Cr^{n(1-\zeta)-2s} \left[\left(\frac{\sup_{Q_r} (f-l)_+}{l}\right)^{1-\zeta} +  \left(\frac{\sup_{Q_r} (f-l)_+}{l}\right)^{\zeta}  + 1\right]\\
	&\qquad\qquad\qquad \times \left( l + \sup_{Q_r}(f-l)_+\right) \abs{\big\{ f > l\big\}  \cap Q_r}^\zeta.
\eals
This concludes the proof of \eqref{eq:L1-tail}, and therefore of the proposition.
\end{proof}

\section{De Giorgi in $L^1$}\label{sec:strategy-A}
We aim to derive De Giorgi's first lemma, which is typically a bound from $L^2$ to $L^\infty$. Such a boundedness result has been derived in \cite[Lemma 4.1]{AL} before, however, with a tail quantity on the right hand side. We aim to derive a fully local boundedness result. Since we saw in the previous section how to bound the tail in $L^1$, we perform the De Giorgi iteration in $L^1$. The result is Proposition \ref{prop:L2-Linfty-A}.

The De Giorgi argument is based on the realisation that for any sub-solution $f$ of \eqref{eq:1.1}, the level set function $(f-l)_+$ for any $l \in \R_+$ is still a sub-solution to \eqref{eq:1.1}. Then the argument relies on three steps. 
The first is the energy estimate, which is typically derived by testing the equation with the solution itself, and thus yields a bound in $L^2$. This is required to estimate the coercive term, which for our equation is the $L^2_{t,x}H^s_v$ norm of the solution. But even then, by Hölder's inequality, the tail term naturally appears in $L^1$, if we pay the supremum on the test function. Thus, we can use the non-local to local bound of Proposition \ref{prop:tail-bound}, to obtain a fully local energy estimate. The second step is the gain of integrability. This is usually done on the same Lebesgue level as the energy estimate, that is $L^2$. Since this only works for solutions on the whole space, we need to localise with a cutoff. This, however, naturally makes appear another tail term, which then, would need to be bounded in $L^2$, which we don't know how to bound. Therefore, we need a gain of integrability in $L^1$. Then, on the one hand, the tail terms are again bounded by Proposition \ref{prop:tail-bound}. On the other hand, we then also have to bound the $L^1_{t, x}W^{s, 1}_v$ norm of the localised level set function, which can be done using Cauchy-Schwarz and the energy estimate. The last step is the proper choice of control quantity, which for us is the $L^1$ norm of the level set functions, which by Chebyshev's inequality, can be related to the measure of the upper level set.
\subsection{Gain of integrability in $L^1$}
The preliminary step we require for the iteration is thus a gain of integrability in $L^1$. For this, we consider the fractional Kolmogorov equation \eqref{eq:frac-kolm}, which plays the role of the constant coefficient equation for \eqref{eq:1.1}, with a right hand side $h = h_1 + h_2$ for some $h_2 \in L^1\big([0, \tau]\times \R^{2d}\big)$ and some $h_1$ such that $$\norm{(-\Delta_v)^{-\frac{s+\epsilon}{2}} h_1}_{L^1([0, \tau]\times \R^{2d})} < +\infty$$ for $0 \leq \epsilon < s$. The role of the $\epsilon$ is the number of derivatives that we can put on the fundamental solution so that these are absorbed in the time integrability. 
We prove:
\begin{proposition}\label{prop:goi}
Let $0 \leq f$ solve (or be a sub-solution of) \eqref{eq:frac-kolm} in $[0, \tau]\times \R^{2d}$, with $f(0, x, v) = f_0(x, v) \in L^1\cap L^2(\R^{2d})$, and with $h = h_1 + h_2$ where $h_1, h_2\in L^1\cap L^2([-\tau,0]\times\R^{2d})$ such that 
$$\norm{(-\Delta_v)^{-\frac{s+\epsilon}{2}} h_1}_{L^1([-\tau, 0]\times \R^{2d})} < +\infty.$$
Then for any $0 \leq \epsilon < s$ and any $1 \leq p < 1+\frac{s-\epsilon}{s + 2d(s+1) + \epsilon}$ there holds
\bals
	\norm{f}_{L^p([0, \tau]\times \R^{2d})} \lesssim &\tau^{\frac{1}{p}-\alpha_\epsilon}\norm{(-\Delta_v)^{-\frac{s+\epsilon}{2}} h_1}_{L^1([-\tau, 0]\times \R^{2d})} +\tau^{\frac{1}{p}+\frac{1}{2}-\alpha_0}\norm{h_2}_{L^1([0, \tau]\times \R^{2d})} \\
	&+ \tau^{\frac{1}{p}+\frac{1}{2}-\alpha_0}\norm{f_0}_{L^1(\R^{2d})},
\eals
where $\alpha_\epsilon = d\left(1+\frac{1}{s}\right)\left(1-\frac{1}{p}\right)+\frac{s+\epsilon}{2s}$.
\end{proposition}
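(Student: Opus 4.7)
The natural approach is Duhamel's formula applied with the fundamental solution $J_0$ of the fractional Kolmogorov equation \eqref{eq:frac-kolm}. By linearity, and in the sub-solution case by a comparison argument against the genuine solution sharing the same data (via the maximum principle for the translation-invariant equation), we may reduce to proving the estimate for the representation
\[
f(t) = J_0(t) \ast f_0 + \int_0^t J_0(t-\sigma) \ast h_1(\sigma)\, d\sigma + \int_0^t J_0(t-\sigma) \ast h_2(\sigma)\, d\sigma,
\]
where the convolutions are in the Galilean $(x,v)$ variables. The task then splits into three analogous estimates, one for each piece.

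The key ingredients are scaling bounds for $J_0$. Setting $Q := \tfrac{2d(1+s)}{2s} = d(1+\tfrac{1}{s})$, the self-similar form \eqref{eq:J0} together with the smoothness and polynomial decay of $\mathcal J_0$ recalled in Remark \ref{rmk:cc-fund-sol} give, by a direct change of variables,
\[
\|J_0(t)\|_{L^r(\R^{2d})} \lesssim t^{-Q(1-1/r)}, \qquad \|(-\Delta_v)^{s/2} J_0(t)\|_{L^r(\R^{2d})} \lesssim t^{-Q(1-1/r) - \frac{1}{2}},
\]
for every $r \in [1,\infty]$. The extra $t^{-1/2}$ in the second bound reflects the fact that under the rescaling $v \mapsto t^{-1/(2s)} v$ the operator $(-\Delta_v)^{s/2}$ scales by $t^{-1/2}$.

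The three contributions are then handled by combining Young's inequality in $(x,v)$ at fixed time ($L^p \ast L^1 \to L^p$) with Young's inequality in time ($L^p \ast_t L^1 \to L^p$). For the $f_0$ and $h_2$ pieces, this reduces to estimating $\|t^{-Q(1-1/p)}\|_{L^p([0,\tau])} \lesssim \tau^{1/p - Q(1-1/p)} = \tau^{1/p + \frac{1}{2} - \alpha}$. For the more singular $h_1$ piece, I would use the identity $J_0 \ast h_1 = (-\Delta_v)^{s/2} J_0 \ast (-\Delta_v)^{-s/2} h_1$, justified via Plancherel and the explicit Fourier formula for $\hat{\mathcal J}_0$, to put the integrable function $\tilde h_1 := (-\Delta_v)^{-s/2} h_1 \in L^1_{t,x,v}$ into the convolution while the ``differentiated'' kernel absorbs the extra factor. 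The resulting time integral is $\|t^{-Q(1-1/p) - 1/2}\|_{L^p([0,\tau])} \lesssim \tau^{1/p - Q(1-1/p) - 1/2} = \tau^{1/p - \alpha}$.

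The restriction on $p$ comes entirely from the $h_1$ contribution: $L^p([0,\tau])$-integrability of $t^{-Q(1-1/p) - 1/2}$ requires $pQ(1-1/p) + p/2 < 1$, which algebraically rearranges to $p - 1 < \tfrac{1}{2Q+1} = \tfrac{s}{2d(1+s)+s}$, matching the stated range. The principal obstacle is making the identity $J_0 \ast h_1 = (-\Delta_v)^{s/2} J_0 \ast (-\Delta_v)^{-s/2} h_1$ rigorous on an appropriate distributional class, and verifying via scaling that $(-\Delta_v)^{s/2} \mathcal J_0 \in L^r$ for all $r \in [1,\infty]$; both follow from the explicit formula $\hat{\mathcal J}_0(\varphi, \xi) = \exp(-\int_0^1 |\xi - \tau \varphi|^{2s} d\tau)$, which decays rapidly enough in the velocity frequency $\xi$ to absorb the extra $|\xi|^s$ factor.
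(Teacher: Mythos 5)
Your proposal is correct and follows essentially the same route as the paper: represent $f$ via the Galilean/Kolmogorov convolution with the self-similar fundamental solution $J_0$, split into the $f_0$, $h_1$, $h_2$ contributions, use the scaling identities $\|J_0(t)\|_{L^r}\sim t^{-Q(1-1/r)}$ and $\|(-\Delta_v)^{s/2}J_0(t)\|_{L^r}\sim t^{-Q(1-1/r)-1/2}$ together with the transfer $J_0\ast h_1 = (-\Delta_v)^{s/2}J_0\ast(-\Delta_v)^{-s/2}h_1$, and close with Young's inequality in $(x,v)$ and in time. The binding restriction $\frac1p-\alpha>0$ from the $h_1$ piece is identified exactly as in the paper.
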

\begin{proof}
Equation \eqref{eq:frac-kolm} admits a fundamental solution, given by
\bals
	0 &\leq f(t, x, v) \\
	&\leq \int_{\R^{2d+1}}\big[h_1(t', x', v') + h_2(t', x', v')\big]J\left(t-t', x-x'-(t-t')v', v-v'\right)\dd t'\dd x'\dd v' \\
	&\qquad+ \int_{\R^{2d}}f_0(x', v')J\left(t, x-x'-tv', v-v'\right)\dd x'\dd v',
\eals
where $(t, x, v) \in (0, \tau)\times \R^{2d}$ and
\beq\label{eq:J}
	J(t, x, v) = \frac{C_d}{t^{d+\frac{d}{s}}}\mathcal J\left(\frac{x}{t^{1+\frac{1}{2s}}}, \frac{v}{t^{\frac{1}{2s}}}\right), \qquad \hat{\mathcal J}(\varphi, \xi) = \exp\left(-\int_0^1 \abs{\xi -\tau\varphi}^{2s}\dd \tau\right).
\eeq
We remark that for any $r \geq 1$ there holds for $t>0$ and for any $\epsilon \geq 0$
\bal\label{eq:fund-sol-lebesgue}
	&\left\Vert J(t, \cdot, \cdot)\right\Vert_{L^r(\R^{2d})} =t^{-d(1+\frac{1}{s})(1-\frac{1}{r})}\left\Vert\mathcal J\right\Vert_{L^r(\R^{2d})}, \\
	&\left\Vert(-\Delta)^{\frac{s+\epsilon}{2}}_vJ(t, \cdot, \cdot)\right\Vert_{L^r(\R^{2d})} =t^{-d(1+\frac{1}{s})(1-\frac{1}{r})-\frac{s+\epsilon}{2s}}\left\Vert(-\Delta)^{\frac{s+\epsilon}{2}}_v\mathcal J\right\Vert_{L^r(\R^{2d})}.
\eal
(The factor $t^{-d(1+\frac{1}{s})}$ comes from the prefactor in \eqref{eq:J}. The fractional derivative $(-\Delta)^{\frac{s+\epsilon}{2}}_v$ gives a factor in time of $t^{-\frac{s+\epsilon}{2s}}$. Finally the $L^r$ norm in space and velocity scales as $t^{\frac{d}{r}(\frac{2+2s}{2s})}$.)

We define the modified convolution
\bals
	f*_t g(x, v):= \int_{\R^{2d}}f(x', v')g(x - x' -tv', v-v')\dd x'\dd v'.
\eals	
Note that the modified convolution satisfies the usual Young inequality independent of $t$:
\bals
	\norm{f*_t g}_{L^r_{x, v}} \leq \norm{f}_{L^p_{x, v}}\norm{g}_{L^q_{x, v}}
\eals
for $1 + \frac{1}{r} = \frac{1}{p} + \frac{1}{q}$. We split $f = \tilde f_0 + f_1 + f_2$ with
\bals
	&\tilde f_0(t, \cdot, \cdot) := f_0*_tJ(t, \cdot, \cdot),\\
	&f_1(t, \cdot, \cdot) := \int_0^t (-\Delta)_v^{-\frac{s+\epsilon}{2}}h_1*_{(t-t')}(-\Delta)_v^{\frac{s+\epsilon}{2}}J(t-t', \cdot, \cdot)\dd t',\\
	&f_2(t, \cdot, \cdot) := \int_0^t h_2*_{(t-t')}J(t-t', \cdot, \cdot)\dd t'.
\eals
By Young's inequality we get for $\alpha_0 = d\left(1+\frac{1}{s}\right)\left(1-\frac{1}{p}\right)+\frac{1}{2}$ and for $\alpha_\epsilon = d\left(1+\frac{1}{s}\right)\left(1-\frac{1}{p}\right)+\frac{s+\epsilon}{2s}$
\bals
	\norm{\tilde f_0(t, \cdot, \cdot)}_{L^p(\R^{2d})} &\leq \norm{f_0}_{L^1(\R^{2d})}\norm{J(t, \cdot, \cdot)}_{L^{p}(\R^{2d})} = \norm{f_0}_{L^1(\R^{2d})}\norm{\mathcal J}_{L^{p}(\R^{2d})}t^{\frac{1}{2}-\alpha_0},\\
	\norm{f_1(t, \cdot, \cdot)}_{L^p(\R^{2d})} &\leq \int_0^t \norm{(-\Delta)_v^{-\frac{s+\epsilon}{2}} h_1(t')}_{L^1(\R^{2d})}\norm{(-\Delta)_v^{\frac{s+\epsilon}{2}}J(t-t', \cdot, \cdot)}_{L^{p}(\R^{2d})}\dd t'\\
	&= \int_0^t \norm{(-\Delta)_v^{-\frac{s+\epsilon}{2}} h_1(t')}_{L^1(\R^{2d})}\norm{(-\Delta)_v^{\frac{s+\epsilon}{2}}\mathcal J}_{L^{p}(\R^{2d})}(t-t')^{-\alpha_\epsilon}\dd t',\\
	\norm{f_2(t, \cdot, \cdot)}_{L^p(\R^{2d})} &\leq \int_0^t \norm{h_2(t')}_{L^1(\R^{2d})}\norm{J(t-t', \cdot, \cdot)}_{L^{p}(\R^{2d})}\dd t'\\
	&= \int_0^t \norm{h_2(t')}_{L^1(\R^{2d})}\norm{ \mathcal J}_{L^{p}(\R^{2d})}(t-t')^{\frac{1}{2}-\alpha_0}\dd t'.
\eals
Since $p\left(\frac{1}{2} - \alpha_0\right) > -1$ we get that $\tilde f_0 \in L^p([0, \tau]\times\R^{2d})$ and
\beqs
	\norm{\tilde f_0}_{L^p([0, \tau]\times\R^{2d})} \leq C\tau^{\frac{1}{p}+\frac{1}{2}-\alpha_0} \norm{f_0}_{L^1(\R^{2d})}.
\eeqs
For $f_1$ and $f_2$ we apply Young's inequality again and get
\bals
	&\norm{f_1}_{L^p([0, \tau]\times\R^{2d})} \leq C\norm{(-\Delta)_v^{-\frac{s+\epsilon}{2}}h_1}_{L^1([0,\tau]\times\R^{2d})}\tau^{-\alpha_\epsilon+\frac{1}{p}},\\
	&\norm{f_2}_{L^p([0, \tau]\times\R^{2d})} \leq C\norm{h_2}_{L^1([0, \tau]\times\R^{2d})}\tau^{\frac{1}{p}+\frac{1}{2}-\alpha_0},
\eals
provided that $\frac{1}{p} - \alpha_\epsilon > 0$. Equivalently, we require
\[
	1 \leq p < 1 + \frac{s-\epsilon}{s + 2d(s+1) + \epsilon}.
\]
\end{proof}

\subsection{De Giorgi iteration}

This section is dedicated to the proof of Proposition \ref{prop:L2-Linfty-A}. 
We use the tail bound from Proposition \ref{prop:tail-bound} to get a local energy estimate. Then we use the gain of integrability in $L^1$ from Proposition \ref{prop:goi}, by comparing the solution of \eqref{eq:1.1} to the fractional Kolmogorov equation \eqref{eq:frac-kolm}. We bound the right hand side by the local energy estimate, and the tail term again with Proposition \ref{prop:tail-bound}. Finally, we iterate the so gained local a priori estimate on level set functions. 

In this section, we let $f$ solve \eqref{eq:1.1} in $[-R^{2s} + t_0, t_0] \times B_{R^{1+2s}}(x_0 + (t-t_0) v_0) \times \R^d$ such that $f$ conserves mass. 
With no loss in generality, we set $z_0 = (0, 0, 0)$. We, moreover, work with zero source term. Note that the energy estimate, the gain of integrability, and correspondingly the De Giorgi iteration, have to be modified in a suitable way to account for non-zero source.

\textit{Step 1: Local energy estimate.}

 Let $\phi \in C_c^\infty(\R^{1+2d})$ be such that $\phi(t, x, v) = 0$ for $(t, x, v)$ outside $Q_{\frac{R}{4}}$ and $\phi(t, x, v) = 1$ for $(t, x, v) \in Q_{\frac{R}{8}}$. We then test \eqref{eq:1.1} with $(f - l)_+\phi^2$, so that
\bal\label{eq:aux1-improved-DG}
	0 &= \int_{\R^{1+2d}} \mathcal T f(z) \phi^2(z) (f - l)_+(z) \dd z- \int_{\R^{1+2d}}\mathcal L f(z) (f - l)_+(z)\phi^2(z) \dd z\\
	&=\frac{1}{2} \int_{\R^{1+2d}} \mathcal T (f-l)^2_+(z)  \phi^2(z)  \dd z - \int_{\R^{1+2d}}\mathcal L f(z) (f - l)_+(z)\phi^2(z) \dd z.
\eal
We observe that for any $\frac{R}{4} < r$, if we abbreviate $(f-l)_+ =: {f_l}_+$ and if we denote by $\Omega_{\rho} := [-\rho^{2s}, 0] \times B_{\rho^{1+2s}}$, then due to \eqref{eq:symmetry}
\bal\label{eq:aux-bound-0}
	&-\int_{\R^{1+2d}}\mathcal L f(z) (f - l)_+(z)\phi^2(z) \dd z \\
	&= \int_{\R^{1+2d}}\int_{\R^d} \big[f(t, x, v) - f(t, x, w)\big]\big((f-l)_+\phi^2\big)(t, x, v)K(t, x, v, w) \dd w \dd z \\
	&= \frac{1}{2}\int_{\Omega_{\frac{R}{4}}} \int_{B_r} \int_{B_r} \big[f(v) - f(w)\big]\Big[\big({f_l}_+\phi^2\big)(v)- \big({f_l}_+\phi^2\big)(w)\Big]K(v, w) \dd w \dd z\\
	&\quad+\frac{1}{2} \int_{\Omega_{\frac{R}{4}}}\int_{B_r}\int_{B_r} \big[f(v) - f(w)\big]\Big[\big({f_l}_+\phi^2\big)(v) +\big({f_l}_+\phi^2\big)(w)\Big]K(v, w) \dd w \dd z\\
	&\quad+\int_{\Omega_{\frac{R}{4}}}\int_{B_r}\int_{\R^d \setminus B_r} \big[f(v) - f(w)\big]\big({f_l}_+\phi^2\big)(z) K(v, w)\dd w \dd z\\
	&= \frac{1}{4}\int_{\Omega_{\frac{R}{4}}} \int_{B_r} \int_{B_r} \big[f(v) - f(w)\big]\Big[\big({f_l}_+\phi^2\big)(v)- \big({f_l}_+\phi^2\big)(w)\Big]\\
	&\qquad\qquad\qquad \times \big[K(v, w)+K(w, v)\big] \dd w \dd z\\
	&\quad+\int_{\Omega_{\frac{R}{4}}}\int_{B_r}\int_{\R^d \setminus B_r} \big[f(v) - f(w)\big]\big({f_l}_+\phi^2\big)(z) K(v, w)\dd w \dd z\\
	&=: \mathcal E_{loc} + \mathcal E_{tail}. 
\eal

\textit{Step 1-i: Transport.}
We integrate by parts the transport term, and use $\abs{\mathcal T \phi} \sim  R^{-2s}$.
\bal\label{eq:step1-i-a}
	-\frac{1}{2} \int_{\R^{1+2d}}\mathcal T (f-l)^2_+(z)  \phi^2(z)  \dd z
	&= \frac{1}{2} \int_{\R^{1+2d}}  (f-l)^2_+(z)  \mathcal T \phi^2(z)  \dd z\\
	& \leq C R^{-2s} \int_{Q_{\frac{R}{4}}} (f-l)^2_+(z) \dd z.
\eal	

\textit{Step 1-ii: Tail bound.}
We use \eqref{eq:L1-tail} from Proposition \ref{prop:tail-bound} for $f$, which we assume to be a solution, in particular a super-solution, so that for $r = \frac{R}{2}$
\bal\label{eq:step1-ii}
	-&\mathcal E_{tail} = \int_{\Omega_{\frac{R}{4}}} \int_{B_r}\int_{\R^d \setminus B_r} \big[f(w) - f(v)\big]\big({f_l}_+\phi^2\big)(z) K(v, w)\dd w \dd z\\
	&\leq \int_{Q_{\frac{R}{4}}} \int_{\R^d \setminus B_{\frac{R}{2}}} \big[f(t, x, w)-l\big] \big({f_l}_+\phi^2\big)(z) K(v, w)\dd w \dd z\\
	&\leq \sup_{Q_{\frac{R}{4}}} (f-l)_+ \int_{Q_{\frac{R}{4}}} \int_{\R^d \setminus B_{\frac{R}{2}}} (f-l)_+(t, x, w)  K(v, w)\chi_{f > l}(v)\dd w \dd z\\
	&\leq C R^{n(1-\zeta)-2s}\sup_{Q_{\frac{R}{4}}} (f-l)_+ \mathcal M_l \abs{\big\{ f > l\big\}  \cap Q_{R}}^\zeta,
\eal
where we recall $\mathcal M_l$ is given in \eqref{M-l}. 

\textit{Step 1-iii: Not-too-non-local operator.}
What remains to be estimated is the local contribution of the non-local operator, 
\bals
	&\mathcal E_{loc} = \frac{1}{2} \int_{B_r} \int_{B_r} \big[f(v) - f(w)\big]\Big[\big({f_l}_+\phi^2\big)(v) -\big({f_l}_+\phi^2\big)(w)\Big]K(v, w) \dd w \dd z,
\eals
where we recall ${f_l}_+ = (f-l)_+$.

\textit{Claim.} For $z = (t, x, v)$ and $r = \frac{R}{2}$
\bal\label{eq:claim1}
	\mathcal E_{loc} &\geq \frac{1}{2}\int_{B_r}\int_{B_r} \Big[\big({f_l}_+\phi\big)(v) -\big({f_l}_+\phi\big)(w) \Big]^2K(v, w) \dd v- Cr^{-2s}  \int_{B_r}  {f_l}_+^2(v)\dd v\\
	&\quad +\frac{1}{2}\int_{B_r}\int_{B_r} \big[{f_l}_-(v) - {f_l}_-(w)\big]\Big[\big({f_l}_+\phi^2\big)(v) -\big({f_l}_+\phi^2\big)(w) \Big]K(v, w) \dd w \dd v.
\eal
Indeed, since 
\beq\label{eq:aux-claim1}
	{f_l}_-(v) - {f_l}_-(w) = f(v) - f(w) - \big({f_l}_+(v) - {f_l}_+(w)\big),
\eeq
and 
\bals
	 \Big[\big({f_l}_+\phi\big)(v) -\big({f_l}_+\phi\big)(w) \Big]^2 &-\big[{f_l}_+(v) - {f_l}_+(w)\big]\Big[\big({f_l}_+\phi^2\big)(v) -\big({f_l}_+\phi^2\big)(w)\Big]\\
	 & = {f_l}_+(v){f_l}_+(w)\big[\phi(v) - \phi(w)\big]^2,
\eals	
there holds
\bals
	\int_{B_r}&\int_{B_r} \Bigg\{\Big[\big({f_l}_+\phi\big)(v) -\big({f_l}_+\phi\big)(w) \Big]^2\\
	&\qquad \qquad + \big[{f_l}_-(v) - {f_l}_-(w)\big]\Big[\big({f_l}_+\phi^2\big)(v) -\big({f_l}_+\phi^2\big)(w) \Big] \Bigg\}K(v, w) \dd w \dd v \\
	&= \int_{B_r}\int_{B_r} \Bigg\{\Big[\big({f_l}_+\phi\big)(v) -\big({f_l}_+\phi\big)(w) \Big]^2\\
	&\qquad \qquad-\big[{f_l}_+(v) -{f_l}_+(w)\big]\Big[\big({f_l}_+\phi^2\big)(v) -\big({f_l}_+\phi^2\big)(w)\Big]\Bigg\}K(v, w) \dd w \dd v \\
	&\quad +\int_{B_r}\int_{B_r} \big[f(v) - f(w)\big]\Big[\big({f_l}_+\phi^2\big)(v) -\big({f_l}_+\phi^2\big)(w) \Big]K(v, w) \dd w \dd v \\
	&=  \int_{B_r}\int_{B_r}{f_l}_+(v){f_l}_+(w)\big[\phi(v) - \phi(w)\big]^2K(v, w) \dd w \dd v  + 2\mathcal E_{loc}.
\eals
Thus, using Young's inequality and \eqref{eq:upperbound}, we further bound
\bals
	\int_{\Omega_{\frac{R}{4}}} &\int_{B_r} \int_{B_r} {f_l}_+(v){f_l}_+(w)\big[\phi(v) - \phi(w)\big]^2K(v, w)  \dd w \dd v \dd x \dd t\\
	&\leq C \norm{\phi}_{C^1_v}^2\int_{\Omega_{\frac{R}{4}}}  \int_{B_r}\int_{B_r}\Big({f_l}^2_+(v)+  {f_l}^2_+(w)\Big)\abs{v-w}^2K( v, w) \dd w \dd z \\
	&\leq Cr^{-2} r^{2-2s} \int_{Q_{\frac{R}{2}}}  {f_l}_+^2(z) \dd z.
\eals
This proves \eqref{eq:claim1}.

In particular, \eqref{eq:claim1} implies for $r = \frac{R}{2}$ 
\bal\label{eq:step1-iii}
	&\mathcal E_{loc} = \iint_{B_r \times B_r} \Big[f(v) -f(w) \Big]\big({f_l}_+\phi^2\big)(v) K(v, w) \dd w \dd v  \\
	&\geq \frac{1}{2}\iint_{B_r \times B_r} \Big[\big({f_l}_+\phi\big)(v) -\big({f_l}_+\phi\big)(w) \Big]^2K(v, w) \dd w \dd v- Cr^{-2s} \int_{B_r} {f_l}_+^2(v)\dd v\\
	&\quad +\frac{1}{2}\iint_{B_r \times B_r} \big[{f_l}_-(v) - {f_l}_-(w)\big]\Big[\big({f_l}_+\phi^2\big)(v) -\big({f_l}_+\phi^2\big)(w) \Big]K(v, w) \dd w \dd v\\
	&\geq \frac{1}{2}\iint_{B_r \times B_r} \Big[\big({f_l}_+\phi\big)(v) -\big({f_l}_+\phi\big)(w) \Big]^2K(v, w) \dd w \dd v- Cr^{-2s} \int_{B_r} {f_l}_+^2(v)\dd v\\
	&\quad -\iint_{B_r \times B_r} \Big[{f_l}_-(v)\big({f_l}_+\phi^2\big)(w) + {f_l}_-(w)\big({f_l}_+\phi^2\big)(v)\Big] K(v, w) \dd w \dd v\\
	&\geq \frac{1}{2}\iint_{B_r \times B_r} \Big[\big({f_l}_+\phi\big)(v) -\big({f_l}_+\phi\big)(w) \Big]^2K(v, w) \dd w \dd v- Cr^{-2s} \int_{B_r} {f_l}_+^2(v)\dd v.
\eal
In the last inequality we used for each $v, w \in \R^d$
\beqs
 	- {f_l}_-(w)\big({f_l}_+\phi^2\big)(v) - {f_l}_-(v)\big({f_l}_+\phi^2\big)(w) \geq 0.
\eeqs

\textit{Step 1-iv: Conclusion.}
Combining \eqref{eq:aux1-improved-DG} and \eqref{eq:aux-bound-0} with \eqref{eq:step1-i-a} 
of \textit{Step 1-i}, \eqref{eq:step1-ii} of \textit{Step 1-ii}, and \eqref{eq:step1-iii} of \textit{Step 1-iii}, we conclude
\bal\label{eq:energy-no-tail}
	&\frac{1}{8}\int_{ \Omega_{\frac{R}{4}}}\iint_{B_{\frac{R}{2}} \times B_{\frac{R}{2}}} \Big[\big((f-l)_+\phi\big)(v) -\big((f-l)_+\phi\big)(w) \Big]^2K(t, x, v, w) \dd w \dd v \dd x \dd t\\
	&\quad\leq CR^{-2s}\int_{Q_{\frac{R}{4}}} (f-l)_+^2(z)\dd z	 \\
	&\qquad +C R^{n(1-\zeta)-2s}\sup_{Q_{\frac{R}{4}}} (f-l)_+ \mathcal M_l \abs{\big\{ f > l\big\}  \cap Q_{R}}^\zeta. 
\eal

\textit{Step 2: Gain of Integrability.} 

For the gain of integrability, we use Proposition \ref{prop:goi}. 
We check that $(f-l)_+ \phi$ satisfies in $\R^{1+2d}$
\bals
	\mathcal T \big[(f-l)_+  \phi \big]&+ (-\Delta_v)^s \big[(f-l)_+ \phi \big] \\
	&\leq  \mathcal L \big[(f-l)_+ \phi\big] +  (-\Delta_v)^s \big[(f-l)_+\phi\big]
	+ (f-l)_+  \mathcal T \phi\\
	&\quad  +   \Bigg(\int_{\R^d\setminus B_{\rho}(v)} \big(f(w)-l\big)_+K(v, w) \big(\phi(v) - \phi(w)\big) \dd w \Bigg)\chi_{f > l} \tilde \eta(v) \\
	&\quad  +   \Bigg(\int_{\R^d} \big(f(w)-l\big)_+K(v, w) \big(\phi(v) - \phi(w)\big) \dd w \Bigg)\chi_{f > l} \big(1-\tilde \eta(v)\big) \\
	&\quad+ \Bigg(\int_{B_{\rho}(v)} \big(f(w)-l\big)_+K(v, w) \big(\phi(v) - \phi(w)\big) \dd w \Bigg)\chi_{f > l}\tilde \eta(v),
\eals
where $0 < \rho \leq R/8$,  $\tilde \eta = 1$ in $B_{\frac{R}{4}+2\rho}$ and $\tilde \eta = 0$ outside $B_{\frac{R}{4}+3\rho}$. 
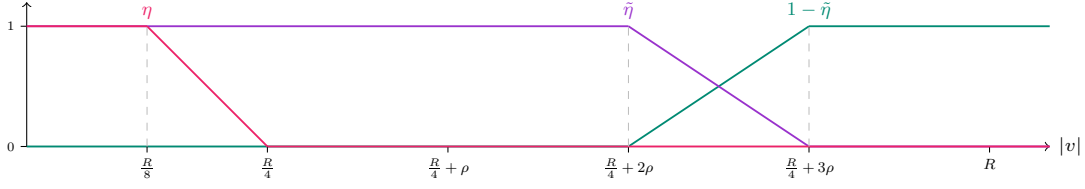
\begin{figure}{
\centering
\begin{tikzpicture}[scale=1.5]
  \draw (-5,1) -- (-5.05, 1) node[anchor=east, scale =0.7] {\footnotesize{$1$}};
  \draw (-5,0) -- (-5.05, 0) node[anchor=east, scale =0.7] {\footnotesize{$0$}};
  \draw (-4, 0) -- (-4, -0.05) node[anchor=north, scale =0.7] {\small$\frac{R}{8}$};
  \draw (-3, 0) -- (-3, -0.05) node[anchor=north, scale =0.7] {\small$\frac{R}{4}$};
  \draw (-1.5, 0) -- (-1.5, -0.05) node[anchor=north, scale =0.7] {\small$\frac{R}{4}+\rho$};
  \draw (0, 0) -- (0, -0.05) node[anchor=north, scale =0.7] {\small$\frac{R}{4}+2\rho$};
  \draw (1.5, 0) -- (1.5, -0.05) node[anchor=north, scale =0.7] {\small$\frac{R}{4}+3\rho$};
  \draw (3, 0) -- (3, -0.05) node[anchor=north, scale =0.7] {\small$R$};
  \draw[WildStrawberry, line width = 0.7pt](-5, 1) -- (-4, 1) node[anchor=south, scale =0.8] {$\eta$};
  \draw[WildStrawberry, line width = 0.7pt] (-4, 1) -- (-3, 0);
  \draw[WildStrawberry, line width = 0.7pt] (-3, 0) -- (3.5, 0);
  \begin{scope}[on background layer]
    \draw[->] (-5, 0) -- (3.5, 0) node[right] {\footnotesize{$\vert v\vert $}};
  \draw[->] (-5, 0) -- (-5, 1.2);
  \draw[PineGreen, line width = 0.7pt] (-5, 0) -- (0, 0);
  \draw[PineGreen, line width = 0.7pt] (0, 0) -- (1.5, 1)node[anchor=south, scale =0.8] {$1-\tilde \eta$};
  \draw[PineGreen, line width = 0.7pt] (1.5, 1) -- (3.5,1);
  \draw[DarkOrchid, line width = 0.7pt] (-5, 1) -- (0, 1)node[anchor=south, scale =0.8] {$\tilde \eta$};
  \draw[DarkOrchid, line width = 0.7pt] (0, 1) -- (1.5, 0);
  \draw[DarkOrchid, line width = 0.7pt] (1.5, 0) -- (3.5,0);
  \end{scope}
    \draw[lightgray, dashed] (-4, 0) -- (-4, 1);
  \draw[lightgray, dashed] (1.5, 0) -- (1.5, 1);
  \draw[lightgray, dashed] (0, 0) -- (0, 1);
\end{tikzpicture}}
\caption{The cutoffs $\eta, \tilde \eta \in C_c^\infty(\R^d)$ in velocity that are used in the proof of Proposition \ref{prop:L2-Linfty-A}.}\label{fig:cutoffs-2}
\end{figure}
The only non-local term in the inequality above is on the third line. Note that $\phi(v) - \phi(w) \leq 0$ for $v$ outside $B_{\frac{R}{4}}$ and $w$ outside $B_\rho(v)$, so that the tail term arising from the equation (on the third line) is non-negative only for $v \in B_{\frac{R}{4}}$, where $\tilde \eta(v) = 1$. 
Furthermore, we note since $\phi$ and $1-\tilde \eta$ have disjoint support, there holds $\phi(v) (1-\tilde \eta(v)) = 0$ for all $v \in \R^d$. Thus, since $0 \leq 1-\tilde \eta \leq 1$, we see that 
\beqs
	 \Bigg(\int_{\R^d} \big(f(w)-l\big)_+K(v, w) \big(\phi(v) - \phi(w)\big) \dd w \Bigg)\chi_{f > l} \big(1-\tilde \eta(v)\big)  \leq 0.
\eeqs

Therefore, there holds for $(t, x, v) \in \R^{1+2d}$
\bals
	\mathcal T &\big[(f-l)_+  \phi \big]+ (-\Delta_v)^s \big[(f-l)_+ \phi \big] \\
	&\leq  \mathcal L \big[(f-l)_+ \phi\big] +  (-\Delta_v)^s \big[(f-l)_+\phi\big]
	+ (f-l)_+  \mathcal T \phi\\
	&\quad  +   \Bigg(\int_{\R^d\setminus B_{\rho}(v)} \big(f(w)-l\big)_+K(v, w) \dd w \Bigg)\chi_{f > l} \phi \\	
	&\quad+ \Bigg(\int_{B_{\rho}(v)} \big(f(w)-l\big)_+K(v, w) \big(\phi(v) - \phi(w)\big) \dd w \Bigg)\chi_{f > l}\tilde \eta(v)\\
	&=: h_1 + h_2,
\eals
with 
\bals
	h_1 &:= \mathcal L \big[(f-l)_+ \phi\big] +  (-\Delta_v)^s \big[(f-l)_+\phi\big],\\
	h_2 &:=(f-l)_+  \mathcal T \phi +\Bigg(\int_{\R^d\setminus B_{\rho}(v)} \big(f(w)-l\big)_+K(v, w) \dd w \Bigg)\chi_{f > l} \phi  \\
	&\qquad+ \Bigg(\int_{B_{\rho}(v)} \big(f(w)-l\big)_+K(v, w) \big(\phi(v) - \phi(w)\big) \dd w \Bigg)\chi_{f > l}\tilde \eta(v).
\eals

Proposition \ref{prop:goi} then implies for any $0 < \epsilon < s$ and any $1 < p < 1 + \frac{s-\epsilon}{2d(1+s)+s+\epsilon}$ 
\bal\label{eq:goi}
	\norm{(f-l)_+  \phi}_{L^p([0, \tau]\times \R^{2d})}  \leq &\tau^{\frac{1}{p}  - \alpha_\epsilon} \norm{(-\Delta_v)^{-\frac{s+\epsilon}{2}} h_1}_{L^1([0, \tau] \times \R^{2d})} \\
	&+\tau^{\frac{1}{p} + \frac{1}{2} - \alpha_0} \norm{h_2}_{L^1([0, \tau] \times \R^{2d})} \\
	&+ \tau^{\frac{1}{p}+ \frac{1}{2} - \alpha_0} \norm{\big[(f-l)_+  \phi\big](0)}_{L^1(\R^{2d})},
\eal
where $\alpha_\epsilon = d\left(1+\frac{1}{s}\right)\left(1-\frac{1}{p}\right)+\frac{s+\epsilon}{2s}$ and, in our case, $\tau \sim R^{2s}$.

\textit{Step 2-i: Contribution of $h_1$.}
To bound $h_1$, we have to estimate, on the one hand, 
\bals
	&\norm{(-\Delta_v)^{-\frac{s+\epsilon}{2}}(-\Delta_v)^s \big[(f-l)_+\phi\big] }_{L^1([0, \tau] \times \R^{2d})} \\
	&=\norm{(-\Delta_v)^{\frac{s-\epsilon}{2}} \big[(f-l)_+\phi\big] }_{L^1([0, \tau] \times \R^{2d})}\\
	&= \int_{\Omega_{\frac{R}{4}}} \iint_{\R^d \times \R^d} \frac{\abs{(f-l)_+\phi(v) - (f-l)_+\phi(w)}}{\abs{v-w}^{d+s-\epsilon}} \dd w \dd z.
\eals
On the other hand,
we note that $(-\Delta_v)^{-\frac{s+\epsilon}{2}}\mathcal L$ is a non-local operator with a kernel $K(v, w) \abs{v-w}^{s+\epsilon}$, which satisfies the same ellipticity assumptions as $K$ with an improved order $s-\epsilon$ (instead of $2s$). In particular, to bound $h_1$ it suffices to bound for $r = \frac{R}{2}$
\bal\label{eq:H1-non-const}
	&\norm{(-\Delta_v)^{-\frac{s+\epsilon}{2}}\mathcal L \big[(f-l)_+\phi\big] }_{L^1([0, \tau] \times \R^{2d})} \\
	&\quad= \int_{\R^{1+2d}} (-\Delta_v)^{-\frac{s+\epsilon}{2}}\mathcal L \big[(f-l)_+\phi\big](z) \dd z \\
	&\quad=\int_{\Omega_{\frac{R}{4}} \times \R^d} \int_{\R^d}  \abs{\big((f-l)_+\phi\big)(w)-\big((f-l)_+\phi\big)(v)} K(v, w) \abs{v-w}^{s+\epsilon} \dd w \dd z \\
	&\quad = \int_{\Omega_{\frac{R}{4}} \times B_r} \int_{B_r} \dots + \int_{\Omega_{\frac{R}{4}} \times \R^d\setminus B_r} \int_{B_r} \dots + \int_{\Omega_{\frac{R}{4}} \times B_r} \int_{\R^d\setminus B_r} \dots. 
\eal
Then, we further note, since $\textrm{supp }\phi \subseteq Q_{\frac{R}{4}}$ and $r = \frac{R}{2}$, by \eqref{eq:upperbound}
\bal\label{eq:H1-non-const-tail}
	&\int_{\Omega_{\frac{R}{4}} \times \R^d\setminus B_r} \int_{B_r}  \abs{\big((f-l)_+\phi\big)(w)-\big((f-l)_+\phi\big)(v)} K(v, w) \abs{v-w}^{s+\epsilon} \dd w \dd z\\
	&\quad+ \int_{\Omega_{\frac{R}{4}} \times B_r} \int_{\R^d\setminus B_r}\abs{\big((f-l)_+\phi\big)(w)-\big((f-l)_+\phi\big)(v)} K(v, w) \abs{v-w}^{s+\epsilon} \dd w \dd z\\
	&\quad= \int_{\Omega_{\frac{R}{4}} \times \R^d\setminus B_{\frac{R}{2}}} \int_{B_{\frac{R}{4}}}  \big((f-l)_+\phi\big)(w) K(v, w) \abs{v-w}^{s+\epsilon} \dd w \dd z\\
	&\quad\quad+ \int_{Q_{\frac{R}{4}}} \int_{\R^d\setminus B_{\frac{R}{2}}}\big((f-l)_+\phi\big)(v) K(v, w) \abs{v-w}^{s+\epsilon} \dd w \dd z\\
	&\quad \leq C R^{-s+\epsilon} \int_{Q_{\frac{R}{4}}} (f-l)_+ \dd z.
\eal
Moreover, we introduce $\mathcal C_K$ to be such that 
\beq\label{eq:choice-CK}
	K^2(v, w)\mathcal C_K^{-1}(v, w) \abs{v-w}^{2s+d} \sim K(v, w).
\eeq
For the Boltzmann equation for instance (see \cite{ISglobal}), we would have
\beqs
	\mathcal C_K(v, w) =\int_{w' \perp w - v} f(v+w') \abs{w'}^{\gamma+2s+1}\tilde b(\cos \theta) \dd w'.
\eeqs
In particular, due to the ellipticity of $K$ in \eqref{eq:upperbound}, we know that
\beqs
	\int_{B_r(v)} \mathcal C_K(v, w) \abs{v-w}^{-d+2\epsilon} \dd w \leq C(\Lambda_0, \epsilon)r^{2\epsilon}.  
\eeqs
Thus
\bals
	&\int_{\Omega_{\frac{R}{4}} \times B_r} \int_{B_r} \abs{\big((f-l)_+\phi\big)(w)-\big((f-l)_+\phi\big)(v)} K(v, w) \abs{v-w}^{s+\epsilon} \dd w \dd z\\
	&\leq \int_{\Omega_{\frac{R}{4}} \times B_r} \int_{B_r} \left(\chi_{f > l}(t, x, v) +\chi_{f > l}(t, x, w)\right) \\
	&\qquad \qquad \times \abs{\big((f-l)_+\phi\big)(w)-\big((f-l)_+\phi\big)(v)} K(v, w) \abs{v-w}^{s+\epsilon} \dd w \dd z\\
	&\leq \int_{\Omega_{\frac{R}{4}} \times B_r} \chi_{f > l}(t, x, v)\left(\int_{B_r}   \abs{\big((f-l)_+\phi\big)(w)-\big((f-l)_+\phi\big)(v)}^2 K^2(v, w) \mathcal C_K^{-1} \abs{v-w}^{2s+d} \dd w\right)^{\frac{1}{2}}\\
	&\qquad \qquad \qquad\qquad\qquad\qquad \times \left(\int_{B_r} \abs{v-w}^{-d+2\epsilon} \mathcal C_K \dd w\right)^{\frac{1}{2}} \dd z\\
	&\quad + \int_{\Omega_{\frac{R}{4}} \times B_r} \chi_{f > l}(t, x, w)\left(\int_{B_r}   \abs{\big((f-l)_+\phi\big)(w)-\big((f-l)_+\phi\big)(v)}^2 K^2(v, w)\mathcal C_K^{-1}  \abs{v-w}^{2s+d} \dd v\right)^{\frac{1}{2}} \\
	&\qquad \qquad \qquad\qquad\qquad\qquad \times \left(\int_{B_r} \abs{v-w}^{-d+2\epsilon} \mathcal C_K \dd v\right)^{\frac{1}{2}} 	\dd w \dd x \dd t\\
	&\leq 2C(\Lambda_0, \epsilon)r^\epsilon\abs{\{ f > l\} \cap Q_{\frac{R}{2}}}^{\frac{1}{2}}\\
	&\qquad \qquad \times\left(\int_{\Omega_{\frac{R}{4}} \times B_r}\int_{B_r}   \abs{\big((f-l)_+\phi\big)(w)-\big((f-l)_+\phi\big)(v)}^2 K^2(v, w) C_K^{-1}\abs{v-w}^{2s+d} \dd w \dd z\right)^{\frac{1}{2}},
\eals
By choice of $\mathcal C_K$ in \eqref{eq:choice-CK}, we then relate the last integral to the $H^s_v$ norm of $(f-l)_+\phi$, which, due to the energy estimate \eqref{eq:energy-no-tail}, is bounded as
\bal\label{eq:H1-non-const-loc}
	&\int_{\Omega_{\frac{R}{4}} \times B_r} \int_{B_r} \abs{\big((f-l)_+\phi\big)(w)-\big((f-l)_+\phi\big)(v)} K(v, w) \abs{v-w}^s \dd w \dd z\\
	&\leq CR^\epsilon \abs{\{ f > l\} \cap Q_{\frac{R}{2}}}^{\frac{1}{2}}\left(\int_{\Omega_{\frac{R}{4}} \times B_r}\int_{B_r}   \abs{\big((f-l)_+\phi\big)(w)-\big((f-l)_+\phi\big)(v)}^2 K(v, w) \dd w \dd z\right)^{\frac{1}{2}}\\
	&\leq CR^{-s+\epsilon}\abs{\{ f > l\} \cap Q_{\frac{R}{2}}}^{\frac{1}{2}}\norm{(f-l)_+\phi}_{L^2(Q_{\frac{3R}{4}})}\\
	&\qquad +  C R^{\frac{n(1-\zeta)}{2}-s+\epsilon} \left(\sup_{Q_{\frac{R}{4}}} (f-l)_+ \mathcal M_l\right)^{\frac{1}{2}} \abs{\big\{ f > l\big\}  \cap Q_{R}}^{\frac{\zeta+1}{2}}.
\eal
Thus we combine \eqref{eq:H1-non-const}, \eqref{eq:H1-non-const-tail} and \eqref{eq:H1-non-const-loc} to get
\bal\label{eq:H1-non-const-bound}
	&\norm{(-\Delta_v)^{-\frac{s+\epsilon}{2}}\mathcal L \big[(f-l)_+\phi\big] }_{L^1([0, \tau] \times \R^{2d})} \\
	&\quad=\int_{\Omega_{\frac{R}{4}} \times \R^d} \int_{\R^d}  \abs{\big((f-l)_+\phi\big)(w)-\big((f-l)_+\phi\big)(v)} K(v, w) \abs{v-w}^s \dd w \dd z \\
	&\quad\leq C R^{-s+\epsilon} \int_{Q_{\frac{R}{4}}} (f-l)_+ \dd z +CR^{-s+\epsilon}\abs{\{ f > l\} \cap Q_{\frac{R}{2}}}^{\frac{1}{2}}\norm{(f-l)_+\phi}_{L^2(Q_{\frac{3R}{4}})}\\
	&\qquad +  CR^{\frac{n(1-\zeta)}{2}-s+\epsilon} \left(\sup_{Q_{\frac{R}{4}}} (f-l)_+ \mathcal M_l\right)^{\frac{1}{2}} \abs{\big\{ f > l\big\}  \cap Q_{R}}^{\frac{\zeta+1}{2}}.
\eal

We conclude thus the bound for $h_1$ using \eqref{eq:H1-non-const-bound}:
\bal\label{eq:H1}
	&\norm{(-\Delta_v)^{-\frac{s+\epsilon}{2}} h_1}_{L^1([0, \tau] \times \R^{2d})} \\
	&\quad\leq C R^{-s+\epsilon} \norm{(f-l)_+}_{L^1(Q_{\frac{R}{4}})} +CR^{-s+\epsilon}\abs{\{ f > l\} \cap Q_{\frac{R}{2}}}^{\frac{1}{2}}\norm{(f-l)_+\phi}_{L^2(Q_{\frac{3R}{4}})}\\
	&\qquad +  CR^{\frac{n(1-\zeta)}{2}-s+\epsilon} \left(\sup_{Q_{\frac{R}{4}}} (f-l)_+ \mathcal M_l\right)^{\frac{1}{2}} \abs{\big\{ f > l\big\}  \cap Q_{R}}^{\frac{\zeta+1}{2}}.
\eal

\textit{Step 2-ii: Contribution of $h_2$.}
To bound $h_2$, we distinguish the transport term, and the tail, which is non-singular, from the local contribution, which is singular,
\bals
	h_2 &:= \underbrace{(f-l)_+  \mathcal T \phi}_{=: h_2^{\rm{transport}}}+ \underbrace{\Bigg(\int_{\R^d\setminus B_{\rho}(v)} \big(f(w)-l\big)_+K(v, w) \dd w \Bigg)\chi_{f > l} \phi}_{=: h_2^{\textrm{tail}}} \\
	&\qquad+ \underbrace{\Bigg(\int_{B_{\rho}(v)} \big(f(w)-l\big)_+K(v, w) \big(\phi(v) - \phi(w)\big) \dd w \Bigg)\chi_{f > l}\tilde \eta(v)}_{=: h_2^{\textrm{loc}}},
\eals
The transport term gives
\bal\label{eq:H2-transport}
	\norm{h_2^{\rm{transport}}}_{L^1(\R^{1+2d})} \leq C R^{-2s} \norm{(f-l)_+}_{L^1(Q_{\frac{R}{2}})}. 
\eal
Then we bound $h_2^{\rm{loc}}$ using the commutator estimates of Lemma \ref{lem:commutator}:
we find for $\rho = \frac{R}{16}$
\bals
	\norm{h_2^{\rm{loc}}}_{L^1(\R^{1+2d})} &=\int_{Q_{\frac{R}{2}}} \Bigg(\int_{B_{\rho}(v)} \big(f(w)-l\big)_+K(v, w) \big(\phi(v) - \phi(w)\big) \dd w \Bigg)\chi_{f > l}\dd z\\
	&\leq  \abs{\{f > l\} \cap Q_{\frac{R}{2}}}^{\frac{1}{2}} \left(\int_{Q_{\frac{R}{2}}} \Bigg(\int_{B_{\rho}(v)} \big(f(w)-l\big)_+K(v, w) \big(\phi(v) - \phi(w)\big) \dd w \Bigg)^2\dd z\right)^{\frac{1}{2}}\\
	&\leq C R^{-2s}\abs{\{f > l\} \cap Q_{\frac{R}{2}}}^{\frac{1}{2}} \norm{(f-l)_+}_{L^2(Q_{\frac{R}{2}})} \\
	&\quad+ C R^{-s}\abs{\{f > l\} \cap Q_{\frac{R}{2}}}^{\frac{1}{2}} \norm{(f-l)_+}_{L^2_{t, x}H^s_v(Q_{\frac{R}{2}})},
\eals
which again by the coercivity \eqref{eq:coercivity} and the energy estimate \eqref{eq:energy-no-tail} is bounded as
\bal\label{eq:H2-loc}
	\norm{h_2^{\rm{loc}}}_{L^1(\R^{1+2d})} &\leq C R^{-2s}\abs{\{f > l\} \cap Q_{\frac{R}{2}}}^{\frac{1}{2}} \norm{(f-l)_+}_{L^2(Q_{\frac{R}{2}})} \\	
	&\quad +C R^{\frac{n(1-\zeta)}{2}-s} \left(\sup_{Q_{\frac{R}{4}}} (f-l)_+ \mathcal M_l\right)^{\frac{1}{2}} \abs{\big\{ f > l\big\}  \cap Q_{R}}^{\frac{\zeta+1}{2}}.
\eal
Finally, to bound the contribution of the tail term, we use the bound \eqref{eq:L1-tail} from Proposition \ref{prop:tail-bound} on $f$:
\bal\label{eq:H2-tail}
	\norm{h_2^{\rm{tail}}}_{L^1(\R^{1+2d})} &= \int_{Q_{\frac{R}{4}}} \int_{\R^d\setminus B_{\rho}(v)} \big(f(w)-l\big)_+K(v, w) \dd w \chi_{f > l} \phi \dd z\\
	&\leq CR^{n(1-\zeta)-2s}  \mathcal M_l \abs{\big\{ f > l\big\}  \cap Q_{R}}^{\zeta}.
\eal

\textit{Step 2-iii: Conclusion.}
Since $\phi = 1$ in $Q_{\frac{R}{8}}$ and $\phi = 0$ outside $Q_{\frac{R}{4}}$, we conclude from \eqref{eq:goi}, \eqref{eq:H1}, \eqref{eq:H2-transport}, \eqref{eq:H2-loc} and \eqref{eq:H2-tail} 
\bal\label{eq:goi-bound}
	\norm{(f-l)_+  \phi}_{L^p(Q_{\frac{R}{8}})}  &\leq CR^{s+\epsilon} \norm{(f-l)_+(t_0)}_{L^1(Q^{t_0}_{\frac{R}{4}})}+  CR^{-s+\epsilon} \norm{(f-l)_+}_{L^1(Q_{\frac{R}{2}})} \\
	&\quad+ C R^{-s+\epsilon}\abs{\{f > l\} \cap Q_{\frac{R}{2}}}^{\frac{1}{2}} \norm{(f-l)_+}_{L^2(Q_{\frac{R}{2}})} \\	
	&\quad +C R^{\frac{n(1-\zeta)}{2}-s+\epsilon} \left(\sup_{Q_{\frac{R}{4}}} (f-l)_+ \mathcal M_l\right)^{\frac{1}{2}} \abs{\big\{ f > l\big\}  \cap Q_{R}}^{\frac{\zeta+1}{2}}\\
	&\quad +CR^{n(1-\zeta)-s+\epsilon}  \mathcal M_l \abs{\big\{ f > l\big\}  \cap Q_{R}}^{\zeta}.
\eal
Let us remark that, in case of non-zero source term, we would have the additional term
\beqs
	CR^{s+\epsilon}\norm{h}_{L^\infty(Q_R)} \abs{\{f > l\} \cap Q_R} + CR^{\epsilon}\norm{h}_{L^\infty(Q_R)}^{\frac{1}{2}} \left( \sup_{Q_{R}} (f-l)_+\right)^{\frac{1}{2}} \abs{\{f > l\} \cap Q_R} 
\eeqs	
on the right hand side of \eqref{eq:goi-bound}.

\textit{Step 3: De Giorgi iteration.}

Let $L > 0$ to be determined and consider 
\beqs
	l_k = L(1 - 2^{-k}),\quad r_k = \frac{R}{8} + 2^{-k}\left(R - \frac{R}{8}\right).
\eeqs
Define $Q_k = \left(-r_k^{2s}, 0\right] \times B_{r_k^{1 + 2s}} \times B_{r_k}$ and
\beqs
	A_k :=  \int_{Q_k} (f - l_k)_+(z) \dd z.
\eeqs
We want to prove that $A_k \to 0$ as $k \to +\infty$. This then proves $$f(t, x, v) \leq L$$ for $(t, x, v) \in Q_{\frac{R}{8}}$.

Chebyschev's inequality gives
\bal
	\abs{\{f > l_{k+1}\} \cap Q_{k}} &= \abs{\left\{(f - l_k)_+> 2^{-k-2}L\right\} \cap  Q_{k}} \\
	&\leq 2^{k+2}L^{-1} \int_{Q_k} (f - l_k)_+(z) \dd z = 2^{k+2}L^{-1} A_k.
\label{eq:chebyschevaux-B}
\eal
To bound the right hand side of \eqref{eq:goi-bound} we pick $t_{k+\frac{1}{2}} \in \left[-r_k^{2s}, -r_{k+1}^{2s}\right]$ such that
\bals
	\int_{Q_k^{t_{k+\frac{1}{2}}}} (f-l_{k})_+(t_{k+\frac{1}{2}}, x, v) \dd v\dd x &\leq \frac{1}{t_{k+1}-t_k}\int_{Q_k} (f-l_{k})_+(t, x, v)\dd z \\
	&\leq C2^kR^{-2s} A_k.
\eals
Moreover, we interpolate the $L^2$ norm between $L^1$ and $L^\infty$,
\bals
	\norm{(f-l_{k+1})_+}_{L^2(Q_{k+1})} \leq \norm{(f-l_{k+1})_+}^{\frac{1}{2}}_{L^1(Q_{k+1})}\left(\sup_{Q_{k+1}} (f-l_{k+1})_+\right)^{\frac{1}{2}}.
\eals
Then we use the gain of integrability \eqref{eq:goi-bound}, Chebyshev \eqref{eq:chebyschevaux-B} and $l_{k+1} \geq l_k$ to find 
\bal\label{eq:DG_aux}
	\Bigg(\int_{t_{k+\frac{1}{2}}}^{0}&\int_{ Q^t_{k+1}}(f - l_{k+1})_+^p(z) \dd z\Bigg)^{\frac{1}{p}}\\
	&\leq C 2^{4k} R^{-s+\epsilon}A_k +  C 2^{3k} R^{-s+\epsilon}A_k \\
	&\quad + C 2^{4k} R^{-s+\epsilon}L^{-\frac{1}{2}}\left( \sup_{Q_{k+1}}(f-l_{k+1})_+\right)^{\frac{1}{2}} A_k \\
	&\quad + C2^{4k} R^{\frac{n(1-\zeta)}{2}-s+\epsilon} \left(\sup_{Q_{k+1}}(f-l_{k+1})_+\mathcal M_{l_{k+1}}\right)^{\frac{1}{2}} L^{-\frac{\zeta+1}{2}} A_k^{\frac{\zeta+1}{2}} \\
	&\quad +C 2^{4k} R^{n(1-\zeta)-s+\epsilon} \mathcal M_{l_{k+1}} L^{-\zeta} A_k^\zeta.
\eal
Again, for non-zero source term, we have the additional terms
\bals
	C &2^{k+2} R^{-s+\epsilon}\left(R^{2s} \norm{h}_{L^\infty(Q_k)}+ R^s \norm{h}_{L^\infty(Q_k)}^{\frac{1}{2}}\left(\sup_{Q_k}(f-l_k)_+\right)^{\frac{1}{2}}\right) L^{-1}A_k\\
	&\leq C 2^{k+2} R^{-s+\epsilon}\left(2R^{2s} \norm{h}_{L^\infty(Q_k)}+ \sup_{Q_k}(f-l_k)_+\right) L^{-1}A_k
\eals
on the right hand side of \eqref{eq:DG_aux}.

For arbitrary $\delta \in (0, 1]$, pick $L$ such that
\beqs
	L \geq \delta \sup_{Q_R}f,
\eeqs
or, in case of non-zero source term, 
\beqs
	L \geq\delta \left(\sup_{Q_R}f + R^{2s} \sup_{Q_R} h\right).
\eeqs

Then we bound $\mathcal M_{l_{k+1}}$ given in \eqref{M-l} by
\bals
	\mathcal M_{l_{k+1}}  &\leq \left[1 +\left(\frac{\sup_{Q_{R}}f}{L}\right)^{\zeta}+ \left(\frac{\sup_{Q_{R}} f}{L}\right)^{1-\zeta}\right] \left(L + \sup_{Q_{R}}f\right) \\
	&\leq \left[1 +\delta^{-\zeta}+ \delta^{-1+\zeta}\right] \left(1+\delta^{-1}\right)L \leq 6\delta^{-1-\zeta}L.
\eals
for $\delta \leq 1$ and $\zeta \geq \frac{1}{2}$.
Now, using $Q_{k+1} \subset Q_k$, Hölder, \eqref{eq:DG_aux} and Chebyschev's inequality \eqref{eq:chebyschevaux-B} we get
\bals
	A_{k+1} &= \int_{Q_{k+1}} (f - l_{k+1})_+(z) \dd z\\
	&\leq \left(\int_{Q_{k+1}} (f - l_{k+1})^p_+(z) \dd z\right)^{\frac{1}{p}} \abs{ \left\{f > l_{k+1}\right\} \cap  Q_{k+1}}^{1 - \frac{1}{p}}\\
	&\leq \left(\int_{t_{k+\frac{1}{2}}}^{0}\int_{Q^t_{k+1}} (f - l_{k+1})^p_+(z) \dd z\right)^{\frac{1}{p}} \abs{ \left\{f> l_{k+1}\right\} \cap  Q_{k}}^{1 - \frac{1}{p}}\\
	&\leq C 2^{4k} R^{-s+\epsilon} \Bigg\{\delta^{-\frac{1}{2}}  A_k^{1-\frac{\zeta}{2}}  +  R^{\frac{n(1-\zeta)}{2}}\delta^{-1-\frac{\zeta}{2}}   L^{1-\frac{\zeta+1}{2}} A_k^{\frac{1}{2}}\\
	&\qquad\qquad\qquad \qquad+ R^{n(1-\zeta)} \delta^{-1-\zeta}  L^{1-\zeta} A_k^{\frac{\zeta}{2}}\Bigg\}L^{-\frac{p-1}{p}} A_k^{\frac{\zeta}{2}+\frac{p-1}{p}}\\
	&\leq C 2^{4k} R^{n(1-\zeta)-s+\epsilon} \delta^{-1-\zeta}  L^{1-\zeta-\frac{p-1}{p}} A_k^{\zeta+\frac{p-1}{p}}. 
\eals
This estimate holds for any $\zeta \in (0, 1)$. We pick $\zeta \in (1 - \frac{p-1}{p}, 1)$ as close to $1$ as possible.

To determine how large we can pick $L$, we consider the sequence $A_k^* := A_0 Q^{-k}$ for some $Q > 1$ to be determined and for $k \geq 0$. Then we require $A_k^*$ to satisfy the reverse non-linear recurrence, that is
\beq
	A_{k+1}^* \geq 2^{4k} R^{n(1-\zeta)-s+\epsilon}\delta^{-1-\zeta} L^{1-\zeta-\frac{p-1}{p}}(A_k^*)^{\zeta + \frac{p-1}{p}}.
\label{eq:reverse}
\eeq
Equivalently,
\beqs
	1 \geq  2^{4k} R^{n(1-\zeta)-s+\epsilon}\delta^{-1-\zeta}L^{1-\zeta-\frac{p-1}{p}}A_0^{\zeta-1+\frac{p-1}{p}} Q^{-k(\zeta-1+\frac{p-1}{p})}Q.
\eeqs
Choose $Q$ such that $2^4Q^{-(\zeta-1 +\frac{p-1}{p})} \leq 1$ or $2^{4} \leq Q^{\zeta -1 +\frac{p-1}{p}}$.
Then \eqref{eq:reverse} holds if 
\beqs
	L^{\zeta-1+\frac{p-1}{p}}\geq  R^{n(1-\zeta)-s+\epsilon}A_0^{\zeta-1+\frac{p-1}{p}} Q\delta^{-1-\zeta}.
\eeqs
Therefore, we pick
\beq\label{eq:L}
	L =  A_0 R^{\frac{n(1-\zeta) -(s-\epsilon)}{\zeta - 1 + \frac{p-1}{p}}} 2^{\frac{4}{(\zeta-1+\frac{p-1}{p})^2}}\delta^{-\frac{1-\zeta}{\zeta -1 +\frac{p-1}{p}}} + \delta \left(\sup_{Q_R}f + R^{2s} \sup_{Q_R} h\right).
\eeq
Then since $\frac{p-1}{p} > 0$ and since $A_0 \leq  2^{\frac{4}{(\zeta-1+\frac{p-1}{p})^2}} R^{\frac{n(1-\zeta) -(s-\epsilon)}{\zeta - 1 + \frac{p-1}{p}}} \delta^{-\frac{1-\zeta}{\zeta -1 +\frac{p-1}{p}}} L$, 
we deduce inductively
\beqs
	A_k \leq 2^{-\frac{4k}{\zeta-1+\frac{p-1}{p}}}A_0,
\eeqs
so that $A_k \to 0$ as $k \to +\infty$.
In particular, for almost every $(t, x, v) \in Q_{\frac{R}{8}}$
\beqs
	 f(z) \leq L = \delta \left(\sup_{Q_R}f + R^{2s} \sup_{Q_R} h\right) + 2^{\frac{4}{(\zeta-1+\frac{p-1}{p})^2}}\delta^{-\frac{1-\zeta}{\zeta -1 +\frac{p-1}{p}}} R^{\frac{n(1-\zeta) -(s-\epsilon)}{\zeta - 1 + \frac{p-1}{p}}} \int_{Q_R} f(z) \dd z.
\eeqs
Recall that we pick $\zeta$ arbitrarily close to $1$, and $\epsilon$ arbitrarily close to $0$.
Finally, since $\sup_{Q_R} f < +\infty$ (due to the non-local $L^2$-$L^\infty$ estimate as established in \cite[Lemma 6.6]{IS}, or \cite[Lemma 4.1]{AL} together with Definition \ref{def:solutions}), so that we can absorb the first term on the left hand side with Lemma \ref{lem:covering}. 
This concludes the proof of Proposition \ref{prop:L2-Linfty-A}. Remarkably, the proof only works for solutions, and not for sub-solutions, which is in contrast to the local case.

\section{Strong Harnack inequality}\label{sec:strong_H}
\begin{proof}[Proof of the Strong Harnack inequality in Theorem \ref{thm:strong-Harnack}]
As a consequence of Proposition \ref{prop:L2-Linfty-A}, Young's inequality, the non-negativity of $f$ and the Weak Harnack \eqref{eq:weakH}, we obtain for any $\delta_1 \in (0, 1)$
\bals
	\sup_{Q_{\frac{R}{8}}(z_0)} f &\leq C R^{-(2d(1+s) + 2s)} \norm{f}_{L^1(Q_R(z_0))} + CR^{2s} \norm{h}_{L^\infty(Q_R(z_0))} \\
	&\leq \delta_1\sup_{Q_{R}(z_0)} f + C(\delta_1) R^{-\frac{2d(1+s) + 2s}{\zeta}} \Bigg(\int_{Q_R(z_0)} f^\zeta \dd z\Bigg)^{\frac{1}{\zeta}} + C R^{2s}\norm{h}_{L^\infty(Q_R(z_0))}\\
	&\leq \delta_1\sup_{Q_{R}(z_0)} f + C(\delta_1)R^{-\frac{2d(1+s) + 2s}{\zeta}} \inf_{\tilde Q_{R}^+(z_0)} f + CR^{-\frac{2d(1+s) + 2s}{\zeta} + 2s} \norm{h}_{L^\infty(Q_R(z_0))},
\eals
where we recall $\tilde Q_R^+ := Q_R\left(\left(\frac{3}{2} R^{2s} + \frac{1}{2} \left(\frac{R}{2}\right)^{2s}, 0, 0\right)\right)$.
Lemma \ref{lem:covering} permits to absorb the first term on the right hand side, and we conclude
\beqs
	\sup_{Q_{\frac{R}{8}}(z_0)} f \leq CR^{-\frac{2d(1+s) + 2s}{\zeta}} \left(\inf_{\tilde Q_{R}^+(z_0)} f+ R^{2s}\norm{h}_{L^\infty(Q_R(z_0))}\right).
\eeqs
\end{proof}

\section{Upper bound on the fundamental solution}\label{sec:upperbound}

For the sequel we assume that \eqref{eq:1.1} under assumptions \eqref{eq:coercivity}-\eqref{eq:symmetry-nondiv} admits a fundamental solution with the properties \eqref{eq:normalisation-J}, \eqref{eq:symmetry-J} and \eqref{eq:representation-J}. We denote it by $J(t, x, v; \tau, \zeta, \xi)$ for $(t, x, v), (\tau, \zeta, \xi) \in \R^{1+2d}$.

\subsection{On-diagonal bound}

The on-diagonal upper bound follows from Proposition \ref{prop:L2-Linfty-A}.

\begin{theorem}[Nash Upper Bound]\label{thm:nash}
Let $x, v, \zeta, \xi \in \R^d$ and $0 \leq \tau_0 < t < T$. Let $J(t, x, v; \tau_0, \zeta, \xi)$ be the fundamental solution of \eqref{eq:1.1} with a non-negative kernel $K$ satisfying \eqref{eq:coercivity}-\eqref{eq:symmetry}. Then there exists $C > 0$ depending on $s, d, \lambda_0, \Lambda_0$ such that
\beq\label{eq:ondiag}
	J(t, x, v; \tau_0, \zeta, \xi) \leq C (t-\tau_0)^{-\frac{2d(1+s)}{2s}}.
\eeq
\end{theorem}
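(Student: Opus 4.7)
The plan is to apply the linear $L^1$–$L^\infty$ bound of Proposition \ref{prop:L2-Linfty-A} directly to $J$ viewed as a function of the forward variables, and to use the total-mass normalisation \eqref{eq:normalisation-J} to turn the $L^1$ norm on the right-hand side into a power of the radius. For fixed $(\tau_0, \zeta, \xi)$ and $t > \tau_0$, the map $(t', x', v') \mapsto J(t', x', v'; \tau_0, \zeta, \xi)$ is a non-negative solution of \eqref{eq:1.1} with $h=0$ on $(\tau_0, T) \times \R^{2d}$, so Proposition \ref{prop:L2-Linfty-A} is applicable once the radius is chosen small enough relative to $t - \tau_0$.

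Fix $(t,x,v)$ and pick $R>0$ with $R^{2s} = \tfrac{1}{2}(t-\tau_0)$, and set $z_0 = (t,x,v)$, so $Q_R(z_0) \subset (\tau_0, t] \times \R^{2d}$ and $J$ is a solution throughout this cylinder. Proposition \ref{prop:L2-Linfty-A} (with $h=0$ and any fixed $\delta$) then gives, for a.e.\ $z_1 \in Q_{R/8}(z_0)$,
\[
J(z_1;\tau_0,\zeta,\xi) \;\leq\; C\,R^{-(2d(1+s)+2s)}\int_{Q_R(z_0)} J(z;\tau_0,\zeta,\xi)\dd z.
\]
By \eqref{eq:normalisation-J} and non-negativity,
\[
\int_{Q_R(z_0)} J(z;\tau_0,\zeta,\xi)\dd z \;\leq\; \int_{t-R^{2s}}^{t}\!\int_{\R^{2d}} J(t',x',v';\tau_0,\zeta,\xi)\dd x'\dd v'\dd t' \;=\; R^{2s},
\]
so $J(z_1) \leq C R^{-2d(1+s)} = C(t-\tau_0)^{-\frac{2d(1+s)}{2s}}$ for a.e.\ $z_1 \in Q_{R/8}(z_0)$.

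To upgrade the a.e.\ bound to the pointwise bound at $z_0$ itself, I would shift the cylinder slightly: apply the previous argument with center $z_0^\eta := (t+\eta, x+\eta v, v)$ for small $\eta>0$ and radius $R_\eta$ with $R_\eta^{2s} = \tfrac{1}{2}(t+\eta - \tau_0)$, so that $z_0$ lies in the interior of $Q_{R_\eta/8}(z_0^\eta)$, and then let $\eta \downarrow 0$ using the continuity of $J$ inherited from the Hölder regularity theory for \eqref{eq:1.1}. The main technical obstacle is matching the precise assumptions of Proposition \ref{prop:L2-Linfty-A}, in particular the mass-decrease condition and the $x$-periodicity built into Definition \ref{def:solutions}: the mass condition is automatic once one integrates over all of $\R^{2d}$ in the velocity variable (by conservation of mass), while the periodicity can either be bypassed by a standard approximation of $J$ by periodic solutions on tori of growing period, or handled directly by observing that the transport step in the proof of Proposition \ref{prop:L2-Linfty-A} tolerates the loss of the boundary term on $\partial B_{R^{1+2s}}$ whenever $f \geq 0$, which is our situation.
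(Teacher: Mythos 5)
Your argument is essentially the paper's own proof: apply Proposition \ref{prop:L2-Linfty-A} to $J(\cdot,\cdot,\cdot;\tau_0,\zeta,\xi)$ on a cylinder centred at $(t,x,v)$ of temporal radius comparable to $t-\tau_0$, and convert the $L^1$ integral of $J$ to $R^{2s}$ via the normalisation \eqref{eq:normalisation-J}. The paper is slightly terser — it picks $r^{2s}=\tfrac14(t-\tau_0)$, $R=2r$, and passes directly to $\sup_{Q_r}J$ without spelling out the a.e.-to-pointwise upgrade or the verification of the hypotheses of Proposition \ref{prop:L2-Linfty-A} that you correctly flag; your extra care on those points is a welcome refinement rather than a different route.
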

\begin{proof}
Since $(t, x, v) \to J(t, x, v; \tau_0, \zeta, \xi)$ solves \eqref{eq:1.1}, we find by Proposition \ref{prop:L2-Linfty-A} with $z = (t, x, v)$, $r^{2s} = \frac{1}{4}(t- \tau_0)$ and $R = 2r$
\bals
	J(t, x, v; \tau_0, \zeta, \xi) &\leq \sup_{Q_r(z)} J(\cdot, \cdot, \cdot; \tau_0, \zeta, \xi) \\
	&\leq C (t-\tau_0)^{-\frac{2d(1+s)+2s}{2s}} \int_{Q_{2r}(z)} J(s, y, w; \tau_0, \zeta, \xi) \dd s \dd y \dd w\\
	&\leq C (t-\tau_0)^{-\frac{2d(1+s)+2s}{2s}} \int_{t - r^{2s}}^t \int_{\R^{2d}}  J(s, y, w; \tau_0, \zeta, \xi) \dd s \dd y \dd w\\
	&\leq C (t-\tau_0)^{-\frac{2d(1+s)}{2s}}.
\eals
The last inequality follows by using \eqref{eq:normalisation-J}. 
\end{proof}

As a consequence of \eqref{eq:ondiag} and \eqref{eq:normalisation-J} we get for every $0 \leq \tau_0 < \sigma < T$ and $\zeta, \xi \in \R^d$
\beq\label{eq:ondiag1}
	\Bigg(\int_{\R^{2d}} J^2(\sigma, x, v; \tau_0, \zeta, \xi) \dd x \dd v\Bigg)^{\frac{1}{2}} \leq C (\sigma-\tau_0)^{-\frac{d(1+s)}{2s}},
\eeq
and due to \eqref{eq:symmetry-J} also for every $x, v \in \R^d$
\beq\label{eq:ondiag2}
	\Bigg(\int_{\R^{2d}} J^2(\sigma, x, v; \tau_0, \zeta, \xi) \dd \zeta \dd\xi\Bigg)^{\frac{1}{2}} \leq C (\sigma-\tau_0)^{-\frac{d(1+s)}{2s}}.
\eeq

\subsection{Off-diagonal bound}

In this section, we derive an off-diagonal bound on the fundamental solution. 
\begin{theorem}[Off-diagonal bound]\label{thm:off-diag}
Let $J$ be the fundamental solution of \eqref{eq:1.1} with a non-negative kernel $K$ satisfying \eqref{eq:coercivity}-\eqref{eq:symmetry-nondiv}. Then there exists $C > 0$ depending on $d, \lambda_0, \Lambda_0, s$ such that for every $0 < \rho$, $0 \leq \tau_0 < \sigma < T$, $x, v, y_0, w_0 \in \R^d$ with $\sigma - \tau_0 \leq \frac{\rho^{2s}}{4k}$ there holds
\beqs
	J(\sigma,x, v; \tau_0, y_0, w_0) \leq C \Bigg(  (\sigma - \tau_0)^{-\frac{2d(1+s)}{2s}}\Bigg(\frac{\rho^{2s}}{k(\sigma - \tau_0)}\Bigg)^{\frac{1}{4}- \frac{\gamma}{12\rho}} +\rho^{-\frac{s}{2}} (\sigma - \tau_0)^{\frac{1}{4} - \frac{2d(1+s)}{2s}}\Bigg),
\eeqs
for
\beqs
	\gamma = \frac{1}{4}\max\left\{\abs{v- w_0},  \abs{x - y_0 - (\sigma - \tau_0) w_0}^{\frac{1}{1+2s}}\right\},
\eeqs
with $\abs{\sigma - \tau_0}^{\frac{1}{2s}} \leq 4 \gamma$. 
\end{theorem}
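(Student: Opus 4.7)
The plan is to adapt Aronson's method to the non-local kinetic framework via a weighted energy estimate combined with Chapman--Kolmogorov. Set $f(t,x,v) := J(t,x,v;\tau_0,y_0,w_0)$ and let $H(t,x,v)$ be an auxiliary weight to be constructed. Testing \eqref{eq:1.1} against $fH$, symmetrising the bilinear form using \eqref{eq:symmetry}, and applying the algebraic identity
$$
(a-b)(aP^{2}-bQ^{2}) = (aP-bQ)^{2} - ab(P-Q)^{2},
$$
with $a=f(v)$, $b=f(w)$, $P=H^{1/2}(v)$, $Q=H^{1/2}(w)$, together with $ab \leq (a^{2}+b^{2})/2$ on the cross term, yields the differential inequality
$$
\tfrac{d}{dt}\!\int_{\R^{2d}}\! f^{2}H\,\dd x\,\dd v \leq \int_{\R^{2d}} f^{2}\Bigl[\mathcal T H + \int_{\R^d} \bigl(H^{1/2}(v)-H^{1/2}(w)\bigr)^{2}K(v,w)\,\dd w\Bigr]\,\dd x\,\dd v.
$$
Hence, if $H$ satisfies the dual supersolution inequality
$$
\mathcal T H + \int_{\R^{d}} \bigl(H^{1/2}(v)-H^{1/2}(w)\bigr)^{2}K(v,w)\,\dd w \leq 0,
$$
then $t\mapsto \int f^{2}(t)H(t)\,\dd x\,\dd v$ is non-increasing on $(\tau_{0},\sigma)$.

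The second step is to construct such a weight, concentrated near $(y_{0},w_{0})$ at $t=\tau_{0}$. I take a polynomial weight of the form
$$
H(t,x,v) \asymp \Bigl(\frac{\rho^{2s}}{\rho^{2s}+k\,\Gamma(t,x,v)^{2s}}\Bigr)^{\theta}, \qquad \Gamma(t,x,v):=\max\bigl\{|v-w_{0}|,\;|x-y_{0}-(t-\tau_{0})w_{0}|^{1/(1+2s)}\bigr\},
$$
with $k$ large and exponent $\theta$ to be tuned as a function of $\gamma/\rho$. The Galilean-invariant choice of $\Gamma$ kills $\mathcal T H$ to leading order (this is the role of the shift $(t-\tau_0)w_0$), and the non-local dissipation $\int (H^{1/2}(v)-H^{1/2}(w))^{2}K(v,w)\,\dd w$ is controlled by splitting $|v-w|\lessgtr\rho$ and invoking the integral upper bound \eqref{eq:upperbound}. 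The timing condition $\sigma-\tau_{0}\leq \rho^{2s}/(4k)$ ensures that $H$ stays of order one along characteristics; the admissibility constraint on $\theta$ then produces the exponent $\tfrac14-\tfrac{\gamma}{12\rho}$ of the claim, while the lower bound $(\sigma-\tau_0)^{1/(2s)}\leq 4\gamma$ guarantees the analysis sits in the genuine off-diagonal regime.

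The third step converts the energy decay into a pointwise bound via \eqref{eq:representation-J} at intermediate time $s=(\tau_{0}+\sigma)/2$ and Cauchy--Schwarz with weight $H(s,\cdot)^{1/2}$:
$$
J(\sigma,x,v;\tau_{0},y_{0},w_{0})^{2} \leq \Bigl(\!\int J^{2}(\sigma,x,v;s,\cdot)\,H^{-1}(s,\cdot)\,\dd\varphi\,\dd\xi\!\Bigr) \Bigl(\!\int J^{2}(s,\cdot;\tau_{0},y_{0},w_{0})\,H(s,\cdot)\,\dd\varphi\,\dd\xi\!\Bigr).
$$
The second factor is bounded by the monotonicity from Step~1 applied between $\tau_{0}+\varepsilon$ and $s$, combined with the on-diagonal bound $\|J(\tau_{0}+\varepsilon,\cdot;\tau_{0},y_{0},w_{0})\|_{L^{2}}^{2}\leq C\varepsilon^{-d(1+s)/s}$ from Theorem~\ref{thm:nash} and the fact that $H\approx 1$ in the region where $J(\varepsilon,\cdot)$ concentrates; sending $\varepsilon\downarrow 0$ yields a bound of order $C(\sigma-\tau_0)^{-d(1+s)/s}$. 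The first factor is handled via \eqref{eq:symmetry-J} (which turns $J(\sigma,x,v;s,\cdot)$ into a forward solution in its last variables), the on-diagonal bound \eqref{eq:ondiag2}, and a pointwise control on $H^{-1}(s,\cdot)$. Splitting the domain into a near region where $H^{-1}(s)$ is bounded by $(\rho^{2s}/(k(\sigma-\tau_0)))^{-\theta}$, and its complement, where $H^{-1}$ grows but the $\rho^{-s/2}$ factor appears from the $s$-fractional scale of \eqref{eq:upperbound}, produces the two summands of the claimed estimate.

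The main obstacle is the construction and verification of $H$ in the second step. In contrast to the parabolic setting, where Davies's exponential weight $e^{-\psi}$ with Lipschitz $\psi$ provides a clean control, here the symmetric difference $\int(H^{1/2}(v)-H^{1/2}(w))^{2}K(v,w)\,\dd w$ diverges for exponential choices of $H$ under only the $s$-fractional integral upper bound \eqref{eq:upperbound}. This forces the polynomial form above, whose exponent must be tuned to balance the transport against the non-local dissipation subject to $H$ remaining an admissible supersolution; this trade-off is precisely what produces the non-optimal decay rate and the scale restriction $\sigma-\tau_{0}\leq \rho^{2s}/(4k)$ in the statement.
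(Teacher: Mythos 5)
Your Step~1 and Step~3 are structurally the same as the paper's (test against $fH$, symmetrise via \eqref{eq:symmetry} and the identity $(a-b)(aP^{2}-bQ^{2})=(aP-bQ)^{2}-ab(P-Q)^{2}$, then Chapman--Kolmogorov at mid-time plus Cauchy--Schwarz against the on-diagonal bounds), but Step~2 departs from the paper and contains the gap.

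\textbf{The weight you propose lacks a time-decay mechanism.} Your
\[
H(t,x,v)\asymp\Bigl(\tfrac{\rho^{2s}}{\rho^{2s}+k\,\Gamma(t,x,v)^{2s}}\Bigr)^{\theta},\qquad \Gamma=\max\bigl\{|v-w_{0}|,|x-y_{0}-(t-\tau_{0})w_{0}|^{1/(1+2s)}\bigr\},
\]
depends on $t$ only through the Galilean shift inside $\Gamma$. But $\mathcal T\Gamma=\tfrac{1}{1+2s}\,|x-y_{0}-(t-\tau_{0})w_{0}|^{-\frac{1+4s}{1+2s}}(x-y_{0}-(t-\tau_{0})w_{0})\cdot(v-w_{0})$ has no sign, so $\mathcal T H$ has no sign either, and the differential inequality $\tfrac{d}{dt}\int f^{2}H\leq 0$ that your Step~1 hinges on does not follow. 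The paper's weight (Lemma~\ref{lem:existence-H}) carries the additional factor $\delta(t)=2(\sigma-\tau_{0})-(t-\tau_{0})$ inside the exponent via $\log(\rho^{2s}/(k\delta(t)))$; since $\delta'=-1$ and $\rho^{2s}/(k\delta(t))>1$ under the scale restriction $\sigma-\tau_{0}\leq\rho^{2s}/(4k)$, the term $\partial_{t}H$ is genuinely negative and dominates the dissipation. Any Aronson-type construction here must build in such a decreasing-in-time quantity.

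\textbf{Your obstruction argument is correct, but the paper avoids it differently, and this mismatch distorts the structure of your estimate.} You observe, correctly, that the \emph{full} non-local supersolution inequality $\mathcal T H+\int_{\R^{d}}(H^{1/2}(v)-H^{1/2}(w))^{2}K\,\dd w\leq 0$ cannot hold for exponential $H$ under only the integral bound \eqref{eq:upperbound}, which pushes you to a polynomial weight. But the paper does \emph{not} require the full inequality: it only requires the localized version $\mathcal T H+c\int_{B_{\rho}(v)}(H^{1/2}(v)-H^{1/2}(w))^{2}(K(v,w)+K(w,v))\,\dd w\leq 0$ (condition \eqref{eq:aronson-condition}), and it pays for the long-range part $|v-w|>\rho$ separately with the crude bound $C\rho^{-2s}\|H\|_{L^{\infty}}\|f\|_{L^{2}}^{2}$ in Proposition~\ref{prop:aronson}. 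With the localized condition, the exponential-type weight is admissible, and that long-range error is precisely the origin of the second summand $\rho^{-s/2}(\sigma-\tau_{0})^{1/4-\frac{2d(1+s)}{2s}}$ in the claimed bound. In your version, if $H$ really were a supersolution of the full operator, the energy $\int f^{2}H$ would be exactly non-increasing and you would get a \emph{single}-term estimate; the $\rho^{-s/2}$ factor you try to extract from the domain split in Step~3 does not naturally arise there and misattributes the source of the error. Moreover the exponent $\frac{1}{4}-\frac{\gamma}{12\rho}$, linear in $\gamma$, is the fingerprint of the exponential-in-$\Gamma$ weight (the paper's $H\sim(k\delta/\rho^{2s})^{\Gamma/3\rho}$ for $\Gamma>3\rho$); a polynomial weight with exponent $\theta$ would produce a polynomial tail in $\gamma$, so even if your $H$ worked as a supersolution you would not recover the stated rate unless $\theta$ is already forced to be of order $\gamma/\rho$, at which point you have effectively reintroduced the exponential weight and the original obstruction.
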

To prove Theorem \ref{thm:off-diag}, we use Aronson's method: we construct a function $H$ that satisfies 
\beqs
	\mathcal T H + \int_{\R^d} \left[H^{\frac{1}{2}}(v) - H^{\frac{1}{2}}(w)\right]^2 K(v, w) \dd w \leq 0. 
\eeqs
Testing \eqref{eq:1.1} with the product of the solution and $H$ then yields 
\beq\label{eq:decay-ineq-h}
	\sup_{t \in (t_0, \sigma)} \int_{\R^{2d}} f^2(t, x, v) H(t, x, v) \dd x \dd v \lesssim \int_{\R^{2d}} f^2(\tau_0,  x, v) H(\tau_0, x, v) \dd x \dd v. 
\eeq
The decay from the function $H$ determines through this inequality \eqref{eq:decay-ineq-h} how quick solutions increase inside a ball that lies outside the support of the initial data:
\begin{theorem}[Aronson's bound]\label{thm:aronson-bound}
Let $0 < \tau_0 < \sigma < T$, $0 < \rho $, $\gamma > 0$ such that $\abs{\sigma - \tau_0}^{\frac{1}{2s}} \leq \gamma$, and $(y_0, w_0) \in \R^{2d}$. Let $f_0 = f(\tau_0) \in L^2(\R^{2d})$ be such that 
\beqs
	f(t_0, x, v) = 0, \qquad \textrm{ for } \max\left\{\abs{x-y_0 - (\sigma-\tau_0) w_0}^{\frac{1}{1+2s}}, \abs{v-w_0}\right\} \leq \gamma.
\eeqs
Let $f \in L^\infty\big((\tau_0, T) \times \R^{2d}\big)$ solve \eqref{eq:1.1} in $(\tau_0, T) \times \R^{2d}$ with a non-local operator whose kernel is non-negative and satisfies \eqref{eq:coercivity}, \eqref{eq:upperbound}, \eqref{eq:symmetry}, \eqref{eq:symmetry-nondiv}.
Then there exists $k \geq 1$,  and $C > 0$ depending on $s, d, \lambda_0, \Lambda_0$ such that if $\sigma - \tau_0 < \frac{\rho^{2s}}{4k}$ there holds
\beqs
	\Abs{f(\sigma,y_0, w_0) } \leq C \Bigg[(\sigma - \tau_0)^{-\frac{d(1+s)}{2s}} \Bigg(\frac{\rho^{2s}}{k(\sigma - \tau_0)} \Bigg)^{\frac{1}{2}- \frac{\gamma}{6\rho}}+  \rho^{-s} (\sigma - \tau_0)^{\frac{1}{2}-\frac{d(1+s)}{2s}} \Bigg]\norm{f_0}_{L^2(\R^{2d})}.
\eeqs
\end{theorem}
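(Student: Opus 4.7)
The plan is to adapt Aronson's weighted-energy method to the non-local kinetic setting. The core of the argument will rely on the algebraic identity
$$(f(v) - f(w))\bigl(H(v) f(v) - H(w) f(w)\bigr) = \bigl(f \sqrt{H}(v) - f \sqrt{H}(w)\bigr)^{2} - f(v) f(w) \bigl(\sqrt{H(v)} - \sqrt{H(w)}\bigr)^{2},$$
combined with a weight $H(t,x,v)$ adapted to the Galilean geometry, which decays rapidly away from the characteristic line $\{(t,y_0 + (t-\sigma) w_0, w_0) : \tau_0 \leq t \leq \sigma\}$ and satisfies a non-local parabolic dissipation inequality.

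\textbf{Construction of $H$ and energy estimate.} Setting
$$\Phi_t(x,v) := \max\bigl\{\abs{v - w_0}, \abs{x - y_0 - (t - \sigma) w_0}^{1/(1+2s)}\bigr\},$$
which measures a Galilean-kinetic distance to the target $(\sigma, y_0, w_0)$, I will take
$$H(t,x,v) := \left( 1 + \frac{\Phi_t(x,v)^{2s}}{\rho^{2s} + k(\sigma - t)}\right)^{-\alpha}$$
for parameters $\alpha > 0$ and $k \geq 1$ to be tuned. Three properties will be established: (a) $H \geq 1/2$ on a kinetic cylinder $Q_r(\sigma, y_0, w_0)$ with $r \sim (\sigma - \tau_0)^{1/(2s)}$; (b) $H(\tau_0,x,v) \lesssim (\rho/\gamma)^{2s\alpha}$ whenever $\Phi_{\tau_0}(x,v) \geq \gamma$, which covers the support of $f_0$; and (c) the key inequality
$$\mathcal{T} H(t,x,v) + \int_{\R^{d}} \bigl[\sqrt{H(t,x,v)} - \sqrt{H(t,x,w)}\bigr]^{2} K(t,x,v,w)\,dw \leq C \rho^{-2s} H(t,x,v).$$
Testing \eqref{eq:1.1} with $f H$, invoking \eqref{eq:symmetry}, the identity above, and $f(v) f(w) \leq \tfrac{1}{2}(f(v)^{2} + f(w)^{2})$, the coercive square $\iint (f\sqrt{H}(v) - f\sqrt{H}(w))^{2} K$ is positive and can be dropped; what survives on the right-hand side is $\int f^{2}[\mathcal{T}H + \int(\sqrt{H}(v) - \sqrt{H}(w))^{2} K\,dw]$, which by (c) is bounded by $C\rho^{-2s}\int f^{2} H$. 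Grönwall on $[\tau_{0},\sigma]$ (of length at most $\rho^{2s}/(4k)$, so the multiplier $e^{C(\sigma-\tau_0)/\rho^{2s}} \leq 1 + C'(\sigma-\tau_0)/\rho^{2s}$) and property (b) then yield
$$\sup_{t \in [\tau_{0},\sigma]} \int_{\R^{2d}} f^{2} H \,dx\,dv \lesssim \Bigl[(\rho/\gamma)^{2s\alpha} + (\sigma - \tau_{0})\rho^{-2s}\Bigr] \|f_{0}\|_{L^{2}}^{2}.$$

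\textbf{From $L^{2}$ to the pointwise bound.} Proposition \ref{prop:L2-Linfty-A} applied to $f$ in $Q_{r}(\sigma, y_0, w_0)$ combined with Cauchy-Schwarz gives $|f(\sigma, y_{0}, w_{0})| \lesssim r^{-(d(1+s) + s)} \|f\|_{L^{2}(Q_{r})}$. Property (a) then lets me bound $\|f\|_{L^{2}(Q_{r})}^{2} \leq 2 \int_{Q_{r}} f^{2} H \lesssim r^{2s} \sup_{t} \int f^{2} H$, and with $r^{2s} \sim \sigma - \tau_{0}$,
$$|f(\sigma, y_{0}, w_{0})| \lesssim (\sigma - \tau_{0})^{-d(1+s)/(2s)} \Bigl[(\rho/\gamma)^{s\alpha} + \rho^{-s} (\sigma - \tau_{0})^{1/2}\Bigr] \|f_{0}\|_{L^{2}},$$
which already matches the second summand in the statement. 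For the first summand, I will calibrate $\alpha$ linearly in $\gamma/\rho$ and rewrite $(\rho/\gamma)^{s\alpha}$ as $(\rho^{2s}/(k(\sigma-\tau_{0})))^{1/2 - \gamma/(6\rho)}$ by exploiting the time constraint $\sigma - \tau_{0} \leq \rho^{2s}/(4k)$ together with the lower bound $(\sigma - \tau_{0})^{1/(2s)} \leq \gamma$; the constant $k$ will be chosen large enough that (a) survives even for $\alpha$ of this size.

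\textbf{Principal obstacle.} The decisive step is verifying property (c) for the candidate $H$. Because \eqref{eq:upperbound} only provides integral upper bounds on $K$ rather than pointwise ones, the non-local dissipation $\int[\sqrt{H(v)} - \sqrt{H(w)}]^{2} K(v, w)\,dw$ resists elementary pointwise estimation. I will split the velocity integral at scale $\rho$: on $B_{\rho}(v)$, a Taylor expansion of $\sqrt{H}$ together with the smoothness of $\Phi_{t}$ and \eqref{eq:upperbound} on annuli extracts a term that is absorbed by the dissipative contribution in $\mathcal{T}H$ (where the Galilean-adapted choice of $\Phi_{t}$ is essential for the cancellation along characteristics); outside $B_{\rho}(v)$, \eqref{eq:upperbound} produces the error $C \rho^{-2s} H$ in (c), which is precisely the source of the non-optimal second summand in the theorem, consistent with Remark \ref{rmk:cc-fund-sol}(iii) on the absence of a Meyer-type decomposition.
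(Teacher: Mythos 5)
Your overall strategy — Aronson's weighted-energy method with a Galilean-adapted weight $H$ followed by the $L^1$–$L^\infty$ bound of Proposition \ref{prop:L2-Linfty-A} — is the same as the paper's, but the two central ingredients do not withstand scrutiny. Your property (c) asks for
\begin{equation*}
\mathcal{T} H(v) + \int_{\R^{d}} \bigl[H^{1/2}(v) - H^{1/2}(w)\bigr]^{2} K(v,w)\,\dd w \;\leq\; C \rho^{-2s}\, H(v),
\end{equation*}
with the integral over \emph{all} of $\R^d$ and the weight $H(v)$ on the right. This cannot hold uniformly: for $v$ far from $w_0$ (where $H(v)$ is tiny) the region $\{H(w) \approx 1\}$ lies entirely outside $B_\rho(v)$, and by \eqref{eq:upperbound} the kernel mass $\int_{\R^d\setminus B_\rho(v)}K(v,w)\dd w$ is of order $\rho^{-2s}$; the tail therefore contributes roughly $\rho^{-2s}$, which exceeds $C\rho^{-2s}H(v)$ as $H(v)\to 0$. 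This is why the paper's condition \eqref{eq:aronson-condition} demands only the \emph{truncated} dissipation $\mathcal{T}H + c\int_{B_\rho(v)}[H^{1/2}(v)-H^{1/2}(w)]^2(K(v,w)+K(w,v))\,\dd w \leq 0$, while the $|v-w|>\rho$ piece is carried along as a separate error (see $\mathcal I_{loc}^{ns}$ in \eqref{eq:I-loc-ns}) producing the additive term $C\rho^{-2s}\|H\|_{L^\infty}\|f\|_{L^2}^2$ in \eqref{eq:step1}. There is no Grönwall step; that term is later bounded using the normalisation of the fundamental solution, $\|f\|_{L^2([\tau_0,\sigma]\times\R^{2d})}^2 \leq (\sigma-\tau_0)\|f_0\|_{L^2}^2$.

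Second, your polynomial weight $H=(1+\Phi_t^{2s}/(\rho^{2s}+k(\sigma-t)))^{-\alpha}$ yields the factor $(\rho/\gamma)^{s\alpha}$, and you cannot ``rewrite'' this as $(\rho^{2s}/(k(\sigma-\tau_0)))^{1/2-\gamma/(6\rho)}$: these are different functions of the parameters, and no choice of $\alpha$ depending only on $\gamma/\rho$ makes them coincide. The essential feature of the paper's choice \eqref{eq:H} is that $H$ is an explicit power of $\rho^{2s}/(k\delta(t))$ whose exponent is piecewise-linear in the Galilean distance: this makes both the upper bound of $H(\tau_0,\cdot)$ on $\{\Phi_{\tau_0}\geq\gamma\}$ and the lower bound of $H$ on the small cylinder around $(\sigma,y_0,w_0)$ powers of $\rho^{2s}/(k(\sigma-\tau_0))$, and their ratio produces exactly the exponent $1/2-\gamma/(6\rho)$ after applying the $L^1$–$L^\infty$ bound and taking a square root. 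Your $H$ does not reproduce this structure. (There is also a sign slip: you use $x-y_0-(t-\sigma)w_0$ in $\Phi_t$, which at $t=\tau_0$ gives $x-y_0+(\sigma-\tau_0)w_0$, the opposite translation from the support condition on $f_0$; the paper correctly uses $x-y_0-(\sigma+t-2\tau_0)w_0$, which equals $x-y_0-(\sigma-\tau_0)w_0$ at $t=\tau_0$.)
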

Before proving Theorem \ref{thm:aronson-bound}, we complete the proof of the off-diagonal bound in Theorem \ref{thm:off-diag} using Theorem \ref{thm:aronson-bound} and the on-diagonal bounds in Theorem \ref{thm:nash}.

\begin{proof}[Proof of Theorem \ref{thm:off-diag}]
\textit{Step 1: First consequence of Theorem \ref{thm:aronson-bound}.}

Let $0 < \rho$, and $0 \leq \tau_0 < \sigma$ be such that $\sigma - \tau_0 \leq \frac{\rho^{2s}}{4k}$. Let $v, \xi, x, \zeta \in \R^d$. Then we claim that for $0 \leq \tau_0 < t < \sigma$ there holds
\bal\label{eq:offdiag-aux}
	&\int_{\R^{2d} \setminus Q^{\tau_0}_\gamma(t, x, v)} J^2(t, x, v; \tau_0, \zeta, \xi) \dd \zeta \dd \xi \\
	&\qquad\qquad\leq C (t - \tau_0)^{-\frac{2d(1+s) }{2s}} \Bigg(\frac{\rho^{2s}}{k(\sigma - \tau_0)}\Bigg)^{\frac{1}{2}- \frac{\gamma}{6\rho}} + \rho^{-s} (t - \tau_0)^{\frac{1}{2}-\frac{2d(1+s)}{2s}} , \qquad x, v \in \R^d, \\
	&\int_{\R^{2d} \setminus Q^{t}_\gamma(\tau_0, \zeta, \xi)} J^2(t, x, v; \tau_0, \zeta, \xi) \dd x \dd v \\
	&\qquad\qquad\leq C (t - \tau_0)^{-\frac{2d(1+s) }{2s}} \Bigg(\frac{\rho^{2s}}{k(\sigma - \tau_0)}\Bigg)^{\frac{1}{2}- \frac{\gamma}{6\rho}} +  \rho^{-s} (t - \tau_0)^{\frac{1}{2}-\frac{2d(1+s)}{2s}}, \qquad \zeta, \xi \in \R^d,
\eal
where 
\beqs
	Q^{\tau_0}_\gamma(t, x, v) :=\left\{(\zeta, \xi) \in \R^{2d} : \max\left\{ \abs{\zeta - x - (t-\tau_0) v}^{\frac{1}{1+2s}}, \abs{\xi - v}\right\} < \gamma \right\},
\eeqs
and similarly,
\beqs
	Q^{t}_\gamma(\tau_0, \zeta, \xi) :=\left\{(x, v) \in \R^{2d} : \max\left\{ \abs{x - \zeta - (\tau_0-t) \xi}^{\frac{1}{1+2s}}, \abs{v - \xi}\right\} < \gamma \right\}.
\eeqs

To prove this, we first note
\bals
	\int_{Q_\gamma^{\tau_0}(t, x, v)} J^2(t, x, v; \tau_0, \zeta, \xi) \dd \zeta \dd \xi = \int_{Q_\gamma^{0}(0, y, w)} J^2(t, y -(t-\tau_0) w, w ; \tau_0, \zeta, \xi) \dd \zeta \dd \xi.
\eals
Then we define
\beqs
	f(\sigma, y_0, w_0) := \int_{\R^{2d} \setminus Q_\gamma^{0}(0, y, w)} J(\sigma, y_0, w_0; \tau_0, \zeta, \xi) J(t, y -(t-\tau_0) w, w; \tau_0, \zeta, \xi) \dd \zeta \dd \xi.
\eeqs
This function $f$ is a non-negative solution of \eqref{eq:1.1} for $0 \leq \tau_0 < \sigma < T$ with initial condition
\bals
	f(\tau_0, y_0, w_0) = \begin{cases}
		0, &\qquad \textrm{ if } (y_0, w_0) \in Q_\gamma^{0}(0, y, w),  \\
		J(t, y -(t-\tau_0) w, w; \tau_0, y_0, w_0), &\qquad \textrm{ if } (y_0, w_0) \in\R^{2d} \setminus Q_\gamma^{0}(0, y, w),
	\end{cases}
\eals
due to \eqref{eq:representation-J} and \eqref{eq:symmetry-J}.
Setting $(\sigma, y_0, w_0) = (t, y - (t-\tau_0)w, w)$ yields by Theorem \ref{thm:aronson-bound} 
\bals
	&\int_{\R^{2d} \setminus Q_\gamma^{0}(0, y, w)} J^2(t, y -(t-\tau_0) w, w; \tau_0, \zeta, \xi) \dd \zeta \dd \xi \\
	&=f(\sigma, y - (t- \tau_0) w, w) \\
	&\leq  C (\sigma - \tau_0)^{-\frac{d(1+s) }{2s}} \Bigg(\frac{\rho^{2s}}{k(\sigma - \tau_0)}\Bigg)^{\frac{1}{2}- \frac{\gamma}{6\rho}}  \norm{f_0}_{L^2(\R^{2d})} + \rho^{-s} (\sigma - \tau_0)^{\frac{1}{2}-\frac{d(1+s)}{2s}} \norm{f_0}_{L^2(\R^{2d})}\\
	&\leq  C (\sigma - \tau_0)^{-\frac{d(1+s)}{2s}} \Bigg(\frac{\rho^{2s}}{k(\sigma - \tau_0)}\Bigg)^{\frac{1}{2}- \frac{\gamma}{6\rho}}  \norm{J(t, y -(t-\tau_0) w, w; \tau_0, \cdot, \cdot)}_{L^2(\R^{2d})} \\
	&\qquad + \rho^{-s} (\sigma - \tau_0)^{\frac{1}{2}-\frac{d(1+s)}{2s}} \norm{J(t, y -(t-\tau_0) w, w; \tau_0, \cdot, \cdot)}_{L^2(\R^{2d})}\\
	&\leq  C (\sigma - \tau_0)^{-\frac{2d(1+s)}{2s}} \Bigg(\frac{\rho^{2s}}{k(\sigma - \tau_0)}\Bigg)^{\frac{1}{2}- \frac{\gamma}{6\rho}}   + \rho^{-s} (\sigma - \tau_0)^{\frac{1}{2}-\frac{2d(1+s)}{2s}}.
\eals
The last inequality follows from the on-diagonal bound \eqref{eq:ondiag2}. This yields the first inequality in claim \eqref{eq:offdiag-aux}. Together with \eqref{eq:symmetry-J} this also yields the second inequality in \eqref{eq:offdiag-aux}.

\textit{Step 2: Second consequence of Theorem \ref{thm:aronson-bound}.}

We fix $0 \leq \tau_0 < \sigma < T$ such that $\sigma - \tau_0 \leq \frac{\rho^{2s}}{4k}$ and $x, v, y_0, w_0 \in \R^d$. We consider
\beq\label{eq:gamma}
	\gamma = \frac{1}{4}\max\left\{\abs{v- w_0},  \abs{x - y_0 - (\sigma - \tau_0) w_0}^{\frac{1}{1+2s}}\right\},
\eeq
and assume $\abs{\sigma - \tau_0}^{\frac{1}{2s}} \leq 4 \gamma$.
Then by \eqref{eq:representation-J}
\bals
	J&(\sigma,x, v; \tau_0, y_0, w_0) \\
	&= \int_{\R^{2d}} J\Big(\sigma, x, v; \sigma - \frac{\sigma - \tau_0}{2}, x', v'\Big) J\Big(\sigma - \frac{\sigma - \tau_0}{2}, x', v'; \tau_0, y_0, w_0\Big) \dd x' \dd v'\\
	&= \underbrace{\int_{\R^{2d}\setminus Q^{\frac{\sigma-\tau_0}{2}}_\gamma(0, x, v) } \dots \dd x' \dd v'}_{=: J_1}  + \underbrace{\int_{Q_\gamma^{
	\frac{\sigma - \tau_0}{2}}(0, x, v)} \dots \dd x' \dd v'}_{=: J_2}.
\eals
For $J_1$ we use \eqref{eq:ondiag1} and \eqref{eq:offdiag-aux} to get
\bals
	J_1 &\leq \Bigg(\int_{\R^{2d}\setminus Q^{\frac{\sigma - \tau_0}{2}}_\gamma(0, x, v) } J^2\Big(\sigma, x, v; \sigma -\frac{\sigma - \tau_0}{2}, x', v'\Big)  \dd x' \dd v'\Bigg)^{\frac{1}{2}}  \\
	&\qquad  \times\Bigg( \int_{\R^{2d}\setminus Q^{\frac{\sigma - \tau_0}{2}}_\gamma(0, x, v) }   J^2\Big(\sigma - \frac{\sigma - \tau_0}{2}, x', v'; \tau_0, y_0, w_0\Big) \dd x' \dd v'\Bigg)^{\frac{1}{2}}\\
	&\leq C \Bigg(  (\sigma - \tau_0)^{-\frac{2d(1+s)}{4s}}\Bigg(\frac{\rho^{2s}}{k(\sigma - \tau_0)}\Bigg)^{\frac{1}{4}- \frac{\gamma}{12\rho}} +\rho^{-\frac{s}{2}} (\sigma - \tau_0)^{\frac{1}{4} - \frac{2d(1+s)}{4s}}  \Bigg)  (\sigma-\tau_0)^{-\frac{2d(1+s)}{4s}}\\
	&\leq C \Bigg(  (\sigma - \tau_0)^{-\frac{2d(1+s)}{2s}}\Bigg(\frac{\rho^{2s}}{k(\sigma - \tau_0)}\Bigg)^{\frac{1}{4}- \frac{\gamma}{12\rho}} +\rho^{-\frac{s}{2}} (\sigma - \tau_0)^{\frac{1}{4} - \frac{2d(1+s)}{2s}}  \Bigg).
\eals
For $J_2$ we first note that for $x', v' \in Q_\gamma^{\frac{\sigma - \tau_0}{2}}(0, x, v)$ by choice of $\gamma$ in \eqref{eq:gamma} either
\bals
	\abs{v' - w_0} \geq \abs{v - w_0} - \abs{v' - v} \geq 4 \gamma - \gamma \geq \gamma,
\eals
or
\bals
	(4 \gamma)^{1+2s} &= \abs{x - y_0 - (\sigma - \tau_0) w_0}\\
	 &= \abs{\left(x - x'  - \frac{(\sigma - \tau_0)}{2} v \right)+ \left( x' - y_0 - \frac{(\sigma - \tau_0)}{2} w_0 \right) + \frac{(\sigma - \tau_0)}{2}(v - w_0)}\\	&\leq \abs{\gamma^{1+2s} +\abs{ x' - y_0 - \frac{(\sigma - \tau_0)}{2} w_0} + \frac{1}{2}4^{2s}\gamma^{2s} 4 \gamma } \\
	&\leq (1+2^{1+4s})\gamma^{1+2s} +\abs{ x' - y_0 - \frac{(\sigma - \tau_0)}{2} w_0},
\eals	
since $x' \in Q_\gamma^{\frac{\sigma - \tau_0}{2}}(0, x, v)$ and by choice of $\gamma$ in \eqref{eq:gamma}, so that
\bals
	\abs{ x' - y_0 - \frac{(\sigma - \tau)}{2} w_0}^{\frac{1}{1+2s}} \geq (4^{1+2s} -  (1+2^{1+4s}))^{\frac{1}{1+2s}} \gamma \geq \gamma.
\eals

In other words, $Q_\gamma^{\frac{\sigma - \tau_0}{2}}(0, x, v) \subset \R^{2d} \setminus Q_\gamma^{\frac{\sigma - \tau_0}{2}}(0, y_0, w_0)$ so that again by \eqref{eq:ondiag1} and \eqref{eq:offdiag-aux} we get
\bals
	J_2 &\leq \Bigg(\int_{\R^{2d}\setminus Q^{\frac{\sigma - \tau_0}{2}}_\gamma(0, y_0, w_0) } J^2\left(\sigma, x, v; \sigma -\frac{\sigma - \tau_0}{2}, x', v'\right)  \dd x' \dd v'\Bigg)^{\frac{1}{2}}  \\
	&\qquad\times\Bigg( \int_{\R^{2d}\setminus Q^{\frac{\sigma - \tau_0}{2}}_\gamma(0, y_0, w_0) }   J^2\left(\sigma- \frac{\sigma - \tau_0}{2}, x', v'; \tau_0, y_0, w_0\right) \dd x' \dd v'\Bigg)^{\frac{1}{2}}\\
	&\leq C \Bigg(  (\sigma - \tau_0)^{-\frac{2d(1+s)}{4s}}\Bigg(\frac{\rho^{2s}}{k(\sigma - \tau_0)}\Bigg)^{\frac{1}{4}- \frac{\gamma}{12\rho}}  +\rho^{-\frac{s}{2}} (\sigma - \tau_0)^{\frac{1}{4} - \frac{2d(1+s)}{4s}} \Bigg)  (\sigma-\tau_0)^{-\frac{2d(1+s)}{4s}}\\
	&\leq C \Bigg(  (\sigma - \tau_0)^{-\frac{2d(1+s) }{2s}}\Bigg(\frac{\rho^{2s}}{k(\sigma - \tau_0)}\Bigg)^{\frac{1}{4}- \frac{\gamma}{12\rho}}  +\rho^{-\frac{s}{2}} (\sigma - \tau_0)^{\frac{1}{4} - \frac{2d(1+s)}{2s}} \Bigg),
\eals 
which proves Theorem \ref{thm:off-diag}.
\end{proof}

It remains to establish Aronson's method.

\subsection{Aronson's method}\label{sec:aronson}
The proof of Theorem \ref{thm:aronson-bound} relies on Aronson's method: we aim to construct a suitable function $H$ to derive \eqref{eq:decay-ineq-h}. 

\subsubsection{Decay relation}

We aim to define a function that satisfies for some $\rho > 0$ and for some constant $c > 0$
\beq\label{eq:aronson-condition}
	\mathcal T H(v)+ c \underbrace{\int_{B_\rho(v)} \Big[ H^{\frac{1}{2}}(v) - H^{\frac{1}{2}}(w)\Big]^2 \big[K(v, w) + K(w, v)\big] \dd w}_{=: \mathcal L_\rho H^{\frac{1}{2}}}\leq 0.
\eeq
Then we can derive the following statement. 
\begin{proposition}\label{prop:aronson}
Let $0 < \tau_0 <\sigma < T$ and $0 < \rho$.
Let $f \in L^\infty\big((\tau_0, T) \times \R^{2d}\big)$ solve \eqref{eq:1.1} in $(\tau_0, T) \times \R^{2d}$ with a non-local operator whose kernel is non-negative and satisfies \eqref{eq:upperbound}, \eqref{eq:symmetry}, \eqref{eq:symmetry-nondiv} (if $s \geq \frac{1}{2}$). Then for every bounded function $H : [\tau_0, \sigma] \times \R^{2d} \to [0, \infty)$ such that $H^{\frac{1}{2}} \in L^2\big( (\tau_0, \sigma) \times \R^{d}; H^s_v(\R^d)\big)$ 
satisfies \eqref{eq:aronson-condition} in $(\tau_0, \sigma) \times \R^{2d}$,
there exists a constant $C = C(\Lambda_0, s, d)$ such that
\bal\label{eq:step1}
	\sup_{t \in (\tau_0, \sigma)} &\int_{\R^{2d}} f^2(t, x, v) H(t, x, v)  \dd x \dd v\\
	\leq &\int_{\R^{2d}} f^2(\tau_0, x, v) H(\tau_0, x, v) \dd x \dd v \\
	&+C \rho^{-2s}\Norm{H}_{L^\infty([\tau_0, \sigma]\times \R^{2d})} \norm{f}_{L^2([\tau_0, \sigma]\times \R^{2d})}^2.
\eal
\end{proposition}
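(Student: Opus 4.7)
\medskip
\noindent\textbf{Proof proposal for Proposition \ref{prop:aronson}.}
The plan is to test the equation \eqref{eq:1.1} (with $h=0$) against $fH$ and derive a differential inequality for $t\mapsto \int f^2(t)H(t)\,dx\,dv$. After symmetrising $\mathcal{E}(f, fH)$ via \eqref{eq:symmetry} into $\tfrac12\iint (f(v)-f(w))(f(v)H(v)-f(w)H(w)) K(v,w)$, the key algebraic tool is the identity
\[
(a-b)(aA - bB) = (aA^{1/2} - bB^{1/2})^2 - ab\,(A^{1/2} - B^{1/2})^2
\]
with $a=f(v)$, $b=f(w)$, $A=H(v)$, $B=H(w)$. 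The first term is a non-negative Dirichlet contribution that we can simply discard. The second term, after using $|ab|\leq\tfrac12(a^2+b^2)$ together with symmetry of the integrand, reduces to $\tfrac12\iint f^2(v)(H^{1/2}(v)-H^{1/2}(w))^2 K(v,w)\,dw\,dv$, and this is exactly the quantity that the hypothesis \eqref{eq:aronson-condition} is designed to control.

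Integration by parts in $(x,v)$, using $\mathcal{T}(f^2)=2f\mathcal{T}f$ and the fact that $\int v\cdot\nabla_x(f^2H)\,dx\,dv=0$ under the stated integrability assumptions, gives $\int f\,\mathcal{T}f\,H = \tfrac12\tfrac{d}{dt}\int f^2 H - \tfrac12\int f^2\mathcal{T}H$. Combined with $\int f\,\mathcal{T}f\,H = -\mathcal{E}(f, fH)$ and the lower bound above, this yields
\[
\frac{1}{2}\frac{d}{dt}\int f^2 H \,\leq\, \frac{1}{2}\int f^2\,\mathcal{T}H + \frac{1}{2}\iint f^2(v)\big(H^{1/2}(v)-H^{1/2}(w)\big)^2 K(v,w)\,dw\,dv.
\]
I would then split the inner $dw$-integral at radius $\rho$. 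For $|v-w|\geq\rho$ the bound $(H^{1/2}(v)-H^{1/2}(w))^2\leq 4\|H\|_{L^\infty}$ combined with \eqref{eq:upperbound} produces the contribution $2\Lambda_0\rho^{-2s}\|H\|_{L^\infty}\int f^2$. For $|v-w|<\rho$, using \eqref{eq:symmetry} to write $K(v,w) = \tfrac12(K(v,w)+K(w,v))$, the singular piece equals $\tfrac14\int f^2\,\mathcal{L}_\rho H^{1/2}$, which by \eqref{eq:aronson-condition} is bounded pointwise by $-\tfrac{1}{4c}\int f^2\,\mathcal{T}H$.

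Assembling these estimates yields
\[
\frac{d}{dt}\int f^2 H \,\leq\, \Big(1 - \frac{1}{2c}\Big)\int f^2\,\mathcal{T}H + 4\Lambda_0\rho^{-2s}\|H\|_{L^\infty}\int f^2.
\]
The crucial sign observation is that \eqref{eq:aronson-condition} together with $\mathcal{L}_\rho H^{1/2}\geq 0$ forces $\mathcal{T}H\leq 0$; hence for $c\geq\tfrac12$ (so that the coefficient $1-\tfrac{1}{2c}$ is non-negative) the $\mathcal{T}H$-term has a favourable sign and may be dropped. Integrating from $\tau_0$ to $t\in(\tau_0,\sigma)$ and taking the supremum over $t$ then delivers \eqref{eq:step1} with $C = 4\Lambda_0$. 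The main obstacle is this sign-chasing in the symmetrisation: the specific choice of weight $fH$ in the test function (rather than $fH^{1/2}$ or another variant) is dictated precisely by the requirement that the $\tfrac12\int f^2\mathcal{T}H$ contribution from the transport term be compensated by the $-\tfrac{1}{4c}\int f^2\mathcal{T}H$ contribution extracted from the singular part of $\mathcal{E}(f, fH)$, which is what makes the Dirichlet and non-singular terms split cleanly along the scale $\rho$.
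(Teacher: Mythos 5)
Your proposal follows essentially the same strategy as the paper's: test with $f H$, symmetrise the bilinear form, apply the algebraic identity $(a-b)(aA-bB)=(aA^{1/2}-bB^{1/2})^2-ab(A^{1/2}-B^{1/2})^2$, split the velocity integral at scale $\rho$, handle the far part with \eqref{eq:upperbound} and the near part with \eqref{eq:aronson-condition}. The only structural difference is the order of operations: you apply the identity to the whole bilinear form before splitting at $\rho$, whereas the paper first splits at $\rho$ and then applies the identity only to the singular piece. Both orderings are fine and yield the same estimate up to constants; yours is slightly cleaner to state.

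There is one genuine gap, though a fixable one. You test directly with $fH$ and claim that the transport term integrates by parts with no boundary contributions ``under the stated integrability assumptions,'' and you implicitly pass to the limit in the nonlocal term over all of $\R^{2d}$ as though every intermediate quantity in the algebraic manipulation were absolutely convergent. The hypotheses only give $f\in L^\infty\cap L^2$ and $H$ bounded with $H^{1/2}\in L^2_{t,x}H^s_v$; a priori the Dirichlet-type term $\iint (fH^{1/2}(v)-fH^{1/2}(w))^2 K$ that you discard for its sign need not be finite, so the rearrangement of the identity inside the integral is not justified without truncation. The paper handles this by inserting a compactly supported cutoff $\varphi_R$ in $(x,v)$, testing with $fH\varphi_R^2$, and verifying that the three error terms (tail of $\mathcal L$ outside $B_{2R}$, $\varphi_R$-gradient terms, and the cutoff contribution in the transport part) all vanish as $R\to\infty$. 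Your write-up needs to either repeat this truncation or supply an a priori argument that the untruncated integrals are absolutely convergent in the relevant combinations.

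A minor remark on constants: your argument requires the hypothesis \eqref{eq:aronson-condition} to hold with $c\geq 1/2$ (so that $1-\tfrac{1}{2c}\geq 0$); the paper's version of the same step requires $c=2c_0$ for the $c_0$ appearing in its Young's-inequality bound. Both are conditions on the size of $c$ that are secured by the construction of $H$ in Lemma \ref{lem:existence-H} (choosing $k$ large), so this is not a real obstruction, but your statement ``Assembling these estimates yields \eqref{eq:step1}'' should explicitly record the constraint on $c$ and note why it can be arranged.
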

\begin{proof}[Proof of Proposition \ref{prop:aronson}]
For $R \geq \max\{2, 2\rho+1\}$ we consider $\varphi_R \in C^\infty_c(\R^{2d})$ with $0 \leq \varphi_R \leq 1$ such that $\varphi_R \equiv 1$ for $(x, v) \in B_{(R-1)^{1+2s}} \times B_{R-1}$ and $\varphi_R \equiv 0$ for $(x, v)$ outside $B_{R^{1+2s}} \times B_{R}$, with bounded derivatives and such that $\abs{v \cdot \varphi_R} \sim R^{-2s}$. We test \eqref{eq:1.1} with $fH\varphi_R^2$ where $H$ satisfies \eqref{eq:aronson-condition} over $[\tau_0, \tau_1]\times \R^{2d}$ for $0 \leq \tau_0 \leq \tau_1 \leq \sigma$ and get
\bals
	0 =\int_{[\tau_0, \tau_1]\times \R^{2d}}\mathcal Tf &\Big(fH\varphi_R^2\Big)\dd z - \int_{[\tau_0,\tau_1]\times \R^{2d}}\mathcal Lf \Big(fH\varphi_R^2\Big) \dd z.
\eals

\textit{Step 1: Transport operator.}

First we integrate by parts the transport operator
\bal
	&\int_{[\tau_0, \tau_1]\times \R^{2d}}\mathcal Tf \Big(fH\varphi_R^2\Big) \dd z \\
	&\quad= \frac{1}{2}\int_{\R^{2d}} f^2 H \varphi_R^2\Big\vert_{t = \tau_0}^{\tau_1} \dd x\dd v- \int_{[\tau_0, \tau_1] \times \R^{2d}}f^2H \varphi_R v\cdot \nabla_x\varphi_R\dd z - \frac{1}{2}\int_{[\tau_0,\tau_1]\times\R^{2d}} f^2\varphi_R^2 \mathcal T H \dd z\\
	&\quad\geq \frac{1}{2}\int_{\R^{2d}} f^2 H \varphi_R^2\Big\vert_{t = \tau_0}^{\tau_1} \dd x\dd v- CR^{-2s} \int_{[\tau_0, \tau_1] \times \R^{2d}}f^2H \varphi_R \dd z - \frac{1}{2}\int_{[\tau_0,\tau_1]\times\R^{2d}} f^2\varphi_R^2 \mathcal T H \dd z.
\label{eq:aux1-transport}
\eal

\textit{Step 2: Non-local operator.}

Now we deal with the non-local term. 
We write 
\bal\label{eq:E-H}
	\int_{\R^{d}}&\big(\mathcal Lf \big)fH\varphi_R^2 \dd v \\
	&= \frac{1}{2} \int_{B_{2R}}\int_{B_{2R}} \big[f(w) - f(v)\big] \Big[\big(fH\varphi_R^2\big)(v) - \big(fH\varphi_R^2\big)(w)\Big] K(v, w)\dd w\dd v \\
	&\quad+ \frac{1}{2} \int_{B_{2R}}\int_{B_{2R}} \big[f(w) - f(v)\big] \Big[\big(fH\varphi_R^2\big)(v) + \big(fH\varphi_R^2\big)(w)\Big]K(v, w)  \dd w\dd v\\
	&\quad+ \int_{B_{2R}}\int_{\R^d \setminus B_{2R}} \big[f(w) - f(v)\big] \big(fH\varphi_R^2\big)(v) K(v, w) \dd w\dd v\\
	&= \frac{1}{2} \int_{B_{2R}}\int_{B_{2R}} \big[f(w) - f(v)\big] \Big[\big(fH\varphi_R^2\big)(v) - \big(fH\varphi_R^2\big)(w)\Big] K(v, w)\dd w\dd v \\
	&\quad+ \int_{B_{2R}}\int_{\R^d \setminus B_{2R}} \big[f(w) - f(v)\big] \big(fH\varphi_R^2\big)(v) K(v, w) \dd w\dd v\\
	&=: \mathcal I_{loc} + \mathcal I_{tail},
\eal
where we used the divergence structure \eqref{eq:symmetry}.

\textit{Step 2-i: Tail.}

First we note that the tail tends to zero as $R \to \infty$:
\bal\label{eq:tail-aux}
	\mathcal I_{tail} &=\int_{B_{2R}}\int_{\R^d \setminus B_{2R}} \big[f(w) - f(v)\big] \big(fH\varphi_R^2\big)(v) K(v, w) \dd w\dd v \\
	&\leq \int_{B_{R}}\int_{\R^d \setminus B_{2R}} f(w) \big(fH\varphi_R^2\big)(v) K(v, w) \dd w\dd v \\
	&\leq C\Lambda R^{-2s} \norm{H\varphi_R^2}_{L^\infty} \norm{f}_{L^1(B_R)} \norm{f}_{L^\infty(\R^d\setminus B_{2R})} \xrightarrow[R \to \infty]{} 0.
\eal

\textit{Step 2-ii: Symmetric not-too-non-local non-locality.}

Second, to bound $\mathcal I_{loc}$ from \eqref{eq:E-H}, we first distinguish the non-singular from the singular part. We write
\bal\label{eq:I-loc}
	\mathcal I_{loc}&=\int_{B_{2R}}\int_{B_{2R}} \big[f(w) - f(v)\big] \Big[\big(fH\varphi_R^2\big)(v) - \big(fH\varphi_R^2\big)(w)\Big] K(v, w)\dd w\dd v\\
	&=\int_{B_{2R}}\int_{B_{2R}} \big[f(w) - f(v)\big] \Big[\big(fH\varphi_R^2\big)(v) - \big(fH\varphi_R^2\big)(w)\Big] K(v, w)\chi_{\abs{v-w} > \rho} \dd w\dd v\\
	&\quad +\int_{B_{2R}}\int_{B_{2R}} \big[f(w) - f(v)\big] \Big[\big(fH\varphi_R^2\big)(v) - \big(fH\varphi_R^2\big)(w)\Big] K(v, w)\chi_{\abs{v-w} < \rho} \dd w\dd v\\
	&=: \mathcal I_{loc}^{ns} + \mathcal I_{loc}^{s}. 
\eal

Then, on the one hand, we crudely bound with \eqref{eq:upperbound}
\bal\label{eq:I-loc-ns}
	\mathcal I_{loc}^{ns} &= \int_{B_{2R}}\int_{B_{2R}} \big[f(w) - f(v)\big] \Big[\big(fH\varphi_R^2\big)(v) - \big(fH\varphi_R^2\big)(w)\Big] K(v, w)\chi_{\abs{v-w} > \rho} \dd w\dd v\\
	&\leq \int_{B_{2R}}\int_{B_{2R}} \Big[f(w)\big(fH\varphi_R^2\big)(v) + f(v)\big(fH\varphi_R^2\big)(w) \Big]K(v, w)\chi_{\abs{v-w} > \rho} \dd w\dd v\\
	&\leq \int_{B_{2R}}\int_{B_{2R}}\big[f^2(w)+f^2(v)\big]\big(H\varphi_R^2\big)(v) + \big(H\varphi_R^2\big)(w) \big] K(v, w)\chi_{\abs{v-w} > \rho} \dd w\dd v\\
	&\leq C \rho^{-2s} \norm{H}_{L^\infty} \int_{B_{2R}}f^2(v)\dd v.
\eal

On the other hand, to bound the singular part of $\mathcal I_{loc}$, we realise
\bals
	 \big[f(v) &- f(w) \big]\Big[ \big(f H \varphi_R^2\big)(v) - \big(f H \varphi_R^2\big)(w)\Big]  \\
	 &= \Big[\big(fH^{\frac{1}{2}}\varphi_R\big)(v) - \big(fH^{\frac{1}{2}}\varphi_R\big)(w)\Big]^2 - f(v) f(w) \Big[\big(\varphi_R H^{\frac{1}{2}}\big)(v) - \big(\varphi_R H^{\frac{1}{2}}\big)(w)\Big]^2,
\eals
so that 
\bals
	\mathcal I_{loc}^s&= \int_{B_{2R}}\int_{B_{2R}}f(v) f(w) \Big[\big(\varphi_R H^{\frac{1}{2}}\big)(v) - \big(\varphi_R H^{\frac{1}{2}}\big)(w)\Big]^2 K(v, w) \chi_{\abs{v-w} < \rho}\dd v \dd w\\
	&\quad -\int_{B_{2R}}\int_{B_{2R}}\Big[\big(fH^{\frac{1}{2}}\varphi_R\big)(v) - \big(fH^{\frac{1}{2}}\varphi_R\big)(w)\Big]^2K(v, w) \chi_{\abs{v-w} < \rho} \dd w \dd v.
\eals

Now notice that for some $c_0 \geq 1$ there holds
\bals
	 f(v) &f(w) \Big[\big(\varphi_R H^{\frac{1}{2}}\big)(v) - \big(\varphi_R H^{\frac{1}{2}}\big)(w)\Big]^2\\
	  &\leq c_0 \Big[ f^2(v) + f^2(w)\Big] \Big[\big(\varphi_R(v) - \varphi_R(w)\big)^2\big(H(v) + H(w)\big) + \Big(H^{\frac{1}{2}}(v) - H^{\frac{1}{2}}(w)\Big)^2\Big(\varphi_R^2(v) + \varphi_R^2(w)\Big)\Big].
\eals
Thus
\bal\label{eq:I-loc-s}
	\mathcal I_{loc}^s &\leq \int_{B_{2R}}\int_{B_{2R}}f(v) f(w) \Big[\big(\varphi_R H^{\frac{1}{2}}\big)(v) - \big(\varphi_R H^{\frac{1}{2}}\big)(w)\Big]^2K(v, w) \chi_{\abs{v-w} < \rho} \dd v \dd w\\
	&\lesssim_{c_0} \underbrace{\int_{B_{2R}}\int_{B_{2R}} \big[ f^2(v) + f^2(w)\big] \big[\varphi_R(v) - \varphi_R(w)\big]^2\big[H(v) + H(w)\big] K(v, w) \chi_{\abs{v-w} < \rho} \dd w \dd v}_{=: \mathcal I_{loc}^{s, \varphi}}\\
	& \quad+  \underbrace{\int_{B_{2R}}\int_{B_{2R}}\big[ f^2(v) + f^2(w)\big] \Big[H^{\frac{1}{2}}(v) - H^{\frac{1}{2}}(w)\Big]^2\Big[\varphi_R^2(v) + \varphi_R^2(w)\Big]K(v, w) \chi_{\abs{v-w} < \rho} \dd w \dd v}_{=: \mathcal I_{loc}^{s, H}}.
\eal
We start with $\mathcal I_{loc}^{s, \varphi}$: we observe that $\varphi_R(v) = \varphi_R(w)$ for $w \in B_\rho(v)$ with $v \in B_{\frac{R-1}{2}}$. Thus
\bals
	\mathcal I_{loc}^{s, \varphi}&= \int_{B_{\frac{R-1}{2}}}\int_{B_{2R} \cap B_\rho(v) } \big[ f^2(v) + f^2(w)\big] \big[\varphi_R(v) - \varphi_R(w)\big]^2\big[H(v) + H(w)\big] K(v, w) \dd w \dd v \\
	&\quad+\int_{B_{2R} \setminus B_{\frac{R-1}{2}}}\int_{B_{2R} \cap B_\rho(v) } \big[ f^2(v) + f^2(w)\big] \big[\varphi_R(v) - \varphi_R(w)\big]^2\big[H(v) + H(w)\big] K(v, w) \dd w \dd v\\
	&= \int_{B_{2R} \setminus B_{\frac{R-1}{2}}}\int_{B_{2R} \cap B_\rho(v) } \big[ f^2(v) + f^2(w)\big] \big[\varphi_R(v) - \varphi_R(w)\big]^2\big[H(v) + H(w)\big] K(v, w) \dd w \dd v.
\eals
Since the integrand is bounded, we obtain as $R \to \infty$
\beq\label{eq:I-loc-s-phi}
	\mathcal I_{loc}^{s, \varphi} \to 0.
\eeq
We continue with $\mathcal I_{loc}^{s, H}$: due to \eqref{eq:symmetry}, the integrand is symmetric, so that
\bals
	\frac{1}{2}\mathcal I_{loc}^{s, H} &= \int_{B_{2R}}\int_{B_{2R}} f^2(v) \big[\varphi_R^2(v) +\varphi_R^2(w)\big] \Big[ H^{\frac{1}{2}}(v) - H^{\frac{1}{2}}(w)\Big]^2 K(v, w)\chi_{\abs{v-w} <\rho}\dd w \dd v\\
	&= \int_{B_{\frac{R-1}{2}}}\int_{B_{2R} \cap B_\rho(v)} f^2(v) \big[\varphi_R^2(v) +\varphi_R^2(w)\big] \Big[ H^{\frac{1}{2}}(v) - H^{\frac{1}{2}}(w)\Big]^2K(v, w) \dd w \dd v\\
	&\quad + \int_{B_{2R} \setminus B_{\frac{R-1}{2}}}\int_{B_{2R} \cap B_\rho(v)} f^2(v) \big[\varphi_R^2(v) +\varphi_R^2(w)\big] \Big[ H^{\frac{1}{2}}(v) - H^{\frac{1}{2}}(w)\Big]^2 K(v, w)\dd w \dd v\\
	&=: \mathcal I_{loc}^{H-decay} +\mathcal I_{loc}^{H-error}. 
\eals
The first term encodes the localised decay of $H$. The second term tends to zero as $R \to \infty$, as the integrand is bounded. 
Concretely, since $\varphi_R(v) = \varphi_R(w)$ for any $w \in B_\rho(v)$ and $v \in B_{\frac{R-1}{2}}$ if $R \geq 1 + 2\rho$, we find
\bals
	 \mathcal I_{loc}^{H-decay}= 2\int_{B_{\frac{R-1}{2}}}\int_{B_{2R} \cap B_\rho(v)} f^2(v) \varphi_R^2(v) \Big[ H^{\frac{1}{2}}(v) - H^{\frac{1}{2}}(w)\Big]^2K(v, w) \dd w \dd v.
\eals
For the second term, we see that the integrand is bounded so that 
\beqs
	 \mathcal I_{loc}^{H-error} \leq 2 \int_{B_{2R} \setminus B_{\frac{R-1}{2}}}\int_{B_{2R} \cap B_\rho(v)} f^2(v) \Big[ H^{\frac{1}{2}}(v) - H^{\frac{1}{2}}(w)\Big]^2 K(v, w) \dd w \dd v\xrightarrow[R\to \infty]{} 0.
\eeqs
Thus, as $R \to \infty$, we get 
\bal\label{eq:I-loc-s-H}
	\mathcal I_{loc}^{s, H}  \leq  &4\int_{\R^d}\int_{B_\rho(v) } f^2(v) \varphi_R^2(v) \Big[ H^{\frac{1}{2}}(v) - H^{\frac{1}{2}}(w)\Big]^2 K(v, w)\dd w \dd v.
\eal

The combination of \eqref{eq:I-loc} with \eqref{eq:I-loc-ns}, \eqref{eq:I-loc-s}, \eqref{eq:I-loc-s-phi} and \eqref{eq:I-loc-s-H} yields as $R \to \infty$
\bal\label{eq:sym2}
	\mathcal I_{loc}\leq &2\int_{\R^d}\int_{B_\rho(v)}  f^2(v)\Big[ H^{\frac{1}{2}}(v) - H^{\frac{1}{2}}(w)\Big]^2 \big[K(v, w) +K(w, v)\big] \dd w \dd v \\
	&+ C \rho^{-2s}\Norm{H}_{L^\infty} \norm{f}_{L^2(\R^d)}^2.
\eal

Therefore, by letting $R \to \infty$, we conclude from \eqref{eq:E-H}, \eqref{eq:tail-aux}, \eqref{eq:sym2}
\bal\label{eq:nonloc-op}
	\int_{\R^{d}}\big(\mathcal Lf \big)fH \dd v &\leq 2c_0 \int_{\R^d}\int_{B_\rho(v)}  f^2(v)\Big[ H^{\frac{1}{2}}(v) - H^{\frac{1}{2}}(w)\Big]^2 \big[K(v, w) +K(w, v)\big] \dd w \dd v \\
	&\quad+ C \rho^{-2s}\norm{H}_{L^\infty(\R^d)} \norm{f}_{L^2(\R^d)}^2.
\eal

\textit{Step 3: Conclusion.}

We assemble the pieces. Equation \eqref{eq:aux1-transport} implies
\bals
	\frac{1}{2}\int_{\R^{2d}} f^2(\tau_1, x, v) H(\tau_1, x, v) \varphi_R^2 \dd x \dd v &\leq  \frac{1}{2}\int_{\R^{2d}} f^2(\tau_0, x, v) H(\tau_0, x, v) \varphi_R^2 \dd x \dd v\\
	&\quad+ CR^{-2s} \int_{[\tau_0, \tau_1] \times \R^{2d}}f^2(z)H(z) \varphi_R(x, v) \dd z \\
	&\quad+ \int_{[\tau_0,\tau_1]\times\R^{2d}} f^2(z) \varphi_R^2(x, v)\mathcal T H(z) \dd z\\
	&\quad+ \int_{[\tau_0,\tau_1]\times\R^{2d}} \big(\mathcal L f\big)(z) f(z) H(z) \varphi_R^2(x, v) \dd z,
\eals
which as $R \to \infty$ yields together with \eqref{eq:nonloc-op}
\bals
	&\frac{1}{2}\int_{\R^{2d}} f^2(\tau_1, x, v) H(\tau_1, x, v)  \dd x \dd v \\
	&\leq   \frac{1}{2}\int_{\R^{2d}} f^2(\tau_0, x, v) H(\tau_0, x, v) \dd x \dd v +  \int_{[\tau_0,\tau_1]\times\R^{2d}} f^2(z)  \mathcal T H(z) \dd z\\
	&\quad + 2c_0 \int_{\R^d}\int_{B_\rho(v)}  f^2(v)\Big[ H^{\frac{1}{2}}(v) - H^{\frac{1}{2}}(w)\Big]^2 \big[K(v, w) +K(w, v)\big] \dd w \dd v\\
	&\quad+C \rho^{-2s}\Norm{H}_{L^\infty} \norm{f}_{L^2([\tau_0, \tau_1]\times \R^{2d})}^2 \\
	&\leq   \frac{1}{2}\int_{\R^{2d}} f^2(\tau_0, x, v) H(\tau_0, x, v) \dd x \dd v +C \rho^{-2s}\Norm{H}_{L^\infty} \norm{f}_{L^2([\tau_0, \tau_1]\times \R^{2d})}^2,
\eals
since by construction $H$ satisfies \eqref{eq:aronson-condition} with $c = 2c_0$. This concludes the proof of Proposition \ref{prop:aronson}.
\end{proof}

\subsubsection{Construction of $H$.}
In this section, we construct a function $H$ that satisfies \eqref{eq:aronson-condition}.

\begin{lemma}\label{lem:existence-H}
Let $y_0, w_0 \in \R^d$.  Let $0 < \rho$ and $0 \leq \tau_0  <\sigma$. Let $k \geq 1$ be such that $\sigma - \tau_0 \leq \frac{\rho^{2s}}{4k}$.
For $(t, x, v) \in [\tau_0, \sigma] \times \R^{2d}$, we define $\delta(t) := 2(\sigma - \tau_0) - (t - \tau_0)$ and
\beq\label{eq:H}
	H(t, x, v) := e^{-\max\Big\{1, \frac{1}{3\rho}\max\Big(\abs{v-w_0}, \abs{x - y_0 - (\sigma + t - 2\tau_0)w_0}^{\frac{1}{1+2s}} \Big)\Big\}\log\big(\frac{\rho^{2s}}{k \delta(t)}\big)}.
\eeq
Then there exist a constant $C_1 > 0$ depending only on $s, d, \lambda_0, \Lambda_0$ such that, if $k > C_1$, then $H$ satisfies \eqref{eq:aronson-condition}, where $K$ is a non-negative kernel satisfying \eqref{eq:upperbound}, \eqref{eq:symmetry}, \eqref{eq:symmetry-nondiv} (if $s \geq \frac{1}{2}$). 
\end{lemma}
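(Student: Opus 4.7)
Set $\psi(t,x,v) := \max\bigl\{1,\, \tfrac{1}{3\rho}\max(|v-w_0|,\, |\xi|^{1/(1+2s)})\bigr\}$ with $\xi := x - y_0 - (\sigma+t-2\tau_0)w_0$, and $\beta(t) := \log(\rho^{2s}/(k\delta(t)))$, so that $H = e^{-\psi\beta}$. From $k\delta \leq \rho^{2s}/2$ one has $\beta \geq \log 2$, and $\psi$ is $(3\rho)^{-1}$-Lipschitz in $v$. Since $\partial_t \beta = 1/\delta$, a direct computation gives $\mathcal T H = -H(\psi/\delta + \beta \mathcal T\psi)$. Crucially, $\mathcal T\psi = 0$ wherever the maximum defining $\psi$ is attained by $1$ or by $|v-w_0|/(3\rho)$ (both are $t,x$-independent); in the complementary ``$X$-dominated'' region, where $\psi = |\xi|^{1/(1+2s)}/(3\rho) > 1$ and $|v-w_0| \leq 3\rho\psi$, direct differentiation yields
\begin{equation*}
|\mathcal T \psi| \leq \frac{|v-w_0|}{3\rho(1+2s)|\xi|^{2s/(1+2s)}} \leq \frac{\psi^{1-2s}}{(3\rho)^{2s}(1+2s)}.
\end{equation*}

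For the nonlocal term, the Lipschitz bound on $\psi$ gives $|\psi(v)-\psi(w)| \leq 1/3$ for $w \in B_\rho(v)$, and the mean-value inequality $|e^{-a} - e^{-b}|^2 \leq |a-b|^2 \max(e^{-2a}, e^{-2b})$ applied to $a = \psi(v)\beta/2$, $b = \psi(w)\beta/2$ yields
\begin{equation*}
\bigl[H^{1/2}(v) - H^{1/2}(w)\bigr]^2 \leq H(v)\, e^{\beta/3}\, \min\bigl(\beta^2 |v-w|^2/(36\rho^2),\, 4\bigr).
\end{equation*}
Splitting the integration at $|v-w| = 12\rho/\beta$ and using \eqref{eq:upperbound} together with its layer-cake consequence $\int_{B_r}|v-w|^2 K(v,w)\dd w \leq C\Lambda_0 r^{2-2s}$ gives
\begin{equation*}
\mathcal L_\rho H^{1/2} \leq C(s, d, \Lambda_0)\, \beta^{2s} \rho^{-2s} H(v)\, e^{\beta/3}.
\end{equation*}

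To assemble, dividing \eqref{eq:aronson-condition} by $H$ reduces the claim to $\psi/\delta + \beta \mathcal T\psi \geq cC\beta^{2s} \rho^{-2s} e^{\beta/3}$. The decisive identity hidden in the definition of $\beta$ is $\rho^{-2s}\delta = 1/(ke^\beta)$; multiplying by $\delta$ rewrites the target as
\begin{equation*}
\psi + \beta \delta \mathcal T\psi \geq cC\beta^{2s} e^{-2\beta/3}/k,
\end{equation*}
whose right-hand side is uniformly bounded by $cCM(s)/k$ with $M(s) := \sup_{\beta \geq \log 2}\beta^{2s} e^{-2\beta/3} < \infty$. Where $\mathcal T\psi = 0$, the LHS is $\psi \geq 1$, which suffices once $k \geq cCM(s)$. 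In the $X$-dominated region, inserting $\delta(3\rho)^{-2s} = 3^{-2s}/(ke^\beta)$, using $\beta e^{-\beta} \leq 1/e$, and exploiting $\psi^{1-2s} \leq \psi$ (valid for $\psi \geq 1$ and any $s\in(0,1)$) give
\begin{equation*}
\beta\delta|\mathcal T\psi| \leq \frac{\psi^{1-2s}}{ek(1+2s)3^{2s}} \leq \frac{\psi}{2}
\end{equation*}
for $k$ exceeding an absolute constant, so $\psi + \beta\delta\mathcal T\psi \geq \psi/2 \geq 1/2$ and the inequality follows for $k > C_1$ with $C_1 = C_1(s,d,\lambda_0,\Lambda_0)$ sufficiently large.

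The principal difficulty is the a priori unbounded factor $e^{\beta/3}$ in the nonlocal bound, which would force $k$ to depend on $\beta$ (and hence on the data) in a naive approach; it is neutralised precisely by the relation $\rho^{-2s}\delta = e^{-\beta}/k$ built into the very definition of $\beta$, producing the uniformly bounded combination $\beta^{2s}e^{-2\beta/3}/k$. A secondary subtlety is that in the $X$-dominated region the transport term $\beta\mathcal T\psi$ can be negative, but the bound $|\mathcal T\psi| \lesssim \psi^{1-2s}/\rho^{2s}$ combined with $\delta \lesssim \rho^{2s}/k$ keeps $\beta\delta|\mathcal T\psi|$ comparable to $\psi/k$, so that choosing $k$ large absorbs it into $\psi$.
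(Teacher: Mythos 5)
Your proof is correct, and it follows a genuinely different (and in my view cleaner) route than the paper's. The paper proves Lemma \ref{lem:existence-H} by a six-way case analysis on whether $\abs{v-w_0}$ and the $x$-distance fall below $2\rho$, between $2\rho$ and $3\rho$, or above $3\rho$, and in each case it computes $\mathcal T H$ and $\mathcal L_\rho H^{1/2}$ separately (constructing, in the mid-range cases, an auxiliary boundary point $v_0\in\partial B_{3\rho}(w_0)$ to control the gradient). You instead write $H=e^{-\psi\beta}$ and extract three reusable facts: the identity $\mathcal T H = -H(\psi/\delta+\beta\,\mathcal T\psi)$, a \emph{single, region-independent} upper bound $\mathcal L_\rho H^{1/2}\le C\beta^{2s}\rho^{-2s}H\,e^{\beta/3}$ obtained from the $1/(3\rho)$-Lipschitz bound on $\psi$ together with a splitting of the integral at $\abs{v-w}\sim\rho/\beta$, and the algebraic relation $\rho^{-2s}\delta = e^{-\beta}/k$ hard-wired into the definition of $\beta$. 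Multiplying through by $\delta$ collapses the desired inequality to $\psi+\beta\delta\,\mathcal T\psi\ge cC\,\beta^{2s}e^{-2\beta/3}/k$, whose right-hand side is uniformly bounded over $\beta\ge\log 2$, so only a two-way split (``$\mathcal T\psi=0$'' versus ``$X$-dominated'') remains. The exchange is: the paper gets sharp statements like $\mathcal L_\rho H^{1/2}=0$ in some regions (which you over-estimate), but your uniform bound is still of the required size because the factor $e^{\beta/3}$ is dominated by $e^{-\beta}$ coming from $\delta\rho^{-2s}$, and your approach makes the dependence on $k$, $\rho$, $\delta$ transparent in one place. I verified the Lipschitz constant, the transport derivative in the $X$-dominated region, the mean-value estimate $\bigl[H^{1/2}(v)-H^{1/2}(w)\bigr]^2\le H(v)e^{\beta/3}\min(\beta^2\abs{v-w}^2/(36\rho^2),4)$, the dyadic integration against $K$, and the final absorption of $\beta\delta\abs{\mathcal T\psi}$ into $\psi/2$ using $\beta e^{-\beta}\le 1/e$ and $\psi^{1-2s}\le\psi$; all steps are sound.
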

\begin{proof}
We verify that $H$ satisfies \eqref{eq:aronson-condition} by a case distinction. 

We abbreviate in the sequel
\[
	\abs{X-Y_0} := \abs{x-y_0-(\sigma + t - 2\tau_0)w_0},
\]
for readability.

\begin{enumerate}[i.]

\item For $(t, x, v)$ such that $\abs{v-w_0} < 2\rho$ and  $\abs{X-Y_0}^{\frac{1}{1+2s}} < 3\rho$:
then 
\beqs
	H(t, x, v) = \frac{k\delta(t)}{\rho^{2s}}, 
\eeqs
so that
\beqs
	\partial_t H = - \frac{k}{\rho^{2s}}, \qquad v \cdot \nabla_x H = 0, \qquad \mathcal L_\rho H^{\frac{1}{2}} = 0.
\eeqs
Thus for any $k > 0$ \eqref{eq:aronson-condition} is satisfied.

\item  For $(t, x, v)$ such that $\abs{v-w_0}  < 2\rho$ and $\abs{X-Y_0}^{\frac{1}{1+2s}} > 3\rho$.
Then 
\beqs
	H(t, x, v)  = e^{-\frac{\abs{X-Y_0}^{\frac{1}{1+2s}}}{3\rho} \log\big(\frac{\rho^{2s}}{k \delta(t)}\big)},
\eeqs
so that
\beqs	
	\mathcal L_\rho H^{\frac{1}{2}} = 0,
\eeqs
and
\bals
	&\mathcal T H(t, x, v)\\
	&= -\Bigg[\frac{1}{3\rho(1+2s)} \log\Big(\frac{\rho^{2s}}{k\delta(t)}\Big) \abs{X-Y_0}^{-\frac{1+4s}{1+2s}} (X-Y_0) \cdot (v - w_0) \\
	&\qquad+ \frac{\abs{X-Y_0}^{\frac{1}{1+2s}}}{3\rho\delta(t)} \Bigg]H(t, x, v) \\
	&\leq C \Bigg[ \frac{\rho^{2s}}{3 \rho k\delta(t)} (3\rho)^{-2s} (2\rho)  - \frac{1}{\delta(t)}  \Bigg]e^{-\frac{\abs{X-Y_0}^{\frac{1}{1+2s}}}{3\rho} \log\big(\frac{\rho^{2s}}{k \delta(t)}\big)} \\
	&\leq C \Bigg[ \frac{1}{k\delta(t)}   - \frac{1}{\delta(t)}\Bigg]e^{-\frac{\abs{X-Y_0}^{\frac{1}{1+2s}}}{3\rho} \log\big(\frac{\rho^{2s}}{k \delta(t)}\big)}\leq 0,
\eals
for any $k \geq 1$. We used the bounds $\abs{v-w_0}  < 2\rho, \abs{X-Y_0}^{\frac{1}{1+2s}} > 3\rho$, and the fact that $\log a < a$ for $a > 0$.

\item For $(t, x, v)$ such that $2 \rho < \abs{v-w_0}  <  3\rho$ and $\abs{X-Y_0}^{\frac{1}{1+2s}} < 3\rho$.
Then 
\beqs
	H(t, x, v)  = \frac{k\delta(t)}{\rho^{2s}},
\eeqs
but for $w \in B_\rho(v)$ it might happen that $\abs{w - w_0} > 3\rho$, in which case 
\beqs
	H(t, x, w) = e^{-\frac{1}{3\rho}\abs{w-w_0} \log\big(\frac{\rho^{2s}}{k \delta}\big)} \leq \frac{k\delta(t)}{\rho^{2s}}= H(t, x, v), 
\eeqs
with 
\beqs
	\nabla_v H^{\frac{1}{2}}(t, x, w) = -\frac{1}{6\rho} \log\Big(\frac{\rho^{2s}}{k \delta}\Big)\frac{(w-w_0)}{\abs{w-w_0} } e^{-\frac{1}{6\rho}\abs{w-w_0} \log\big(\frac{\rho^{2s}}{k \delta}\big)}.
\eeqs
We fix $v_0 \in B_\rho(v)$ with $\abs{v_0 - w_0} = 3\rho$ such that for every $w \in  \R^d \setminus B_{3\rho}(w_0)$ there holds $\abs{v_0 -w} \leq 2 \abs{v -w}$. As Kassmann-Weidner \cite{KW-aronson} point out, this works for example if $v_0 \in \partial B_{3\rho}(w_0)$ so that $v_0$ minimises $\textrm{dist}(v, \R^d \setminus B_{3\rho}(w_0))$. 
Then $B_\rho(v) \subset B_{2\rho}(v_0)$ and $H(t, x, v_0) = H(t, x, v)$. 
Note that
$$\Abs{H^{\frac{1}{2}}(t, x, v_0) - H^{\frac{1}{2}}(t, x, w)}^2 \leq  \abs{\nabla_v H^{\frac{1}{2}}(t, x, v_0)}^2 \abs{v_0-w}^{2}$$
since 
\beqs
	\sup_{w \in \R^d\setminus B_{3\rho}(w_0)} \Abs{\nabla_v H^{\frac{1}{2}}(t, x, w)} = \Abs{\nabla_v H^{\frac{1}{2}}(t, x, v_0)}, \quad \textrm{ for } v_0 \in \partial B_{3\rho}(w_0).
\eeqs
Therefore, using the upper bound \eqref{eq:upperbound} and $\abs{v_0 - w_0} = 3\rho$, as well as $\log a \leq a^{1/2}$, we get 
\bal\label{eq:lsym-iii}
	\mathcal L_\rho H^{\frac{1}{2}} (v) &= \int_{B_\rho(v)} \left(H^{\frac{1}{2}} (v) - H^{\frac{1}{2}} (w)\right)^2 \big(K(v, w) + K(w, v)\big) \dd w\\
	&=\int_{B_\rho(v)\setminus B_{3\rho}(w_0)}  \left(H^{\frac{1}{2}} (v) - H^{\frac{1}{2}} (w)\right)^2 \big(K(v, w) + K(w, v)\big)  \dd w\\
	&\leq 2\int_{B_\rho(v)\setminus B_{3\rho}(w_0)}  \left(H^{\frac{1}{2}} (v_0) - H^{\frac{1}{2}} (w)\right)^2 K(v_0, w)  \dd w\\
	&\leq C\int_{B_\rho(v)\setminus B_{3\rho}(w_0)}  \abs{\nabla_v H^{\frac{1}{2}} (v_0)}^2 \abs{v_0-w}^2 K(v_0, w)  \dd w\\
	&\leq C \rho^{2-2s} \frac{1}{36\rho^2} \left(\log\Big(\frac{\rho^{2s}}{k \delta}\Big)\right)^2 e^{-\frac{1}{3\rho}\abs{v_0-w_0} \log\big(\frac{\rho^{2s}}{k \delta}\big)}\\
	&= C \rho^{-2s}  \left(\frac{\rho^{2s}}{k \delta}\right) \frac{k \delta}{\rho^{2s}} =: c_1 \rho^{-2s},
\eal
for some constant $c_1 > 0$ independent of $\rho, \delta(t), k$.
Together with 
\beqs
	\mathcal T H(t, x, v) = - \frac{k}{\rho^{2s}},
\eeqs
we thus find
\beqs
	\mathcal T H + \mathcal L_\rho H^{\frac{1}{2}} \leq 0
\eeqs
for $k \geq c_1$.

\item  For $(t, x, v)$ such that $2 \rho < \abs{v-w_0}  <  3\rho$ and $\abs{X-Y_0}^{\frac{1}{1+2s}} > 3\rho$.
Then 
\beqs
	H(t, x, v)  = e^{-\frac{\abs{X-Y_0}^{\frac{1}{1+2s}}}{3\rho} \log\big(\frac{\rho^{2s}}{k \delta(t)}\big)},
\eeqs
but for $w \in B_\rho(v)$ it might happen that $\abs{w - w_0} > \abs{X-Y_0}^{\frac{1}{1+2s}} >  3\rho$, in which case 
\beqs
	H(t, x, w) = e^{-\frac{1}{3\rho}\abs{w-w_0} \log\big(\frac{\rho^{2s}}{k \delta(t)}\big)}\leq e^{-\frac{\abs{X-Y_0}^{\frac{1}{1+2s}}}{3\rho} \log\big(\frac{\rho^{2s}}{k \delta(t)}\big)} = H(t, x, v).
\eeqs
Similar to before, we fix $v_0 \in B_\rho(v)$ with $\abs{v_0 - w_0} = \abs{X-Y_0}^{\frac{1}{1+2s}}$ such that for every $w \in  \R^d \setminus B_{\abs{X-Y_0}^{\frac{1}{(1+2s)}}}(w_0)$ it holds $\abs{v_0 -w} \leq 2 \abs{v -w}$. For instance, we choose $v_0 \in \partial B_{\abs{X-Y_0}^{\frac{1}{(1+2s)}}}(w_0)$ so that $v_0$ minimises the distance between $v$ and \\
$\R^d \setminus B_{\abs{X-Y_0}^{\frac{1}{(1+2s)}}}(w_0)$. 
Then $B_\rho(v) \subset B_{2\rho}(v_0)$ and $H(t, x, v_0) = H(t, x, v)$. If we denote by $\Omega := B_\rho(v)\setminus B_{\abs{X-Y_0}^{\frac{1}{(1+2s)}}}(w_0)$, then we find similar to before
\bals
	\mathcal L_\rho H^{\frac{1}{2}} (v) &= \int_{B_\rho(v)} \left(H^{\frac{1}{2}} (v) - H^{\frac{1}{2}} (w)\right)^2 \big(K(v, w) + K(w, v)\big) \dd w\\
	&=\int_{\Omega } \left(H^{\frac{1}{2}} (v) - H^{\frac{1}{2}} (w)\right)^2 \big(K(v, w) + K(w, v)\big) \dd w\\
	&\leq \abs{\nabla_v H^{\frac{1}{2}}(v_0)}^2 \int_{\Omega}  \abs{v_0-w}^2 K(v_0, w) \dd w\\
	&\leq C \rho^{2-2s} \frac{1}{36\rho^2} \left(\log\Big(\frac{\rho^{2s}}{k \delta}\Big)\right)^2 e^{-\frac{1}{3\rho}\abs{X-Y_0}^{\frac{1}{1+2s}} \log\big(\frac{\rho^{2s}}{k \delta}\big)}\\
	&=: c_2 \frac{1}{k \delta(t)}  e^{-\frac{1}{3\rho}\abs{X-Y_0}^{\frac{1}{1+2s}} \log\big(\frac{\rho^{2s}}{k \delta}\big)},
\eals
for $c_2$ independent of $k, \rho, \delta(t)$.
Finally, using $\log a < a$ for $a > 0$
\bals	
	&\mathcal T H(t, x, v) \\
	&= -\Bigg[\frac{1}{3\rho(1+2s)} \log\Big(\frac{\rho^{2s}}{k\delta(t)}\Big) \abs{X-Y_0}^{-\frac{1+4s}{1+2s}} (X-Y_0) \cdot (v - w_0) \\
	&\qquad+ \frac{\abs{X-Y_0}^{\frac{1}{1+2s}}}{3\rho\delta(t)}\Bigg]H(t, x, v)\\
	&\leq C  \Bigg[\frac{1}{3\rho} \log\Big(\frac{\rho^{2s}}{k\delta(t)}\Big)\frac{\rho}{\rho^{2s}}  - \frac{1}{\delta(t)}\Bigg]e^{-\frac{\abs{X-Y_0}^{\frac{1}{1+2s}}}{3\rho} \log\big(\frac{\rho^{2s}}{k \delta(t)}\big)} \\
	&=: c_3 \left[\frac{1}{\delta(t)k} -\frac{1}{\delta(t)} \right]e^{-\frac{\abs{X-Y_0}^{\frac{1}{1+2s}}}{3\rho} \log\big(\frac{\rho^{2s}}{k \delta(t)}\big)}.
\eals
Thus $\mathcal TH+ \mathcal L_\rho H^{\frac{1}{2}} \leq 0$, for $k \geq c_2 + c_3$.

\item For $(t, x, v)$ such that $3 \rho < \abs{v-w_0}$ and either $\abs{X-Y_0}^{\frac{1}{1+2s}} < 3\rho$ or \\
$3 \rho <  \abs{X-Y_0}^{\frac{1}{1+2s}} < \abs{v-w_0}$:
Then 
\beqs
	H(t, x, v) := e^{-\frac{\abs{v-w_0}}{3\rho}\log\big(\frac{\rho^{2s}}{k \delta(t)}\big)},
\eeqs
and
\beqs
	\mathcal T H(t,x, v) =-\frac{\abs{v-w_0}}{3\rho\delta(t)}e^{-\frac{\abs{v-w_0}}{3\rho}\log\big(\frac{\rho^{2s}}{k \delta(t)}\big)} \leq - \frac{c_4}{\delta(t)} e^{-\frac{\abs{v-w_0}}{3\rho}\log\big(\frac{\rho^{2s}}{k \delta(t)}\big)},
\eeqs
for some $c_4 > 0$ independent of $k, \delta(t), \rho$.
Moreover, again as before, using the fact that $\abs{w - w_0} > \abs{v-w_0} - \rho$ we get
\bals
	\mathcal L_\rho H^{\frac{1}{2}} (v) &= \int_{B_\rho(v)} \left(H^{\frac{1}{2}} (v) - H^{\frac{1}{2}} (w)\right)^2 \big(K(v, w) + K(w, v)\big) \dd w\\
	&\leq \sup_{v' \in B_\rho(v)} \abs{\nabla_v H^{\frac{1}{2}}(v')}^2 \int_{B_\rho(v)} \abs{v-w}^2  K(v, w) \dd w \\
	&\leq  C \rho^{2-2s} e^{-\frac{\abs{v-w_0}-\rho}{3\rho}\log\big(\frac{\rho^{2s}}{k \delta(t)}\big)}\Bigg[\log\Bigg(\frac{\rho^{2s}}{k \delta(t)}\Bigg)\Bigg]^2 \frac{1}{(6\rho)^2}\\
	&\leq  C \rho^{2-2s} e^{-\frac{\abs{v-w_0}}{3\rho}\log\big(\frac{\rho^{2s}}{k \delta(t)}\big)}\Bigg[\log\Bigg(\frac{\rho^{2s}}{k \delta(t)}\Bigg)\Bigg]^2\Bigg(\frac{\rho^{2s}}{k \delta(t)}\Bigg)^{\frac{1}{3}} \frac{1}{(6\rho)^2}\\
	&\leq \frac{c_5}{  k \delta(t)} e^{-\frac{\abs{v-w_0}}{3\rho}\log\big(\frac{\rho^{2s}}{k \delta(t)}\big)},
\eals
for some $c_5> 0$ independent of $k, \rho, \delta(t)$.
Thus
\bals
	\mathcal T H + \mathcal L_\rho H^{\frac{1}{2}}  \leq  \left[- \frac{c_4}{\delta(t)} +  \frac{c_5}{k\delta(t)} \right] e^{-\frac{\abs{v-w_0}}{3\rho}\log\big(\frac{\rho^{2s}}{k \delta(t)}\big)} \leq 0,
\eals
for $k > \frac{c_5}{c_4}$.

\item  For $(t, x, v)$ such that $3\rho   <  \abs{v-w_0} < \abs{X-Y_0}^{\frac{1}{1+2s}}$.
Then 
\beqs
	H(t, x, v) := e^{-\frac{\abs{X-Y_0}^{\frac{1}{1+2s}}}{3\rho}\log\big(\frac{\rho^{2s}}{k \delta(t)}\big)},
\eeqs
and
\bals
	&\mathcal T H(t, x,v)\\
	&= -\Bigg[\frac{1}{3\rho(1+2s)} \log\Big(\frac{\rho^{2s}}{ k\delta(t)}\Big) \abs{X-Y_0}^{-\frac{1+4s}{1+2s}} (X-Y_0) \cdot (v - w_0)\\
	&\qquad + \frac{\abs{X-Y_0}^{\frac{1}{1+2s}}}{3\rho\delta(t)}\Bigg]H(t, x, v) \\
	&\leq C\frac{\abs{X-Y_0}^{\frac{1}{1+2s}}}{\rho\delta(t)}\Bigg[ \frac{\rho^{2s}\abs{X-Y_0}^{\frac{-2s}{1+2s}}}{k}  - 1\Bigg]H(t, x, v)  \\
	&\leq C\frac{\abs{X-Y_0}^{\frac{1}{1+2s}}}{\rho\delta(t)}\Bigg[ \frac{1}{k}  - 1\Bigg]e^{-\frac{\abs{X-Y_0}^{\frac{1}{1+2s}}}{3\rho} \log\big(\frac{\rho^{2s}}{k \delta(t)}\big)}\\
	&\leq C\Bigg[ \frac{1}{\delta(t)k}  - \frac{1}{\delta(t)}  \Bigg]e^{-\frac{\abs{X-Y_0}^{\frac{1}{1+2s}}}{3\rho} \log\big(\frac{\rho^{2s}}{k \delta(t)}\big)} \\
	&=: c_6  \Bigg[ \frac{1}{\delta(t)k}  - \frac{1}{\delta(t)}  \Bigg]e^{-\frac{\abs{X-Y_0}^{\frac{1}{1+2s}}}{3\rho} \log\big(\frac{\rho^{2s}}{k \delta(t)}\big)}.
\eals
The last inequality uses $k > 1$. 
Moreover, using $\log a < a^{1/2}$ we denote by $$\Omega := B_\rho(v)\setminus B_{\abs{X-Y_0}^{1/(1+2s)}}(w_0),$$ and we again take $v_0 \in \partial B_{\abs{X-Y_0}^{1/(1+2s)}}(w_0)$, 
so that 
\bals
	\mathcal L_\rho H^{\frac{1}{2}} (v) &= \int_{\Omega} \left(H^{\frac{1}{2}} (v) - H^{\frac{1}{2}} (w)\right)^2 \big(K(v, w) + K(w, v)\big) \dd w\\
	&\leq C \rho^{2-2s}   \abs{\nabla_v H^{\frac{1}{2}}(v_0)}^2 \\
	&\leq C \rho^{2-2s} \left[\log\Big(\frac{\rho^{2s}}{k \delta(t)}\Big)\right]^2 \Big(\frac{1}{6\rho}\Big)^2 e^{-\frac{\abs{X-Y_0}^{\frac{1}{1+2s}}}{3\rho}\log\big(\frac{\rho^{2s}}{k \delta(t)}\big)}\\
	&\leq C \frac{1}{k\delta(t)}e^{-\frac{\abs{X-Y_0}^{\frac{1}{1+2s}}}{3\rho}\log\big(\frac{\rho^{2s}}{k \delta(t)}\big)}\\
	&=: c_{7}\frac{1}{k\delta(t)}e^{-\frac{\abs{X-Y_0}^{\frac{1}{1+2s}}}{3\rho}\log\big(\frac{\rho^{2s}}{k \delta(t)}\big)}.
\eals
Thus, for \eqref{eq:aronson-condition} to hold in this case, we need $k >\frac{c_7 }{c_6}$.
\end{enumerate}
Overall, we conclude the proof upon choosing 
\beqs
	k > \max\left\{ 1, c_1, c_2 + c_3, \frac{c_5}{c_4}, \frac{c_7} {c_6}\right\},
\eeqs  
where $c_i > 0$ for $i = 1, \dots, 7$ are constants independent of $\rho$ and $\delta(t)$.
\end{proof}

\subsubsection{Proof of Aronson's bound}
The existence of a function satisfying \eqref{eq:aronson-condition} given by Lemma \ref{lem:existence-H} combined with Proposition \ref{prop:aronson} implies Theorem \ref{thm:aronson-bound}.
\begin{proof}[Proof of Theorem \ref{thm:aronson-bound}]
\textit{Step 1: Bounds on $H$.}

We recall the definition of $H$ from \eqref{eq:H}: for $\delta(t) = 2(\sigma - \tau_0) - (t - \tau_0)$ we have defined
\beqs
	H(t, x, v) = e^{-\max\Big\{1, \frac{1}{3\rho}\max\Big(\abs{v-w_0}, \abs{x - y_0 -  (\sigma + t - 2\tau_0)w_0}^{\frac{1}{1+2s}} \Big)\Big\}\log\big(\frac{\rho^{2s}}{k \delta(t)}\big)}.
\eeqs

\textit{Step 1-(i): The lower bound.}
For $(t, x, v) \in [\tau_0, \sigma] \times B_{(2\rho)^{1+2s}}(y_0 + (\sigma + t - 2\tau_0) w_0)\times B_{2\rho}(w_0)$ we find
\beq\label{eq:H-lowerbound}
	H(t, x, v) \geq \Bigg(\frac{\rho^{2s}}{k\delta(t)}\Bigg)^{-1} \geq\Bigg(\frac{\rho^{2s}}{k(\sigma - \tau_0)}\Bigg)^{-1}.
\eeq
We used $\delta(t) \geq \sigma -\tau_0$ and $\sigma - \tau_0 \leq \frac{\rho^{2s}}{4k}$.

\textit{Step 1-(ii): The upper bound.}
For $t = \tau_0$ and $(x, v)$ such that $$\max\big\{\abs{x-y_0 - (\sigma  - \tau_0) w_0}^{\frac{1}{1+2s}}, \abs{v-w_0}\big\} \geq \gamma$$ we find
\beq\label{eq:H-upperbound}
	H(\tau_0, x, v) \leq  \Bigg(\frac{\rho^{2s}}{k\delta(\tau_0)}\Bigg)^{-\frac{\gamma}{3\rho}}  \leq  \Bigg(\frac{\rho^{2s}}{2k(\sigma -\tau_0)}\Bigg)^{-\frac{\gamma}{3\rho}}.
\eeq
We used $\sigma - \tau_0 \leq \frac{\rho^{2s}}{4k}$.

\textit{Step 1-(iii): The general upper bound.}
Note that for $(t, x, v) \in [\tau_0, \sigma]\times \R^{2d}$ we have
\beq\label{eq:H-upperbound-gen}
	H(t, x,v) \leq \Bigg(\frac{\rho^{2s}}{k\delta(t)}\Bigg)^{-1}  \leq \Bigg(\frac{\rho^{2s}}{2k(\sigma - \tau_0)}\Bigg)^{-1}.
\eeq

\textit{Step 2: Assembly.}

For fixed $(y_0, w_0) \in \R^{2d}$ we denote $$Q^{0}_{2\rho}(t, y_0, w_0) := B_{(2\rho)^{1+2s}}(y_0 + (\sigma + t - 2\tau_0)w_0 ) \times B_{2\rho}(w_0),$$
to obtain with Proposition \ref{prop:aronson}, \eqref{eq:H-upperbound} and \eqref{eq:H-upperbound-gen}
\bals
	\sup_{t \in (\tau_0, \sigma)} &\int_{Q^0_{2\rho}(t, y_0, w_0)} f^2(t, x, v) H(t, x, v)  \dd x \dd v\\
	&\leq  \sup_{t \in (\tau_0, \sigma)} \int_{\R^{2d}} f^2(t, x, v) H(t, x, v)  \dd x \dd v\\
	&\leq \int_{\R^{2d}\setminus Q^0_{\gamma}(\tau_0, y_0, w_0)} f^2(\tau_0, x, v) H(\tau_0, x, v) \dd x \dd v \\
	&\quad +C \rho^{-2s}\Norm{H}_{L^\infty([\tau_0, \sigma]\times \R^{2d})} \norm{f}_{L^2([\tau_0, \sigma]\times \R^{2d})}^2\\
	&\leq  \Bigg(\frac{\rho^{2s}}{2k(\sigma -\tau_0)}\Bigg)^{-\frac{\gamma}{3\rho}}  \norm{f_0}^2_{L^2(\R^{2d})}+C \rho^{-2s}\left(\frac{\rho^{2s}}{2k(\sigma - \tau_0)}\right)^{-1}  \norm{f}_{L^2([\tau_0, \sigma]\times \R^{2d})}^2.
\eals	
This implies due to \eqref{eq:H-lowerbound}
\bals
	&\sup_{t \in (\tau_0, \sigma)} \int_{Q^0_{2\rho}(t, y_0, w_0)} f^2(t, x, v)  \dd x \dd v\\
	&\leq \Bigg(\frac{\rho^{2s}}{k(\sigma - \tau_0)}\Bigg)\Bigg(\frac{\rho^{2s}}{2k(\sigma -\tau_0)}\Bigg)^{-\frac{\gamma}{3\rho}}  \norm{f_0}^2_{L^2(\R^{2d})}\\
	&\quad+C\Bigg(\frac{\rho^{2s}}{k(\sigma - \tau_0)}\Bigg) \left(\frac{\rho^{2s}}{2k(\sigma - \tau_0)}\right)^{-1} \rho^{-2s}\norm{f}_{L^2([\tau_0, \sigma]\times \R^{2d})}^2\\
	&\leq\Bigg(\frac{\rho^{2s}}{k(\sigma -\tau_0)}\Bigg)^{1-\frac{\gamma}{3\rho}} \norm{f_0}^2_{L^2(\R^{2d})}+C \rho^{-2s} \norm{f}_{L^2([\tau_0, \sigma]\times \R^{2d})}^2.
\eals

Since we assume that \eqref{eq:1.1} admits a fundamental solution, we estimate with Young's convolution inequality and with \eqref{eq:normalisation-J}
\bals
	 \norm{f}_{L^2([\tau_0, \sigma]\times \R^{2d})}^2 &\leq  \norm{f_0 \ast_v J}_{L^2([\tau_0, \sigma]\times \R^{2d})}^2 \leq (\sigma - \tau_0) \norm{f_0}_{L^2(\R^{2d})}^2\norm{J}_{L^\infty([\tau_0, \sigma]; L^1(\R^{2d}))}^2 \\
	 &= (\sigma - \tau_0) \norm{f_0}_{L^2(\R^{2d})}^2,
\eals
so that 
\beq\label{eq:aux-pw-ub}
	\sup_{t \in (\tau_0, \sigma)} \int_{Q^0_{2\rho}(t, y_0, w_0)} f^2(t, x, v)  \dd x \dd v\leq \Bigg[\Bigg(\frac{\rho^{2s}}{k(\sigma -\tau_0)}\Bigg)^{1-\frac{\gamma}{3\rho}} + \rho^{-2s} (\sigma - \tau_0) \Bigg]\norm{f_0}_{L^2(\R^{2d})}^2.
\eeq

Then using Proposition \ref{prop:L2-Linfty-A} with $r^{2s} = \frac{\sigma - \tau_0}{2^{2s}}$, $R = 2r$ for $\rho \geq r$ and \eqref{eq:aux-pw-ub}
\bals
	\Abs{f(\sigma, y_0, w_0)}^2 &\leq \sup_{Q_r(\sigma, y_0, w_0)} f^2\\
	&\leq C r^{-(2d(1+s) +2s)} \int_{Q_{2r}(\sigma, y_0, w_0)} f^2 \dd z\\
	&\leq C r^{- (2d(1+s) +2s) +2s } \sup_{t \in (\tau_0, \sigma)} \int_{Q^0_{2\rho}(t, y_0, w_0)}  f^2(t, x, v)  \dd x \dd v\\
	&\leq  C (\sigma - \tau_0)^{-\frac{2d(1+s)}{2s}} \Bigg(\frac{\rho^{2s}}{k(\sigma - \tau_0)}\Bigg)^{1- \frac{\gamma}{3\rho}}  \norm{f_0}^2_{L^2(\R^{2d})} \\
	&\quad+ \rho^{-2s} (\sigma - \tau_0)^{1-\frac{2d(1+s)}{2s}} \norm{f_0}_{L^2(\R^{2d})}^2.
\eals
This concludes the proof of Theorem \ref{thm:aronson-bound}. 
\end{proof}

\subsection{Proof of the polynomial decay}

With Theorem \ref{thm:nash} and Theorem \ref{thm:off-diag} at hand, we use standard methods to derive the decay on the fundamental solution.

\begin{proof}[Proof of Theorem \ref{thm:upperbounds}]
Due to the on-diagonal bounds \eqref{eq:ondiag}, it suffices to consider $(\sigma - \tau_0) \leq \frac{\rho^{2s}}{4k}$.
We choose 
\beq\label{eq:rho}
	\rho = \frac{\gamma}{12}\left( \frac{1}{4} + \frac{2d(1+s) +2s}{2s}\right)^{-1}.
\eeq
If we abbreviate $\abs{X-Y_0} := \abs{x - y_0 - (\sigma - \tau_0) (v-w_0)}$, then the off-diagonal bounds from Theorem \ref{thm:off-diag} yield
\bals
	J(\sigma,x, v; \tau_0, y_0, w_0)  \lesssim  &(\sigma - \tau_0) \Big(\max\left\{\abs{v- w_0},  \abs{X-Y_0}^{\frac{1}{1+2s}}\right\}\Big)^{-(2d(1+s) +2s)}\\
	&+ (\sigma - \tau_0)^{\frac{1}{4}-\frac{2d(1+s)}{2s}}\left( \max\left\{\abs{v- w_0}, \abs{X-Y_0}^{\frac{1}{1+2s}}\right\}\right)^{-\frac{s}{2}},
\eals
which concludes the proof of Theorem \ref{thm:upperbounds}.
\qedhere
\end{proof}

\begin{proof}[Proof of Theorem \ref{thm:upperbounds-conditional}]
Again, due to the on-diagonal bounds \eqref{eq:ondiag}, it suffices to consider $(\sigma - \tau_0) \leq \frac{\rho^{2s}}{4k}$. Due to \eqref{eq:meyers}, it suffices to use Aronson's method on $J_\rho$. In particular, the term arising from the non-singular part in Proposition \ref{prop:aronson} vanishes, that is $\mathcal I_{loc}^{ns} = 0$ in \eqref{eq:I-loc}. This implies that in Theorem \ref{thm:aronson-bound} only the first term on the right hand side remains, from which we deduce Theorem \ref{thm:off-diag} on $J_\rho$ with an improved right hand side:
\beqs
	J_\rho(\sigma,x, v; \tau_0, y_0, w_0) \leq C  (\sigma - \tau_0)^{-\frac{2d(1+s)}{2s}}\Bigg(\frac{\rho^{2s}}{k(\sigma - \tau_0)}\Bigg)^{\frac{1}{4}- \frac{\gamma}{12\rho}}.
\eeqs
If we choose $\rho$ as above in \eqref{eq:rho} then together with \eqref{eq:meyers} this yields for $(\sigma - \tau_0) \leq \frac{\rho^{2s}}{4k}$
\bals
	J(\sigma,x, v; \tau_0, y_0, w_0)  \lesssim  (\sigma - \tau_0) \max\left\{\abs{v- w_0},  \abs{X-Y_0}^{\frac{1}{1+2s}}\right\}\Big)^{-(2d(1+s) +2s)},
\eals
which concludes the proof of Theorem \ref{thm:upperbounds-conditional}.
\qedhere
\end{proof}

\section{Lower bound on the fundamental solution}\label{sec:lowerbound}

\subsection{Pointwise Harnack inequality}

\begin{theorem}\label{thm:pw-harnack}
Let \(s\in(\frac12,1)\), and let \(f\) be a non-negative solution of
\eqref{eq:1.1} in \([0,T]\times\R^{2d}\), with zero source term and
with a non-negative kernel satisfying
\eqref{eq:coercivity}--\eqref{eq:symmetry-nondiv}.
Let $0<\tau_0<t\leq T,
(y_0,w_0),(x,v)\in\R^{2d}.$
Then there exists a constant $C$ depending on $s, d, \lambda_0, \Lambda_0$ and $T$ such that
\begin{equation}
\label{eq:pw-H-equi}
\begin{aligned}
f(\tau_0,y_0,w_0)
\leq{}&
f(t,x,v)
\exp\Bigg[
C\Bigg(
1+\frac{t-\tau_0}{\tau_0}+
\left(
\frac{\abs{v-w_0}}{(t-\tau_0)^{\frac1{2s}}}
+
\frac{\abs{x-y_0-(t-\tau_0)w_0}}
     {(t-\tau_0)^{1+\frac1{2s}}}
\right)^{\frac{2s}{2s-1}}
\Bigg)
\Bigg].
\end{aligned}
\end{equation}
\end{theorem}
\begin{proof}

By scaling and translating Theorem \ref{thm:strong-Harnack}, there
exist constants
\[
c_{\rm H}\in(0,1),\qquad C_{\rm H}>1,
\]
depending only on \(s,d,\lambda_0,\Lambda_0\), with the following
property. Let \(\delta>0\), and suppose that the kinetic cylinder
required below is contained in the domain of the equation. If
\[
\abs{v_2-v_1}
\leq c_{\rm H}\delta^{\frac1{2s}}
\]
and
\[
\abs{x_2-x_1-\delta v_1}
\leq c_{\rm H}\delta^{1+\frac1{2s}},
\]
then
\begin{equation}\label{e.one.link}
f(t_1,x_1,v_1)
\leq
C_{\rm H}f(t_1+\delta,x_2,v_2).
\end{equation}

Let
\[
D:=x-y_0-(t-\tau_0) w_0,\qquad
B:=\frac{D}{t-\tau_0}-\frac{(v-w_0)}{2}.
\]
For \(a\in[0,1]\), set
\[
V(a):=w_0+a(v-w_0)+6a(1-a)B
\]
and
\[
X(a):=y_0+(t-\tau_0)\int_0^aV(b)\,\dd b.
\]
Then
\[
V(0)=w_0,\qquad V(1)=v,\qquad
X(0)=y_0,\qquad X(1)=x,
\]
because
\[
\int_0^1V(a)\,\dd a
=w_0+\frac{(v-w_0)}{2}+B
=\frac{x-y_0}{(t-\tau_0)}.
\]
Moreover,
\[
X'(a)=(t-\tau_0) V(a)
\]
and
\[
\norm{V'}_{L^\infty(0,1)}
\leq
C\left(\abs{(v-w_0)}+\frac{\abs{D}}{(t-\tau_0)}\right).
\]

For \(\nu=0,\dots,N\), define
\[
t^\nu:=\tau_0+\frac{\nu}{N}(t-\tau_0),\qquad
x^\nu:=X\left(\frac{\nu}{N}\right),\qquad
v^\nu:=V\left(\frac{\nu}{N}\right).
\]
Thus
\[
(t^0,x^0,v^0)=(\tau_0,y_0,w_0),
\qquad
(t^N,x^N,v^N)=(t,x,v).
\]

Set
\[
\mathcal D
:=
\frac{\abs{v-w_0}}{(t-\tau_0)^{1/(2s)}}
+
\frac{\abs{x-y_0-(t-\tau_0) w_0}}
     {(t-\tau_0)^{1+1/(2s)}}.
\]
For every \(\nu=0,\dots,N-1\), the mean-value theorem gives
\[
\abs{v^{\nu+1}-v^\nu}
\leq
C(t-\tau_0)^{1/(2s)}\frac{\mathcal D}{N}.
\]
Furthermore, since \(X'=(t-\tau_0) V\),
\begin{align*}
&x^{\nu+1}-x^\nu
 -(t^{\nu+1}-t^\nu)v^\nu=
(t-\tau_0)\int_{\nu/N}^{(\nu+1)/N}
\left[
V(a)-V\left(\frac{\nu}{N}\right)
\right]\dd a,
\end{align*}
and hence
\[
\abs{x^{\nu+1}-x^\nu
 -(t^{\nu+1}-t^\nu)v^\nu}
\leq
C(t-\tau_0)^{1+1/(2s)}
\frac{\mathcal D}{N^2}.
\]

We choose
\[
N
=
\left\lceil
C_0\left(
1+\frac{t-\tau_0}{\tau_0}
+\mathcal D^{\frac{2s}{2s-1}}
\right)
\right\rceil,
\]
where \(C_0\) is sufficiently large depending on the one-link Harnack geometry.
Since \(s>\frac12\), choosing \(C_0\) sufficiently large ensures
\[
\abs{v^{\nu+1}-v^\nu}
\leq
c_{\rm H}\left(\frac{t-\tau_0}{N}\right)^{1/(2s)}
\]
and
\[
\abs{x^{\nu+1}-x^\nu
 -(t^{\nu+1}-t^\nu)v^\nu}
\leq
c_{\rm H} \left(\frac{(t-\tau_0)}{N}\right)^{\frac{1+2s}{2s}}.
\]
Indeed, the choice of \(N\) gives
\[
\mathcal D
\leq
C N^{\frac{2s-1}{2s}}.
\]
Therefore
\[
\frac{\mathcal D}{N}
\leq
C N^{-\frac1{2s}}, \qquad 
\frac{\mathcal D}{N^2}
\leq
C N^{-1-\frac1{2s}}.
\]
Moreover,
\[
N\geq C_0\frac{t-\tau_0}{\tau_0},
\]
and hence
\[
\frac{t-\tau_0}{N}\leq \frac{\tau_0}{C_0}.
\]
Thus, by choosing \(C_0\) sufficiently large, all the kinetic
cylinders used in the chain are contained in
\([0,T]\times\R^{2d}\).

The one-link inequality \eqref{e.one.link} therefore gives
\[
f(t^\nu,x^\nu,v^\nu)
\leq
C_{\rm H}
f(t^{\nu+1},x^{\nu+1},v^{\nu+1})
\]
for every \(\nu=0,\dots,N-1\). Iterating,
\[
f(\tau_0,y_0,w_0)
\leq
C_{\rm H}^Nf(t,x,v).
\]
Finally,
\[
C_{\rm H}^N
=
\exp(N\log C_{\rm H})
\]
and, by the definition of \(N\),
\[
N
\leq
C\left(
1+\frac{t-\tau_0}{\tau_0}
+\mathcal D^{\frac{2s}{2s-1}}
\right).
\]
Thus
\begin{align*}
f(\tau_0,y_0,w_0)
\leq{}&
f(t,x,v)
\exp\Bigg[
C\Bigg(
1+\frac{t-\tau_0}{\tau_0}+
\left(
\frac{\abs{v-w_0}}{(t-\tau_0)^{\frac1{2s}}}
+
\frac{\abs{x-y_0-(t-\tau_0)w_0}}
     {(t-\tau_0)^{1+\frac1{2s}}}
\right)^{\frac{2s}{2s-1}}
\Bigg)
\Bigg].
\end{align*}
\end{proof}

\subsection{Sufficient condition for the exponential lower bound}
\begin{theorem}\label{thm:suff-cond}
Let \(s\in(\frac12,1)\).
Let $f$ be a non-negative weak solution of \eqref{eq:1.1} in $[0, T] \times \R^{2d}$ with \eqref{eq:coercivity}-\eqref{eq:symmetry-nondiv} that is essentially bounded almost everywhere. Suppose that for some $\alpha > 0$ there holds
\beq\label{eq:suff-cond}
	\mathcal M := \inf_{0 < t < T} \int_{\max\left\{ \abs{x}^{\frac{2s}{1+2s}}, \abs{v }^{2s} \right\} < \alpha t} f(t, x, v) \dd x \dd v > 0.
\eeq
Then there exists constants $C_1, C_2 > 0$ such that 
\[
f(t,x,v)
\geq
C_1t^{-\frac{d(1+s)}s}
\exp\left[
-C_2\left(
1+
\left(
\frac{|v|}{t^{1/(2s)}}
+
\frac{|x|}{t^{1+1/(2s)}}
\right)^{\frac{2s}{2s-1}}
\right)
\right].
\]
The constant $C_1$ depends on $s, d, \alpha, T, \lambda_0, \Lambda_0$ and $\mathcal M$, whereas $C_2$ depends only on $s, d, T, \lambda_0, \Lambda_0$. 
\end{theorem}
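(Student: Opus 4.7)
The main idea is to combine the pointwise Harnack inequality of Theorem \ref{thm:pw-harnack} with the mass lower bound \eqref{eq:suff-cond}, by integrating the Harnack estimate over the cylinder where the mass is controlled. Fix $(t, x, v) \in (0, T] \times \R^{2d}$, and choose the past time $\tau_0 := t/2$. For any $(y_0, w_0)$ in the kinetic cylinder
\beqs
\mathcal{C}_{\tau_0} := \left\{(y_0, w_0) \in \R^{2d} : \max\left\{\abs{y_0}^{\frac{2s}{1+2s}}, \abs{w_0}^{2s}\right\} < \alpha \tau_0\right\},
\eeqs
Theorem \ref{thm:pw-harnack} gives
\beqs
f(\tau_0, y_0, w_0) \leq f(t, x, v) \exp\left\{ C \left( \frac{\abs{x - y_0 - (t-\tau_0)w_0}^{2s}}{(t-\tau_0)^{1+2s}} + \frac{\abs{v - w_0}^{2s}}{t-\tau_0} + \frac{t}{\tau_0}\right)\right\}.
\eeqs
Integrating in $(y_0, w_0)$ over $\mathcal C_{\tau_0}$ and invoking \eqref{eq:suff-cond}, the left-hand side is bounded below by $\mathcal M$.

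Next, I would bound the exponential factor uniformly over $\mathcal C_{\tau_0}$. Using the elementary inequalities $\abs{x - y_0 - (t-\tau_0)w_0}^{2s} \lesssim \abs{x}^{2s} + \abs{y_0}^{2s} + (t-\tau_0)^{2s}\abs{w_0}^{2s}$ and $\abs{v-w_0}^{2s} \lesssim \abs{v}^{2s} + \abs{w_0}^{2s}$, together with the cylinder constraints $\abs{y_0}^{2s} \leq (\alpha \tau_0)^{1+2s}$ and $\abs{w_0}^{2s} \leq \alpha \tau_0$, and the identities $\tau_0 = t/2$, $t/\tau_0 = 2$, $\tau_0/(t-\tau_0) = 1$, every term containing $y_0$, $w_0$, or $t/\tau_0$ produces only constants depending on $\alpha, s, d, T, \lambda_0, \Lambda_0$. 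Hence there exist $C > 0$ (depending only on $s, d, T, \lambda_0, \Lambda_0$) and $C_\alpha > 0$ (depending also on $\alpha$) such that
\beqs
\sup_{(y_0, w_0) \in \mathcal C_{\tau_0}} \exp\{\cdots\} \leq C_\alpha \exp\left\{ C \left( \frac{\abs{x}^{2s}}{t^{1+2s}} + \frac{\abs{v}^{2s}}{t}\right)\right\}.
\eeqs

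Finally, since $\mathcal C_{\tau_0}$ is a product of balls in $\R^d$ of radii $(\alpha \tau_0)^{(1+2s)/(2s)}$ and $(\alpha \tau_0)^{1/(2s)}$, its Lebesgue measure is
\beqs
\abs{\mathcal C_{\tau_0}} \sim (\alpha \tau_0)^{d(1+2s)/(2s) + d/(2s)} = (\alpha t/2)^{d(1+s)/s} \sim t^{\frac{2d(1+s)}{2s}}.
\eeqs
Combining the previous two estimates yields
\beqs
\mathcal M \leq f(t, x, v) \cdot C' t^{\frac{2d(1+s)}{2s}} \exp\left\{ C \left( \frac{\abs{x}^{2s}}{t^{1+2s}} + \frac{\abs{v}^{2s}}{t}\right)\right\},
\eeqs
and rearranging produces the claimed lower bound with $C_1 = \mathcal M / C'$ (depending on $s, d, \alpha, T, \lambda_0, \Lambda_0, \mathcal M$) and $C_2 = C$ (depending only on $s, d, T, \lambda_0, \Lambda_0$).

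The main obstacle is purely bookkeeping: tracking dependencies carefully so that $C_2$ remains free of $\alpha$ and $\mathcal M$, and verifying that all constants arising from the cylinder geometry can be absorbed into the prefactor $C_1$. A minor verification is that $\tau_0 = t/2$ satisfies $0 < \tau_0 < t \leq T$ so Theorem \ref{thm:pw-harnack} applies; the remaining computations are elementary algebraic manipulations.
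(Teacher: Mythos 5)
Your proof is correct and takes essentially the same approach as the paper: integrate the pointwise Harnack inequality of Theorem \ref{thm:pw-harnack} over the kinetic cylinder on which \eqref{eq:suff-cond} controls the mass, compare the resulting lower bound $\mathcal M$ with the cylinder's Lebesgue measure $\sim t^{\frac{2d(1+s)}{2s}}$ and the supremum of the exponential Harnack weight, then rearrange. The only difference is cosmetic: the paper applies Theorem \ref{thm:pw-harnack} twice through the intermediate point $(t/2, 0, 0)$ so that the $\alpha$-dependence and the $(x,v)$-dependence are cleanly separated into two steps, whereas you apply it once from $(t/2, y_0, w_0)$ directly to $(t, x, v)$ and perform the same separation via elementary triangle inequalities and the cylinder constraints — both routes track the constant dependencies correctly.
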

\begin{remark}
To draw a connection to the literature on hypoelliptic lower bounds, there has been a result discussing Gaussian lower bounds for solutions to the Boltzmann equation by Imbert-Mouhot-Silvestre in \cite{IMS}. The authors show that for any $t \in [0, T]$ there exists $a(t), b(t) > 0$ such that any non-negative solution of the Boltzmann equation satisfies
\beqs
	f(t, x, v) \geq a(t) e^{-b(t) \abs{v}^2}.
\eeqs
We explicitly give expressions for $a$ and $b$, however, we are only able to treat symmetric kernels due to the tail bound of Section \ref{sec:tail-bound}.
\end{remark}
\begin{proof}
Consider \(y,w\in\R^d\) such that
\[
\max\left\{
    \abs{y}^{\frac{2s}{1+2s}},
    \abs{w}^{2s}
\right\}
<\frac{\alpha t}{4}.
\]
By Theorem \ref{thm:pw-harnack}, there exists
\(C_\alpha>0\), depending only on
\(s,d,\lambda_0,\Lambda_0,T\) and \(\alpha\), such that
\begin{equation}\label{eq:suff-cond-aux}
f(t/4,y,w)
\leq
C_\alpha f(t/2,0,0).
\end{equation}

Moreover,
\[
\left|
\left\{(y,w)\in\R^{2d}:
\max\left\{
    \abs{y}^{\frac{2s}{1+2s}},
    \abs{w}^{2s}
\right\}
<\frac{\alpha t}{4}
\right\}
\right|
\approx
t^{\frac{2d(1+s)}{2s}},
\]
where the implicit constants may depend on \(\alpha,d,s\).
Integrating \eqref{eq:suff-cond-aux} over this set and using
\eqref{eq:suff-cond} at time \(t/4\), we obtain
\bals
f(t/2,0,0)
&\geq
C t^{-\frac{2d(1+s)}{2s}}
C_\alpha^{-1}
\int_{\max\left\{
    \abs{y}^{\frac{2s}{1+2s}},
    \abs{w}^{2s}
\right\}<\frac{\alpha t}{4}}
f(t/4,y,w)\,\dd y\,\dd w
\\
&\geq
C\mathcal M
t^{-\frac{2d(1+s)}{2s}}
C_\alpha^{-1}.
\eals

On the other hand, Theorem \ref{thm:pw-harnack}, applied between
\((t/2,0,0)\) and \((t,x,v)\), gives
\[
f(t/2,0,0)
\leq
f(t,x,v)
\exp\left[
C\left(
1+
\left(
\frac{|v|}{t^{1/(2s)}}
+
\frac{|x|}{t^{1+1/(2s)}}
\right)^{\frac{2s}{2s-1}}
\right)
\right].
\]

Combining the preceding two estimates, we obtain
\[
f(t,x,v)
\geq
c\,\mathcal M\,t^{-\frac{d(1+s)}s}
\exp\left[
-C\left(
\frac{|v|}{t^{1/(2s)}}
+
\frac{|x|}{t^{1+1/(2s)}}
\right)^{\frac{2s}{2s-1}}
\right],
\]
after absorbing all constants depending on \(\alpha\) into \(c\).
This proves the theorem.
\end{proof}

\subsection{Proof of Theorem \ref{thm:lowerbounds}}

\begin{proof}[Proof of Theorem \ref{thm:lowerbounds}]
We fix \(0<\tau_0<\sigma<T\). We first establish a uniform lower bound on the mass of the
fundamental solution in a kinetic ball centred on the characteristic
starting from \((y_0,w_0)\).

\medskip
\noindent
\textit{Step 1: Uniform near-diagonal mass bound.}

For \(0<t<T-\tau_0\), we define
\[
A_t:=
\left\{(x,v)\in\R^{2d}:
\max\left\{
    \abs{x-y_0-tw_0}^{\frac{2s}{1+2s}},
    \abs{v-w_0}^{2s}
\right\}<\alpha t
\right\},
\]
where \(\alpha>0\) will be chosen below. For
\(\tau_0\leq \tau<\tau_0+t\), we let
\[
F_t(\tau,y,w)
:=
\int_{A_t}
J(\tau_0+t,x,v;\tau,y,w)\,\dd x\,\dd v.
\]
By the non-negativity and normalisation of the fundamental solution,  $0\leq F_t\leq 1$.
Moreover, as a function of the initial variables
\((\tau,y,w)\), \(F_t\) solves the backward equation
\[
    \partial_\tau F_t
    +w\cdot\nabla_yF_t
    +\mathcal L_\tau F_t=0
\]
in \((\tau_0,\tau_0+t)\times\R^{2d}\), and its terminal value is
\[
    F_t(\tau_0+t,y,w)=\mathbbm 1_{A_t}(y,w).
\]

We reverse time and velocity by setting
\[
    G_t(r,y,\omega)
    :=
    F_t(\tau_0+t-r,y,-\omega),
    \qquad 0<r\leq t.
\]
Then \(G_t\) solves a forward kinetic equation
\[
    \partial_rG_t+\omega\cdot\nabla_yG_t
    =\widetilde{\mathcal L}_rG_t,
\]
where the transformed kernel is obtained from \(K\) by time reversal
and velocity reflection. In particular, it satisfies the same
coercivity, integral upper-bound and symmetry assumptions, with the
same structural constants. The initial value of \(G_t\) is
\[
    G_t(0,y,\omega)
    =
    \mathbbm 1_{A_t}(y,-\omega).
\]
This function equals one in a kinetic neighbourhood of
\[
    (y_0+tw_0,-w_0).
\]
The characteristic for the reversed equation starting
from this point reaches \((y_0,-w_0)\) at time \(t\), since
\[
    y_0+tw_0+t(-w_0)=y_0.
\]

We then choose a universal \(c_0>0\) sufficiently small and
\(\alpha>0\) sufficiently large that the kinetic cylinder of radius
\[
    r_t:=c_0t^{\frac1{2s}}
\]
used below has its section at time \(r=0\) compactly contained in
\[
    \left\{(y,\omega):(y,-\omega)\in A_t\right\}.
\]
We extend \(G_t\) locally to negative times by setting it equal to one
there. Since its trace at \(r=0\) is also equal to one on the
spatial-velocity section of this cylinder, the extension introduces
no jump in the cylinder. It is therefore a non-negative
super-solution there.

After translating the cylinder and rescaling it by \(r_t\) to a
fixed unit kinetic cylinder, the past cylinder lies in the region
where the extended function is identically one, while the
corresponding future cylinder contains
\((t,y_0,-w_0)\). The scaled Weak Harnack inequality,
Theorem \ref{thm:weakH}, therefore gives
\[
    1
    \leq
    C\inf_{\widetilde Q^+}G_t
    \leq
    C G_t(t,y_0,-w_0),
\]
where \(C\) depends only on
\(d,s,\lambda_0,\Lambda_0\) and the fixed cylinder geometry.
Consequently,
\[
    F_t(\tau_0,y_0,w_0)
    =
    G_t(t,y_0,-w_0)
    \geq c
\]
for a constant \(c>0\) independent of \(t\). Hence
\begin{equation}\label{eq:uniform-local-mass}
\inf_{0<t<T-\tau_0}
\int_{A_t}
J(\tau_0+t,x,v;\tau_0,y_0,w_0)\,\dd x\,\dd v
\geq c>0.
\end{equation}

\medskip
\noindent
\textit{Step 2: Reduction to unit time.}

For \(0<\theta<T_*:=\frac{T-\tau_0}{\sigma-\tau_0},\)
we define the rescaled fundamental solution
\[
\begin{aligned}
U(\theta,X,V)
:={}&
(\sigma-\tau_0)^{\frac{2d(1+s)}{2s}}
J\Big(
    \tau_0+(\sigma-\tau_0)\theta,\,
    y_0+(\sigma-\tau_0)\theta w_0
       +(\sigma-\tau_0)^{\frac{1+2s}{2s}}X,\,
    w_0+(\sigma-\tau_0)^{\frac1{2s}}V;\,
    \tau_0,y_0,w_0
\Big).
\end{aligned}
\]
By the scaling and Galilean invariance of the equation, \(U\) is a
non-negative solution of an equation of the form \eqref{eq:1.1}
whose kernel satisfies the same structural assumptions.

For every \(0<\theta<1\), the change of variables
\[
    x=y_0+(\sigma-\tau_0)\theta w_0
      +(\sigma-\tau_0)^{\frac{1+2s}{2s}}X,
    \qquad
    v=w_0+(\sigma-\tau_0)^{\frac1{2s}}V
\]
and \eqref{eq:uniform-local-mass} give
\begin{align*}
&\int_{\max\left\{
       \abs{X}^{\frac{2s}{1+2s}},
       \abs{V}^{2s}
     \right\}<\alpha\theta}
U(\theta,X,V)\,\dd X\,\dd V
\\
&\qquad =
\int_{\max\left\{
       \abs{x-y_0-(\sigma-\tau_0)\theta w_0}^{\frac{2s}{1+2s}},
       \abs{v-w_0}^{2s}
     \right\}<\alpha(\sigma-\tau_0)\theta}
J(\tau_0+(\sigma-\tau_0)\theta,x,v;
  \tau_0,y_0,w_0)\,\dd x\,\dd v
\\
&\qquad \geq c.
\end{align*}
Therefore
\[
\inf_{0<\theta<1}
\int_{\max\left\{
       \abs{X}^{\frac{2s}{1+2s}},
       \abs{V}^{2s}
     \right\}<\alpha\theta}
U(\theta,X,V)\,\dd X\,\dd V
\geq c>0.
\]

The Nash upper bound, Theorem \ref{thm:nash}, shows that \(U\) is
locally essentially bounded for every positive time. Thus the proof
of Theorem \ref{thm:suff-cond}, which only uses positive times,
applies to \(U\). We consequently obtain
\[
U(1,X,V)
\geq
c
\exp\left[
-C
\left(
|V|+|X|
\right)^{\frac{2s}{2s-1}}
\right].
\]

\medskip
\noindent
\textit{Step 3: Return to the original variables.}

Taking
\[
    X=
    \frac{x-y_0-(\sigma - \tau_0) w_0}
         {(\sigma-\tau_0)^{\frac{1+2s}{2s}}},
    \qquad
    V=
    \frac{v-w_0}{(\sigma-\tau_0)^{\frac1{2s}}},
\]
we conclude that
\begin{align*}
J(\sigma,x,v;\tau_0,y_0,w_0)
\geq{}&
c(\sigma-\tau_0)^{-\frac{d(1+s)}s}\\
&\times
\exp\left[
-C\left(
\frac{|v-w_0|}{(\sigma-\tau_0)^{1/(2s)}}
+
\frac{|x-y_0-(\sigma-\tau_0)w_0|}
     {(\sigma-\tau_0)^{1+1/(2s)}}
\right)^{\frac{2s}{2s-1}}
\right],
\end{align*}
which proves Theorem \ref{thm:lowerbounds}.
\end{proof}

\textbf{Acknowledgements.}

We thank Clément Mouhot for his inspiring intuition that he transmits in every discussion we have, for knowing exactly how to prevent us from turning in circles and for his continuous support.
Moreover, we thank Marvin Weidner and Moritz Kassmann for pointing out an issue in the sketch of the proof in our proceeding, as well as for their useful feedback on the first version of this manuscript; in particular, for their comments on the decay estimate. We also thank Lukas Niebel for his encouraging remarks. 
Finally, we gratefully acknowledge Markus Reiß for his interest in our work and his suggestions.
This work was supported by the Cambridge Trust.

\appendix

\section{Higher regularity estimate}\label{sec:approx}

The purpose of this appendix is to clarify the use of a test function with less differentiability than what is usually required. This is necessary to justify the construction of the test function in the argument of Proposition \ref{prop:tail-bound} in case that $s \geq \frac{1}{2}$. 

One can show that solutions to the fractional Laplacian of order $2s \in (0,2)$ are in fact $H^{2s-}$, rather than $H^s$ as expected from an energy estimate. Therefore the equation can in principle be tested against a function in $H^{0+}$. The test function we work with in Proposition \ref{prop:tail-bound} is in $H^{\frac{s}{2}}$ for any $s \in (0, 1)$; thus, if we can show that the solution to \eqref{eq:1.1} satisfies better regularity, say $H^{\frac{3s}{2}}$, then this test function is indeed permissible. 

To this end, we compare our solutions to the solution of the fractional Laplacian, so that we can quantify the loss of regularity from the roughening of the coefficient. We show that we can work with a solution that satisfies the higher differentiability that is required in order for the test function in \eqref{eq:psi-l} to be admissible. In the limit, where the smoothening of the fractional Laplacian is lost, this higher regularity vanishes as well. However, we can run the argument of Proposition \ref{prop:tail-bound} on the solutions before we take the limit, where the admissibility of the test function is guaranteed. Since the higher regularity is not used quantitatively, but merely qualitatively, we end up with the result of Proposition \ref{prop:tail-bound} uniformly in the approximation parameter. We can then take the limit in the end, so that the results apply to solutions of \eqref{eq:1.1}. The equation is linear, so that the solution is unique, and the limit is uniquely determined through the limit equation, which corresponds to the equation with rough coefficients.



Let $\eta: \R^d \to \R_+$ be a cutoff, such that $0 \leq \eta \leq 1$ with $\eta =1$ in $B_1$, $\eta = 0$ outside $B_2$ and $\eta(v) = - \abs{v} + 2$ for $v \in B_2 \setminus B_1$. Denote by $\eta_\delta$ the cutoff $\eta_\delta(v) = \eta\big(\frac{v}{\delta}\big) \in C_c^\infty(B_{2\delta})$ satisfying $0 \leq \eta_\delta \leq 1$ with $\eta_\delta = 1$ in $B_{\delta}$ and $\eta_\delta = 0$ outside $B_{2\delta}$. Then we consider
\[
	K_{\delta}(v, w) =  \eta_\delta(v-w) c_{d, s}\abs{v-w}^{-(d+2s)} + \big(1-\eta_\delta(v-w)\big) K(v, w),
\]
where $c_{d} > 0$ is the constant such that
\[
	(-\Delta_v)^{s} f(v) = c_{d, s}\int_{\R^d} \big(f(w) - f(v)\big)  \abs{v-w}^{-(d+2s)} \dd w.
\]
Note that $\eta_\delta \xrightarrow[\delta \to 0]{} 0,$
pointwise almost everywhere and in $L^p$ for any $1 \leq p <\infty$.
Now let $f_\delta$ solve 
\beq\label{eq:eq-delta}
	\mathcal T f_\delta = \mathcal L_{\delta} f_\delta := \int_{\R^d} \big(f_\delta(w) - f_\delta(v)\big) K_{\delta}(v, w) \dd w, 
\eeq
in $[t_1, t_2] \times B_{R^{1+2s}} \times \R^d$. 
Note $$f_\delta \in L^2((t_1, t_2) \times B_{R^{1+2s}}; H^s(\R^d))$$ uniformly in $\delta$. To see this, we test \eqref{eq:eq-delta} with $f_\delta \phi^2$ where $\phi \in C_c^\infty([t_1, t_2] \times B_{R^{1+2s}})$ is a cutoff in time and space such that $\phi = 1$ in $\big[\frac{t_1}{2}, \frac{t_2}{2}\big] \times B_{\big(\frac{R}{2}\big)^{1+2s}}$ and $\phi = 0$ outside $[t_1, t_2] \times B_{R^{1+2s}}$. We get
\bals
	&\int_{[t_1, t_2] \times B_{R^{1+2s}} \times \R^d} \mathcal T f_\delta f_\delta \phi^2 \dd z \\
	&= \int_{[t_1, t_2] \times B_{R^{1+2s}}} \int_{\R^d} \int_{\R^d} \big(f_\delta(w) - f_\delta(v)\big) f_\delta(v)\phi^2(t, x) K_\delta(v, w) \dd w\dd v\dd x \dd t\\
	&= -\frac{1}{2}\int_{[t_1, t_2] \times B_{R^{1+2s}}}\phi^2(t, x) \int_{\R^d} \int_{\R^d} \big(f_\delta(w) - f_\delta(v)\big)^2 K_\delta(v, w) \dd w\dd v\dd x \dd t.\\
	&= -\frac{ c_d}{2}\int_{[t_1, t_2] \times B_{R^{1+2s}}}\phi^2(t, x) \int_{\R^d} \int_{\R^d}\eta_\delta(v-w) \frac{\big(f_\delta(w) - f_\delta(v)\big)^2}{\abs{v-w}^{d+2s}} \dd w \dd z
	\\&\quad-\frac{1}{2}\int_{[t_1, t_2] \times B_{R^{1+2s}}}\phi^2(t, x) \int_{\R^d} \int_{\R^d}(1-\eta_\delta(v-w)) \big(f_\delta(w) - f_\delta(v)\big)^2K(v, w)   \dd w\dd z.
\eals
Using that $(1-\eta_\delta) \geq \frac{1}{4}$ outside $B_{\frac{5\delta}{4}}$ and $\eta_\delta \geq \frac{1}{4}$ inside $B_{\frac{7\delta}{4}}$, we find with the coercivity of $K$ \eqref{eq:coercivity}
\bals
	&\int_{[t_1, t_2] \times B_{R^{1+2s}} \times \R^d} \mathcal T f_\delta f_\delta \phi^2 \dd z \\
	&\leq -\frac{1}{4}\frac{ c_d}{2}\int_{[t_1, t_2] \times B_{R^{1+2s}}}\phi^2(t, x) \int_{\R^d} \int_{\abs{v-w} < \frac{7\delta}{4}} \frac{\big(f_\delta(w) - f_\delta(v)\big)^2}{\abs{v-w}^{d+2s}}\dd w \dd z
	\\&\quad-\frac{1}{4}\cdot\frac{ 1}{2}\int_{[t_1, t_2] \times B_{R^{1+2s}}}\phi^2(t, x) \int_{\R^d} \int_{\abs{v-w} > \frac{5\delta}{4}} \big(f_\delta(w) - f_\delta(v)\big)^2K(v, w)   \dd w\dd z\\ 
	&\leq -\frac{1}{4}\frac{ c_d}{2}\int_{[t_1, t_2] \times B_{R^{1+2s}}}\phi^2(t, x) \int_{\R^d} \int_{\abs{v-w} < \frac{7\delta}{4}} \frac{\big(f_\delta(w) - f_\delta(v)\big)^2}{\abs{v-w}^{d+2s}}\dd w \dd z
	\\&\quad-\frac{1}{4}\frac{ \lambda}{2}\int_{[t_1, t_2] \times B_{R^{1+2s}}}\phi^2(t, x) \int_{\R^d} \int_{\abs{v-w} > \frac{5\delta}{4}} \frac{\big(f_\delta(w) - f_\delta(v)\big)^2}{\abs{v-w}^{d+2s}} \dd w\dd z.
\eals
Thus 
\bals
	\sup_{t\in[t_1, t_2]} &\int_{B_{(\frac{R}{2})^{1+2s}} \times \R^d} f_\delta^2(t, x, v) \dd x \dd v\\
	 &+ C_{d, s, \lambda} \int_{\big[\frac{t_1}{2}, \frac{t_2}{2}\big] \times B_{(\frac{R}{2})^{1+2s}}} \int_{\R^d} \int_{\R^d} \frac{\big(f_\delta(w) - f_\delta(v)\big)^2}{\abs{v-w}^{d+2s}}  \dd w\dd z \\
	&\quad\leq - \frac{1}{2} \int_{[t_1, t_2] \times B_{R^{1+2s}} \times \R^d}  v \cdot \nabla_x \big(f_\delta^2\big)  \phi^2 \dd z\\
	& \quad\leq  CR^{-2s} \int_{[t_1, t_2] \times B_{R^{1+2s}} \times \R^d}  f_\delta^2  \dd z.
\eals
In particular, $C_{d, s, \lambda}$ and $C$ are uniform constants in $\delta$, thus $f_\delta \in L^2((t_1, t_2) \times B_{R^{1+2s}}; H^s(\R^d))$ uniformly in $\delta$.

Moreover, we observe that $f_\delta$ satisfies
\[
	\mathcal T f_\delta = (-\Delta_v)^{s} f_\delta + \int_{\abs{v-w} > \delta} \big(f_\delta(w) -f_\delta(v)\big) \big(1-\eta_\delta(v-w)\big) \big(K(v, w) - c_{d, s}\abs{v-w}^{-(d+2s)}\big)\dd w
\]
in $[t_1, t_2] \times B_{R^{1+2s}} \times \R^d$. This is the Kolmogorov equation with bounded right hand side
\[
	h_\delta(t, x, v) :=  \int_{\abs{v-w} > \delta} \big(f_\delta(w) -f_\delta(v)\big) \big(1-\eta_\delta(v-w)\big) \big(K(v, w) - c_{d, s}\abs{v-w}^{-(d+2s)}\big)\dd w.
\]
Thus $f_\delta$ is given by 
\[
	f_\delta(t, x, v) = h_\delta \ast J(t, x,v) + f_\delta(t_1, x, v) \ast J(t_1, x, v),
\]
where $J$ is the fundamental solution of the Kolmogorov equation in $[t_1, t_2] \times B_{R^{1+2s}} \times \R^d$.
We find on the one hand, for any $2 \leq p < \frac{2d(s+1) + 2s}{d(s+1)}$, if we let $\frac{1}{q} = \frac{1}{2} - \frac{1}{p}$, then by Young's inequality, the scaling of the fundamental solution \eqref{eq:fund-sol-lebesgue} and Theorem \ref{thm:boundedness-Hs}
\bals
	&\norm{f_{\delta}}_{L^p([t_1, t_2] \times B_{R^{1+2s}} \times \R^d)}\\
	 &\quad\leq \norm{(-\Delta_v)^{-\frac{s}{2}}h_{\delta}}_{L^2([t_1, t_2] \times B_{R^{1+2s}} \times \R^d)}\norm{(-\Delta_v)^{\frac{s}{2}}J}_{L^q([t_1, t_2] \times B_{R^{1+2s}} \times \R^d)} \\
	&\quad\leq C R^{-s} \norm{f_{\delta}}_{L^2([t_1, t_2] \times B_{R^{1+2s}}; \dot H^s_v(\R^d))}	\\
	&\quad\leq C R^{-2s}\norm{f_{\delta}}_{L^2([t_1, t_2] \times B_{R^{1+2s}} \times \R^d))},
\eals
uniformly in $\delta$. 

On the other hand, since $\norm{h_\delta}_{L^2} \leq C \delta^{-2s}\norm{f_{\delta}}_{L^2}$, we find, due to \eqref{eq:fund-sol-lebesgue}, $f_\delta \in L^1([t_1, t_2]);L^2(B_{R^{1+2s}});  H^{\frac{3s}{2}}(\R^d))$ with a constant that degenerates as $\delta \to 0$. This follows by using Young's inequality for convolutions:
\bals
	&\norm{f_\delta}_{ L^1([t_1, t_2]);L^2(B_{R^{1+2s}}); \dot H^{\frac{3s}{2}}(\R^d))}\\
	&\leq \norm{h_\delta \ast J}_{ L^1([t_1, t_2]);L^2(B_{R^{1+2s}}); \dot H^{\frac{3s}{2}}(\R^d))} + \norm{f_\delta(t_1)\ast J}_{ L^1([t_1, t_2]);L^2(B_{R^{1+2s}}); \dot H^{\frac{3s}{2}}(\R^d))} \\
	&\leq \norm{h_\delta \ast (-\Delta_v)^{\frac{3s}{4}} J}_{L^1([t_1, t_2]);L^2(B_{R^{1+2s}}\times\R^d)} + \norm{f_\delta(t_1)\ast (-\Delta_v)^{\frac{3s}{4}} J}_{L^1([t_1, t_2]);L^2(B_{R^{1+2s}}\times\R^d)} \\
	&\leq  C \norm{h_\delta \ast t^{-\frac{3}{4}}}_{L^1([t_1, t_2]);L^2(B_{R^{1+2s}}\times\R^d)}+  C \norm{f_\delta(t_1) }_{L^2(B_{R^{1+2s}}\times\R^d)}\\
	& \leq C \norm{h_\delta}_{L^2([t_1, t_2]\times B_{R^{1+2s}}\times\R^d)}+  C \norm{f_\delta(t_1) }_{L^2(B_{R^{1+2s}}\times\R^d)}\\
	& \leq C \delta^{-2s} \norm{f_\delta}_{L^2([t_1, t_2]\times B_{R^{1+2s}}\times\R^d)}+  C \norm{f_\delta(t_1) }_{L^2(B_{R^{1+2s}}\times\R^d)}.
\eals
Thus we can use any test function in $L^\infty_{t} L^2_x  H_v^{\frac{s}{2}}$ to give sense to the equation; in particular $\psi_l$ constructed in \eqref{eq:psi-l} is admissible. We deduce the a priori estimate on $f_\delta$, but since the $\dot H_v^{\frac{3s}{2}}$ norm of $f_\delta$ is not quantitatively affecting the estimate of Proposition \ref{prop:tail-bound}, the final estimate is uniform in $\delta$. 
We find:
\bal\label{eq:apriori-fdelta}
	\int_{Q_{\frac{3R}{4}}}&\int_{\R^d \setminus B_R} (f_\delta-l)_+(w) \chi_{f_\delta > l}(v) K(v, w) \dd w\dd v \\
	&\leq CR_0^{n(1-\zeta)-2s} \mathcal M_l \abs{\big\{ f_\delta > l\big\}  \cap Q_{R_0}}^\zeta,
\eal
where $C$ does not depend on $\delta$. 
Now we want to pass to the limit. Note that as $\delta \to 0$, the coefficients $K_\delta$ converge to $K$ pointwise a.e. Due to \cite[Lemma 4.1]{AL} and \cite[Theorem 1.1]{IS} (which both rely merely on the $L^2_{t, x} H^s_v$ norm of $f_\delta$) we know $f_\delta \in L^\infty_{loc}([t_1, t_2] \times B_{R^{1+2s}} \times \R^d)$ uniformly in $\delta$, and moreover  $f_\delta \in C^{\alpha}_{loc}([t_1, t_2] \times B_{R^{1+2s}} \times \R^d)$ uniformly in $\delta$ for some $\alpha \in (0, 1)$. As a consequence of the uniform estimates, $f_\delta$ converges to some $f: [t_1, t_2] \times B_{R^{1+2s}} \times \R^d \to \R_+$ pointwise almost everywhere in $[t_1, t_2] \times B_{R^{1+2s}} \times \R^d$, weakly in $L^2([t_1, t_2] \times B_{R^{1+2s}}; H^s(\R^d))$ by weak compactness, and strongly in $L^1([t_1, t_2] \times B_{R^{1+2s}} \times \R^d)$ by uniform integrability, pointwise convergence and tightness. In particular, $f \geq 0$ and it satisfies 
\bals
	\int_{Q_{\frac{3R}{4}}}\int_{\R^d \setminus B_R} (f-l)_+(w) \chi_{f > l}(v) K(v, w) \dd w\dd v \leq CR_0^{n(1-\zeta)-2s} \mathcal M_l \abs{\big\{ f > l\big\}  \cap Q_{R_0}}^\zeta.
\eals
It remains to check that $f$ satisfies the limit equation $$\mathcal T f = \int_{\R^d} \big(f(w) - f(v) \big)K(v, w) \dd w.$$

Let $\varphi \in L^2((t_1, t_2) \times B_{R^{1+2s}},  H^s(\R^d))$ such that $\varphi(\cdot, \cdot, v)$ has compact support in $(t_1, t_2) \times B_{R^{1+2s}}$. Then $f_\delta$ satisfies
\bals
	&\int_{(t_1, t_2)\times B_{R^{1+2s}} \times \R^d} (\mathcal T f_\delta)\varphi\dd z\\
	 &\quad+ \int_{(t_1, t_2)}\int_{B_{R^{1+2s}}}\int_{\R^d} \int_{\R^d} \big(f_\delta(v) - f_\delta(w)\big) K_\delta(v, w)\varphi(v) \dd w\dd v\dd x\dd t  = 0.
\eals
Integrating by parts and using the global convergence of the coefficients pointwise and strongly in $L^1$, and by weak compactness in $L^2_{t, x} H^s_v$, we see that $f$ satisfies 
\bals
	&-\int_{(t_1, t_2)\times B_{R^{1+2s}} \times \R^d}     f\mathcal T\varphi\dd z \\
	&\quad+ \int_{(t_1, t_2)}\int_{B_{R^{1+2s}}}\int_{\R^d} \int_{\R^d} f(v) K(v, w)(\varphi(v)-\varphi(w)\big) \dd w\dd v\dd x\dd t  = 0,
\eals
that is
\bals
	&\int_{(t_1, t_2)\times B_{R^{1+2s}} \times \R^d}   (\mathcal T f)\varphi\dd z \\
	&\quad+ \int_{(t_1, t_2)}\int_{B_{R^{1+2s}}}\int_{\R^d} \int_{\R^d}\big(f(v)-f(w)\big) K(v, w)\varphi(v) \dd w\dd v\dd x\dd t  = 0.
\eals

\section{Upper bound in the parabolic case}\label{app:improved-decay}
In this subsection, we discuss how the method outlined above to derive the polynomial decay of Theorem \ref{thm:upperbounds} can be improved for kernels that satisfy a pointwise upper bound \eqref{eq:upperbounds-pw} in case that we are in the parabolic setting: $J$ is constant in $x$. The most significant sub-optimality stems from the way we treated the non-singular part in Aronson's method in Proposition \ref{prop:aronson}. For pointwisely bouded kernels, instead of \eqref{eq:step1}, we obtain:
\bal\label{eq:step1-improved}
	\sup_{t \in (\tau_0, \sigma)} \int_{\R^{2d}} f^2(t, x, v) H(t, x, v)  \dd x \dd v\leq &\int_{\R^{2d}} f^2(\tau_0, x, v) H(\tau_0, x, v) \dd x \dd v \\
	&+C \rho^{-(d+2s)}\Norm{H}_{L^\infty([\tau_0, \sigma]\times \R^{2d})} \norm{f}_{L^2_t([\tau_0, \sigma]; L^1_v(\R^{d}))}^2.
\eal
This follows by replacing the estimate on the non-singular part of \eqref{eq:I-loc-ns} by 
\bals
	\mathcal I_{loc}^{ns} &= \int_{B_{2R}}\int_{B_{2R}} \big[f(w) - f(v)\big] \Big[\big(fH\varphi_R^2\big)(v) - \big(fH\varphi_R^2\big)(w)\Big] K(v, w)\chi_{\abs{v-w} > \rho} \dd w\dd v\\
	&\leq \int_{B_{2R}}\int_{B_{2R}} \Big[f(w)\big(fH\varphi_R^2\big)(v) + f(v)\big(fH\varphi_R^2\big)(w) \Big]K(v, w)\chi_{\abs{v-w} > \rho} \dd w\dd v\\
	&\leq C \rho^{-2s-d} \norm{H}_{L^\infty} \int_{B_{2R}} f(w) \dd w\int_{B_{2R}} f(v) \dd v.
\eals
The construction of $H$ remains the same. Therefore, this improvement leads in the proof of Theorem \ref{thm:aronson-bound} to the replacement of \eqref{eq:aux-pw-ub} by
\bals
	\sup_{t \in (\tau_0, \sigma)} \int_{B^0_{2\rho}(t, w_0)} f^2(t, v)  \dd v\leq \Bigg(\frac{\rho^{2s}}{k(\sigma -\tau_0)}\Bigg)^{1-\frac{\gamma}{3\rho}}\norm{f_0}_{L^2(\R^{d})}^2 + \rho^{-(d+2s)} (\sigma - \tau_0)\norm{f_0}_{L^1(\R^{d})}^2,
\eals
which subsequently implies an improved estimate in Theorem \ref{thm:aronson-bound}:
\bal\label{eq:aronson-imp}
	\abs{f(\sigma, w_0)}^2 \leq  C (\sigma - \tau_0)^{-\frac{d}{2s}} \Bigg(\frac{\rho^{2s}}{k(\sigma - \tau_0)}\Bigg)^{1- \frac{\gamma}{3\rho}}  \norm{f_0}^2_{L^2(\R^{2d})} + \rho^{-(d+2s)} (\sigma - \tau_0)^{1-\frac{d}{2s}} \norm{f_0}_{L^1(\R^{d})}^2.
\eal
Using \eqref{eq:aronson-imp} in the proof of Theorem \ref{thm:off-diag}, yields the following improved estimate in Theorem \ref{thm:off-diag}:
\beqs
	J(\sigma, v; \tau_0, w_0) \leq C \Bigg(  (\sigma - \tau_0)^{-\frac{d}{2s}}\Bigg(\frac{\rho^{2s}}{k(\sigma - \tau_0)}\Bigg)^{\frac{1}{4}- \frac{\gamma}{12\rho}} +\rho^{-\frac{d+2s}{4}} (\sigma - \tau_0)^{\frac{2s-3d}{8s}}\Bigg),
\eeqs
so that eventually, choosing $\rho$ as in \eqref{eq:rho}, we conclude for $(\sigma - \tau_0) \leq \frac{\rho^{2s}}{4}$ 
\bals
	J(\sigma, v; \tau_0,w_0)  \lesssim  &(\sigma - \tau_0) \abs{v- w_0}^{-(d +2s)}+ (\sigma - \tau_0)^{\frac{2s-3d}{8s}} \abs{v- w_0}^{-\frac{d+2s}{4}}. 
\eals
Note that in the parabolic case, the optimal decay in velocity is $d+2s$, which suggests that our method - which does not rely on Meyer's decomposition - is sub-optimal by a factor of $\frac{1}{4}$.

\bibliographystyle{acm}
\bibliography{bib}

\end{document}